\newcommand{\andf}{\quad\hbox{and}\quad}
\newcommand{\with}{\quad\hbox{with}\quad}
\newcommand{\newcom}{\newcommand}
\def\inte#1{
\displaystyle\mathop{#1\kern0pt}^\circ }
\newcom{\al}{\alpha}
\newcom{\de}{\delta}
\newcom{\Th}{\Theta}
\newcom{\be}{\beta}
\newcom{\s}{\sigma}
\newcom{\eps}{\epsilon }
\newcom{\ve}{\varepsilon}
\newcom{\ga}{\gamma}
\newcom{\Ga}{\Gamma}
\newcom{\ka}{\kappa}
\newcom{\Lam}{\Lambda}
\newcom{\lam}{\lambda}
\newcom{\bp}{\Phi}
\newcom{\om}{\omega}
\newcom{\Sig}{\Sigma}
\newcom{\sig}{\sigma}
\newcom{\tht}{\theta}
\newcom{\tri}{\triangle}
\newcom{\oo}{\infty}
\newcom{\h}{{\rm h}}
\newcom{\rmv}{{\rm v}}
\newcom{\hs}{\hslash}
\newcom{\vphi}{\varphi}
\def\dive{\mathop{\rm div}\nolimits}
\newcom{\grad}{\nabla}
\newcom{\lap}{\Delta}
\newcom{\ca}{{\mathcal a}}
\newcom{\cB}{{\mathcal B}}
\newcom{\cC}{{\mathcal C}}
\newcom{\cD}{{\mathcal D}}
\newcom{\cE}{{\mathcal E}}
\newcom{\cF}{{\mathcal F}}
\newcom{\cG}{{\mathcal G}}
\newcom{\cH}{{\mathcal H}}
\newcom{\cI}{{\mathcal I}}
\newcom{\cJ}{{\mathcal J}}
\newcom{\cL}{{\mathcal L}}
\newcom{\cK}{{\mathcal K}}
\newcom{\cM}{{\mathcal M}}
\newcom{\cN}{{\mathcal N}}
\newcom{\cP}{{\mathcal P}}
\newcom{\cS}{{\mathcal S}}
\newcom{\cO}{{\mathcal O}}
\newcom{\cQ}{{\mathcal Q}}
\newcom{\cT}{{\mathcal T}}
\newcom{\cY}{{\mathcal Y}}
\newcom{\cZ}{{\mathcal Z}}
\newcom{\cR}{{\mathcal R}}
\newcom{\cU}{{\mathcal U}}
\newcom{\cV}{{\mathcal V}}
\newcom{\cW}{{\mathcal W}}
\newcom{\cX}{{\mathcal X}}
\newcom{\R}{\Bbb R}
\newcom{\T}{\Bbb T}
\newcom{\N}{\Bbb N}
\newcom{\Z}{\Bbb Z}
\newcom{\C}{\Bbb C}
\newcom{\E}{\Bbb E}
\newcom{\pr}{\Bbb P}
\let\wh=\widehat
\newcom{\bfp}{\mathbf{p}}
\newcom{\bfq}{\mathbf{q}}
\newcom{\bfr}{\mathbf{\wh{R}}}
\newcom{\bfn}{\mathbf{\wh{N}}}
\newcom{\f}{\frac}
\newcom{\dint}{\displaystyle\int}
\newcom{\dsum}{\displaystyle\sum}
\newcom{\dlim}{\displaystyle\lim}
\newcom{\ov}{\overline}
\newcom{\wt}{\widetilde}
\newcom{\pt}{\partial_t}
\newcom{\p}{\partial}
\newcom\na{\nabla}
\newcom\rto{\rightarrow}
\newcom\lto{\leftarrow}
\newcom\mto{\mapsto}
\newcom{\disp}{\displaystyle}
\newcom{\non}{\nonumber}
\newcom{\no}{\noindent}
\newcom{\QED}{$\square$}
\def\eqdefa{\buildrel\hbox{\footnotesize def}\over =}
\newcommand{\beq}{\begin{equation}}
\newcommand{\eeq}{\end{equation}}
\newcommand{\beqo}{\begin{equation*}}
\newcommand{\eeqo}{\end{equation*}}
\newcommand{\ben}{\begin{eqnarray}}
\newcommand{\een}{\end{eqnarray}}
\newcommand{\beno}{\begin{eqnarray*}}
\newcommand{\eeno}{\end{eqnarray*}}
\newcom{\is}{{i^\star}}
\newcom{\sM}{\mathscr{M}}
\newcom{\eo}{{\mathbf{e}_1}}
\newcom{\et}{\mathbf{e}_2}
\newcom{\Ein}{\text{Ein}}
\newcom{\nl}{\text{NL}}
\newcom{\lt}{{L^2}}
\newcom{\sa}{\mathscr{a}}
\newcom{\sF}{\mathscr{F}}
\newcom{\sN}{\mathscr{N}}
\newcom{\sI}{\mathscr{I}}
\newcom{\lla}{\left\langle}
\newcom{\rra}{\right\rangle}
\newcom{\lbk}{\left\{}
\newcom{\rbk}{\right\}}
\newcom{\lamet}{\Lam^{E,\star}}
\newcom{\spn}{\text{span}}
\newcom{\oma}{{\Omega_a}}
\newcom{\omia}{{\Omega_{i,a}}}
\newcom{\pia}{{\Phi_{i,a}}}
\newcom{\pisa}{{\Phi_{\is,a}}}
\newcom{\ala}{{\al_a}}
\newcom{\alpa}{{\dot{\al}_a}}
\newcom{\thpa}{{\dot{\theta}_a}}
\newcom{\thpp}{{\dot{\theta}_p}}
\newcom{\dtea}{\dot{\theta}^E_a}
\newcom{\wiv}{{W_{i,\ve}}}
\newcom{\civ}{{\cX_{i,\ve}}}
\newcom{\Div}{{\cD_{i,\ve}}}
\newcom{\ive}{{\mathrm{I}_{i,\ve}}}
\newcom{\iive}{{\mathrm{II}_{i,\ve}}}
\newcom{\iiive}{{\mathrm{III}_{i,\ve}}}
\newtheorem{Theorem}{Theorem}[section]
\newtheorem{Proposition}[Theorem]{Proposition}
\newtheorem{Lemma}[Theorem]{Lemma}
\newtheorem{Remark}[Theorem]{Remark}
\begin{document}
\title[Long time evolution of viscous point vortices]
{Long time evolution of a pair of 2D viscous point vortices}

\author[P. Zhang]{Ping Zhang}%
\address[P. Zhang]
{State Key Laboratory of Mathematical Sciences, Academy of Mathematics $\&$ Systems Science, The Chinese Academy of
	Sciences, Beijing 100190, China, and School of Mathematical Sciences,
	University of Chinese Academy of Sciences, Beijing 100049, CHINA.}
\email{zp@amss.ac.cn}

\author[Y. Zhang]{Yibin Zhang}
\address[Y. Zhang]
 {academy of Mathematics $\&$ Systems Science, The Chinese academy of
Sciences, Beijing 100190, CHINa.  } \email{zhangyibin22@mails.ucas.ac.cn}

\date{\today}

\begin{abstract}
This paper studies the long-time evolution of two  point vortices under the 2D Navier–Stokes equations. Starting from initial data given by a pair of Dirac measures, we derive an asymptotic expansion for the vorticity  over time scales significantly longer than the advection time, yet shorter than the diffusion time. Building on previous works \cite{GS24-1, DG24}, we construct suitable approximate solutions $\Omega_a$ and employ Arnold’s method to define a nonlinear energy functional $E_\ve[\om]$, with respect to which the linearized operator $\Lambda^{E,\star}$ around $\Omega_a$ is nearly skew-adjoint. A key innovation in this work is the introduction of ``pseudo-momenta'': $\varrho^e_a, \varrho^o_a,\varrho^{te}_a, \varrho^{to}_a$, which correspond to eigenfunctions or other nontrivial elements in invariant subspaces of $\Lambda^E$, derived from the Lie structure of the 2D Euler equations.
In particular, by transitioning to a co-moving frame that tracks the trajectories of the vortex centers and introducing the ``frame stream function" $\hat{X}$ and  its associated ``frame derivative" $X=\{\hat{X}, \cdot\}_V$, we obtain the relation $\{\varrho^e_a, \Omega^E_a\}_V \approx X\Omega^E_a$, which  is shown to be an important eigenfunction of $\Lam^E_a$ of eigenvalue zero.
Imposing orthogonality of the perturbation $\omega$ with respect to these carefully chosen ``pseudo-momenta" enables us to prove the coercivity of $E_\ve[\omega]$.
 Finally, we estimate the error between $\Omega_a$ and the true solution  by projecting it onto the directions spanned by these ``pseudo-momenta'' and their orthogonal complement, and then analyzing the evolution of each projection. Our analysis reveals how the vortex centers evolve and how the solution deviates from a simple superposition of Oseen vortices, generalizing previous result \cite{DG24} and providing a refined description of metastable vortex dynamics in the small viscosity regime.
\end{abstract}

\maketitle

\renewcommand{\theequation}{\thesection.\arabic{equation}}
\setcounter{equation}{0}

\section{introduction}
The study of coherent vortex structures in two-dimensional incompressible flows, which refers to organized, swirling patterns of fluid motion that persist in time and space,  has long been a central topic in fluid dynamics, both from a theoretical and applied perspective. At high Reynolds numbers, such structures often persist over long time intervals and dominate the transport and mixing properties of the flow.
Far from being mere theoretical concepts, these structures are ubiquitous in nature and technology. They are the invisible architects behind the familiar: from the swirling eddies that shed off a bridge pier in a river, influencing erosion and sediment transport, to the powerful wingtip vortices generated by aircraft, which dictate safe separation distances during takeoff and landing. In the atmosphere, large-scale coherent vortices define weather systems, including hurricanes and tornadoes, while in engineering, controlling them is key to enhancing the efficiency of turbines and reducing drag on vehicles. Essentially, these coherent structures are the fundamental building blocks of fluid flow, connecting abstract theory to the dynamic world we live in.

A natural entry into this field is the study of vortex interactions, which are mathematically more tractable yet more singular than vortex patches or sheets. Indeed, according to numerical studies \cite{DV02} and nonrigorous asymptotic analysis \cite{TK91,TT65}, the interaction of vortices commences with a rapid relaxation phase. In this stage, each vortex undergoes shape adaptation in response to the velocity field produced by surrounding vortices. This initial phase is sensitive to the detailed initial conditions and features oscillatory behavior in the vortex cores that diminish within an inviscid time scale. Following this, the vortices gradually approach Gaussian-type profiles at a diffusion-controlled rate. The system eventually attains a “metastable regime” that no longer reflects the initial data and remains stable until vortices come into close proximity, triggering a merging process.
During this ``metastable regime", the dynamic of the vortex centers is characterized  by the following ``point vortex system", which originally derived by Helmholtz \cite{H58} and Kirchhoff \cite{K76}:
\begin{equation}\label{eq 1.1a}
    \dot{z}_i(t) = \frac{1}{2\pi} \sum_{j\neq i} \Ga_j \frac{(z_i(t)-z_j(t))^\perp}{|z_i(t)-z_j(t)|^2}.
\end{equation}
Here $z_i \in \R^2$ and $\Ga_i\in \R$ designate the positions and circulations of the $i$-th vortice.   This ODE system describes the motion of idealized singular vortices in an inviscid fluid,  and arises naturally as a formal limit of the 2D Euler or Navier–Stokes equations when the vorticity is highly concentrated.

To render this approximation mathematically rigorous, we start from the 2D Navier-Stokes equations in vorticity formulation:
\begin{equation}\label{eq 1.1}
\pt w + u\cdot \grad_x w-\nu \lap_x w=0, \quad  (t,x) \in \R^+\times \R^2,
\end{equation}
where $\nu\ll1$ represents the viscous coefficient, $\omega$ designates the vorticity of the fluid and the  corresponding velocity $u$ can be expressed by the Biot-Savart formula
\begin{equation*}
    u\eqdefa\grad_x^\perp \varphi, \quad \text{where} \ \varphi\eqdefa \lap_x^{-1} \om = \frac{1}{2\pi} \int_{\R^2} \log|x-y|  \om(y)\, dy.
\end{equation*}
 To investigate the long time evolution of  a pair of point vortices, we implement \eqref{eq 1.1} with
\begin{equation}\label{eq 1.2}
w(t=0)=\Ga_1 \delta_{P_1} + \Ga_2 \delta_{P_2}.
\end{equation}
Here  $\Ga_i\in \R$ and $P_i \in\R^2 (i=1,2)$, are the strength and initial position of the $i$-th Dirac measure, respectively. Provided the total circulation  $\Ga\eqdefa \Ga_1+\Ga_2 \neq 0$, we may apply a rotation and translation to assume, without loss of generality:
\begin{equation}\label{eq 1.3}
  \Ga_1\geq |\Ga_2|>0,\quad   P_i= (\ell_i,0) ,\quad     \Ga_1\ell_1 +\Ga_2\ell_2=0 \andf d\eqdefa\ell_1 -\ell_2 >0,
\end{equation}
which together with the conservation laws implies
\begin{equation}\label{eq 1.4}
    \int_{\R^2} w(t)dx =\Ga  \andf \int_{\R^2}x_jw(t)dx\equiv 0, \quad \text{for}\ j=1,2.
\end{equation}

Among the numerous parameters governing this complex system, three dimensionless quantities are paramount: the Reynolds number, the advection time, and the diffusion time, defined respectively by
\begin{equation}
   {\rm Re}\eqdefa \Ga_1/\nu   \gg1,\quad    T_{{\rm adv}} \eqdefa   d^2/\Ga_1, \quad T_{{\rm dif}}\eqdefa d^2/\nu=T_{{\rm adv}}\times {\rm Re}.
\end{equation}
Introducing the dimensionless variables
\begin{equation}\label{eq 1.4a}
    x=d\Tilde{x}, \quad \ell_i=d\Tilde{\ell}_i, \quad t=T_{{\rm adv}}\Tilde{t}, \quad w(t,x)= T_{{\rm adv}}^{-1}\Tilde{w}(\Tilde{t},\Tilde{x}),\quad  \Ga_2 = \Ga_1 \Tilde{\Ga}_2 \andf \nu= \Ga_1 \Tilde{\nu},
\end{equation}
we obtain the rescaled system:
\begin{equation*}
\left\{
\begin{aligned}
    &\p_{\Tilde{t}} \Tilde{w} + \Tilde{u}\cdot \grad_{\Tilde{x}} \Tilde{w}-\Tilde{\nu} \lap_{\Tilde{x}} \Tilde{w}=0,\quad \Tilde{u}=\grad^\perp_{\Tilde{x}}\lap_{\Tilde{x}}^{-1}\Tilde{w},\quad  (\Tilde{t},\Tilde{x}) \in \R^+\times \R^2, \\
    &\Tilde{w}(\Tilde{t}=0)=\delta_{(\Tilde{\ell}_1,0)} + \Tilde{\Ga}_2 \delta_{(\Tilde{\ell}_2,0)}.
\end{aligned}
\right.
\end{equation*}
From \eqref{eq 1.3}  and \eqref{eq 1.4a}, we deduce:
\begin{equation*}
    |\Tilde{\Ga}_2| \leq 1, \quad \Tilde{d}\eqdefa \Tilde{\ell}_1-\Tilde{\ell}_2 =1.
\end{equation*}
Hence, unless stated otherwise, we assume the normalized parameter regime:
\begin{equation}\label{eq assumption}
    \Ga_1=1, \quad |\Ga_2|\leq 1, \quad d=1.
\end{equation}

For the reader's convenience, and to contextualize our work, we first briefly survey prior results on  well-posedness, asymptotic behavior, enhanced dissipation, long time evolution  of \eqref{eq 1.1} with measure data.

\subsection{Well-posedness of \eqref{eq 1.1} and enhanced dissipation of Oseen vortices}
The well-posedness and uniqueness of \eqref{eq 1.1} with initial data in $\mathcal{M}(\R^2)$ were established in a series of literatures, see \cite{GMO88,GG05,GGL05, BM14}. Notably, the exact solution with $w(t=0)=\Ga \delta_0$ is a self-similar solution and  known as the ``Oseen vortice":
\begin{equation}
    w(t,x) = \frac{\Ga}{\nu t}G\Bigl(\frac{x}{\sqrt{\nu t}}\Bigr) \quad \text{with} \quad  G(\xi)\eqdefa \frac{1}{4\pi} e^{-|\xi|^2/4} ,
\end{equation}
which is widely used to analyze phenomena like tornadoes, industrial swirling flows, two-dimensional turbulence, and many other   applications.

What particularly distinguishes the Oseen vortex is the fact that any solution of \eqref{eq 1.1} with measure initial data (due to the smoothing effect of \eqref{eq 1.1}, it suffices to consider initial data in $L^1(\R^2)$) will eventually converge to  Oseen vortice of some strength.

\begin{Theorem}[\cite{GW05}]
   {\sl For any $w_0\in L^1(\R^2)$, the solution $w(t,x)$ of \eqref{eq 1.1} satisfies
\begin{equation*}
    \lim_{t\rto +\oo} t^{1-\frac{1}{p}}\Bigl\|w(t,x)- \frac{\Ga}{\nu t}G\Bigl(\frac{x}{\sqrt{\nu t}}\Bigr) \Bigr\|_{L^p_x} =0 \quad  \text{for } \ 1\leq p\leq +\oo,
\end{equation*}
where $\Ga\eqdefa \int_{\R^2} w_0(x) dx$.}
\end{Theorem}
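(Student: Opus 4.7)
I would use a dynamical-systems argument in self-similar variables. Introduce
\begin{equation*}
\xi = \f{x}{\sqrt{\nu t}},\qquad \tau = \log(t/t_0),\qquad w(t,x) = \f{1}{\nu t}\,W(\tau,\xi),
\end{equation*}
so that by scale invariance $\bigl\|w(t)-\tfrac{\Ga}{\nu t}G\bigl(\cdot/\sqrt{\nu t}\bigr)\bigr\|_{L^p_x} = (\nu t)^{1/p-1}\|W(\tau)-\Ga G\|_{L^p_\xi}$, and the claim reduces to showing $\|W(\tau)-\Ga G\|_{L^p}\rto 0$ as $\tau\rto\oo$. The rescaled equation reads
\begin{equation*}
\p_\tau W + U\cdot\grad_\xi W = \cL W,\qquad \cL\eqdefa \lap_\xi + \tfrac12\xi\cdot\grad_\xi + 1,
\end{equation*}
with $U$ recovered from $W$ via the scale-invariant Biot--Savart law; the Oseen profile $\Ga G$ is a stationary solution because $\cL G=0$ and radial symmetry kills the advective term.

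First I would establish the basic a priori estimates. Mass conservation gives $\int W(\tau)\,d\xi\equiv \Ga$, and parabolic smoothing, together with standard Biot--Savart bounds on $U$, yields after time one uniform control of $W(\tau)$ in $L^p$ for every $p\in[1,\oo]$ and, eventually, in weighted spaces such as $L^2(G^{-1}d\xi)$. Propagating a polynomial weight like $|\xi|^2$ then provides tightness: the favorable contribution from the drift term $\tfrac12\xi\cdot\grad_\xi$ in $\cL$ prevents mass from escaping to infinity in the rescaled frame. Combining the resulting bounds with equicontinuity in time (again from the smoothing) makes the trajectory $\{W(\tau)\}_{\tau\geq 1}$ relatively compact in $L^1(\R^2)$, so its $\om$-limit set $\om(W_0)$ is non-empty, compact, and invariant under the rescaled flow, with each of its elements of total mass $\Ga$.

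The main obstacle --- and the real content of the theorem --- is to identify $\om(W_0)=\{\Ga G\}$. In the non-negative case $W_0\geq 0$, the relative entropy
\begin{equation*}
H(W) \eqdefa \int_{\R^2} W \log\f{W}{\Ga G}\,d\xi
\end{equation*}
is a strict Lyapunov functional, whose dissipation is a positive multiple of the relative Fisher information and vanishes only at $W=\Ga G$; LaSalle's invariance principle then forces $\om(W_0)=\{\Ga G\}$. For sign-changing $W_0$, this direct entropy argument fails and one instead analyses the linearization $\Lam\eqdefa \cL - U_{\Ga G}\cdot\grad_\xi - (\grad_\xi^\perp\lap_\xi^{-1}\cdot)\cdot\grad_\xi(\Ga G)$ around the fixed point: in an appropriate weighted $L^2$ it has compact resolvent with purely discrete spectrum $\{-n/2\}_{n\geq 0}$, whose kernel is spanned by $G$ and the two translation modes $\p_1 G,\,\p_2 G$. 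Every point of $\om(W_0)$ generates an entire trajectory bounded in these weighted norms for $\tau\in\R$, hence must lie in this finite-dimensional kernel; the mass constraint $\int W=\Ga$ together with the fact that the translation modes correspond to a choice of reference frame (absorbed by centering the Gaussian) then forces every limit point to be $\Ga G$. Undoing the rescaling yields the $L^p$ convergence claimed.
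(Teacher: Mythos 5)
The theorem you are proving is stated in the paper only as background and attributed to \cite{GW05}; the paper contains no proof of it, so there is nothing internal to compare against. Evaluating your sketch against the actual Gallay--Wayne argument: your first three stages (self-similar variables, compactness via weighted bounds and tightness, and the relative-entropy/LaSalle identification $\omega(W_0)=\{\Gamma G\}$ when $W_0\ge 0$) are the right strategy and match theirs.

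The gap is in your treatment of sign-changing data. You argue that each point of $\omega(W_0)$ generates an entire trajectory bounded for all $\tau\in\R$, and then invoke the purely discrete spectrum $\{-n/2\}$ of the linearized operator $\Lambda$ to conclude the trajectory must lie in $\ker\Lambda$. This is a non-sequitur: points of the $\omega$-limit set generate entire bounded solutions of the \emph{nonlinear} rescaled equation, not of its linearization about $\Gamma G$. Writing $\p_\tau \bar W = \Lambda\bar W + N(\bar W)$ with $N$ quadratic, boundedness for all $\tau\in\R$ does not force $\bar W\in\ker\Lambda$, because $N$ can balance the linear growth or decay (even a scalar ODE such as $\dot x = -x + x^2$ admits a bounded entire heteroclinic orbit that is not a fixed point of the linear part). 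The spectral argument only constrains trajectories already confined to a small neighborhood of $\Gamma G$, which is precisely what you have not yet established, so the reasoning is circular. The actual Gallay--Wayne mechanism for general $L^1$ data is to split $w=w^+-w^-$ with $w^\pm\ge 0$ solving the \emph{linear} advection--diffusion equations driven by the common velocity field of the full solution; the maximum principle preserves the sign and mass of each piece, so the entropy/Lyapunov machinery you correctly invoke in the nonnegative case can be applied to the pair $(w^+,w^-)$, after which $\omega(W_0)=\{\Gamma G\}$ follows. Your sketch has the right compactness and nonnegative-case ingredients, but replacing this decomposition by a linearization argument leaves the general case unproved.
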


Moreover, motivated by the self-similar structure of Oseen vortice,
we recall the self-similar variables:
\begin{equation}
    \xi\eqdefa \frac{x}{\sqrt{\nu t}},\quad \tau \eqdefa \log t \andf w(t,x)\eqdefa \frac{1}{\nu t} H(\tau,\xi),
\end{equation}
and then find that  $H$ solves:
\begin{equation}\label{eq 1.13}
    \p_\tau H + \nu^{-1}V\cdot \grad_\xi H - \cL H =0, \quad V(\xi)\eqdefa \grad^\perp_\xi \lap^{-1}_\xi H , \quad (\tau,\xi)\in \R\times\R^2,
\end{equation}
where $\cL\eqdefa\lap_\xi +\frac{\xi}{2}\cdot\grad_\xi+1$.  In self-similar coordinates, Oseen vortice becomes a stationary solution to \eqref{eq 1.13}.
The classical framework for studying Oseen vortices is the following weighted $L_\xi^2$ space:
\begin{equation*}
    \cY\eqdefa L^2(\R^2; G^{-1}d\xi), \quad \text{with}\quad  \lla f,g\rra_\cY\eqdefa \int_{\R^2} f\Bar{g} G^{-1}\, d\xi.
\end{equation*}
 Let $\xi=\rho(\cos\vartheta, \sin\vartheta)$, by  expanding $f\in \cY$ into  Fourier series with respect to $\vartheta$ variable, we obtain an orthogonal decomposition:
\begin{equation*}
    \cY= \bigoplus_{n=0}^\oo \cY_n,  \quad \text{with} \ \cY_n \eqdefa \bigl\{ f\in \cY \ :\ f=a(\rho) \cos(n\vartheta) + b(\rho) \sin(n\vartheta)  \bigr\}.
\end{equation*}
and denote by $\cP_n$ the orthogonal projection onto $\cY_n$.

\begin{Theorem}[\cite{GW05}]
    For any $H_0\in \cY$, \eqref{eq 1.13} has a unique solution  $H(\tau,\xi)\in C([0,+\oo),\cY)$ so that $H(0,\xi)=H_0$. And we have
    \begin{equation}
\lim_{\tau\rto +\oo}\|H(\tau,\xi) -\Ga G(\xi)\|_{\cY} = 0 , \quad \text{with}\quad  \Ga\eqdefa \int_{\R^2} H_0(\xi) d\xi.
    \end{equation}
\end{Theorem}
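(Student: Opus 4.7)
The strategy is to recast \eqref{eq 1.13} in Duhamel form around the linear semigroup generated by $\cL$, prove global well-posedness in $\cY$, and then use a spectral gap for $\cL$ together with a compactness/Lyapunov argument to identify $\Ga G$ as the attractor.

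First I would analyze the spectral structure of $\cL = \lap_\xi + \frac{\xi}{2}\cdot\nabla_\xi + 1$ on $\cY$. A direct computation yields $\cL G = 0$, and conjugation by $G^{1/2}$ turns $-\cL$ (up to an additive constant) into a harmonic-oscillator-type Schr\"odinger operator on $L^2(\R^2,d\xi)$, which is self-adjoint with compact resolvent and discrete spectrum $\{n/2:n\in\N\}$. Consequently $\cL$ generates an analytic contraction semigroup $\{e^{\tau\cL}\}_{\tau\geq0}$ on $\cY$, and on the zero-mean subspace $\{h\in\cY:\int_{\R^2}h\,d\xi=0\}$ one has the quantitative decay $\|e^{\tau\cL}h\|_\cY\leq e^{-\tau/2}\|h\|_\cY$. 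For local well-posedness, the nonlinearity $\nu^{-1}V\cdot\nabla_\xi H$ with $V=\nabla^\perp\Delta^{-1}H$ is controlled via Calder\'on-Zygmund bounds for the Biot-Savart kernel and weighted Gagliardo-Nirenberg inequalities, while the parabolic smoothing $\|\nabla e^{\tau\cL}\|_{\cY\to\cY}\lesssim \tau^{-1/2}$ permits a standard contraction argument in $C([0,T],\cY)$. Integrating \eqref{eq 1.13} over $\R^2$ gives $\p_\tau\int H\,d\xi=0$ (the velocity $V$ is divergence-free and $\int\cL H\,d\xi=0$ by integration by parts), hence mass conservation $\int H(\tau)\,d\xi\equiv\Ga$; combining this with the $\cY$-dissipation provided by $\cL$ and the $L^p$-preserving character of the divergence-free transport rules out blow-up and yields global existence.

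For the asymptotics, I would decompose $H(\tau)=\Ga G + h(\tau)$ with $\int h\,d\xi=0$ and aim at $\|h(\tau)\|_\cY\to0$. The smoothing of $e^{\tau\cL}$ together with the Gaussian weight built into $\cY$ furnishes uniform bounds on $\nabla H$ and on spatial moments, so $\{H(\tau)\}_{\tau\geq1}$ is relatively compact in $\cY$. A LaSalle-type invariance principle then reduces the problem to showing that the only stationary solution of \eqref{eq 1.13} in $\cY$ with prescribed mass $\Ga$ is $\Ga G$. For $\Ga>0$, I would close this via the relative entropy $\cH[H|\Ga G]\eqdefa \int_{\R^2} H\log(H/\Ga G)\,d\xi$: the transport term $V\cdot\nabla H$ drops out upon integration by the incompressibility of $V$, and $\cL$ contributes a manifestly nonpositive Fisher-information term, so combined with the logarithmic Sobolev inequality for $G$ it forces $H_\infty=\Ga G$. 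The sign-indefinite case $\Ga\in\R$ is then treated by passing to the orthogonal decomposition $\cY=\bigoplus_{n\geq0}\cY_n$ and exploiting the strict negativity of $\cL$ on every non-Gaussian eigenmode.

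\emph{Main obstacle.} The principal difficulty is the last step: the nonlinear transport $V\cdot\nabla H$ is not manifestly dissipated by the linear spectral-gap estimate, so one cannot close directly on $h$ with a pure energy inequality. Ruling out non-Gaussian stationary profiles in $\cY$—especially when $\Ga\leq0$, where the relative-entropy method is unavailable—requires a careful interplay between the compactness of trajectories, the incompressibility constraint on $V$, the constraint $\int h\,d\xi=0$, and the detailed Hermite-mode structure of $\cL$ on $\cY$.
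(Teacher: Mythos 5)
This theorem is quoted from \cite{GW05} and the present paper offers no proof of its own, so there is no in-paper argument to compare against; the comparison must be to the original Gallay--Wayne proof.

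Your overall strategy—spectral theory for $\cL$, Duhamel/contraction for local existence, mass conservation, compactness of the trajectory in $\cY$, a LaSalle invariance principle, and a Lyapunov functional to single out $\Ga G$ on the $\omega$-limit set—is indeed the architecture of the GW05 argument, so at that level of resolution you have reconstructed the right route. However, the step you flag as the ``main obstacle'' is precisely where your proposed fix fails. Passing to the Hermite decomposition $\cY=\bigoplus_{n\ge 0}\cY_n$ and invoking strict negativity of $\cL$ on the non-Gaussian modes does not close the argument: the transport term $\nu^{-1}V\cdot\nabla_\xi H$ couples all the modes, it is $O(\nu^{-1})$ relative to the linear dissipation, and it does not respect the $\cY$-orthogonality of the Hermite sectors, so there is no energy inequality along a single mode. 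Said differently, the Hermite gap only controls the \emph{linearized} dynamics around $G$; it cannot by itself rule out nontrivial bounded entire solutions of the full nonlinear rescaled equation.

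What GW05 actually do for sign-changing data is different and worth recording. Any point of the $\omega$-limit set sits on a complete trajectory $\{H(\tau)\}_{\tau\in\R}$ bounded in $\cY$, hence in $L^1$. Along the rescaled equation the $L^1$ norm is non-increasing (Kato's inequality for $\Delta$ plus the identity $\int \tfrac{\xi}{2}\cdot\nabla|H|\,d\xi=-\int|H|\,d\xi$ balance the zero-order term, and the transport term contributes nothing since $V$ is divergence-free). A bounded monotone quantity on a complete trajectory must be constant on both the $\alpha$- and $\omega$-limit sets, and constancy forces the equality case of Kato's inequality, i.e.\ $H$ of one sign. Once sign-definiteness is secured, the relative-entropy / Fisher-information argument you outline for $\Ga>0$ applies and pins down $H=\Ga G$; the connectedness of the $\omega$-limit set then propagates this to the original trajectory. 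So the missing ingredient in your plan is the $L^1$-monotonicity (equivalently, the parabolic maximum principle) used to reduce the sign-indefinite case to the sign-definite one; the Hermite structure alone will not do it.

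A smaller issue: ``the $L^p$-preserving character of the divergence-free transport'' does not directly give global a priori bounds in $\cY$, because the Gaussian weight $G^{-1}$ is not conserved along the flow of $V$; the weighted $L^2$ estimate in GW02/GW05 requires an explicit commutator computation between $V\cdot\nabla_\xi$ and the weight, and this is where one genuinely uses $\int H\,d\xi=\Ga$ and the first-moment identity to cancel the worst terms. This can be fixed but needs to be spelled out rather than asserted.
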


Nonlinear stability and decay rates were further studied in \cite{G12,GR08}:

\begin{Theorem}[\cite{G12,GR08}]
{\sl There exists $\epsilon>0$ so that, for all $\Ga\in \R$ and $H_0\in \cY$ satisfying
\begin{equation*}
 \int_{\R^2} H_0(\xi) d\xi =\Ga \andf \|H_0-\Ga G\|_{\cY} \leq \epsilon,
\end{equation*}
the unique solution $H(\tau,\xi)$ of \eqref{eq 1.13} with $H(0,\xi)=H_0$ verifies
\begin{equation}\label{eq 1.16}
   \|H(\tau,\xi) -\Ga G(\xi)\|_{\cY} \leq \min(1,e^{-\tau/2})  \|H_0-\Ga G\|_{\cY}, \quad \text{for} \ \tau \geq 0.
\end{equation}}
\end{Theorem}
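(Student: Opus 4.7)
The plan is to linearize \eqref{eq 1.13} around the Oseen vortex $\Ga G$. Setting $h \eqdefa H - \Ga G$, the hypothesis $\int H_0\, d\xi = \Ga$ together with mass conservation for \eqref{eq 1.13} forces $\int_{\R^2} h(\tau,\xi)\, d\xi \equiv 0$, and $h$ satisfies
\begin{equation*}
\p_\tau h = \Lam h + N(h), \qquad \Lam h \eqdefa \cL h - \nu^{-1}\Ga\bigl( V_G \cdot \grad_\xi h + V_h \cdot \grad_\xi G\bigr), \qquad N(h) \eqdefa -\nu^{-1} V_h \cdot \grad_\xi h,
\end{equation*}
with $V_f \eqdefa \grad_\xi^\perp \lap_\xi^{-1} f$. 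The proof then reduces to a sharp linear decay estimate for $e^{\tau\Lam}$ on $\cY_0^\perp \eqdefa \{f \in \cY : \int f\, d\xi = 0\}$, a quadratic bound on the nonlinearity, and a Duhamel fixed point.

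First I would establish the sharp linear estimate
\begin{equation*}
\bigl\| e^{\tau \Lam} h\bigr\|_{\cY} \leq e^{-\tau/2}\, \|h\|_{\cY}, \qquad \tau \geq 0, \; h \in \cY_0^\perp,
\end{equation*}
which is the heart of the matter. Since $G$ is radial, both the drift $V_G \cdot \grad_\xi$ and the map $f \mapsto V_f \cdot \grad_\xi G$ preserve each angular sector $\cY_n$, so $\Lam$ is block-diagonal in $\cY = \bigoplus_n \cY_n$. On $\cY_0$ the Euler part vanishes by rotational symmetry, so $\Lam|_{\cY_0} = \cL|_{\cY_0}$ has kernel $\spn\{G\}$ with remaining spectrum in $(-\oo,-1]$; on $\cY_1$ the translation modes $\p_j G$ are eigenfunctions of $\Lam$ with eigenvalue $-1/2$ (using $\p_j(V_G\cdot \grad_\xi G) \equiv 0$), saturating the rate in \eqref{eq 1.16}; for $n\geq 2$ one has $\Re\,\sigma(\Lam|_{\cY_n}) \subseteq (-\oo,-n/2]$. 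Upgrading these spectral bounds to a contraction semigroup estimate with the sharp constant $1$ is achieved by constructing on each $\cY_n$ a hypocoercive Lyapunov functional, equivalent to $\|\cdot\|_\cY$, in which the self-adjoint piece of $-\cL$ dominates the skew-adjoint Euler transport; this is the content of \cite{GW05, G12}.

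Second, I would control the quadratic nonlinearity by combining a weighted Biot--Savart inequality with Gagliardo--Nirenberg and the parabolic smoothing of $\cL$ to obtain
\begin{equation*}
\bigl\| e^{\tau\Lam} N(h)\bigr\|_{\cY} \lesssim \min\bigl(\tau^{-1/2},\,1\bigr)\, e^{-\tau/2}\, \|h\|_{\cY}^{2}.
\end{equation*}
Writing the mild formulation $h(\tau) = e^{\tau\Lam} h_0 + \int_0^\tau e^{(\tau-s)\Lam} N(h(s))\, ds$ and closing a fixed point in the Banach space $X \eqdefa \bigl\{ h \in C([0,\oo);\cY_0^\perp) : \sup_{\tau \geq 0} e^{\tau/2}\|h(\tau)\|_{\cY} < \oo \bigr\}$, I get $\|h\|_X \leq \|h_0\|_{\cY} + C\|h\|_X^{2}$. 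For $\|h_0\|_{\cY}\leq \eps$ with $\eps$ small, a standard continuity argument then produces the unique solution satisfying \eqref{eq 1.16}, up to a further shrinkage of $\eps$ to secure the sharp constant $1$.

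The main obstacle will be that the translation eigenvalue $-1/2$ exactly matches the decay rate claimed in \eqref{eq 1.16}: no spectral gap remains to absorb nonlinear errors. This is reconciled only because $N(h)$ is genuinely quadratic, so that on the bootstrap ansatz $\|h(\tau)\|_\cY \lesssim \eps e^{-\tau/2}$ the Duhamel correction contributes at most $\int_0^\tau e^{-(\tau-s)/2}(C\eps)^2 e^{-s}\,ds \lesssim \eps^2 e^{-\tau/2}$, which is subordinate when $\eps$ is small. Technically, the delicate step is carrying out the Biot--Savart and commutator estimates in the Gaussian weighted space $\cY$, where unweighted Calder\'on--Zygmund bounds must be replaced by their weighted counterparts; this is precisely what \cite{G12, GR08} do.
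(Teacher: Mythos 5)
Your proposal correctly identifies the structural ingredients --- the skew-adjointness of the linearized Euler transport on $\cY$, the spectral bound $\langle \cL h, h\rangle_\cY \leq -\frac12\|h\|_\cY^2$ on the orthogonal complement of $G$ (enforced by mass conservation), and the quadratic nonlinearity --- but the Duhamel/fixed-point architecture cannot produce the sharp prefactor $1$ in \eqref{eq 1.16}. Writing $h = H-\Gamma G$ and $V_h = \nabla^\perp_\xi\Delta_\xi^{-1}h$, the bootstrap bound $\|h\|_X \leq \|h_0\|_\cY + C\|h\|_X^2$ in your weighted norm only yields $\|h(\tau)\|_\cY \leq \bigl(1 + C\|h_0\|_\cY + O(\|h_0\|_\cY^2)\bigr)e^{-\tau/2}\|h_0\|_\cY$; shrinking $\epsilon$ drives the prefactor toward $1$ but never attains it, whereas \eqref{eq 1.16} is a genuine Lyapunov property with equality at $\tau=0$. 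The actual route in \cite{G12,GR08} is a direct $\cY$-energy estimate: since $\Lam$ is exactly skew-adjoint on $\cY$, it vanishes identically from
\[
\frac12\frac{d}{d\tau}\|h\|_\cY^2 \;=\; \langle \cL h, h\rangle_\cY \;-\; \nu^{-1}\langle V_h\cdot\nabla_\xi h,\, h\rangle_\cY .
\]
One then splits off a nonnegative residual dissipation $D[h]$ (involving $\|\nabla_\xi h\|_\cY$ and weighted moments) from $\langle \cL h,h\rangle_\cY \leq -\frac12\|h\|_\cY^2 - c\,D[h]$, and shows via weighted Biot--Savart estimates that $\nu^{-1}\bigl|\langle V_h\cdot\nabla_\xi h, h\rangle_\cY\bigr| \lesssim \|h\|_\cY\, D[h]$, so the cubic term is absorbed once $\|h\|_\cY \leq \epsilon$. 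This gives the clean differential inequality $\frac{d}{d\tau}\|h\|_\cY^2 \leq -\|h\|_\cY^2$, hence \eqref{eq 1.16} with constant exactly $1$, the smallness being propagated by the very monotonicity just established.

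A secondary remark: the hypocoercive Lyapunov functional you invoke (only ``equivalent to'' $\|\cdot\|_\cY$) to upgrade spectral bounds on each $\cY_n$ into a semigroup estimate is unnecessary here and would itself degrade the constant. Because the skew part drops from the $\cY$-energy identity, the $\cY$-norm already gives the sharp semigroup contraction $\|e^{\tau(\cL - \frac{\Gamma}{\nu}\Lam)}h\|_\cY \leq e^{-\tau/2}\|h\|_\cY$ for $h \perp G$; modified hypocoercive functionals are needed only when targeting the enhanced-dissipation rate for non-radial modes, which is the subject of the theorem that follows this one in the paper.
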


The above asymptotic behavior stems from the spectrum of self-adjoint operator $\cL$ on $\cY$. Except for this mechanism, Oseen vortice also exhibits enhanced dissipation,
which was first discovered by Kelvin \cite{K87}, Orr \cite{O07} and then mathematically justified by a series of modern papers (see for example \cite{V09,BMV16,W21}), and refers to the accelerated decay of fluid disturbances due to the interplay between advection and diffusion in high-Reynolds-number flows. Unlike pure molecular diffusion, which happens on slow timescales, enhanced dissipation appears when shear flows or vortical structures stretch and deform fluid elements, effectively amplifying the gradient of passive scalars  and accelerating their homogenization. Mathematically, enhanced dissipation manifests as a dissipation timescale shorter than the purely diffusive prediction. The results in \cite{LWZ20,G18}  indicated  that the non-radial part of the perturbation near $\Ga G$ will  decay after $\tau= \cO(|\Ga/\nu|^{-\f13})$, which is much faster than \eqref{eq 1.16}.

 \begin{Theorem}
 {\sl There exists $C_1, C_2,\ka>0$ so that for all $\Ga\in \R$ and $ H_0 = \Ga G + \Tilde{H}_0 $ satisfying
\begin{equation*}
\int_{\R^2} \Tilde{H}_0 d\xi= \int_{\R^2} \xi_i \Tilde{H}_0 d\xi=0, \ i=1,2, \andf  \|\Tilde{H}_0\|_\cY \leq C_1  \frac{\nu (1+|\Ga/\nu|)^\f16}{\log(2+ |\Ga/\nu|)},
\end{equation*}
the unique solution $H(\tau,\xi)$ of \eqref{eq 1.13} with $H(0,\xi)= H_0$ verifies
\begin{align*}
    &\|H(\tau, \xi)- \Ga G\|_\cY \leq C_2 e^{-\tau} \|\Tilde{H}_0\|_\cY, \\
    &\|(1-\cP_0) (H(\tau, \xi)- \Ga G)\|_\cY \leq C_2 \|\Tilde{H}_0\|_\cY \exp\Bigl(- \frac{\ka (1+|\Ga/\nu|)^\f13}{\log(2+ |\Ga/\nu|)}\tau \Bigr) .
\end{align*}}
 \end{Theorem}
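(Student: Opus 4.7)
The plan is to decompose the perturbation into radial and non-radial angular components and to exploit two distinct linear decay mechanisms. Writing $H(\tau,\xi) = \Ga G(\xi) + h(\tau,\xi)$, the perturbation $h$ solves
\begin{equation*}
\p_\tau h - \cL h + \nu^{-1}\Ga\,\Lam_0 h = -\nu^{-1}(V_h\cdot\grad_\xi h),\quad V_h\eqdefa\grad_\xi^\perp\lap_\xi^{-1}h,
\end{equation*}
with linearised Euler transport $\Lam_0 h\eqdefa V_G^\sharp\cdot\grad_\xi h + V_h\cdot\grad_\xi G$, where $V_G^\sharp\eqdefa\grad_\xi^\perp\lap_\xi^{-1}G$. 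The orthogonality hypotheses $\int\wt H_0\,d\xi=\int\xi_i\wt H_0\,d\xi=0$ encode conservation of circulation and first moments and are preserved by the flow, so $h(\tau)\perp\spn\{G,\p_1 G,\p_2 G\}$ in $\cY$ throughout. Under the angular decomposition $h=\sum_{n\geq0}h_n$ with $h_n\in\cY_n$, each $\cY_n$ is invariant under both $\cL$ and $\Lam_0$, and the two regimes can be treated separately.

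On the radial mode $\cY_0$ the Biot--Savart correction vanishes because a radial vorticity produces a purely azimuthal velocity, so $\Lam_0|_{\cY_0}\equiv 0$ and $h_0$ obeys $\p_\tau h_0 - \cL h_0 = -\nu^{-1}\cP_0(V_h\cdot\grad h)$. The operator $\cL$ is self-adjoint on $\cY$ with spectrum $\{-k/2:k\in\N\cup\{0\}\}$, and on $\cY_0\ominus\spn\{G\}$ the top eigenvalue is $-1$, yielding the uniform rate $\|e^{\tau\cL}|_{\cY_0\ominus\spn\{G\}}\|\leq e^{-\tau}$, which is the slow bound in the theorem.

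The enhanced decay of $(1-\cP_0)h$ is the heart of the matter. Writing $h_n=e^{in\vartheta}\phi_n(\tau,\rho)+$ c.c.\ and $\Lam_n\eqdefa\cL|_{\cY_n}-\nu^{-1}\Ga\Lam_0|_{\cY_n}$, one has $\Lam_0|_{\cY_n}\phi_n = in[\Omega_G(\rho)\phi_n - G(\rho)\Phi_n(\rho)]$, where $\Omega_G(\rho)=\tfrac{1}{2\pi\rho^2}(1-e^{-\rho^2/4})$ is the angular velocity of the Oseen vortex and $\Phi_n$ is the non-local Biot--Savart potential at mode $n$. Following Li--Wei--Zhang and Gallay, the strategy is to establish a uniform resolvent bound
\begin{equation*}
\sup_{\lam\in\R}\bigl\|(\Lam_n-i\lam)^{-1}\bigr\|_{\cY_n\to\cY_n}\leq C\,\frac{\log(2+|\Ga/\nu|)}{(1+|\Ga/\nu|)^{1/3}},\qquad n\geq 1,
\end{equation*}
and then invoke the Gearhart--Prüss theorem to pass to the semigroup estimate $\|e^{\tau\Lam_n}\|\leq Ce^{-\mu\tau}$ with $\mu\eqdefa\ka(1+|\Ga/\nu|)^{1/3}/\log(2+|\Ga/\nu|)$. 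The resolvent bound itself is obtained by testing $(\Lam_n-i\lam)f=g$ against $\bar f\,G^{-1}$: the real part produces the weighted Dirichlet energy from $\cL$, and the imaginary part controls $\|(\Omega_G(\rho)-\lam n^{-1}\nu\Ga^{-1})f\|_\cY$. Balancing the width $\delta$ of the critical layer $\{\rho:\Omega_G(\rho)\approx\lam n^{-1}\nu\Ga^{-1}\}$ against diffusion recovers the classical Kelvin--Orr scale $\delta\sim(|\Ga|/\nu)^{-1/3}$, and the logarithmic loss reflects the degeneracy of $\Omega_G'(\rho)$ at the origin and at infinity. For $n=1$, the kernel $\spn\{\p_1 G,\p_2 G\}\subset\cY_1$ of $\cL|_{\cY_1}$ would obstruct the argument, and it is precisely the first-moment orthogonality on $\wt H_0$ that removes it.

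The proof concludes with a bootstrap on
\begin{equation*}
N(\tau)\eqdefa \sup_{s\in[0,\tau]}e^s\|h_0(s)\|_\cY+\sup_{s\in[0,\tau]}e^{\mu s}\|(1-\cP_0)h(s)\|_\cY.
\end{equation*}
The bilinear estimate $\|V_h\cdot\grad h\|_\cY\lesssim\|h\|_\cY^2$, obtained by absorbing one derivative via parabolic smoothing from $\cL$, combined with Duhamel's formula and the two linear semigroup bounds, yields an inequality of the form $N(\tau)\leq C\|\wt H_0\|_\cY + C\nu^{-1}\mu^{-1}N(\tau)^2$; the smallness hypothesis $\|\wt H_0\|_\cY\leq C_1\nu(1+|\Ga/\nu|)^{1/6}/\log(2+|\Ga/\nu|)$ is calibrated exactly so that the quadratic correction is absorbed and the bootstrap closes. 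The principal obstacle is the sharp resolvent estimate with its $1/3$ exponent and logarithmic loss: controlling the non-local Biot--Savart perturbation uniformly in $\lam$ and in $n$ while preserving the weighted $\cY$-structure is the delicate technical core of the proof.
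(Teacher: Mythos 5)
This theorem is not proved in the paper: it is recalled as background, attributed to \cite{LWZ20,G18} in the sentence immediately preceding it, so there is no ``paper's own proof'' to compare against. Your sketch does follow the structure of the argument in those references: write $H=\Ga G+h$, split $h$ into the radial part $\cP_0 h$ (for which $\Lam$ vanishes and $\cL$ alone gives $e^{-\tau}$ from the spectral gap on $\cY_0\ominus\spn\{G\}$) and the non-radial part (treated via a Gearhart--Pr\"uss argument based on the resolvent estimate $\sup_{\lam\in\R}\|(\cL-\nu^{-1}\Ga\Lam-i\lam)^{-1}\|\lesssim\log(2+|\Ga/\nu|)/(1+|\Ga/\nu|)^{1/3}$), then close nonlinearly by Duhamel. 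That is the correct road map.

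Two points deserve correction. First, $\p_1 G,\p_2 G$ are \emph{not} in the kernel of $\cL|_{\cY_1}$; they are eigenfunctions of $\cL$ with eigenvalue $-1/2$ (indeed $\ker\cL=\spn\{G\}$). What makes them dangerous is that they span $\ker\Lam\cap\cY_1$, so the transport term $\nu^{-1}\Ga\Lam$ --- the engine of enhanced dissipation --- does not act on them, and along those directions one only sees the diffusive rate $e^{-\tau/2}$, which is slower than $e^{-\mu\tau}$. The first-moment orthogonality is there to eliminate exactly that $\Lam$-kernel, not a $\cL$-kernel. Second, your proposed bootstrap inequality $N(\tau)\leq C\|\wt H_0\|_\cY+C\nu^{-1}\mu^{-1}N(\tau)^2$ would close under $\|\wt H_0\|_\cY\lesssim\nu\mu\sim\nu(1+|\Ga/\nu|)^{1/3}/\log(2+|\Ga/\nu|)$, which is \emph{weaker} than the stated hypothesis with exponent $1/6$. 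The discrepancy signals that the claim $\|V_h\cdot\grad h\|_\cY\lesssim\|h\|_\cY^2$ ``by parabolic smoothing'' is not free: the derivative on $h$ costs a factor of order $\mu^{1/2}$ (morally, the $\cX$-$\cD$ interpolation or the enstrophy from the dissipation functional), and once that loss is bookkept the threshold degrades to $\nu\mu^{1/2}\sim\nu(1+|\Ga/\nu|)^{1/6}/\log(2+|\Ga/\nu|)$, which is what the theorem asserts. Your sketch as written is internally inconsistent on this point and should carry the nonlinear estimate through in the $\cD$-norm rather than the $\cY$-norm.
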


\subsection{Point vortex system}\label{subsec 1.3}
Setting $\nu=0$ in \eqref{eq 1.1}, we obtain the classical 2D incompressible Euler equations:
\begin{equation}\label{eq Euler}
    \pt w + u\cdot \grad_x w=0,\quad u=\grad^\perp_x\lap_x^{-1}w,\quad  (t,x) \in \R^+\times \R^2.
\end{equation}
A natural conjecture is that for small $\nu$,  the dynamical behavior of the solutions to \eqref{eq 1.1} will sufficiently approximate that of the Euler equations.
However, unlike  the case of 2D Naveier-Stokes equations, the measure initial data
\begin{equation}\label{eq 1.11}
    w_0= \sum_{i=1}^N \Ga_i \delta(\cdot - p_i)
\end{equation}
is too singular to construct  even a weak solution of Euler equations  \eqref{eq Euler} (see \cite{DM87,D91}).
To study such evolutions rigorously, three approximation approaches are commonly employed (see \cite{MB02} for instance):
\begin{itemize}
\item using vortex blob method to construct the  approximate  solutions $\{w^\epsilon\}_{0<\epsilon<1}$,

\item  regularizing initial data $w^\epsilon(t=0) \eqdefa \varphi_\ve \ast w_0 $ and solving \eqref{eq Euler} for the  approximate  solutions $\{w^\epsilon\}_{0<\epsilon<1}$,

    \item  solving \eqref{eq 1.1} for a sequence of  approximate  solutions $\{w^\nu\}_{0<\nu<1}$.
\end{itemize}

Along these lines, the inviscid motion of point vortices \eqref{eq 1.11} in the plane has been investigated in many literatures, initiated by Helmholtz \cite{H58} and Kirchhoff \cite{K76}, who derived the following formal solution of \eqref{eq Euler}:
\begin{equation}\label{eq 1.18}
    w(t,x) = \sum_{i=1}^N \Ga_i \delta(x-z_i(t)), \quad  \text{where} \
    z_i\  \text{solving \eqref{eq 1.1a} with }\  z_i(0)=p_i.
\end{equation}
For relevant literatures, readers may refer to \cite{M88, MP93}. In this paper we pursue the third approach to mathematically justify \eqref{eq 1.18}, in a relatively long region $t\ll \nu^{-1}$. Over such extended time scales, the primary imperative is to ensure the global well-posedness of the equations.
We list below some known results:
\begin{itemize}
    \item  for $N=2$ or same-sign circulations,  \eqref{eq 1.1a} is globally well-posed;
    \item the set of initial data leading to finite-time collisions has Lebesgue measure zero;
    \item for $N\geq 3$ with mixed signs, finite-time collisions can occur \cite{MP94,N01}.
\end{itemize}
Even if the solution of general $N$-body ($N\geq 3$) point vortex system is global, concise descriptions of the trajectories of $z_i(t)$ remain elusive.

\subsection{Previous studies on the long time evolution of Dirac measures}\label{subsec 1.2}
The case $N=2$ and $\Ga\eqdefa \Ga_1+\Ga_2=0$ has already been studied lately by Dolce and Gallay in \cite{DG24}. More precisely, their result can be stated as follows:

\begin{Theorem}[\cite{DG24}]
 {\sl   Let $s\in(0,1)$, $\ve\eqdefa\sqrt{\nu t}/d\ll1$, $\delta\eqdefa\nu /\Ga \ll1$ and let $w$ be the unique solution of \eqref{eq 1.1} with initial data $w_0= \Ga(\delta_{(-d/2,0)}- \delta_{(d/2,0)})$.
There exists constant $C$ independent of $\nu$, so that
\begin{equation*}
    \int_{\R^2} \Bigl| w(t,x)-\frac{\Ga}{4\pi\nu t} \Bigl(\exp\Bigl(-\frac{|x-x_\ell(t)|^2}{4\nu t}\Bigr) -\exp\Bigl(-\frac{|x-x_r(t)|^2}{4\nu t} \Bigl)\Bigr)\Bigr| \,dx \leq C \frac{|\Ga|}{d^2} \nu t, \quad \forall\ t \leq \frac{d^2}{|\Ga|} \left(\frac{|\Ga|}{\nu}\right)^{s},
\end{equation*}
where
\begin{equation}
    x_\ell(t)=\Bigl(-\frac{d}{2},Z(t)\Bigr),\quad x_r(t)=\Bigl(\frac{d}{2},Z(t)\Bigr),
\end{equation}
with $Z(0)=0$ and $Z'(t)= \frac{\Ga}{2\pi d}\left( 1-2\pi \al \ve^4 +  \cO(\ve^5+\delta^2 \ve + \delta \ve^2)\right) $ for some $\al \approx 22.24$.}
\end{Theorem}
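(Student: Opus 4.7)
The plan is to build an accurate approximate solution $\oma$ made of two counter-rotating Oseen vortices plus finite-size corrections, and then control the perturbation $w-\oma$ via an Arnold-type energy functional. First I would rescale each vortex to its own self-similar coordinate and pass to a co-moving frame tracking the two modulated centers $x_\ell(t), x_r(t)$. Writing $\ve\eqdefa\sqrt{\nu t}/d\ll 1$ for the ratio of the viscous core to the separation, the leading order in this frame is a pair of opposite-sign Oseen profiles $G_\ell-G_r$, and one sets $\oma = G_\ell-G_r+\ve^2\Omega^{(2)}+\ve^3\Omega^{(3)}+\ve^4\Omega^{(4)}+\cdots$, determining the correctors $\Omega^{(k)}$ iteratively by plugging the ansatz into \eqref{eq 1.1} and matching powers of $\ve$. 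The correctors encode the deformation each vortex suffers under the strain generated by the other; as a solvability condition at order $\ve^4$ one reads off the translation speed $Z'(t)=\f{\Ga}{2\pi d}(1-2\pi\al\ve^4)+\cO(\ve^5+\de^2\ve+\de\ve^2)$, with the explicit constant $\al\approx 22.24$ coming from an elementary integral over the Oseen profile.

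Next, I linearize \eqref{eq 1.1} around $\oma$. With $w=\oma+\eta$, the perturbation satisfies
\beqo
\p_\tau\eta+\lamet\eta=\cR_a+\cN[\eta],
\eeqo
where $\cR_a$ measures the residual of $\oma$ (small once the expansion is pushed far enough) and $\cN[\eta]$ is the quadratic self-interaction. Following Arnold's philosophy, I would introduce a quadratic energy $E_\ve[\eta]=\f12\int|\grad\psi|^2\,dx-\f12\int F''(\oma)\eta^2\,dx$ with $\psi=\lap^{-1}\eta$ and $F$ convex, chosen so that $\oma$ is a conditional critical point; the virtue of this choice, exactly as in \cite{GS24-1, DG24}, is that $\lamet$ is nearly skew-adjoint with respect to $E_\ve$, so that Gronwall estimates on $E_\ve[\eta]$ essentially close on themselves.

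The main obstacle is the near-kernel of $\lamet$. On each single Oseen vortex, the translation modes $\p_1 G, \p_2 G$ lie in the kernel of the Euler linearization, and in the two-vortex configuration they combine into the pseudo-momenta $\varrho^e_a,\varrho^o_a,\varrho^{te}_a,\varrho^{to}_a$ alluded to in the abstract. Since $E_\ve$ is only semi-definite on their span, I would absorb those unstable directions by modulating the centers $x_\ell(t), x_r(t)$ (and possibly a phase) so as to enforce $\lla\eta,\varrho^\bullet_a\rra=0$ throughout the evolution. The associated ODE system for $\dot x_\ell,\dot x_r$ reproduces the Helmholtz-Kirchhoff law \eqref{eq 1.1a} at leading order and produces the $\ve^4$ correction to $Z'$ at next order. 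Once the orthogonality conditions are in place, perturbing off the single-vortex Gallay-Wayne coercivity yields $E_\ve[\eta]\gtrsim\|\eta\|_\cY^2$.

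Finally, a bootstrap closes the argument: $\cR_a=\cO(\ve^N)$ for the chosen truncation order $N$, enhanced dissipation forces the non-radial part of $\eta$ on each vortex to decay on the $\cO(\de^{1/3})$ scale, and near-skew-adjointness controls the rest; undoing the rescaling produces the stated $L^1$ bound. The hardest part is making the numerical constants conspire so that $\eta$ remains small all the way up to $t\leq (d^2/|\Ga|)(|\Ga|/\nu)^s$: since $\ve^2=\nu t/d^2$ this amounts to $\ve^2\lesssim\de^{1-s}$, which forces the approximate solution to be accurate through at least order $\ve^4$ (hence the need to compute $\al$ and the $\ve^4$ speed correction explicitly) and demands a delicate matching between the nonlinear growth and the enhanced-dissipation rate over the entire metastable window.
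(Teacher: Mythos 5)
This statement is cited from Dolce--Gallay \cite{DG24}; the present paper does not re-prove it, but only quotes it and then generalizes the method to the case $\Ga_1+\Ga_2\neq 0$ (Theorem \ref{Thm 1.1}). Your sketch does correctly capture the broad architecture shared by \cite{DG24} and the present paper: rescale each vortex to self-similar coordinates in a co-moving frame, build $\Omega_a$ by an iterative $\ve$-expansion with the modulation of the centers fixed by solvability conditions, introduce an Arnold-type quadratic form with respect to which the Euler linearization is nearly skew-adjoint, restore coercivity by orthogonality to a finite set of near-kernel directions, and close with a Gronwall estimate.

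However, there are two substantive mismatches with the actual argument. First, you invoke the pseudo-momenta $\varrho^e_a,\varrho^o_a,\varrho^{te}_a,\varrho^{to}_a$ as the objects controlling the near-kernel in the dipole setting. But the paper is explicit that these pseudo-momenta are its \emph{new} contribution, needed precisely because $\Ga\neq 0$ produces a rotating frame; the remark following Theorem \ref{Thm 1.1} even notes that the four pseudo-momenta degenerate and cease to be linearly independent as $\Ga_2\to -1$, i.e.\ exactly in the dipole regime of \cite{DG24}. In the dipole case the simpler orthogonality $M[\om_i]=0$ (the one the paper attributes to \cite{GS24-2} and says fails in the rotating case) is what \cite{DG24} uses, enforced by modulating the translation speed $Z'(t)$. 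So the modulation modes you should be killing are just the Oseen translation modes $\p_1 G,\p_2 G$ per vortex, not the frame-stream-function construction $\{\hat{X},\Omega^E_a\}_V$ of Section \ref{sect5}, which is irrelevant to translation.

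Second, your final bootstrap appeals to enhanced dissipation driving the non-radial part of $\eta$ to decay at the $\cO(\de^{1/3})$ rate. That mechanism is discussed in the introduction as background for single Oseen vortices, but it is not used anywhere in the proof strategy of \cite{DG24} or of Theorem \ref{Thm 1.1}. The dissipation in the energy inequality comes from the coercivity of $D_\ve[\om]$ (Propositions \ref{Prop 5.3}, \ref{Prop 5.5}, \ref{Prop 7.4}), which reflects the ordinary spectral gap of $\cL$ in self-similar variables, not the sublinear $\nu^{1/3}$ time scale. The closing estimate \eqref{eq 7.44} is a Gronwall inequality balancing $t\p_t\cE$ against $\cD$ and the residual ${\rm R}_a$; the constraint $t\leq T_{\rm adv}\,{\rm Re}^s$ appears because $\nu^{-1}\ve^M$ must stay small, forcing $M\gg 1/(1-s)$, not because one waits for a fast mixing time scale.

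Minor: in your energy functional the weight should be $F'(\Omega_a)$, not $F''$, reflecting the relation $\Psi_a\approx F(\Omega_a)$ (see Proposition \ref{Prop 4.5} and the definition of $W_\ve$ in \eqref{eq 5.4a}). The sketch is also silent about the truncation issue: $F_i(\Omega^E_{i,a})$ is only defined on $|\xi|\lesssim\ve^{-\sig_1}$, so the weight $W_{i,\ve}$ must be patched outside that region, which is where the three-zone construction of Figure \ref{fig1} and the resulting error terms in Propositions \ref{Prop 7.5}, \ref{Prop 7.6} come from; without this, the energy is not even defined globally.
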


Given the dynamical complexity of the N-body point vortex system, fine-grained studies on the long-term evolution of multiple point vortices under general conditions remain scarce. Currently, only short-time approximations such as the following are available:

\begin{Theorem}[\cite{G11}]
  {\sl  Let $w$ be the unique solution of \eqref{eq 1.1} with initial data
    $w_0= \sum_{i=1}^N \Ga_i \delta_{p_i}$.
We assume that the point vortex system \eqref{eq 1.18} with circulation $\Ga_i$ and initial position $p_i$ is well-posed on $[0,T]$, and we denote the solution by $\{z_i(t)\}_{1\leq i\leq N}$,
\begin{equation*}
    d\eqdefa \min_{t\in[0,T]}\min_{i\neq j} |z_i(t)-z_j(t)|, \qquad |\Ga|\eqdefa \sum_{i=1}^N|\Ga_i|>0.
\end{equation*}
Then there exists constant $C$, which depends on the normalized time $\frac{|\Ga|T}{d^2}$ and viscosity $\nu$,  so that
\begin{equation}
    \int_{\R^2} \Bigl|w(t,x)- \sum_{i=1}^N \frac{\Ga_i}{4\pi \nu t} e^{-\frac{|x-z_i(t)|^2}{4\nu t}}\Bigr|\, dx \leq C  \frac{|\Ga|}{d^2}\nu t, \quad \forall \ t \leq T.
\end{equation}}
\end{Theorem}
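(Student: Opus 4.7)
The approach I would follow is the classical one for long-time approximation results of this type. I would first introduce the natural ansatz
\beqo
\tilde w(t,x)\eqdefa\sum_{i=1}^N G^{(i)}(t,x-z_i(t)),\qquad G^{(i)}(t,y)\eqdefa\frac{\Ga_i}{4\pi\nu t}e^{-|y|^2/(4\nu t)},
\eeqo
and analyse the error $r\eqdefa w-\tilde w$. Each $G^{(i)}$ is a radial Oseen vortex, so its self-induced velocity $u^{(i)}\eqdefa\grad^\perp\lap^{-1}G^{(i)}$ is tangential to its level curves and $u^{(i)}\cdot\grad G^{(i)}\equiv 0$. Substitution into \eqref{eq 1.1} then shows that $\tilde w$ solves the equation up to the residual
\beqo
F(t,x)=\sum_{i=1}^N\Bigl(\sum_{j\ne i}u^{(j)}(t,x)-\dot z_i(t)\Bigr)\cdot\grad G^{(i)}(t,x-z_i(t)).
\eeqo
Using the Kirchhoff law \eqref{eq 1.1a} to eliminate $\dot z_i$, each parenthesised factor splits into a regularisation defect $u^{(j)}(z_i)-\frac{\Ga_j(z_i-z_j)^\perp}{2\pi|z_i-z_j|^2}$, which is exponentially small in $d^2/(\nu t)$ because Oseen and point-vortex velocities agree outside a core of scale $\sqrt{\nu t}$, and a smooth Taylor displacement $u^{(j)}(x)-u^{(j)}(z_i)$ of size $|\Ga_j||x-z_i|/d^2$ on the support of $\grad G^{(i)}$, the leading linear part of which is traceless and therefore cancels upon pairing against the radial $\grad G^{(i)}$.

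To convert this into a quantitative control on $r$, I would localise the analysis near each $z_i$ by passing to the self-similar variables of \eqref{eq 1.13}, namely $\xi^{(i)}\eqdefa(x-z_i(t))/\sqrt{\nu t}$ and $\tau\eqdefa\log t$, together with a smooth partition of unity $\{\chi_i\}$ subordinate to $\{B(z_i(t),d/3)\}$. Writing $H^{(i)}(\tau,\xi)\eqdefa\nu t\,(\chi_i w)(t,z_i(t)+\sqrt{\nu t}\,\xi)$, each $H^{(i)}$ satisfies a perturbation of \eqref{eq 1.13} around the stationary profile $\Ga_i G$: the self-interaction produces the Fokker--Planck-type linearisation whose $\cY$-spectral gap yields the $e^{-\tau/2}$ decay of Theorem~1.3, while the cross-vortex terms generated by $u^{(j)}$, $j\ne i$, constitute a perturbation of relative size $|\Ga|/d^2$ in the appropriate operator norm. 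The conservation identities \eqref{eq 1.4} combined with the centring \eqref{eq 1.3} pin the mass and first-moment projections of $H^{(i)}-\Ga_i G$, placing the residual perturbation in the coercive part of the spectrum. A Gronwall estimate in the weighted $\cY$-norm, fed by the residual bound above, together with the elementary conversion $\|f\|_{L^1(\R^2)}\le\|f\|_{\cY}$ coming from Cauchy--Schwarz against $G^{1/2}$ (since $\int G\,d\xi=1$), then produces the target inequality $\|r(t)\|_{L^1(\R^2)}\le C|\Ga|\nu t/d^2$.

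The principal obstacle lies in closing the bilinear Biot--Savart term $u_r\cdot\grad\tilde w$, where $u_r\eqdefa\grad^\perp\lap^{-1}r$, in a norm compatible with the measure-valued initial data $w_0$: controlling $\|u_r\|_{L^\infty}$ requires simultaneous control in $L^1$ and in some $L^p$ with $p>1$, and the latter must be bootstrapped from the $L^1$ bound via parabolic smoothing, which costs polynomial factors of $\nu t$ that degrade near $t=0$. This loss is precisely what forces the constant $C$ to depend on $\nu$ in the statement, in stark contrast with the sharper uniform-in-$\nu$ theorem of Dolce--Gallay \cite{DG24} for $N=2$, $\Ga=0$ and the present paper, whose more refined Arnold-functional construction is designed to overcome exactly this loss. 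A secondary subtlety is that the localised equations for distinct $H^{(i)}$ are coupled through the non-local Biot--Savart kernel, and decoupling them requires exploiting the Gaussian decay of each $G^{(j)}$ outside its core together with the separation $d$, so that all cross terms can be treated as small perturbations in the $\cY$-analysis.
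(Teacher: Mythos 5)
The cited theorem is stated without proof in the paper (it is attributed to \cite{G11}), so there is no in-paper proof to compare against; but the strategy in \cite{G11} and its refinement in the present paper's own Theorem~\ref{Thm 1.1} both reveal a genuine gap in your sketch.

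Your claim that the linear part of $u^{(j)}(x)-u^{(j)}(z_i)$ ``cancels upon pairing against the radial $\nabla G^{(i)}$'' is not a pointwise cancellation. Writing $y=x-z_i$ and $A=\nabla u^{(j)}(z_i)$, the term is $(Ay)\cdot\nabla G^{(i)}(y)=\frac{y^{T}Ay}{|y|}\,\partial_{r}G^{(i)}$. Since $\operatorname{tr}A=0$ this has zero integral, but its $L^1$ (or weighted $L^2$) norm does \emph{not} vanish; it is of full size $O(|\Gamma_j|/d^2)$. Passing to self-similar variables $\xi=(x-z_i)/\sqrt{\nu t}$ and dividing by $\nu$ as in \eqref{eq 2.3}, this produces a residual of order $\epsilon^2/\nu=t$ in the inner region, lying in the quadrupole sector $\cY_2$ where no conservation identity removes it. A direct energy estimate in $\cY$ exploiting skew-adjointness of $\Lambda$ then yields $\|H^{(i)}-\Gamma_iG\|_{\cY}\lesssim\int_0^t\|R(s)\|_{\cY}\,\tfrac{ds}{s}\sim t$, which is larger than the target $\nu t$ by a factor of $\nu^{-1}$. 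Your Gronwall step therefore does not close.

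The ingredient you are missing is precisely the one that occupies Section~\ref{sect3} of this paper (and the corresponding construction in \cite{G11,DG24}): one must expand the profile as $w^{\mathrm{app}}_{i}=\Gamma_iG+\epsilon^2\Omega_{i,2}+\cdots$, choosing $\Omega_{i,2}\in\cY_2$ so that $\Gamma_i\Lambda\Omega_{i,2}$ cancels the $O(\epsilon^2/\nu)$ quadrupole residual. After this, the remaining residual is $O(\nu^{-1}\epsilon^{M+1}+\nu\epsilon^2)$, which the energy estimate controls; the dominant contribution to the $L^1$ discrepancy between $w$ and the plain Gaussian sum is then the \emph{deterministic} correction $\epsilon^2\Omega_{i,2}$ itself, which is exactly of size $\nu t/d^2$. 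In other words, the bound $C\nu t|\Gamma|/d^2$ records the core deformation, not the fluctuation around a Gaussian ansatz, and cannot be derived without building that deformation into the approximate solution. Your secondary observations (Biot--Savart bilinear closure, decoupling via Gaussian tails and separation $d$, and the source of the $\nu$-dependence of the constant) are all sensible, but they are downstream of the profile construction and do not rescue its omission.
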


\subsection{Main results.}
Based on the known results presented in Subsections \ref{subsec 1.2} and \ref{subsec 1.3}, this paper focuses exclusively on the case where $N=2$ and $\Gamma\eqdefa \Ga_1+\Ga_2 \neq 0$, and obtains a complete characterization.
\begin{Theorem}\label{Thm 1.1}
  {\sl  Let $w$ solve \eqref{eq 1.1}-\eqref{eq 1.2} and $\Ga \neq0$. Then under the assumption \eqref{eq assumption}, there exists a real sequence of $\lbk\beta_k \rbk_{k\geq 2}$, so that for any $s\in(0,1)$,
    \begin{equation}\label{eq 1.5}
\int_{\R^2}\Bigl|  w(t,x)-    \sum_{i=1}^2\frac{\Ga_i}{4\pi \nu t} \exp\Bigl(-\frac{|x-x_i(t)|^2}{4\nu t}\Bigr)  \Bigr|\,dx \lesssim C_s\left(\Ga_2\right)\nu t , \quad \forall \  t \leq \nu^{-s},
    \end{equation}
    where
\begin{equation}\label{eq 1.8}
    x_i(t) \eqdefa \ell_i\left(\cos\theta(t)  ,\  \sin\theta(t)\right), \quad \text{for}\  i=1,2,
\end{equation}
with $\theta(0)=0$ and
\begin{equation}\label{eq 1.8a}
    \dot{\theta}(t) = \frac{\Ga}{2\pi}\Bigl( 1 +  \sum_{k=2}^{M_s} \beta_k \left(\nu t\right)^{k/2} \Bigr),  \quad  \text{ for some $M_s$ large enough and depending on $s$} ,
\end{equation}
where $C_s(z)$ is a positive function on $(-1,1]\backslash\{0\}$, which depends on $s$ and may tend to infinity as $z$ approaching $-1,0$.}
\end{Theorem}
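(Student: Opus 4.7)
My plan is to reduce Theorem \ref{Thm 1.1} to a long-time stability analysis around a carefully built approximate solution. Writing $w(t,x) = \Omega_a(t,x) + \omega(t,x)$, I would take $\Omega_a$ to be a finite-order approximation whose leading part is the superposition of two Oseen vortices centered at the rotating positions $x_i(t)$ from \eqref{eq 1.8}, with phase $\theta(t)$ corrected by the polynomial expansion \eqref{eq 1.8a}. The coefficients $\beta_k$ and the higher-order profile corrections would be chosen order by order in the small parameter $\varepsilon = \sqrt{\nu t}$ so that the residual $R_a \eqdefa (\partial_t + u_a\cdot\nabla - \nu\Delta)\Omega_a$ has size $(\nu t)^{(M_s+1)/2}$, small enough to beat the loss factor $(\nu t)^{-s}$ on the horizon $t \lesssim \nu^{-s}$. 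This construction follows the strategy of \cite{GS24-1, DG24}, but must be extended to the asymmetric regime $\Gamma_1 \ne -\Gamma_2$, in which the leading profile rotates rigidly around the center of vorticity rather than merely translating.

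For the perturbation $\omega$, I would change to a co-moving frame, expand in self-similar variables near each vortex, and re-express the evolution as $\partial_t \omega + \Lambda^{E,\star}_a \omega = R_a + N(\omega)$, where $N$ denotes the quadratic nonlinearity. Following Arnold's method, the nonlinear energy functional $E_\varepsilon[\omega]$ announced in the abstract would be defined so that $\Lambda^{E,\star}_a$ is skew-adjoint up to small corrections; then $\frac{d}{dt} E_\varepsilon[\omega]$ is controlled in terms of $R_a$, $N(\omega)$ and commutator-type remainders, rather than by the full unbounded operator.

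The crux is the coercivity of $E_\varepsilon$. The operator $\Lambda^E_a$ has a nontrivial kernel stemming from the invariance of 2D Euler under translations and rotations, together with the frame-derivative relation $\{\varrho^e_a, \Omega^E_a\}_V \approx X \Omega^E_a$ identified in the abstract; these span the pseudo-momenta $\varrho^e_a, \varrho^o_a, \varrho^{te}_a, \varrho^{to}_a$. I would use a modulation argument: the center trajectory and phase $\theta(t)$ are treated as free parameters, adjusted so that $\langle \omega, \varrho^\sharp_a \rangle_\cY = 0$ for each pseudo-momentum, keeping $\omega$ in the coercive complement of $E_\varepsilon$. The resulting modulation ODEs feed back into the rigid-rotation equation \eqref{eq 1.8a} for $\dot{\theta}$ and determine the values of $\beta_k$. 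Once coercivity holds on the complement, a Gronwall-type estimate using the near-skew-adjointness of $\Lambda^{E,\star}_a$, the polynomial smallness of $R_a$, and a cubic bound on $N(\omega)$ would close the argument over $[0,\nu^{-s}]$ and produce \eqref{eq 1.5} after undoing the self-similar rescaling.

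The main obstacle, I expect, will be the coercivity step in the genuinely asymmetric case $\Gamma \ne 0$. In \cite{DG24} the dipole case $\Gamma_1 = -\Gamma_2$ enjoys an exact translational symmetry of the leading profile, which simplifies both the modulation scheme and the spectral analysis of $\Lambda^E$; here, by contrast, $\Omega^E_a$ rotates rather than translates, and the relevant zero-eigenfunction of $\Lambda^E_a$ is not a symmetry mode but the subtler $\{\varrho^e_a, \Omega^E_a\}_V \approx X\Omega^E_a$ produced by the frame derivative $X$. Establishing that this yields a genuine coercive estimate, uniformly in $\varepsilon$ and over $\Gamma_2 \in (-1,1]\setminus\{0\}$, is where the most delicate spectral work lies: the blow-up of $C_s(\Gamma_2)$ as $\Gamma_2 \to -1$ or $\Gamma_2 \to 0$ in the statement reflects precisely the degeneration of this coercivity in the limiting regimes, where the frame-stream-function mechanism collapses or one vortex becomes negligible relative to the other.
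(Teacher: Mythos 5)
Your proposal gets the overall scaffolding right --- self-similar and co-moving coordinates, an $\varepsilon$-expansion approximate solution with residual $\cO(\nu^{-1}\varepsilon^{M+1}+\nu\varepsilon^2)$, the Arnold-type functional $E_\varepsilon$ rendering $\Lambda^E$ nearly skew-symmetric, and coercivity tied to orthogonality against the four pseudo-momenta $\varrho^{te}_a,\varrho^{to}_a,\varrho^e_a,\varrho^o_a$. The modulation step, which you treat as routine, is precisely where the paper's argument is delicate and where your plan has a genuine gap.

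You propose to adjust the center trajectory and phase so that $\langle\omega,\varrho^\sharp_a\rangle_V = 0$ for each of the four pseudo-momenta. This already has a counting problem: after the constraint $\Gamma_1 r_1 + \Gamma_2 r_2 = 0$ there are at most two modulation parameters $(\dot\theta_p,\dot\alpha_p)$, and the paper in fact sets $\dot\alpha_p=0$. More importantly, the paper does not enforce orthogonality by modulation at all. Two conditions, $\langle\omega,\varrho^{te}_a\rangle_V=\langle\omega,\varrho^{to}_a\rangle_V=0$, hold automatically from the conservation laws $m[\omega_i]=0$, $M[\omega_1]+M[\omega_2]=0$ (Proposition~\ref{Prop 3.1}, eq.~\eqref{eq 5.1a}); the remaining two are obtained by algebraic decomposition $\omega=\mu^e(f^e_a+\cdots)+\mu^o(f^o_a-\cdots)+\omega^R$ (Lemma~\ref{Lem 7.1}), where only $\omega^R$ is orthogonal to all pseudo-momenta and $\mu^e,\mu^o$ are carried as extra unknowns in $\cE=(\mu^o)^2+(\mu^e)^2+E_\varepsilon[\omega^R]$. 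The single modulation parameter $\dot\theta_p$ is then reserved for a different purpose: since $\Lambda^E f^o_a\approx-\lambda^e_a f^e_a$ with $\lambda^e_a=\cO(\varepsilon^2)$, a nonzero $\mu^o$ feeds an $\cO(\nu^{-1})$ term $-\nu^{-1}\lambda^e_a\mu^o f^e_a$ into the equation, and the choice $\dot\theta_p=\frac{\Gamma\lambda^e_a}{\varepsilon\alpha_a}\mu^o$ (Lemma~\ref{Lem 7.3}) cancels it against the frame drift $\nu^{-1}\frac{\varepsilon}{\Gamma}\alpha_a\dot\theta_p X\Omega^E_a$. If you spend $\dot\theta_p$ on orthogonality, nothing is left to kill this drift and the Gronwall argument does not close. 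Relatedly, the $\beta_k$ in \eqref{eq 1.8a} are fixed already at the level of the approximate solution ($\beta_k=-\dot\theta^E_k$, Proposition~\ref{Prop 4.2}); the correction $\dot\theta_p$ is only $\cO(\nu\varepsilon^3)$, so the modulation ODEs do not determine the rotation law as you suggest. Finally, a small notational slip: the linearized operator acting on $\omega$ is $\Lambda^E$, not $\Lambda^{E,\star}$; the adjoint is what annihilates the pseudo-momenta.
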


\begin{Remark}
 Owing to the relation given in \eqref{eq 1.4a}, we are able to treat general parameter configurations without relying on assumption \eqref{eq assumption}, and thereby obtain the following:
   \begin{equation}\label{eq 1.9}
   \int_{\R^2}\Bigl| w(t,x)-    \sum_{i=1}^2\frac{\Ga_i}{4\pi \nu t} \exp\Bigl(-\frac{|x-x_i(t)|^2}{4\nu t}\Bigr)  \Bigr|\,dx\lesssim  C_s\left(\frac{\Ga_2}{\Ga_1}\right)   T_{{\rm adv}}^{-1} \nu t, \quad \forall \ t \leq  T_{{\rm adv}}\times {\rm Re}^s,
   \end{equation}
   where  $ x_i(t) \eqdefa \ell_i\left(\cos\theta(t)  ,\  \sin\theta(t)\right), i=1,2$,
with $\theta(0)=0$ and
\begin{equation}\label{eq 1.8b}
    \dot{\theta}(t) = \frac{\Ga}{2\pi d^2}\Bigl( 1 +  \sum_{k=2}^{M_s} \beta_k \Bigl(\frac{\sqrt{\nu t}}{d}\Bigr)^{k} \Bigr).
\end{equation}
\end{Remark}

\begin{Remark}
   In fact, the solution $
w(t,x)$ exhibits more refined localization properties than those indicated in \eqref{eq 1.5}. Specifically,  $w(t,x)$ remains highly concentrated near the trajectories $x_1(t)$ and $x_2(t)$, while away from these points it decays rapidly at the rate $\exp\Bigl(-\frac{|x-x_i(t)|^{2\ga}}{4(\nu t)^{\ga}}\Bigr)$ for some $\ga\ll1$.
\end{Remark}
\begin{Remark}
  The bound $\cO(\nu t)$ can be improved by incorporating higher-order corrections. And as noted in Remark \ref{Rmk 4.4}, we have $\beta_2=\beta_3=0$, $\beta_4\neq 0$. Integrating \eqref{eq 1.8b} with respect to $t$ then yields
 \begin{equation}\label{eq 1.24}
        \theta(t)= \frac{\Ga}{2\pi d^2}\Big(t+ \frac{\beta_4}{3d^4}\nu^2t^3+ \sum_{k=5}^{M_s} \frac{2\beta_k}{(k+2)d^k}\nu^{\frac{k}{2}}t^{\frac{k}{2}+1}\Big),
    \end{equation}
while the solution of the point vortex system is given by
\begin{equation}\label{eq 1.25a}
    \begin{split}
z_{i}(t)= \ell_i (\cos\theta_{{\rm p}} (t), \sin\theta_{{\rm p}} (t)), \quad \text{with }\ \dot{\theta}_{{\rm p}}(t)=\frac{\Ga}{2\pi d^2}, \quad  \theta_{{\rm p}} (t) = \frac{\Ga}{2\pi d^2} t.
    \end{split}
\end{equation}
By comparing \eqref{eq 1.8b}, \eqref{eq 1.24}, and \eqref{eq 1.25a}, we conclude that although the angular velocities differ only slightly, the accumulated phase shift becomes significant over the long time scale  $t=\cO( \nu^{-2/3})$.
\end{Remark}

\no {\bf Remarks on the constant $C_s(\Ga_2)$ in \eqref{eq 1.5}:}
    There are two interpretations for the blow-up of $C_s(\Ga_2)$ as $\Ga_2 \rto 0$. The first is related to the enhanced dissipation mechanism. When $\Ga_2\ll1$, the second point vortex can be treated as a small perturbation to the first. According to the results of Gallay et al., such a vortex undergoes nonlinear enhanced dissipation well before the diffusion time $t=\cO(\nu^{-1})$. As a result, the dynamical behavior of the solution in this regime can no longer be accurately captured by the point vortex system \eqref{eq 1.18}, which leads to the blow-up of the constant. The second interpretation arises from the sequence of approximate solutions $\Omega_{a}$ constructed in Section \ref{sect3}. In particular, we have $\Omega^{NS}_{2,2} = \cO (1/\Ga_2)$, which also implies singular behavior as $\Ga_2 \rto 0$.

Regarding the blow-up of $C_s(\Ga_2)$ as $\Ga_2\rto-1$,  the key point is that the case $\Ga=0$ is entirely different. In this scenario, one cannot use a rigid transformation to set the initial momentum to zero, i.e., to achieve $\Ga_1\ell_1+\Ga_2\ell_2=0$. Instead, the two point vortices undergo translational motion while maintaining an approximately fixed separation. Consequently, the rotational motion described in \eqref{eq 1.8} fails to accurately represent the long-time dynamics, leading to the blow-up of $C_s(\Ga_2)$ as $\Ga_2\rto -1$, i.e., $\Ga=0$. In our proof, the condition $\Ga\neq 0$ is used to ensure that the four ``pseudo-momenta" constructed in Section \ref{sect5} are linearly independent at leading order.

Despite the difference between the physical scenarios of cases $\Ga=0$ and $\Ga\neq 0$, they still share some similarity, since the motion of the case $\Ga=0$ can be viewed as a  rotation  around infinity. In particular, the leading-order linear velocities of the two point vortices are given by
\begin{equation}
    v_1 \eqdefa  \ell_1 \times \dot{\theta}(t) = \frac{\Ga_2}{2\pi d} + \cO(\ve^2)
     \andf v_2\eqdefa \ell_2 \times \dot\theta(t) = -\frac{\Ga_1}{2\pi d} + \cO(\ve^2) ,
\end{equation}
which resembles the result for the case $\Ga=0$. For higher-order corrections, we refer to Remark \ref{Rmk 4.4}.

\subsection{Sketch of the proof and new ingredients.} The proof of Theorem \ref{Thm 1.1} proceeds in several steps:

\no {\bf Step 1. The derivation of the resacled equations.}
By virtue of the fundamental solution of transport-diffusion equations, we decompose the solution of \eqref{eq 1.1}-\eqref{eq 1.2} into two parts $w=w_1+w_2$ (see \eqref{eq 2.1}), which interplay with each other and have fast decay away from two moving centroids.  Furthermore,  taking advantage of the self-similar structure, we introduce co-moving frame \eqref{eq 2.2} and  self-similar coordinates with different centroid \eqref{eq 2.2a}, to get a vector-valued equation (see \eqref{eq 2.6a}, \eqref{eq 2.7}):
\begin{equation}\label{eq 1.27}
    (t\p_t -\cL) \Omega + \frac{1}{\nu} \Bigl\{ \cB\Omega  +\frac{\ve^2}{2}\dot{\theta}|\xi|^2 Y_1 + \frac{\ve}{\Ga}(\dot{\theta}\al \xi_1 + \dot{\al}\xi_2) Y_2, \Omega \Bigr\}_V =0,
\end{equation}
where we use $\dot{f}$ to denote $\p_t f$ and  $(\Omega , \theta,\al)$ are unknowns.

\no {\bf Step 2. Construction of approximate solutions.}
 Using  finite power series expansion in small parameters $\ve\eqdefa\sqrt{\nu t}$ and $\nu$, we then construct  approximate solutions of \eqref{eq 1.27} by
\begin{equation*}
    \Omega_{a} = \sum_{k=0}^M \ve^k \bigl(\Omega^E_{k}+\nu \Omega^{NS}_{k}\bigr), \quad \dot{\theta}_a= \sum_{k=0}^{M-1} \ve^k \bigl(\dot{\theta}^E_k + \nu \dot{\theta}^{NS}_k\bigr), \quad \dot{\al}_a = \sum_{k=0}^{M-1}  \nu \ve^k \dot{\al}^{NS}_k,
\end{equation*}
so that the residual
\begin{equation*}
\begin{split}
  {\rm R}_a&\eqdefa  (t\p_t -\cL) \Omega_a + \frac{1}{\nu} \Bigl\{ \cB_a\Omega_a  +\frac{\ve^2}{2}\dot{\theta}_a|\xi|^2 Y_1 + \frac{\ve}{\Ga}(\dot{\theta}_a\al_a \xi_1 + \dot{\al}_a\xi_2) Y_2, \Omega_a \Bigr\}_V \\
  &=\cO(\nu^{-1}\ve^{M+1}+\nu \ve^2).
\end{split}
\end{equation*}
With these approximate solutions, we set
\begin{equation*}
    \Omega = \Omega_{a} +\om, \quad \dot{\theta}=\dot{\theta}_a + \dot{\theta}_p \andf
    \dot{\al}=\dot{\al}_a
\end{equation*}
and derive the linearized equation
\begin{equation}\label{eq 1.28}
    (t\p_t-\cL)\om + \frac{1}{\nu} \Lam^E\om  + \frac{1}{\nu} \frac{\ve }{\Ga} \al_a \thpp \{\hat{X},\Omega^E_a\}_V + \text{l.s.t.}= -{\rm R}_a + \text{n.l.t.},
\end{equation}
where $\hat{X}\eqdefa\xi_1 Y_2+ \frac{\ve \Ga}{2\ala} |\xi|^2 Y_1$ is called ``frame stream function", and  ``l.s.t." and ``n.l.t." designate ``lower size term" and ``nonlinear term", respectively.
For detailed definitions, one may check \eqref{eq 5.4}-\eqref{eq 5.5} below. Here we do not use the modulation parameter $\dot{\al}$, which might appear to reduce the available freedom. But {\bf Step 5} will show that this freedom is in fact unnecessary.

\no {\bf Step 3. Functional relationship and weighted spaces.}
Following \cite{GS24-1, DG24}, we denote
\begin{equation*}
    \Psi^{E}_a \eqdefa \cB_a \Omega^{E}_a + \frac{\ve^2}{2} \dot{\theta}^{E}_a |\xi|^2 Y_1 +\frac{\ve }{\Ga}\dot{\theta}^{E}_a \al_a \xi_1 Y_2-\frac{1}{2\pi} \log|\ve /\al_a| (\Ga_2,\Ga_1)^T \quad \text{with} \quad  \Omega^E_a \eqdefa \sum_{k=0}^M \ve^k \Omega^E_{k},
\end{equation*}
and then recover the functional relationship between $\Psi^E_a$ and $\Omega^E_a$, i.e., there exist $F_i$ so that
$\Psi^E_{i,a} \approx F_i(\Omega^E_{i,a})$. Inspired by Arnold's Method \cite{A66} and works in \cite{GS24-2}, we define
\begin{equation*}
    W_{i,\ve}(\xi) \approx F'_i\left(\Omega^E_{i,a}(\xi)\right), \quad \|f\|_{\cX_{i,\ve}}^2\eqdefa \int_{\R^2} |f(\xi)|^2 W_{i,\ve}(\xi) d\xi, \quad E_\ve[\om] \eqdefa\frac{1}{2}\lla \om ,\om \dot\otimes W_\ve  + \cB_a \om\rra_V.
\end{equation*}
Here for the detailed definitions, see \eqref{eq 5.4}-\eqref{eq 5.4a}, \eqref{eq 5.7}-\eqref{eq 5.10}. With this energy functional,
\begin{equation*}
    \Lam^E \om= \lbk\Psi^{E}_a, \om \rbk_V + \lbk\cB_a \om, \Omega^{E}_a\rbk_V  \approx \lbk \om \dot\otimes W_\ve + \cB_a \om, \ \Omega^{E}_a\rbk_V ,
\end{equation*}
which leads to
\begin{equation*}
    \lla \Lam^E \om, \ \om \dot\otimes W_\ve + \cB_a \om  \rra_V \approx 0 ,\quad \text{i.e., $\Lam^E$ is almost skew-symmetric with respect to $E_\ve$. }
\end{equation*}

\no {\bf Step 4. Coercivity of $E_\ve[\om]$ and ``pseudo-momenta''.}
This step is crucial and differs from previous approaches. To restore the coercivity of $E_\ve[\om]$, earlier works exploited the freedom in modulation parameters (here, $\dot{\theta}$ and $\dot{\al}$) to enforce the momentum orthogonality condition
\begin{equation*}
    \int_{\R^2} \xi_j  \om d\xi =0, \quad j=1,2,
\end{equation*}
as  observed in \cite{GS24-2}.
However, in this paper we demonstrate that this condition is incompatible with the operator $\Lam^E$ and allows small perturbations. Instead, it can be replaced by
\begin{equation*}
    \int_{\R^2}(\xi_1+\ve P_1) \om \, d\xi=\int_{\R^2}(\xi_2+\ve P_2) \om \, d\xi  =0 , \quad \text{for some $P_1,P_2\in \cS_\star$},
\end{equation*}
while still preserving the coercivity of $E_\ve[\om]$. The key innovation of this paper lies in constructing suitable functions and-referred to as ``pseudo-momenta"-that are compatible with the linearized equation. This construction is detailed in Subsection \ref{subsec. coercivity}.

To better illustrate the concept of ``pseudo-momenta'', we consider a toy model:
\begin{equation*}
    (t\p_t -\cL )f +\nu^{-1}\Lam f={\rm R}, \quad \text{with} \quad \Lam f\eqdefa\{\lap^{-1}_\xi G, f\}+ \{\lap_\xi^{-1}f,G\} .
\end{equation*}
We denote the adjoint of $\Lam$ with respect to standard $L^2_\xi$ inner product by $\Lam^\star$. Then it's easy to check that
\begin{equation}\label{eq 1.29}
    \Lam[\p_j G]=0,  \quad \Lam^\star[\xi_j]=0,\quad   \{\xi_1, G\} = \p_2 G, \quad  \{\xi_2, G\}= -\p_1 G.
\end{equation}
Motivated by \eqref{eq 1.29}, ``pseudo-momenta" should be the eigenfunctions, or some elements in invariant spaces of $\Lam^{E,\star}$, up to an error $\cO(\ve^{M+1})$, whose leading order is $S\xi$ for some non-constant matrix $S$. Similar idea-modulating the kernel of the linearized operator to restore coercivity-has been used in  constructing blow-up solutions and stability analysis.

Let $\xi=\rho(\cos\vartheta, \sin\vartheta)$, we   recall  the ``frame stream function" $\hat{X}$ and define the ``frame derivative" $X$ by
\begin{equation*}
    \hat{X}\eqdefa \xi_1 Y_2+ \frac{\ve \Ga}{2\ala} |\xi|^2 Y_1 \andf X\Omega \eqdefa \{\hat{X},\Omega\}_V = \Bigl(Y_2 \p_2 + \frac{\ve\Ga}{\ala}Y_1 \p_\vartheta\Bigr)\Omega.
\end{equation*}
A key observation is that, up to an small error,  $X\Omega^E_a$ is an eigenfunction of $\Lam^E$ with eigenvalue zero.  More precisely, there exist four ``pseudo-momenta":
\begin{align*}
    &\Lam^{E,\star} [\xi_1 Y_1] = \ve^2  \dot{\theta}^E_a \xi_2 Y_1 , \quad \Lam^{E,\star} [\xi_2 Y_1] = -\ve^2  \dot{\theta}^E_a \xi_1 Y_1-\frac{\ve }{\Ga}\dot{\theta}^E_a \ala Y_2,\\
    &\Lam^{E,\star}[\varrho^e_a] \approx 0,\quad   \Lam^{E,\star}\varrho^o_a \approx \lam^e_a \varrho^e_a, \\ &\text{with}\quad \lam^e_a =\cO(\ve^2), \quad  \varrho^o_a = \xi_2 Y_2 +\cO(\ve) ,\quad \varrho^e_a = \xi_1 Y_2 +\cO(\ve).
\end{align*}
And enlighten by \eqref{eq 1.29}, we use the Lie structure of 2D Euler equations to  introduce
\begin{align*}
    & -\p_1 \Omega^E_a\eqdefa \{\xi_2 Y_1 , \Omega^E_a\}_V ,\quad \p_2\Omega^E_a\eqdefa\{\xi_1 Y_1 , \Omega^E_a\}_V, \\
    &  f^{o}_a\eqdefa \{\varrho^o_a , \Omega^E_a\}_V, \quad f^e_a\eqdefa \{\varrho^e_a, \Omega^E_a\}_V \approx  \{\hat{X}, \Omega^E_a\}_V,
\end{align*}
which also turn out to be the eigenfunctions, or some elements in invariant spaces of $\Lam^{E}$.
Moreover, $\varrho^o_a, f^e_a$ are $\xi_2$-odd, $ \varrho^e_a, f^o_a$ are $\xi_2$-even and $\p_1\Omega^E_a , \p_2\Omega^E_a, f^e_a,f^o_a$ enjoy  certain specific inner product structures (see \eqref{eq 6.19}).

\no {\bf Step 5. Choice of modulation parameter $\dot{\theta}_p$.}
Thanks to \eqref{eq 1.4}, $\om$ is always orthogonal to $\xi_1 Y_1$ and $\xi_2Y_1$. Thus it's natural to decompose $\om$ into
\begin{equation*}
    \begin{split}
\om \approx \mu^e f^e_a + \mu^o f^o_a + \om^R,  \quad \text{with} \quad \lla\om^R, Z\rra_V=0 \quad \text{for}\  Z =\xi_1 Y_1 , \xi_2 Y_1,\varrho^{o}_a, \varrho^{e}_a.
    \end{split}
\end{equation*}
Inserting the above decomposition into \eqref{eq 1.28}, we find
\begin{align*}
    &(t\p_t+1/2 )\mu^o \times f^o_a +   (t\p_t+1/2 )\mu^e \times f^e_a  + (t\p_t-\cL +\nu^{-1}\Lam^E ) \om^R \\
    &\quad +\nu^{-1}\left( \frac{\ve }{\Ga}\ala \dot{\theta}_p - \lam^e_a \mu^o  \right)f^e_a  = -{\rm R}_a+ \textrm{l.s.t. + n.l.t.}.
\end{align*}
Thus it suffices to  choose $\frac{\ve }{\Ga}\ala \dot{\theta}_p - \lam^e_a \mu^o=0$, to eliminate the growth of solutions induced by the large factor $\nu^{-1}$, which completely stems from the structure of the equations and the linearized operator $\Lam^E$, as well as a simple and universal perspective of linear algebra.

Finally by testing  above  equality by  $\varrho^o_a,\varrho^e_a, \om^R \dot\otimes W_\ve + \cB_a \om^R$ to get the estimate of $\mu^e,\mu^o,\om^R$, respectively, we can bound $\om$ by ${\rm R}_a$ and thus complete the proof of Theorem \ref{Thm 1.1}.

\subsection{Notations:}
We denote $\lla f,g\rra$ as the  $L_\xi^2(\R^2)$ inner product. For  $\xi=\rho(\cos\vartheta, \sin\vartheta)$, the Poisson bracket is:
$$\{f,g\}\eqdefa \nabla^\perp_\xi f\cdot\nabla_\xi g= \p_{\xi_1}f \p_{\xi_2}g-\p_{\xi_1}g \p_{\xi_2}f =\frac{1}{\rho}(\p_\rho f\p_\vartheta g- \p_\vartheta f \p_\rho g).$$ For 2D vector function $\Omega$, we denote its $i$-th component by $\Omega_i$ and $\Omega \dot\otimes \varrho \eqdefa (\Omega_1 \varrho_1, \Omega_2\varrho_2)^T$,
\begin{equation*}
    \lla \Omega, \varrho \rra_V \eqdefa \int_{\R^2} \bigl(\Omega_1 \varrho_1 + \Omega_2 \varrho_2\bigr) \, d\xi, \quad \lbk \Omega, \varrho \rbk_V
     \eqdefa \bigl(
        \{\Omega_1, \varrho_1\}, \lbk \Omega_2 ,\varrho_2\rbk \bigr)^T.
\end{equation*}

We denote the dual space of $\cY$ by
$\cY^\star\eqdefa L^2(\R^2; G d\xi) = G^{-1}\cY $, and  the multiplier space of  $\cS(\R^2)$ and $\cS'(\R^2)$ by  $\cS_\star(\R^2)$,  which consists of smooth functions $f(\xi)$ so that for any $\beta= (\beta_1,\beta_2) \in \N^2$,
\begin{equation*}
    |\p^\beta f| \leq C_{\beta,N} (1+|\xi|)^N , \quad \text{for some $N\in\N$ and constant $C_{\beta,N}$ .}
\end{equation*}
We  define a dense subspace $\cZ\in\cY$ by
\begin{equation*}
    \cZ= G \cS_\star\eqdefa \bigl\{ f\in \cY\ : \ f= G h \  \text{for some}\ h\in \cS_\star  \bigr\},
\end{equation*}
and  class $\cK$ by
\begin{equation*}
    \cK\eqdefa\bigl\{F\in \cC^\oo(\R^+)\ :\  F(G) \in \cS_\star(\R^2)\bigr\}.
\end{equation*}

Let $f \in \cS_\star(\R^2)$ depend on $0< \ve\ll1$, we denote
\begin{equation*}
     \Xi_{k} [f] \eqdefa \frac{1}{k!}\frac{d^k f}{d\ve^k}\bigg|_{\ve=0} \andf \Pi_M[f]\eqdefa \sum_{k=0}^M \ve^k \Xi_k[f] =  \sum_{k=0}^M  \frac{\ve^k}{k!} \frac{d^k f}{d\ve^k}\bigg|_{\ve=0}.
\end{equation*}
We say $f = \cO_{\cS_\star}\left(\nu^{M_1} \ve^{M_2}\right)$ if
\begin{equation*}
    \forall \beta \in \N^2, \exists C>0, N\in \N, \quad \text{so that} \ |\p^\beta f(\xi)| \leq C(1+|\xi|)^N \nu^{M_1} \ve^{M_2}, \quad \forall \xi \in \R^2.
\end{equation*}
Similarly, we write $f = \cO_\cZ\left(\nu^{M_1} \ve^{M_2}\right)$ if $G^{-1}f = \cO_{\cS_\star}\left(\nu^{M_1} \ve^{M_2}\right)$.

And we say $f = \cO_{\cZ}\left(\nu^{M_1}\ve^{M_2} + \nu^{M_3} \ve^{M_4}\right)$ if
$$\text{  $f = f_1+ f_2$ with $f_1=\cO_\cZ\left(\nu^{M_1}\ve^{M_2}\right)$ and $f_2= \cO_\cZ\left(\nu^{M_3} \ve^{M_4}\right)$.}   $$

\renewcommand{\theequation}{\thesection.\arabic{equation}}
\setcounter{equation}{0}
\section{Preliminaries}\label{sect2}

In this section, we
 shall first derive the rescaled vorticity equations and then discuss its basic conservation laws. Finally we present some basic operators, which will be frequently used throughout this paper.

\subsection{The rescaled vorticity equations }
We first introduce $\{f,g\}_x\eqdefa \grad_x^\perp f \cdot \grad_x g$ and rewrite \eqref{eq 1.1} as
\begin{equation}
    \p_t w +\{\varphi,w\}_x-\nu \lap_x w =0, \quad  (t,x) \in \R^+\times \R^2.
\end{equation}
Then by the uniqueness result in \cite{GG05}, it's equivalent to consider  $w=w_1+w_2$ with $w_i$ satisfying
\begin{equation}\label{eq 2.1}
\left\{
\begin{aligned}
&\pt w_i + \{\varphi_1+\varphi_2,w_i\}_x-\nu \lap_x w_i=0, \\
&w_i(t=0)=\Ga_i \delta_{(\ell_i,0)},
\end{aligned}
\right.
\end{equation}
where $\varphi_i\eqdefa\lap_x^{-1}w_i $. Motivated by point vortex system and previous work \cite{DG24},
we seek the solution of \eqref{eq 2.1} of the form
\begin{equation}\label{eq 2.2}
    w_i(t,x)=\frac{1}{\nu t} \Omega_i\Bigl(t,\frac{\cR(\theta(t)) x - r_i(t) \eo }{\sqrt{\nu t}}\Bigr) \quad \text{with} \ \Ga_1 r_1 +\Ga_2 r_2=0,
\end{equation}
where we define $\cR(\theta):\R^2 \rightarrow \R^2$ via
\begin{equation*}
    \cR({\theta}) \begin{pmatrix}
        x_1\\x_2
    \end{pmatrix} \eqdefa \begin{pmatrix}
    \cos\theta ,&  -\sin\theta \\
    \sin\theta ,& \cos\theta
    \end{pmatrix} \begin{pmatrix}
        x_1\\x_2
    \end{pmatrix},
\end{equation*}
and thus
\begin{equation*}
    \frac{d}{dt} \big(\cR(\theta(t))x\big)=\dot{\theta}(t)\begin{pmatrix}
    -\sin\theta(t), & -\cos\theta(t)\\
    \cos\theta(t), &  -\sin\theta(t)
    \end{pmatrix} \begin{pmatrix}
        x_1\\x_2
    \end{pmatrix} = \dot{\theta}(t)\big(\cR(\theta(t))x\big)^{\perp} .
\end{equation*}
Here $\dot{\theta}(t),\dot{r}_i(t)$ designate the angular velocity and radius contraction speed  of the centroid of $w_i$, respectively,  with $\theta(0)=0, r_i(0)=\ell_i$. Via direct computation, we find
\begin{equation}
    \varphi_i(t,x)=\frac{1}{2\pi}\int_{\R^2} \log|x-y| w_i(y)\, dy=\Phi_i\Bigl(t,\frac{\cR(\theta(t)) x - r_i(t) \eo }{\sqrt{\nu t}}\Bigr) + \frac{\log(\nu t)}{4\pi}\int_{\R^2} \Omega(\xi) \, d\xi,
\end{equation}
where $\Phi_i \eqdefa \lap_\xi^{-1} \Omega_i$.

To proceed, we recall that $d=1,$ $\Ga=\Ga_1+\Ga_2,$ and introduce $i=1, \is=2,$ while $i=2, \is=1,$  $\ka_1=-\ka_2=1$,
\begin{equation}\label{eq 2.2a}
\begin{split}
    &\xi^{(i)} \eqdefa \frac{\cR(\theta(t)) x - r_i(t) \eo }{\sqrt{\nu t}},  \quad \al(t)\eqdefa r_1(t) - r_2(t), \quad  \ve=\sqrt{\nu t} \quad  \text{and thus}\   r_i(t) = \frac{\ka_i \Ga_\is}{\Ga} \al(t).
\end{split}
\end{equation}
Throughout this paper, we will use variable $\xi^{(i)}$ when considering the $\Omega_i$-equation, and  omit the superscript in $\xi^{(i)}$ in what follows, if no ambiguity arises.
Then we find
\begin{equation}\label{eq 2.3}
\begin{split}
&t\p_t \Omega_i  - \cL \Omega_i + \frac{1}{\nu} \Big\{ \Phi_i(\xi) +  \Phi_\is\left(\xi+\frac{\ka_i \al}{\ve}\eo\right)+ \frac{\ve^2}{2}\dot{\theta}|\xi|^2  + \frac{\ve \ka_i\Ga_\is}{\Ga} (\dot{\theta} \al \xi_1+\dot{\al}\xi_2), \  \Omega_i  \Big\} =0.
\end{split}
\end{equation}
According to \cite{GW05,GG05}, we have
\begin{equation}\label{eq 2.4}
    \Omega_i(t=0)=\Ga_i G.
\end{equation}

Furthermore, for any $2$D vector
$  \Omega=(\Omega_1, \Omega_2)^T$, we introduce:
\begin{equation}\label{eq 2.6a}
\begin{split}
  &\qquad \cB\Omega(\xi)\eqdefa \begin{pmatrix}\lap_\xi^{-1}\Omega_1(\xi) + \lap_\xi^{-1}\Omega_2\left(\xi+\ve^{-1}\al\eo\right)\\ \lap_\xi^{-1}\Omega_2(\xi) + \lap_\xi^{-1}\Omega_1\left(\xi-\ve^{-1}\al\eo\right) \end{pmatrix} , \quad  Y_1\eqdefa\begin{pmatrix}
      1\\1
  \end{pmatrix}, \quad Y_2 \eqdefa \begin{pmatrix}
      \Ga_2 \\ -\Ga_1
  \end{pmatrix}.
  \end{split}
\end{equation}
Equation \eqref{eq 2.3} can then be written in vector form as
\begin{equation}\label{eq 2.7}
    (t\p_t -\cL) \Omega + \frac{1}{\nu} \Bigl\{ \cB\Omega  +\frac{\ve^2}{2}\dot{\theta}|\xi|^2 Y_1 + \frac{\ve}{\Ga}(\dot{\theta}\al \xi_1 + \dot{\al}\xi_2) Y_2, \Omega \Bigr\}_V =0.
\end{equation}

\subsection{Basic conservation}
To begin with, we define
\begin{equation*}
    m[\Omega]\eqdefa\int_{\R^2} \Omega(\xi) d\xi \andf M[\Omega] \eqdefa \begin{pmatrix}
        M_1[\Omega] \\ M_2[\Omega]
    \end{pmatrix}=\int_{\R^2} \Omega(\xi) \xi d\xi.
\end{equation*}

\begin{Proposition}\label{Prop 3.1}
 {\sl Let $(\Omega, \dot{\al}, \dot{\theta}) $ solve \eqref{eq 2.7}.  Then there hold
    \begin{equation*}
        m[\Omega_i] = \Ga_i \andf M[\Omega_1]+M[\Omega_2]=0 \quad \text{for }\ i=1,2.
    \end{equation*}}
\end{Proposition}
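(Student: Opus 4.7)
The plan is to reduce both identities to the corresponding conservation laws for the physical-space vorticity $w=w_1+w_2$ via the rescaling \eqref{eq 2.2}, rather than trying to extract them directly from the rescaled vector equation \eqref{eq 2.7}, whose Biot--Savart kernel $\cB\Omega$ mixes shifted copies of $\Omega_1$ and $\Omega_2$ and makes moment computations at that level rather awkward.

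For the first identity, each $w_i$ satisfies the linear transport--diffusion equation \eqref{eq 2.1} driven by the divergence-free velocity $\grad_x^\perp(\varphi_1+\varphi_2)$. Integrating over $\R^2$, both the advection term (written as a pure divergence) and $\nu\lap_x w_i$ vanish under standard decay, so $\int_{\R^2} w_i(t,x)\,dx$ is conserved and equal to $\Ga_i$ at $t=0^+$. The change of variables $x\mapsto \xi^{(i)}=(\cR(\theta)x-r_i\eo)/\sqrt{\nu t}$ has Jacobian $\nu t$, which cancels the prefactor in \eqref{eq 2.2} and yields $m[\Omega_i]=\int_{\R^2}\Omega_i\,d\xi=\Ga_i$.

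For the second identity, I would invoke the global momentum conservation $\int x\,w(t,x)\,dx\equiv 0$ from \eqref{eq 1.4}. Applying the change of variables to each $w_i$ and using $x=\cR(-\theta(t))(\ve\xi+r_i\eo)$ gives
$$\int_{\R^2} x\, w_i\, dx=\cR(-\theta(t))\bigl(\ve\,M[\Omega_i]+r_i\Ga_i\eo\bigr),$$
and summing over $i=1,2$ produces
$$0=\cR(-\theta(t))\bigl[\ve\bigl(M[\Omega_1]+M[\Omega_2]\bigr)+(r_1\Ga_1+r_2\Ga_2)\eo\bigr].$$
The constraint $\Ga_1 r_1+\Ga_2 r_2=0$ built into the ansatz \eqref{eq 2.2} annihilates the second term, and the invertibility of $\cR(-\theta)$ combined with $\ve=\sqrt{\nu t}>0$ then forces the desired $M[\Omega_1]+M[\Omega_2]=0$.

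The only real subtlety is the legitimacy of the splitting: one needs mass conservation to hold for $w_1$ and $w_2$ individually, not merely for their sum. This is secured by the well-posedness of the linear system \eqref{eq 2.1} for measure initial data (the uniqueness statement from \cite{GG05}) and by sufficient spatial decay of each $w_i$ to justify the vanishing of the boundary terms dropped in the integrations by parts, both of which follow from the $L^1$--$L^\infty$ smoothing properties of the transport--diffusion flow with Dirac initial data. No essential obstacle is anticipated beyond this bookkeeping.
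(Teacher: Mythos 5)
Your proposal is correct and follows essentially the same route as the paper: for $m[\Omega_i]=\Ga_i$, both integrate the linear transport--diffusion equation \eqref{eq 2.1} and transfer to rescaled variables via \eqref{eq 2.2}; for $M[\Omega_1]+M[\Omega_2]=0$, both pull the physical momentum conservation \eqref{eq 1.4} back through the change of variables and use $\Ga_1 r_1+\Ga_2 r_2=0$, differing only in whether one solves for $M[\Omega_1]+M[\Omega_2]$ forwards or expresses it directly as the paper does.
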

\begin{proof}
Due to $\dive_x u=0$, we get, by integrating \eqref{eq 2.1} over $\R^2,$ that
\begin{equation*}
    \int_{\R^2}  w_i(t,x)\, dx = \Ga_i,
\end{equation*}
which is equivalent to $m[\Omega_i]=\Ga_i$ by \eqref{eq 2.2}.
And thanks to \eqref{eq 1.4}, we have
 \begin{equation*}\begin{split}
     M[\Omega_1(t)]+M[\Omega_2(t)]
     &=\sum_{i=1}^2\int_{\R^2}  \Omega_i\Bigl(\frac{\cR(\theta)x-r_i\eo}{\sqrt{\nu t}} \Bigr) \frac{\cR(\theta)x-r_i\eo}{\sqrt{\nu t}} \frac{dx}{\nu t}\\
   & =(\nu t)^{-\f12} \int_{\R^2} w(x) \cR(\theta)x dx -\sum_{i=1}^2   \frac{r_i\eo}{\sqrt{\nu t}} \int_{\R^2}  \Omega_i\left(\frac{\cR(\theta)x-r_i\eo}{\sqrt{\nu t}} \right)  \frac{dx}{\nu t} \\
   & = -(\nu t)^{-\f12}(\Ga_1 r_1 + \Ga_2 r_2)\eo =0.
\end{split}\end{equation*}
This completes the proof of Proposition \ref{Prop 3.1}
\end{proof}

\subsection{Basic operators}\label{subsec.basic operators}
We consider the following operators on $\cY$:
\begin{equation*}
    \cL f= \lap_\xi f+ \frac{\xi}{2}\cdot \grad_\xi f + f \quad \text{with} \ \cD(\cL)\eqdefa \bigl\{f\in \cY\ :\ \lap_\xi f\in \cY, \, \xi\cdot\grad_\xi f\in \cY \bigr\},
\end{equation*}
and
\begin{equation*}
    \Lam f \eqdefa \{\Upsilon, f\} + \{\lap_\xi^{-1}f, G\} \quad  \text{with} \ \cD(\Lam)\eqdefa \bigl\{f\in \cY \ : \ \{\Upsilon, f\}  \in \cY  \bigr\}.
\end{equation*}
where we denote
\begin{equation*}
    \begin{split}
    \Upsilon(\xi)&\eqdefa \lap_\xi^{-1}G     =\frac{1}{2\pi} \int_{\R^2} \log|\xi-\eta| G(\eta) d\eta  = \frac{1}{4\pi}\left(\textrm{Ein}(|\xi|^2/4) -\ga_E\right),
    \end{split}
\end{equation*}
with $ \textrm{Ein}(x)\eqdefa\int_0^x (1-e^{-t})/tdt$ and $\ga_E$ is the Euler constant.
We denote  the adjoint of $\cL$, $\Lam$ in $L^2_\xi$ by $\cL^\star,\Lam^\star$, respectively, i.e.,
\begin{equation*}
    \lla  f, \cL^{\star}g\rra \eqdefa \lla \cL f, g  \rra \andf  \lla  f, \Lam^{\star}g\rra \eqdefa \lla \Lam f, g  \rra,
\end{equation*}
from which it follows directly that
\begin{equation*}
    \cL^{\star} f = \lap_\xi f -\frac{\xi}{2}\cdot \grad_\xi f \quad \text{with}\ \cD(\cL^\star)\eqdefa \bigl\{f\in \cY^\star \ :\ \lap_\xi f, \  \xi\cdot\grad_\xi f \in \cY^\star  \bigr\},
\end{equation*}
and
\begin{equation*}
    \Lam^\star f =-\{\Upsilon , f\} - \lap_\xi^{-1}\{f,G\}   \quad \text{with} \ \cD(\Lam^\star)\eqdefa \bigl\{f\in \cY^\star \ :\ \{\Upsilon, f\} \in \cY^\star \bigr\}.
\end{equation*}

Below we list some fundamental properties of $\cL$ and $\Lam$.
\begin{Proposition}\label{Prop 3.2}
 {\sl   There hold
\begin{enumerate}
    \item[(1)]  $\cL$ is self-adjoint in $\cY$ with compact resolvent and purely discrete spectrum:
    \begin{equation*}
        \sig(\cL) = \lbk-\frac{n}{2} \ : \ n\in \N\rbk.
    \end{equation*}
    Moreover, $\ker\cL= \spn\{G\}$, $\ker (\f12+\cL) = \spn\{\p_1 G,\p_2G\}$. And the eigenvalue $-\frac{n}{2}$ has multiplicity $n+1$, with eigenfunctions being Hermite functions of degree $n$.

    \item[(2)] For any $\ka>0$ and $f\in \cZ$, one has $(\ka- \cL)^{-1}f \in \cZ$.

    \item[(3)] $\Lam$ is skew-adjoint in $\cY$ with
    \begin{equation*}
        \ker\Lam = \cY_0 \oplus \spn\bigl\{\p_1 G, \p_2 G\bigr\}.
    \end{equation*}

    \item[(4)] Let
    \begin{equation*}
        \cY'_1 \eqdefa \cY_1 \cap (\ker{\Lam})^\perp = \lbk f\in \cY_1 \ : \ \int_{\R^2} \xi_j f(\xi)\, d\xi=0 ,\  j=1,2   \rbk.
    \end{equation*}
    Then for $f\in \cY_n \cap \cZ$ if $n\geq 2$,  or $f\in \cY'_1\cap \cZ$ if $n=1$, there exists a unique $\Omega $ in the same space so that  $\Lam \Omega = f$.
     Furthermore, if $f=b(\rho) \sin(n\vartheta)$ (respectively, $f=b(\rho)\cos(n\vartheta)$), then $\Omega= w(\rho)\cos(n\vartheta)$ (respectively, $\Omega=-w(\rho)\sin(n\vartheta)$), where
\begin{equation}\label{eq 3.1}
    w(\rho)\eqdefa - \frac{\rho^2/4}{e^{\rho^2/4}-1} \varphi(\rho) - \frac{2\pi\rho^2 }{n(1-e^{-\rho^2/4})} b(\rho),
\end{equation}
with $\varphi$ solving
\begin{equation}\label{eq 3.2}
    -\varphi'' - \frac{1}{\rho} \varphi' +\left(\frac{n^2}{\rho^2} - \frac{\rho^2/4}{e^{\rho^2/4}-1} \right) \varphi = \frac{2\pi\rho^2 }{n(1-e^{-\rho^2/4})} b(\rho), \quad \rho>0.
\end{equation}
In particular, if $f$ is $\xi_2$-even (odd), then $\Omega$ is $\xi_2$-odd (even).

\item[(5)] For any $h\in G^{-1}\cY_n \cap \cS_\star$ if $n\geq 2$, or $h\in G^{-1}\cY'_1 \cap \cS_\star$ if $n=1$, there exists a unique $\varrho$ in the same space so that  $\Lam^\star \varrho = h$. In particular, if $h$ is $\xi_2$-even (odd), then $\varrho$ is $\xi_2$-odd (even).
\end{enumerate}}
\end{Proposition}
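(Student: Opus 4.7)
\textbf{Proof plan for Proposition \ref{Prop 3.2}.} The unifying structural feature is that both $\cL$ and $\Lam$ commute with rotations and therefore respect the angular decomposition $\cY = \bigoplus_{n\ge 0}\cY_n$, reducing every assertion to a one-dimensional problem in the radial variable. I would treat $\cL$ first, then $\Lam$, then the explicit inversion formulas.

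For parts (1) and (2), the unitary conjugation $U:f\mapsto G^{-1/2}f$ from $\cY$ to $L^2(\R^2)$ transforms $\cL$ into the self-adjoint harmonic-oscillator-type operator $\widetilde{\cL} = \Delta - |\xi|^2/16 + 1/2$, whose spectrum $\{-n/2:n\in\N\}$ with multiplicities $n+1$ and two-dimensional Hermite eigenfunctions transfer back to $\cL$; compactness of the resolvent is the standard harmonic-oscillator fact. The identifications $\cL G = 0$ and $(1/2+\cL)\p_jG = 0$ are checked directly from $\nabla G = -(\xi/2)G$. For (2), a short calculation yields the conjugation identity $\cL(Gh) = G\cL^\star h$, so $(\kappa-\cL)^{-1}(Gh_0) = G(\kappa-\cL^\star)^{-1}h_0$, where $\cL^\star = \Delta - (\xi/2)\cdot\nabla$ is the backward Ornstein--Uhlenbeck generator. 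Elliptic regularity together with polynomially-weighted estimates on the backward OU semigroup then shows that $\cS_\star$ is preserved, yielding $(\kappa-\cL)^{-1}\cZ \subset \cZ$.

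For (3), the skew-adjointness of $\Lam$ in $\cY$ reflects the Hamiltonian structure of the 2D Euler flow linearized around $G$; concretely, it follows by combining the Poisson-bracket identity $\int\{a,b\}c\,d\xi = -\int b\{a,c\}\,d\xi$ with the radial identity $\nabla^\perp\Upsilon\cdot\xi=0$, which allows the weight $G^{-1}$ to pass through the integration by parts. For the kernel: on $\cY_0$, both $\{\Upsilon,f\}$ and $\{\Delta^{-1}f,G\}$ vanish by radiality, and one checks $\Lam(\p_jG) = \{\Upsilon,\p_jG\}+\{\p_j\Upsilon,G\} = \p_j\{\Upsilon,G\} = 0$. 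The absence of further kernel elements is extracted mode-by-mode from the unique-solvability statement in (4), with the momentum constraint defining $\cY'_1$ precisely isolating the exceptional two-dimensional kernel on $\cY_1$.

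For (4) and (5), writing $\Omega = w(\rho)\cos(n\vartheta)$ and $\Phi = \Delta^{-1}\Omega = \varphi(\rho)\cos(n\vartheta)$, the equation $\Lam\Omega = f$ collapses via the polar form of the Poisson bracket to the linear second-order ODE \eqref{eq 3.2} for $\varphi$, from which $w$ is recovered by \eqref{eq 3.1}. Existence and uniqueness follow from Green's-function analysis: at $\rho=0$ the ODE has a regular singular point with indicial roots $\pm n$, selecting the unique solution behaving like $\rho^n$, while at $\rho\to\infty$ the potential approaches $\rho^2/4$ and singles out a unique Gaussian-decaying solution, so matching fixes $\varphi$ uniquely. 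The parity claim follows from the reflection symmetry $\xi_2\mapsto-\xi_2$. Part (5) is analogous: on each angular mode $\Lam^\star$ reduces to a radial ODE of the same type, treated identically in $G^{-1}\cY_n\cap\cS_\star$. The main technical obstacle is the regularity and decay bookkeeping: one must exploit the exponential closeness of $(\rho^2/4)/(e^{\rho^2/4}-1)$ to $\rho^2/4$ as $\rho\to\infty$ and carefully match to the indicial behavior at the origin in order to transfer Schwartz-type decay from the datum $b$ to the solution $\varphi, w$, and conclude $\Omega\in\cZ$ (respectively $\varrho\in G^{-1}\cY_n\cap\cS_\star$).
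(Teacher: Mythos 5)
The paper's own proof is much shorter than your proposal: for parts (1)--(4) it simply cites \cite{GW02,DG24,GS24-1}, and the only thing it actually proves is part (5), via the conjugation identity
\begin{equation*}
\Lam^\star g = -G^{-1}\Lam[Gg],
\end{equation*}
which follows from the skew-adjointness in (3) by the chain of equalities $\lla f,\Lam^\star g\rra = \lla\Lam f, Gg\rra_\cY = -\lla f,\Lam[Gg]\rra_\cY = -\lla f, G^{-1}\Lam[Gg]\rra$. This identity gives the equivalence $\Lam^\star\varrho = h \iff \Lam[G\varrho] = -Gh$, so (5) becomes an immediate corollary of (4): the existence, uniqueness, membership in $G^{-1}\cY_n\cap\cS_\star$, and the parity claim all transfer automatically. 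Your outlines for (1)--(4) are sound --- the conjugation $U\cL U^{-1}=\Delta-|\xi|^2/16+1/2$ is correct, as is $\cL(Gh)=G\cL^\star h$ for (2), and the skew-adjointness and mode-by-mode ODE reduction for (3)--(4) match the standard references --- so you are offering a more self-contained argument than the paper actually gives. The genuine divergence is in (5): you propose an independent radial-ODE analysis for $\Lam^\star$, which would work in principle (both $\{\Upsilon,\cdot\}$ and $\lap_\xi^{-1}\{\cdot,G\}$ are purely tangential and preserve angular modes), but it duplicates the Green's-function/indicial-equation bookkeeping you already did for $\Lam$, and you would need to re-verify the $\cS_\star$-regularity and parity claims from scratch. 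The paper's conjugation trick buys exactly that: (5) inherits everything from (4) for free. If you intend to prove (5) independently, you should at minimum write down the radial ODE explicitly and explain why its solutions lie in $G^{-1}\cY_n\cap\cS_\star$ rather than merely asserting the analogy; otherwise, adopting the identity $\Lam^\star g=-G^{-1}\Lam[Gg]$ is the cleaner route.
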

\begin{proof}
   For the assertions (1)–(4), see for instance \cite{GW02,DG24,GS24-1}. Using (3), we compute for any $f,g\in \mathcal{C}^\oo_c(\R^2)$:
\begin{equation*}
    \lla f, \Lam^\star g \rra = \lla \Lam f, g\rra = \lla \Lam f, Gg \rra_\cY = - \lla f, \Lam[G g]\rra_\cY = -\lla f, G^{-1}\Lam[Gg]\rra,
\end{equation*}
 which gives rise to
 \begin{equation*}
     \Lam^\star g = - G^{-1} \Lam[G g].
 \end{equation*}
In particular, we arrive at
\begin{equation*}
     \Lam^\star \varrho =  h \iff \Lam[G \varrho] = - Gh,
\end{equation*}
from which and (4), we  deduce (5) and thus complete the proof of Proposition \ref{Prop 3.2}.
\end{proof}

\subsection{Expansion of the interacting velocity}
Let $\xi=(\rho \cos\vartheta, \rho\sin\vartheta)$, we recall the   following n-homogeneous  polynomials in $\R^2$ from \cite{DG24}:
\begin{equation*}
    Q^c_n(\xi_1,\xi_2)\eqdefa {\rm Re}(\xi_1+i\xi_2)^n=\rho^n\cos(n\vartheta) \andf Q^s_n(\xi_1,\xi_2)\eqdefa {\rm Im}(\xi_1+i\xi_2)^n=\rho^n\sin(n\vartheta).
\end{equation*}

\begin{Lemma}\label{Lem 3.3}
{\sl There hold
\begin{equation}
    \p_1 Q^c_n(\xi_1,\xi_2)= nQ^c_{n-1}(\xi_1,\xi_2) \andf \p_2 Q^c_n(\xi_1,\xi_2)=-nQ^s_{n-1}(\xi_1,\xi_2).
\end{equation}}
\end{Lemma}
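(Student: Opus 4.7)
The plan is to exploit the complex-analytic origin of $Q^c_n$ and $Q^s_n$. Setting $z \eqdefa \xi_1 + i\xi_2$, the definition gives the compact identity
\begin{equation*}
z^n = Q^c_n(\xi_1,\xi_2) + i\, Q^s_n(\xi_1,\xi_2),
\end{equation*}
so the whole statement reduces to differentiating the holomorphic function $z \mapsto z^n$ with respect to the two real variables and matching real and imaginary parts.

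First I would compute $\p_1 (z^n)$. Since $\p_{\xi_1} z = 1$, the chain rule yields $\p_1(z^n) = n z^{n-1} = n Q^c_{n-1} + i n Q^s_{n-1}$. Comparing this with $\p_1 Q^c_n + i\, \p_1 Q^s_n$ and taking real parts gives the first identity $\p_1 Q^c_n = n Q^c_{n-1}$ (and, as a bonus, $\p_1 Q^s_n = n Q^s_{n-1}$).

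Next I would compute $\p_2(z^n)$. Because $\p_{\xi_2} z = i$, the chain rule gives $\p_2(z^n) = i n z^{n-1} = i n(Q^c_{n-1} + i Q^s_{n-1}) = -n Q^s_{n-1} + i n Q^c_{n-1}$. Comparing real parts with $\p_2 Q^c_n$ produces $\p_2 Q^c_n = -n Q^s_{n-1}$, which is the second identity.

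Alternatively, one could argue directly in polar coordinates: writing $\xi = \rho(\cos\vartheta, \sin\vartheta)$, $Q^c_n = \rho^n \cos(n\vartheta)$ can be differentiated via $\p_j = (\p_j \rho)\p_\rho + (\p_j \vartheta)\p_\vartheta$, using $\p_1 \rho = \cos\vartheta$, $\p_1\vartheta = -\rho^{-1}\sin\vartheta$ and so on, and then the trigonometric addition formulas collapse the result to $n\rho^{n-1}\cos((n-1)\vartheta)$ and $-n\rho^{n-1}\sin((n-1)\vartheta)$. There is no real obstacle here; the only thing to be careful about is bookkeeping of the sign in $\p_2 Q^c_n$, which is dictated by the fact that the map $(\xi_1,\xi_2) \mto (\xi_1,-\xi_2)$ swaps $Q^c_n \lto Q^c_n$ but flips the sign of $Q^s_n$.
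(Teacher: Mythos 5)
Your proof is correct, and the primary argument (differentiating $z^n$ with $z = \xi_1 + i\xi_2$ and matching real and imaginary parts) takes a genuinely different route from the paper's. The paper computes in polar coordinates: it applies $\p_1 = \cos\vartheta\,\p_\rho - \rho^{-1}\sin\vartheta\,\p_\vartheta$ and $\p_2 = \sin\vartheta\,\p_\rho + \rho^{-1}\cos\vartheta\,\p_\vartheta$ to $\rho^n\cos(n\vartheta)$ and then collapses the result via trigonometric addition formulas. Your complex-analytic version avoids the addition formulas entirely by letting the algebra of $\C$ do the work: $\p_1 z^n = n z^{n-1}$ and $\p_2 z^n = i n z^{n-1}$ immediately give all four identities (for both $Q^c_n$ and $Q^s_n$) in one stroke, and the sign in $\p_2 Q^c_n$ is forced by the factor $i$. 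The tradeoff is negligible here since both are one-line computations, but your approach is slightly more economical and makes the bonus identity $\p_1 Q^s_n = n Q^s_{n-1}$ visible for free, whereas the paper's polar computation treats each identity separately. Your closing symmetry check ($\xi_2 \mapsto -\xi_2$ conjugates $z$, hence fixes $Q^c_n$ and flips $Q^s_n$) is a sound sanity check on the sign. The alternative you sketch at the end is essentially the paper's proof verbatim.
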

\begin{proof}
    Notice that
    $$\p_1 = \cos\vartheta \p_\rho -\sin\vartheta \frac{\p_\vartheta}{\rho},\quad \p_2 = \sin\vartheta \p_\rho +\cos\vartheta \frac{\p_\vartheta}{\rho}, $$
we get
\begin{equation*}
\begin{split}
    \p_1 Q^c_n &= \Bigl(\cos\vartheta \p_\rho -\sin\vartheta \frac{\p_\vartheta}{\rho}\Bigr) (\rho^n \cos (n\vartheta))=n \rho^{n-1} \left(\cos\vartheta \cos(n\vartheta) + \sin\vartheta \sin(n\vartheta)   \right)\\
    &=n \rho^{n-1} \cos((n-1)\vartheta)  = n Q^c_{n-1},\\
    \p_2 Q^c_n &= \Bigl(\sin\vartheta \p_\rho +\cos\vartheta \frac{\p_\vartheta}{\rho}\Bigr) (\rho^n \cos (n\vartheta)) = n \rho^{n-1} \left(\sin\vartheta \cos(n\vartheta) - \cos\vartheta \sin(n\vartheta)   \right)\\
    &=-n \rho^{n-1} \sin((n-1)\vartheta) = -n Q^s_{n-1},
\end{split}
\end{equation*}
which completes the proof of Lemma \ref{Lem 3.3}.
\end{proof}

\begin{Lemma}\label{Lem 3.4}
{\sl   Let $0 < |\lam| \ll1$,  $N\in \N$, $\Omega = \cO_\cZ(1)$ and $\Phi=\lap_\xi^{-1}\Omega$. There hold
\begin{equation}\begin{split}\label{eq 3.4}
   &\Phi(\xi+\lam^{-1} \eo) = \frac{1}{2\pi}\log|\lam|^{-1} \int_{\R^2}\Omega(\eta) d\eta \\
   &\quad +  \sum_{n=1}^N \frac{(-1)^{n-1}}{2\pi n} \lam^{n} \int_{\R^2}   Q^c_n\big( \xi-\eta\big) \Omega(\eta) d\eta + \cO_{\cS_\star}(\lam^{N+1}).
\end{split}
\end{equation}}
\end{Lemma}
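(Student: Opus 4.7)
The plan is to perform a multipole-type expansion of the logarithmic potential $\Phi=\lap_\xi^{-1}\Omega$ evaluated at the far-away point $\xi+\lambda^{-1}\eo$; since $|\lambda^{-1}|$ is large, this is essentially the asymptotic expansion of $\Phi$ at infinity. I would first substitute the Biot--Savart representation
\begin{equation*}
\Phi(\xi+\lambda^{-1}\eo) = \frac{1}{2\pi}\int_{\R^2}\log\bigl|\lambda^{-1}\eo + (\xi-\eta)\bigr|\,\Omega(\eta)\,d\eta,
\end{equation*}
and then identify $\R^2$ with $\C$ via $w=z_1+iz_2$ for $z=\xi-\eta$. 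Since $\lambda\in\R$, a direct computation yields
\begin{equation*}
|\lambda^{-1}+w|^2 \;=\; |\lambda|^{-2}\bigl(1+2\lambda\,\mathrm{Re}(w)+\lambda^2|w|^2\bigr) \;=\; |\lambda|^{-2}|1+\lambda\bar w|^2,
\end{equation*}
hence $\log|\lambda^{-1}+w| = \log|\lambda|^{-1} + \mathrm{Re}\log(1+\lambda\bar w)$.

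Next I would Taylor-expand the analytic function $\log(1+u)$ at $u=0$ and write
\begin{equation*}
\log(1+\lambda\bar w) \;=\; \sum_{n=1}^{N}\frac{(-1)^{n-1}}{n}\lambda^n \bar w^n \;+\; R_N(\lambda\bar w).
\end{equation*}
Taking real parts and using that, by the definition of $Q^c_n$, $\mathrm{Re}(\bar w^n) = \mathrm{Re}(w^n) = Q^c_n(\xi-\eta)$, termwise integration against $\Omega(\eta)/(2\pi)\,d\eta$ produces the leading sum in \eqref{eq 3.4}. Thus the identity is already clear at the formal level, and the substantive content of the lemma is the remainder estimate.

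The main obstacle is controlling $R_N$ in the $\cO_{\cS_\star}$ sense uniformly in $\xi$, because the Taylor bound $|R_N(u)|\lesssim |u|^{N+1}$ only holds for $|u|\leq 1/2$, whereas $|\lambda\bar w|=|\lambda||\xi-\eta|$ can be arbitrarily large over the integration domain. The fix is a two-region split of the $\eta$-integral. On the inner region $\{|\xi-\eta|\leq 1/(3|\lambda|)\}$, Taylor's theorem yields the pointwise bound $|R_N(\lambda\bar w)|\lesssim |\lambda(\xi-\eta)|^{N+1}$, so after integration against $\Omega$ this contributes
\begin{equation*}
\lesssim |\lambda|^{N+1}\int_{\R^2}(1+|\xi-\eta|)^{N+1}|\Omega(\eta)|\,d\eta \;\lesssim\; |\lambda|^{N+1}(1+|\xi|)^{N+1},
\end{equation*}
thanks to the Gaussian decay of $\Omega\in\cZ$ and its finite polynomial moments. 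On the complementary region $|\xi-\eta|\geq 1/(3|\lambda|)$, the Gaussian factor in $\Omega$ forces both the original integrand and the truncated partial sum to be exponentially small in $1/|\lambda|$ provided $|\xi|\ll 1/|\lambda|$; the remaining regime $|\xi|\gtrsim 1/|\lambda|$ is absorbed by the a priori estimate $|\lambda|^{N+1}\lesssim (1+|\xi|)^{-(N+1)}$ combined with the at-most-polynomial growth in $\xi$ of both $\Phi(\xi+\lambda^{-1}\eo)$ and of the partial sum $\sum_{n=1}^{N}\lambda^n\!\int Q^c_n(\xi-\eta)\Omega\,d\eta$. Finally, to upgrade to $\cO_{\cS_\star}$ I would differentiate under the integral sign: each application of $\p_{\xi_j}$ either lowers the degree of $Q^c_n$ via Lemma \ref{Lem 3.3} or acts on the logarithmic kernel, producing at most additional polynomial factors of $|\xi-\eta|$ that are absorbed by the Gaussian weight in exactly the same way.
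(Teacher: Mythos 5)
Your proposal is correct and follows essentially the same route as the paper's own proof: both rest on the Taylor expansion of $\log|\eo+x| = \mathrm{Re}\log(1+z)$ in the small parameter $\lambda(\xi-\eta)$ (which is exactly your complex-variable identity), a near/far split of the $\eta$-integral, absorption of the far region by the Gaussian decay of $\Omega\in\cZ$, and the observation that any $\cS_\star$ function supported on $|\xi|\gtrsim|\lambda|^{-1}$ is automatically $\cO_{\cS_\star}(\lambda^M)$ for every $M$. The only point where the paper is slightly more explicit is the region where $|\eo+\lambda(\xi-\eta)|$ is close to zero (the paper's term $\cJ_3$): there the logarithmic kernel is not pointwise small but only locally integrable, so one cannot say the ``integrand'' is exponentially small — rather the \emph{integral} over that small disk is, after integrating the log singularity and using the Gaussian weight; your wording glosses over this, but the underlying idea is the right one.
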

\begin{proof}
First of all, in view of
    \begin{equation*}
        \Phi (\xi) = (\lap_\xi^{-1}\Omega) (\xi) = \frac{1}{2\pi} \int_{\R^2} \log|\xi-\eta| \Omega(\eta) d\eta,
    \end{equation*}
we have $\Phi\in \cS_\star$ and thus $\Phi (\xi+\lam^{-1}\eo) \in \cS_\star$. Notice that any $f\in\cS_\star$ supported on $|\xi|\geq  \frac{1}{8|\lam|}$ belongs to $\cO_{\cS_\star}(\lam^M)$ for any $M$, it suffices to consider the expansion of $ \Phi (\xi+\lam^{-1}\eo)$ for $|\xi|\leq \frac{1}{8|\lam|}$. We write
\begin{equation*}
\begin{split}
      \Phi (\xi+\lam^{-1}\eo) &=  \frac{1}{4\pi} \int_{\R^2} \log\bigl|\xi+\lam^{-1}\eo -\eta\bigr|^2 \Omega(\eta) d\eta\\
     &= \frac{1}{2\pi}\log|\lam|^{-1} \int_{\R^2}\Omega(\eta) d\eta +    \frac{1}{4\pi} \int_{\R^2} \log\left|\eo + \lam(\xi-\eta)\right|^2 \Omega(\eta) d\eta.
\end{split}
\end{equation*}
Thanks to
\begin{equation*}
\begin{split}
    &\Bigl|\frac{1}{2}\log|\eo+x|^2 - \sum_{n=1}^N \frac{(-1)^{n-1}}{n}Q^c_n(x)\Bigr|  \\
    &\lesssim |x|^{N+1} \mathbbm{1}_{|x|\leq \f14} + \left|\log|x+\eo| \right| \mathbbm{1}_{|x+\eo|\leq \f14} + |x|^N \mathbbm{1}_{|x|\geq \f14},
    \end{split}
\end{equation*}
we have
\begin{equation*}
\begin{split}
& \Bigl| \frac{1}{4\pi} \int_{\R^2} \log\left|\eo + \lam(\xi-\eta)\right|^2 \Omega(\eta) d\eta - \sum_{n=1}^N \frac{(-1)^{n-1}}{2\pi n} \lam^{n} \int_{\R^2}   Q^c_n\big( \xi-\eta\big) \Omega(\eta) d\eta \Bigr| \\
 &\lesssim  \int_{\{|\xi-\eta|\leq \frac{1}{4|\lam|}\}}\lam^{N+1}|\xi-\eta|^{N+1}   |\Omega(\eta)| d\eta +    \int_{\{|\xi-\eta|\geq \frac{1}{4|\lam|}\}}\lam^{N}|\xi-\eta|^{N}   |\Omega(\eta)| d\eta \\
 &\quad + \int_{\{ |\eo+ \lam(\xi-\eta)  |\leq \frac{1}{4}\}} \Big|\log|\eo+\lam(\xi-\eta) | \Big|  |\Omega(\eta)| d\eta\eqdefa \cJ_1 + \cJ_2 +\cJ_3.
\end{split}
\end{equation*}
We can easily bound $\cJ_1$ by
\begin{equation}\label{eq 3.7}
    |\cJ_1| \lesssim \lam^{N+1}\lla\xi\rra^{N+1}.
\end{equation}
Now for $\cJ_2$, we get by $|\xi|\leq \frac{1}{8|\lam|}$ and $\Omega= \cO_\cZ(1)$ that
\begin{equation}
\begin{split}
    &|\cJ_2| \lesssim \int_{\{|\eta|\geq \frac{1}{8|\lam|}\}}\lam^{N}  |\xi-\eta|^{N}   |\Omega(\eta)| d\eta \lesssim \int_{\{|\eta|\geq \frac{1} {8|\lam|}\}}\lam^{N}  |\eta|^{N}   |\Omega(\eta)| d\eta  \lesssim \lam^{N} e^{-c|\lam|^{-2}}.
\end{split}
\end{equation}
And similarly, we have
\begin{equation}\label{eq 3.9}
    \begin{split}
&|\cJ_3|\lesssim   \int_{\{|\eta|\geq \frac{1}{8|\lam|}\}\cap \{ |\eo+ \lam(\xi-\eta)  |\leq \frac{1}{4}\}} \big|\log|\lam(\xi-\eta) +\eo |\big|  |\Omega(\eta)| d\eta   \lesssim |\lam|^{-2}e^{-c|\lam|^{-2}}.
    \end{split}
\end{equation}
Finally by summarizing \eqref{eq 3.7}-\eqref{eq 3.9}, we obtained the desired estimate for $\Phi(\xi+\lam^{-1}\eo)$, and the estimates of $\grad_\xi^\beta \Phi(\xi+\lam^{-1}\eo)$ follow along the same line. This completes the proof of  Lemma \ref{Lem 3.4}.
\end{proof}

\renewcommand{\theequation}{\thesection.\arabic{equation}}
\setcounter{equation}{0}
\section{The construction of approximate solutions}\label{sect3}
In this section, we shall construct approximate solutions of (\ref{eq 2.3}-\ref{eq 2.4}).
To measure the error of the approximation solutions and its true solution, we introduce
\begin{equation}\label{S4eq1}
\begin{split}
    &{\rm R}_i[\Omega, \dot{\al}, \dot{\theta}] \eqdefa  t\p_t \Omega_i  - \cL \Omega_i + \frac{1}{\nu} \Big\{\Phi_i(\xi) + \Phi_\is\left(\xi+\ka_i \ve^{-1}\al\eo\right) + \frac{ \ve^2}{2}\dot{\theta}|\xi|^2  \\
&\qquad \qquad \qquad \qquad \quad \qquad \quad \qquad   + \frac{\ve \ka_i\Ga_\is}{\Ga} (\dot{\theta} \al \xi_1+\dot{\al}\xi_2), \  \Omega_i  \Big\}, \quad i=1,2.
\end{split}\end{equation}
For any fixed $M\in \N$, we will seek approximate solutions $(\Omega_{i,a}, \dot{\al}_{a},\dot{\theta}_{a})$, so that
\begin{equation*}
    m[\Omega_{i,a}] \equiv \Ga_i, \quad   M[\Omega_{i,a}]  \equiv 0 \andf  {\rm R}_i[\Omega_a, \dot{\al}_a, \dot{\theta}_a] =\cO_\cZ(\nu^{-1}\ve^{M+1} + \nu \ve^2) ,\quad i=1,2.
\end{equation*}

\subsection{Analysis of the momentum.}

\begin{Lemma}\label{Lem 4.1}
{\sl Let $m[\Omega_i]=\Ga_i$ and $M[\Omega_i]=0$ for $i =1,2$. Then we have
\begin{equation}\label{eq 4.1}
   m[{\rm R}_i[\Omega, \dot{\al}, \dot{\theta}]] =0,
\end{equation}
and
\begin{equation}\label{eq 4.2}
\begin{split}
   \nu
       M[{\rm R}_1[\Omega, \dot{\al}, \dot{\theta}]] &= -\nu
       M[{\rm R}_2[\Omega, \dot{\al}, \dot{\theta}]]
   \\
   &=    \frac{\ve \Ga_1\Ga_2}{\Ga}   \begin{pmatrix}
    \dot{\al} \\-\dot{\theta} \al
\end{pmatrix} - \int_{\R^2}   \grad_\xi^\perp   \Phi_2\left(\xi+\ve^{-1}\al\eo\right) \Omega_1 (\xi) d\xi  .
\end{split}\end{equation}}
\end{Lemma}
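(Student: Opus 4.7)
The two claims are pure integration identities, and the proof amounts to integrating ${\rm R}_i$ (against $1$) and $\xi_j\,{\rm R}_i$ term by term against $d\xi$. Three elementary facts do all the work: the Poisson bracket $\{F,\Omega_i\}=\dive_\xi(\Omega_i\grad^\perp_\xi F)$ is a two-dimensional divergence (so its integral vanishes for $\cZ$-class data); the operator $\cL$ satisfies, by integration by parts, $\int\cL\Omega_i\,d\xi=0$ and $\int\xi_j\cL\Omega_i\,d\xi=-\tfrac12 M_j[\Omega_i]$; and the Biot--Savart kernel $K(\xi-\eta)=\log|\xi-\eta|/(2\pi)$ is symmetric in $(\xi,\eta)$.

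For $m[{\rm R}_i]=0$: the time-derivative term contributes $t\p_t m[\Omega_i]=t\p_t\Ga_i=0$; the $\cL$-piece vanishes by the first IBP identity; and each of the four Poisson-bracket summands comprising the $\frac{1}{\nu}\{\,\cdot\,,\Omega_i\}$ block integrates to zero by divergence. Hence the first claim.

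For the momentum formula, the transport pieces $\int\xi_j\,t\p_t\Omega_i\,d\xi$ and $\int\xi_j\cL\Omega_i\,d\xi$ vanish by $M[\Omega_i]\equiv 0$. For the bracketed Hamiltonian
\[
F \eqdefa \Phi_i(\xi)+\Phi_\is\!\bigl(\xi+\ka_i\ve^{-1}\al\eo\bigr)+\tfrac{\ve^2}{2}\dot\theta|\xi|^2+\tfrac{\ve\ka_i\Ga_\is}{\Ga}\bigl(\dot\theta\al\xi_1+\dot\al\xi_2\bigr),
\]
I would use the duality identity $\int\xi_j\{F,\Omega_i\}\,d\xi=\int\{\xi_j,F\}\Omega_i\,d\xi$ together with $\{\xi_1,F\}=\p_2F$ and $\{\xi_2,F\}=-\p_1F$, and process each summand separately. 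The self-interaction $\Phi_i(\xi)$ contributes zero by the standard antisymmetry trick (the integrand $\grad^\perp K(\xi-\eta)\,\Omega_i(\xi)\Omega_i(\eta)$ is odd under $\xi\leftrightarrow\eta$); the quadratic piece $\tfrac{\ve^2}{2}\dot\theta|\xi|^2$ gives $\ve^2\dot\theta(M_2,-M_1)[\Omega_i]=0$; the linear piece produces exactly $\tfrac{\ve\ka_i\Ga_i\Ga_\is}{\Ga}(\dot\al,-\dot\theta\al)^T$ once one uses $m[\Omega_i]=\Ga_i$; and the cross-interaction piece yields the remaining term $-\int\Omega_i(\xi)\grad^\perp_\xi\Phi_\is(\xi+\ka_i\ve^{-1}\al\eo)\,d\xi$. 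Multiplying by $\nu$ to absorb the $1/\nu$ prefactor yields \eqref{eq 4.2} for $i=1$ (noting $\ka_1=1$, $\is=2$).

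Finally, the antisymmetry $\nu M[{\rm R}_1]=-\nu M[{\rm R}_2]$ follows because the explicit linear contribution flips sign under $i=1\to 2$ (thanks to $\ka_1=-\ka_2$), while the two cross-interaction integrals are negatives of each other by a ``Newton's third law''-type manipulation: translating one integration variable by $\pm\ve^{-1}\al\eo$ removes the shift from the kernel, and then $\grad^\perp K(\xi-\eta)=-\grad^\perp K(\eta-\xi)$ combined with the swap $\xi\leftrightarrow\eta$ gives
\[
\int\grad^\perp_\xi\Phi_2(\xi+\ve^{-1}\al\eo)\,\Omega_1(\xi)\,d\xi \;=\; -\int\grad^\perp_\xi\Phi_1(\xi-\ve^{-1}\al\eo)\,\Omega_2(\xi)\,d\xi.
\]
The only mild obstacle is consistent sign bookkeeping---the $\ka_i=\pm 1$, the $\grad^\perp$ convention fixed by the paper's $\{f,g\}=\p_1f\p_2g-\p_2f\p_1g$, and the direction of the translation in the cross-interaction; there is no analytic subtlety since all ingredients lie in $\cZ\subset\cS_\star$ and decay faster than any polynomial.
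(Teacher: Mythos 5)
Your proof is correct and takes essentially the same route as the paper: the $m$-identity by $t\p_t m[\Omega_i]=0$ plus the divergence form of $\cL$ and the bracket, and the $M$-identity by the duality $\int\xi_j\{F,\Omega_i\}\,d\xi=\int\{\xi_j,F\}\Omega_i\,d\xi$ applied term by term, with the self-interaction killed by kernel antisymmetry, the quadratic piece killed by $M[\Omega_i]=0$, the linear piece producing $\ka_i\frac{\ve\Ga_1\Ga_2}{\Gamma}(\dot\al,-\dot\theta\al)^T$ via $m[\Omega_i]=\Ga_i$, and the cross-interaction handled by the same $\xi\leftrightarrow\eta$ swap that gives $\nu M[{\rm R}_1]=-\nu M[{\rm R}_2]$. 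The sign bookkeeping in your sketch is consistent with the paper's convention $\{f,g\}=\p_1 f\,\p_2 g-\p_2 f\,\p_1 g$, so there is no gap.
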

\begin{proof} In view of \eqref{S4eq1}, we get, by
using integration by parts and Proposition \ref{Prop 3.1}, that
\begin{equation*}
\begin{split}
    m[{\rm R}_i[\Omega, \dot{\al}, \dot{\theta}]] = \int_{\R^2} \bigl(t\p_t-\lap_\xi -\frac{\xi}{2}\cdot\grad_\xi-1\bigr) \Omega_i \, d\xi = t\p_t m[\Omega_i] = 0,
\end{split}
\end{equation*}
which gives \eqref{eq 4.1}. Similarly, we have
\begin{equation*}
\int_{\R^2} (t\p_t \Omega_i  - \cL \Omega_i) \xi_j d\xi= t\p_t M_j[\Omega_i] + \f12  M_j[\Omega_i]=0,
\end{equation*}
from which, Proposition \ref{Prop 3.1}, and
\begin{equation*}
\begin{split}
    \int_{\R^2}\big\{ \lap_\xi^{-1} f, f \big\} \xi d\xi
     &= -\int_{\R^2}\big\{ \lap_\xi^{-1} f, \xi \big\} f d\xi =  -\int_{\R^2} f \grad_\xi^\perp \lap_\xi^{-1} f     d\xi\\
    &=\frac{-1}{2\pi}\iint \frac{(\xi-\eta)^\perp}{|\xi-\eta|^2} f(\eta) f(\xi) \, d\xi d\eta =0,
\end{split}
\end{equation*}
we infer
\begin{equation*}
    \begin{split}
    &\nu M[{\rm R}_i[\Omega, \dot{\al}, \dot{\theta}]]\\
&=\int_{\R^2}  \Big\{\Phi_i(\xi) +  \Phi_\is\left(\xi+\ka_i \ve^{-1}\al\eo\right) + \frac{ \ve^2}{2}\dot{\theta}|\xi|^2  + \frac{\ve \ka_i\Ga_\is}{\Ga} (\dot{\theta} \al \xi_1+\dot{\al}\xi_2), \  \Omega_i  \Big\}\xi d\xi\\
&= \int_{\R^2}  \Big\{  \Phi_\is\left(\xi+\ka_i \ve^{-1}\al\eo\right) + \frac{ \ve^2}{2}\dot{\theta}|\xi|^2  + \frac{\ve \ka_i\Ga_\is}{\Ga} (\dot{\theta} \al \xi_1+\dot{\al}\xi_2), \  \Omega_i  \Big\}\xi d\xi\\
&= -\int_{\R^2}  \Big\{  \Phi_\is\left(\xi+\ka_i \ve^{-1}\al\eo\right) + \frac{ \ve^2}{2}\dot{\theta}|\xi|^2  + \frac{\ve \ka_i\Ga_\is}{\Ga} (\dot{\theta} \al \xi_1+\dot{\al}\xi_2), \  \xi \Big\} \Omega_i  d\xi \\
&= -  \ve^2\dot{\theta}\int_{\R^2}    \Omega_i \xi^\perp d\xi  -  \int_{\R^2}  \left( \grad_\xi^\perp  \Phi_\is\left(\xi+\ka_i \ve^{-1}\al\eo\right)  + \frac{\ve \ka_i\Ga_\is}{\Ga}  \begin{pmatrix}
    -\dot{\al} \\\dot{\theta} \al
\end{pmatrix} \right)   \Omega_i(\xi)  d\xi \\
&=    - \int_{\R^2}   \grad_\xi^\perp  \Phi_\is\left(\xi+\ka_i \ve^{-1}\al\eo\right) \Omega_i (\xi) d\xi  -\frac{\ve \ka_i\Ga_\is}{\Ga} m[\Omega_i]  \begin{pmatrix}
    -\dot{\al} \\\dot{\theta} \al
\end{pmatrix} \\
&=   \ka_i\frac{\ve \Ga_1\Ga_2}{\Ga}   \begin{pmatrix}
    \dot{\al} \\-\dot{\theta} \al
\end{pmatrix}  - \int_{\R^2}   \grad_\xi^\perp  \Phi_\is\left(\xi+\ka_i \ve^{-1}\al\eo\right) \Omega_i (\xi) d\xi.
\end{split}\end{equation*}
This together with
\begin{equation*}
    \int_{\R^2}   \grad_\xi^\perp  \Phi_1\big(\xi- \ve^{-1}\al\eo\big) \Omega_2 (\xi) d\xi = - \int_{\R^2}   \grad_\xi^\perp  \Phi_2\big(\xi+ \ve^{-1}\al\eo\big) \Omega_1 (\xi) d\xi,
\end{equation*}
implies \eqref{eq 4.2}. We  thus completes the proof of Lemma \ref{Lem 4.1}.
\end{proof}

\subsection{Construction of approximate solutions} Let us recall that $\varepsilon\eqdefa (\nu t)^{\f12}.$ Motivated by
\cite{DG24}, we construct the approximate solutions of (\ref{eq 2.3}-\ref{eq 2.4}) via
\begin{equation}\label{eq 4.4}
    \begin{split}
\Omega_{i,a}^{(M)}(t,\xi)= \sum_{k=0}^M \varepsilon(t)^k \Omega_{i,k}(\xi), \quad \dot{\al}_{a}^{(M)}(t) =   \sum_{k=0}^{M-1} \varepsilon(t)^k \dot{\al}_{k}, \quad \dot{\theta}_{a}^{(M)}(t)= \sum_{k=0}^{M-1} \varepsilon(t)^k \dot{\theta}_{k},
    \end{split}
\end{equation}
where
\begin{equation*}
    \begin{split}
\Omega_{i,k}(\xi)= \Omega_{i,k}^E(\xi) + \nu \Omega_{i,k}^{NS}(\xi) ,\quad  \dot{\al}_{k}=  \nu \dot{\al}_{k}^{NS}, \quad \dot{\theta}_{k}= \dot{\theta}_{k}^E + \nu \dot{\theta}_{k}^{NS},
    \end{split}
\end{equation*}
with each component being independent of $\nu, t$. Hence, we get
\begin{equation*}
    \Phi_{i,a}^{(M)}(t,\xi)= \sum_{k=0}^M \varepsilon(t)^k \Phi_{i,k}(\xi)= \sum_{k=0}^M \varepsilon(t)^k \lap_\xi^{-1}\Omega_{i,k}(\xi) \andf  \al_{a}^{(M)}(t)  = 1+ \sum_{k=2}^{M+1}  \frac{2\varepsilon(t)^k }{k} \dot{\al}_{k-2}^{NS}.
\end{equation*}

The main result of this subsection states as follows:

\begin{Proposition}\label{Prop 4.2}
  {\sl  For any $M\geq 2$, there exists $(\Omega_{a}^{(M)}, \dot{\al}_{a}^{(M)}, \dot{\theta}_{a}^{(M)})$ of the form \eqref{eq 4.4}    satisfying
\begin{itemize}
\item[(a1)]    ${\rm R}_i[\Omega_{a}^{(M)}, \dot{\al}_{a}^{(M)},\dot{\theta}_{a}^{(M)}]= \cO_{\cZ}(\nu^{-1}\ve^{M+1} + \nu \ve^2)$;
    \item[(a2)] $m[\Omega_{i,0}]=\Ga_i$ and  $m[\Omega_{i,k}] =0$ for  $k\geq 1$;
    \item[(a3)] $M[\Omega_{i,k}] =0$ for   $k\geq0$;
    \item[(a4)]   $\Omega_{i,k}^E,\Omega_{i,k}^{NS} \in \cZ$ and  $\Omega_{i,k}^E$ is $\xi_2$-even for  $k\geq 0$;
    \item[(a5)] $\Omega^E_{i,k}$ and $\Omega^{NS}_{i,k}$ have finite Fourier series expansion with respect to variable $\vartheta$;
    \item[(a6)] $\Omega^E_{i,k}, \Omega^{NS}_{i,k}=\cO_{\cZ}(1),$ $ \dot{\al}_k^{NS},\dot{\theta}^E_k, \dot{\theta}^{NS}_k=\cO(1)$ for $k\geq 0$.
\end{itemize}
Moreover, there hold
$$\Omega^E_{i,0}=\Ga_i G,\quad \dot{\theta}^E_0=-\frac{\Ga}{2\pi},\quad \Omega^E_{i,1}=\Omega_{i,0}^{NS}=\Omega_{i,1}^{NS}=\dot{\al}^{NS}_0=0.$$}
\end{Proposition}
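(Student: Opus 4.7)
The plan is to construct $(\Omega_a^{(M)},\dot{\al}_a^{(M)},\dot{\theta}_a^{(M)})$ by a formal power-series expansion in $\ve$ at two separate scales: the Euler scale (of size $\nu^{-1}$ in the equation) and the viscous scale (of sizes $\nu^0$ and $\nu$). Substituting the ansatz \eqref{eq 4.4} into ${\rm R}_i$ and expanding the non-local interaction $\Phi_\is(\xi+\ka_i\ve^{-1}\al\eo)$ via Lemma \ref{Lem 3.4} gives a formal series
\begin{equation*}
{\rm R}_i \;=\; \nu^{-1}\sum_{k\geq 0}\ve^k F_{i,k}^E \;+\; \sum_{k\geq 0}\ve^k F_{i,k}^{NS} \;+\; \nu\sum_{k\geq 0}\ve^k H_{i,k}^{NS},
\end{equation*}
in which each coefficient depends only on lower-order unknowns together with one new modulation parameter ($\dot{\theta}_{k-1}^E$ at the Euler scale, $(\dot{\al}_{k-2}^{NS},\dot{\theta}_{k-2}^{NS})$ at the NS scale). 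The aim is to solve $F_{i,k}^E=0$ for $k\leq M$ and $F_{i,k}^{NS}=0$ for $k\leq M-2$; the truncation then leaves precisely the tails required by (a1).

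First I would handle the base case. Taking $\Omega_{i,0}^E=\Ga_i G$ kills the order-$\nu^{-1}\ve^0$ bracket $\{\Ga_i\Upsilon,\Ga_i G\}$ since both arguments are radial, while the $n=0$ output of Lemma \ref{Lem 3.4} contributes only a $\xi$-independent logarithmic constant. At order $\nu^{-1}\ve$, the $n=1$ term of Lemma \ref{Lem 3.4} produces $\tfrac{\ka_i\Ga_\is}{2\pi\al_0}\xi_1$ (with $\al_0=1$), whose bracket with $\Ga_i G$ combines with $\{\tfrac{\ve\ka_i\Ga_\is}{\Ga}\dot{\theta}_0^E\al_0\xi_1,\Ga_i G\}$ into a multiple of $\p_2 G$ with coefficient proportional to $\tfrac{1}{2\pi}+\tfrac{\dot{\theta}_0^E}{\Ga}$; demanding cancellation forces $\dot{\theta}_0^E=-\Ga/(2\pi)$, after which $\Omega_{i,1}^E=0$ satisfies $\Lam\Omega_{i,1}^E=0$. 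The NS-scale sources at $\nu^0$ and $\nu^0\ve$ vanish identically (nothing yet drives them), so $\Omega_{i,0}^{NS}=\Omega_{i,1}^{NS}=\dot{\al}_0^{NS}=0$.

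Next I would carry out the inductive step at level $k\geq 2$. Linearising around $\Ga_i G$ produces the Euler equation $\Ga_i\Lam\Omega_{i,k}^E=F_{i,k}^E$, and Proposition \ref{Prop 3.2}(4) reduces the step to verifying solvability, i.e.\ orthogonality of $F_{i,k}^E$ to $\ker\Lam=\cY_0\oplus\spn\{\p_1 G,\p_2 G\}$. The mass condition $m[F_{i,k}^E]=0$ follows from extracting the order-$\nu^{-1}\ve^k$ part of \eqref{eq 4.1}. The $\xi_1$-momentum condition holds by parity: a straightforward induction shows each $F_{i,k}^E$ is $\xi_2$-odd (the Poisson bracket flips the $\xi_2$-parity, and the $Q_n^c,Q_n^s$ produced by Lemma \ref{Lem 3.4} carry compatible parities), so Proposition \ref{Prop 3.2}(4) yields $\Omega_{i,k}^E\in\cZ$ which is $\xi_2$-even. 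The $\xi_2$-momentum condition is precisely \eqref{eq 4.2} specialised to this order, matched by the single parameter $\dot{\theta}_{k-1}^E$. The finite Fourier-$\vartheta$ property is preserved since each order introduces only polynomials $Q_n^c,Q_n^s$ of bounded degree, and \eqref{eq 3.1}--\eqref{eq 3.2} give the explicit inversion. The NS iteration runs in parallel on $\Ga_i\Lam\Omega_{i,k}^{NS}=F_{i,k}^{NS}$, with the two momentum obstructions absorbed by the two parameters $(\dot{\theta}_{k-2}^{NS},\dot{\al}_{k-2}^{NS})$.

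The hardest point is the algebraic matching between modulation parameters and momentum obstructions at every order. At the Euler scale only a single scalar $\dot{\theta}_{k-1}^E$ is available, so one must verify by the parity induction that exactly one scalar component of $M[F_{i,k}^E]$ is non-trivial (the other vanishing by $\xi_1$-symmetry). At the NS scale one must show the $2\times 2$ linear map relating $(\dot{\theta}_{k-2}^{NS},\dot{\al}_{k-2}^{NS})$ to the two components of $M[F_{i,k}^{NS}]$ is invertible; by \eqref{eq 4.2} this reduces at leading order to non-vanishing of $\ve\Ga_1\Ga_2/\Ga$, which is exactly where the global hypotheses $\Ga\neq 0$ and $|\Ga_2|>0$ enter. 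Once these compatibility checks are secured, the iteration extends up to the prescribed $M$, the truncation tails yield the bound in (a1), and properties (a2)--(a6) propagate inductively along the construction.
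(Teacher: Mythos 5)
Your overall strategy — formal power series in $\ve$ split across $\nu^{-1},\nu^0,\nu^1$ levels, inversion of $\Lam$ via Proposition~\ref{Prop 3.2}, modulation parameters matched against momentum obstructions via Lemma~\ref{Lem 4.1}, the $\xi_2$-parity argument for the Euler-scale source, and the non-degeneracy $\Ga_1\Ga_2/\Ga\neq0$ — coincides with the paper's. The base case computation is correct (apart from writing ``$\xi_1$-symmetry'' where you mean $\xi_2$-parity). However, there is a genuine gap at the NS scale.

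You propose solving $\Ga_i\Lam\,\Omega_{i,k}^{NS}=F_{i,k}^{NS}$ after killing the two momentum obstructions. But $\ker\Lam$ also contains all of $\cY_0$ (radial functions), and nothing in your parity induction gives $\cP_0 F_{i,k}^{NS}=0$. Indeed, the $\xi_2$-odd argument only applies to the $\nu^{-1}$ source, where every ingredient ($\Omega^E_a$, $\Phi^E_a$, $|\xi|^2$, $\xi_1$) is $\xi_2$-even and a bracket of two evens is odd. The $\nu^0$ source $F_{i,k}^{NS}$ (the paper's $\cH_1^i$) contains $(t\p_t-\cL)$ acting on the $\xi_2$-even Euler profiles as well as brackets involving the $\xi_2$-odd corrections $\Omega^{NS}_{i,j}$ (cf.\ Lemma~\ref{Lem 4.3}: $\Omega^{NS}_{i,2}\propto\sin(2\vartheta)$), so it generically carries a nontrivial $\cY_0$ component that $\Lam$ cannot absorb. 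Leaving $\cP_0\cH_1^i$ uncancelled produces a radial residue of size $\cO(\ve^{k})$ at each order $k\leq M$, dominated by the lowest such $k$, which far exceeds the budget $\cO(\nu^{-1}\ve^{M+1}+\nu\ve^2)$ required by (a1) and destroys the induction.

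The paper's fix is an extra Euler-scale piece: $\Omega^E_{i,M+1}=\Omega^{E,0}_{i,M+1}+\Omega^{E,1}_{i,M+1}$, where $\Omega^{E,0}_{i,M+1}\in\cY_0$ solves the diffusion-type problem $\bigl(\tfrac{M+1}{2}-\cL\bigr)\Omega^{E,0}_{i,M+1}=-\cP_0\cH_1^i$. Since $\sig(\cL)=\{-n/2:n\in\N\}\subset(-\infty,0]$, the operator $\tfrac{M+1}{2}-\cL$ is invertible on $\cY_0\cap\cZ$ (Proposition~\ref{Prop 3.2}(2)), so this is always solvable; and since $\Omega^{E,0}\in\cY_0\subset\ker\Lam$, it does not disturb the $\nu^{-1}$ equation. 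Only the non-radial remainder $(1-\cP_0)\cH_1^i$, together with the diffusion residue $(\tfrac{M+1}{2}-\cL)\Omega^{E,1}_{i,M+1}$, is then sent to the $\Lam$-inversion at the NS scale. This also shows the two iterations are not parallel: the NS-scale equation depends on $\Omega^E_{i,k}$ at the same order $k$ through the diffusion term, so they must be solved sequentially, with the $\cY_0$ freedom in the $\Lam$-inversion of $\Omega^E_{i,k}$ exploited precisely to make the NS step solvable.
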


\begin{proof} We shall prove this proposition
 by induction. In view of \eqref{eq 2.4}, we take
\begin{equation*}
    \Omega_{i,0}(\xi)=\Ga_i G(\xi) \andf \Phi_{i,0}(\xi)=\Ga_i\Upsilon(\xi).
\end{equation*}

\no {\bf Step 1. The first order approximation.}
Let us take
\begin{equation*}
    \Omega_{i,a}^{(1)}=\Ga_i G+\ve \Omega_{i,1}, \quad \dot{\al}_a^{(1)} = \dot{\al}_0=\nu \dot{\al}^{NS}_0, \quad \dot{\theta}_a^{(1)}= \dot{\theta}_0 \quad
     \text{and thus}\quad  \al_a^{(1)}= 1+ \ve^2\dot{\al}^{NS}_0.
\end{equation*}
Then in view of \eqref{S4eq1},
we find
\begin{equation}\label{eq 4.5a}
\begin{split}
{\rm R}_i[\Omega_a^{(1)},\dot{\al}_a^{(1)},\dot{\theta}_a^{(1)}]
= &\ve \left(1/2-\cL\right) \Omega_{i,1} \\
&+ \frac{1}{\nu} \Big\{ \Ga_i \Upsilon +  \Ga_\is \Upsilon\left(\xi+\ka_i \ve^{-1}\al_a\eo\right) + \ve  \Phi_{i,1} +\ve   \Phi_{\is,1}\left(\xi+\ka_i \ve^{-1}\al_a\eo\right)  \\
& \qquad + \frac{\ve^2}{2}\dot{\theta}_0|\xi|^2   +  \frac{\ve\ka_i\Ga_\is}{\Ga}(\dot{\theta}_0 \al_a^{(1)} \xi_1 +\dot{\al}_0 \xi_2), \ \Ga_i G+\ve \Omega_{i,1} \Big\}.
\end{split}
\end{equation}
While it follows from  Lemma \ref{Lem 3.4} that $\Phi_{\is,1}\left(\xi+\ka_i \ve^{-1}\al_a\eo\right) = C(t)+\cO_{\cS_\star}(\ve)$ and
\begin{equation}\label{eq 4.5}
\begin{split}
     \Upsilon\left(\xi+\ka_i\frac{\al_a}{\ve}\eo\right)  =C(t)+  \frac{\ka_i \ve}{2\pi}\xi_1- \frac{\ve^2}{4\pi}  Q^c_2(\xi) + \cO_{\cS_\star}(\ve^3),
\end{split}
\end{equation}
from which and \eqref{eq 4.5a}, we infer
\begin{equation*}
    \begin{split}
{\rm R}_i[\Omega_a^{(1)},\dot{\al}_a^{(1)},\dot{\theta}_a^{(1)}]
&= \ve \left(1/2-\cL + \Ga_i/\nu\Lam\right) \Omega_{i,1}\\
&\quad + \frac{\Ga_1\Ga_2}{\nu} \Big\{    \frac{\ka_i \ve}{2\pi}\xi_1 + \frac{\ka_i\ve}{\Ga}(\dot{\theta}_0  \xi_1 +\dot{\al}_0 \xi_2)  ,\ G \Big\} +\cO_\cZ(\nu^{-1}\ve^2),
    \end{split}
\end{equation*}
where $\Lambda f=\bigl\{\Delta_\xi^{-1}G, f\bigr\}+\bigl\{\Delta_\xi^{-1}f, G\bigr\}.$

Thus by choosing
\begin{equation}\label{eq 4.6}
    \dot{\theta}_0 = -\frac{\Ga}{2\pi }, \quad \dot{\al}_0 =0 \andf \Omega_{i,1}=0,
\end{equation}
we have ${\rm R}_i=\cO_\cZ(\nu^{-1}\ve^2)$ and $(\Omega_a, \dot{\al}_a, \dot{\theta}_a)$ satisfying (a1)-(a6), and  thus complete the construction for $M=1$.

\no {\bf Step 2. $(M+1)$-th approximation by induction.}
We assume that we have constructed $(\Omega_{a}^{(M)}, \dot{\al}_{a}^{(M)}, \dot{\theta}_{a}^{(M)})$ which
satisfies (a1)-(a6). We are going to seek
\begin{equation}\label{eq 4.8}
    \begin{split}
\Omega_{i,a}^{(M+1)}= \Omega_{i,a}^{(M)}+ \ve^{M+1} \Omega_{i,M+1}, \quad \dot{\al}_{a}^{(M+1)} =   \alpa^{(M)} + \varepsilon^M \dot{\al}_{M},
\quad \dot{\theta}_{a}^{(M+1)}= \thpa^{(M)} +\varepsilon^M \dot{\theta}_{M},
    \end{split}
\end{equation}
satisfying (a2)-(a6) so that ${\rm R}_i[\Omega_{a}^{(M+1)},\dot{\al}_{a}^{(M+1)},\dot{\theta}_{a}^{(M+1)}]= \cO_\cZ(\nu^{-1}\ve^{M+2}+\nu\ve^2).$
In order to do so,
we write
\begin{equation}\label{eq 4.9}
    \begin{split}
&{\rm R}_i[{\Omega}_{a}^{(M+1)}, \dot{\al}_{a}^{(M+1)}, \dot{\theta}_{a}^{(M+1)}] =  (t\p_t-\cL) \big(\Omega_{i,a}^{(M)} + \ve^{M+1} \Omega_{i,M+1}\big)   \\
& \quad + \nu^{-1}\Big\{ \Phi_{i,a}^{(M)}(\xi) +    \Phi_{\is,a}^{(M)}\big(\xi + \ka_i \ve^{-1}\al_a^{(M+1)}\eo\big)
+  \frac{ \ve^2}{2}\dot{\theta}_{a}^{(M+1)} |\xi|^2\\
& \qquad\qquad    +\frac{\ve\ka_i\Ga_\is}{\Ga} \big(\dot{\theta}_{a}^{(M+1)} \al_{a}^{(M+1)} \xi_1+\dot{\al}_{a}^{(M+1)}\xi_2\big)
  \\
 &\qquad \qquad  + \ve^{M+1}  \Phi_{i,M+1} +  \ve^{M+1} \Phi_{\is,M+1}\big(\xi + \ka_i \ve^{-1}\al_a^{(M+1)}\eo\big)  ,\ \Omega_{i,a}^{(M)} + \ve^{M+1} \Omega_{i,M+1} \Big\}\\
&= {\rm R}_i[\Omega_a^{(M)}, \dot{\al}_{a}^{(M+1)}, \dot{\theta}_{a}^{(M+1)}] +\ve^{M+1} \Bigl(\frac{M+1}{2}-\cL + \frac{\Ga_i}{\nu} \Lam \Bigr) \Omega_{i,M+1} + \cO_\cZ(\nu^{-1}\ve^{M+2}+\nu \ve^2).
    \end{split}
\end{equation}
By virtue of the assumptions on $(\Omega_{a}^{(M)}, \dot{\al}_a^{(M)}, \dot{\theta}_a^{(M)})$, we have
\begin{equation}\label{eq 4.10}
    {\rm R}_i[\Omega_a^{(M)}, \dot{\al}_{a}^{(M+1)}, \dot{\theta}_{a}^{(M+1)}] = \nu^{-1}\ve^{M+1} \cH_0^i + \ve^{M+1} \cH_1^i + \cO_\cZ(\nu^{-1}\ve^{M+2} + \nu \ve^2),
\end{equation}
where $\cH_0^i, \cH_1^i\in \cZ$ are independent of $\nu, t$, and have finite Fourier series expansion with respect to variable $\vartheta$.

\no {\bf Step 2.1. Choices of $\dot{\al}_{M}$  and $\dot{\theta}_{M}$.} In this step,  we shall  choose $\dot{\al}_{M}$  and $\dot{\theta}_{M}$  to
guarantee that $M[\cH_k^i]=0$ for $i=1,2$ and $k=0,1$. Since
\begin{equation*}
  \al_{a}^{(M+1)} =   \al_a^{(M)} +   \frac{2\varepsilon^{M+2}}{(M+2)} \dot{\al}_{M}^{NS}\andf \dot{\al}_k =\nu \dot{\al}^{NS}_k\   \text{for} \ k\leq M-1,
\end{equation*}
 we deduce from Lemma \ref{Lem 4.1} that
\begin{equation}
\begin{split}\label{S4eq3}
&\nu \Xi_{M+1} \big[
      M[{\rm R}_1[\Omega_a^{(M)}, \dot{\al}_{a}^{(M+1)}, \dot{\theta}_{a}^{(M+1)}]] \big]  \\
&=\frac{\Ga_1\Ga_2}{\Ga}   \begin{pmatrix}
    \dot{\al}_{M} \\ - \Xi_{M}[\dot{\theta}_{a}^{(M+1)}\al_a^{(M+1)}]
\end{pmatrix} - \Xi_{M+1} \Big[\int_{\R^2}   \grad_\xi^\perp \Phi_{2,a}^{(M)} \big(\xi+\ve^{-1}\al_{a}^{(M+1)}\eo\big) \Omega_{1,a}^{(M)} (\xi) d\xi \Big] \\
&=\frac{\Ga_1\Ga_2}{\Ga}   \begin{pmatrix}
    \nu \dot{\al}_{M}^{NS} \\-\dot{\theta}_{M}-  \sum_{k=0}^{M-2} \frac{2}{M-k}\dot{\theta}_k  \dot{\al}^{NS}_{M-k-2}
\end{pmatrix} \\
&\quad - \Xi_{M+1}\Big[\int_{\R^2}   \grad_\xi^\perp \Phi^{(M),E}_{2,a} \big(\xi+\ve^{-1}\al_{a}^{(M)}\eo\big) \Omega^{(M),E}_{1,a} (\xi) d\xi\Big] \\
&\quad - \nu \Xi_{M+1}\Big[\int_{\R^2}   \grad_\xi^\perp \Phi^{(M),E}_{2,a} \big(\xi+\ve^{-1}\al_{a}^{(M)}\eo\big) \Omega^{(M),NS}_{1,a} (\xi) d\xi\Big] \\
 &\quad- \nu \Xi_{M+1}\Big[\int_{\R^2}   \grad_\xi^\perp \Phi^{(M),NS}_{2,a} \big(\xi+\ve^{-1}\al_{a}^{(M)}\eo\big) \Omega^{(M),E}_{1,a} (\xi) d\xi\Big] + \cO(\nu^2).
    \end{split}
\end{equation}
By using the above equality, we can    uniquely determine $\dot{\theta}_M$ and $ \dot{\al}_M^{NS}$
by $(\Omega_a^{(M)}, \dot{\al}_a^{(M)}, \dot{\theta}_a^{(M)})$ so that
\begin{equation}\label{S4eq2}
   \nu \Xi_{M+1}\big[
      M[{\rm R}_1[\Omega_a^{(M)}, \dot{\al}_{a}^{(M+1)}, \dot{\theta}_{a}^{(M+1)}]]\big]  = \cO(\nu^2 ),
\end{equation}

On the other hand, we observe from \eqref{eq 4.10} that
\begin{equation*}\label{eq 4.11}
\begin{split}
    &\nu
       M[{\rm R}_i[\Omega_a^{(M)}, \dot{\al}_{a}^{(M+1)}, \dot{\theta}_{a}^{(M+1)}]] = \ve^{M+1}
           M[\cH^i_0]+ \nu\ve^{M+1}
           M[\cH^i_1]  +\cO(\ve^{M+2} + \nu^2 \ve^2),
\end{split}
\end{equation*}
which along with \eqref{S4eq2} and Lemma \ref{Lem 4.1} ensures that
\begin{equation}\label{eq 4.12}
    M[\cH^i_0] = M[\cH^i_1]=0 \quad \text{for} \ i=1,2.
\end{equation}

 It remains to show $\dot{\al}_M^{NS}=\cO(1)$. Indeed,  using assumption (a4), we have
\begin{equation*}
    \begin{split}
\int_{\R^2}   \p_2 \Phi^{(M),E}_{2,a} \big(\xi+\ve^{-1}\al_{a}^{(M)}\eo\big) \Omega^{(M),E}_{1,a} (\xi) d\xi = 0,
    \end{split}
\end{equation*}
from which and \eqref{S4eq3}, we deduce that $\dot{\al}_M^{NS}=\cO(1)$.

\no {\bf Step 2.2. Choice of $\Omega_{i,M+1}$.} In this step, formally we shall choose $\Omega_{i,M+1}$ by solving
\begin{equation}\label{eq 4.14}
    \Bigl(\frac{M+1}{2}-\cL + \frac{\Ga_i}{\nu} \Lam \Bigr) \Omega_{i,M+1} + \nu^{-1} \cH^i_0 + \cH^i_1=0.
\end{equation}
In order to do so, we have already chosen $\dot{\al}_{M},\dot{\theta}_{M}$ so  that \eqref{eq 4.12} holds.
Now in view of the definition of ${\rm R}_i$ and (a2), we have
\begin{equation}\label{eq 3.15a}
    m[\cH_0^i]= m[\cH_1^i]=0 \quad \text{for} \ i=1,2.
\end{equation}
Moreover, thanks to  $\dot{\al}_{k}=\nu \dot{\al}_k^{NS}$, we write
\begin{equation*}\begin{split}
    {\rm R}_i[\Omega_a^{(M)}, \dot{\al}_{a}^{(M+1)}, \dot{\theta}_{a}^{(M+1)}]
    &=  \frac{1}{\nu} \Big\{ \Phi^{(M),E}_{i,a}(\xi) +\Phi^{(M),E}_{\is,a}\big(\xi+\ka_i \ve^{-1}\al_a^{(M+1)}\eo\big) +  \frac{ \ve^2}{2}\dot{\theta}_{a}^{(M+1),E}|\xi|^2  \\
&\qquad \quad  + \frac{\ve\ka_i\Ga_\is  }{\Ga} \dot{\theta}_{a}^{(M+1),E} \al_{a}^{(M+1)} \xi_1, \  \Omega^{(M),E}_{i,a}  \Big\} + \cO_\cZ(1),
\end{split}
\end{equation*}
which together with the fact that $\Omega_{i,a}^{(M),E}$, $\Phi_{i,a}^{(M),E}$, $|\xi|^2$ and $\xi_1$ are all $\xi_2$-even functions, implies that
$$  \cH_0^i \ \text{is a $\xi_2$-odd function, in particular,}\ \cP_0 \cH_0=0.  $$

Now we set
\begin{equation}\label{eq 4.16}
    \Omega_{i,M+1} = \Omega_{i,M+1}^{E,0} + \Omega_{i,M+1}^{E,1} + \nu \Omega_{i,M+1}^{NS},
\end{equation}
where $\Omega_{i,M+1}^{E,0}, \Omega_{i,M+1}^{E,1}, \Omega_{i,M+1}^{NS}$ are defined via
\begin{align*}
   &\Bigl(\frac{M+1}{2}-\cL\Bigr) \Omega_{i,M+1}^{E,0} + \cP_0 \cH^i_1=0,\quad  \Ga_i \Lam \Omega_{i,M+1}^{E,1} + \cH_0^i=0 ,\\
    &\Ga_i \Lam \Omega_{i,M+1}^{NS}
 + (1-\cP_0)\cH_1^i + \Bigl(\frac{M+1}{2}-\cL\Bigr)\Omega_{i,M+1}^{E,1}=0.
 \end{align*}
 Then we deduce from \eqref{eq 4.12}, \eqref{eq 3.15a} and Proposition \ref{Prop 3.2}  that the above equations have a unique solution $\Omega^{E,0}_{i,M+1},\Omega^{E,1}_{i,M+1}, \Omega^{NS}_{i,M+1}\in \cZ$  which have  finite Fourier series expansion with respect to variable $\vartheta$, and satisfy
\begin{equation*}
    \Omega_{i,M+1}^{E}\eqdefa\Omega_{i,M+1}^{E,0} + \Omega_{i,M+1}^{E,1} \ \text{is a $\xi_2$-even function},
\end{equation*}
and
\begin{equation*}
   m[\Omega_{i,M+1}]=0, \quad  M[\Omega_{i,M+1}] = 0 \ \text{for } \ i=1,2.
\end{equation*}

Finally by inserting \eqref{eq 4.10} and \eqref{eq 4.16}  into \eqref{eq 4.9}, we find
\begin{equation*}
{\rm R}_i[\Omega_{a}^{(M+1)}, \dot{\al}_{a}^{(M+1)}, \dot{\theta}_{a}^{(M+1)}] =\cO_\cZ(\nu^{-1}\ve^{M+2}+\nu \ve^2),
\end{equation*}
and  (a1)-(a6) are all satisfied by our approximate solutions $(\Omega_{a}^{(M+1)}, \dot{\al}_{a}^{(M+1)}, \dot{\theta}_{a}^{(M+1)}).$ This completes the proof of Proposition \ref{Prop 4.2}.
\end{proof}

\begin{Lemma}\label{Lem 4.3}
    For $M\geq 2$ and the approximate solutions constructed in Proposition \ref{Prop 4.2}, we have
    \begin{equation}
\dot{\theta}_1= \dot{\al}_1=0, \quad
    \Omega^{E}_{i,2}= \Ga_\is \overline{\Omega}^{E}_{2}(\rho) \cos(2\vartheta), \quad
    \Omega^{NS}_{i,2}= \frac{\Ga_\is}{\Ga_i} \overline{\Omega}^{NS}_{2}(\rho) \sin(2\vartheta),
    \end{equation}
where $\overline{\Omega}^E_2(|\xi|)>0,$ $\overline{\Omega}^{NS}_2(|\xi|) $ are two universal functions in $\cY_0$.
\end{Lemma}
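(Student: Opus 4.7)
\emph{Proof proposal.} The plan is to specialize the induction step of Proposition~\ref{Prop 4.2} from $M=1$ to $M=2$, building on the data $\Omega^E_{i,0}=\Ga_i G,$ $\Omega_{i,1}=0,$ $\dot{\theta}_0=-\Ga/(2\pi),$ $\dot{\al}_0=0$ obtained in Step~1 of that proof (so $\ala^{(2)}=1+\cO(\ve^3)$), and to read off $\dot{\theta}_1,$ $\dot{\al}_1,$ $\Omega_{i,2}$ from the solvability conditions \eqref{eq 4.14}--\eqref{eq 4.16}. I would Taylor expand ${\rm R}_i[\Omega_a^{(1)},\dot{\al}_a^{(2)},\dot{\theta}_a^{(2)}]$ at order $\ve^2$ and split it as $\nu^{-1}\ve^2\cH_0^i+\ve^2\cH_1^i+\cO_\cZ(\nu^{-1}\ve^3+\nu\ve^2)$ as in \eqref{eq 4.10}. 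By Lemma~\ref{Lem 3.4}, the $\ve^2$--coefficient inside the Poisson bracket is made of three pieces: the interaction potential $-\Ga_\is Q^c_2(\xi)/(4\pi)$, the centrifugal term $-\Ga|\xi|^2/(4\pi)$, and the translational correction $(\ka_i\Ga_\is/\Ga)\dot{\theta}_1^E\xi_1$ (the $\ve^1$ contributions cancel thanks to $\dot{\theta}_0=-\Ga/(2\pi)$). Taking the bracket with $\Ga_i G$ via $\{Q^c_2,G\}=-Q^s_2 G,$ $\{|\xi|^2,G\}=0,$ $\{\xi_1,G\}=\p_2 G,$ $\{\xi_2,G\}=-\p_1 G$ yields
\begin{equation*}
\cH_0^i=\frac{\Ga_i\Ga_\is}{4\pi}Q^s_2\,G+\frac{\Ga_i\ka_i\Ga_\is}{\Ga}\dot{\theta}_1^E\,\p_2 G,\qquad \cH_1^i=\frac{\Ga_i\ka_i\Ga_\is}{\Ga}\bigl(\dot{\theta}_1^{NS}\p_2 G-\dot{\al}_1^{NS}\p_1 G\bigr),
\end{equation*}
where $\cH_1^i$ arises solely from the NS pieces $\nu\dot{\theta}_1^{NS}$ and $\nu\dot{\al}_1^{NS}$ hidden in $\dot{\theta}_a^{(2)},\dot{\al}_a^{(2)}$.

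Next I would impose \eqref{eq 4.12}. Since $M[Q^s_2\,G]=0$ by parity and $M[\p_2 G]=(0,-1)^T,$ the condition $M[\cH_0^i]=0$ forces $\dot{\theta}_1^E=0$. The $\cY_1'$--solvability (Proposition~\ref{Prop 3.2}(4)) of $\Ga_i\Lam\Omega^{NS}_{i,2}=-\cH_1^i-(1-\cL)\Omega^{E,1}_{i,2}$ demands $\int\xi_j\cH_1^i\,d\xi=0$ for $j=1,2$; combined with $\int\xi_j\p_k G\,d\xi=-\de_{jk},$ this yields $\dot{\theta}_1^{NS}=\dot{\al}_1^{NS}=0$. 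Hence $\dot{\theta}_1=\dot{\al}_1=0$ and $\cH_1^i\equiv 0$. In the splitting \eqref{eq 4.16} we then find $\Omega^{E,0}_{i,2}=0$ (since $1\notin\sig(\cL)$), while $\Ga_i\Lam\Omega^{E,1}_{i,2}=-\Ga_i\Ga_\is Q^s_2 G/(4\pi)$ is solved by Proposition~\ref{Prop 3.2}(4) with $n=2$ and $b(\rho)=-\Ga_\is\rho^2 G(\rho)/(4\pi),$ producing $\Omega^{E,1}_{i,2}=\Ga_\is\overline{\Omega}^E_2(\rho)\cos(2\vartheta)$ for a universal $\overline{\Omega}^E_2$. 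Substituting this into $\Ga_i\Lam\Omega^{NS}_{i,2}=-(1-\cL)\Omega^{E,1}_{i,2}$ and reapplying Proposition~\ref{Prop 3.2}(4) (the right-hand side remains in the $\cos(2\vartheta)$ sector of $\cY_2$) gives $\Omega^{NS}_{i,2}=(\Ga_\is/\Ga_i)\overline{\Omega}^{NS}_2(\rho)\sin(2\vartheta)$.

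The one non-algebraic point, which I expect to be the main obstacle, is the strict positivity $\overline{\Omega}^E_2(\rho)>0$. In formula \eqref{eq 3.1} with $n=2$ and the above $b,$ the explicit term $-\frac{2\pi\rho^2}{2(1-e^{-\rho^2/4})}b(\rho)=\rho^4 G(\rho)/(4(1-e^{-\rho^2/4}))$ is manifestly positive, so it suffices to show $\vphi\leq 0$ for the auxiliary radial function $\vphi$ solving \eqref{eq 3.2}. Since the source there is strictly negative, I would obtain this via a one-dimensional maximum principle once one checks that the Schr\"odinger-type potential $4/\rho^2-\tfrac{\rho^2/4}{e^{\rho^2/4}-1}$ is nonnegative on $(0,\infty),$ which reduces to the elementary inequality $e^u-1\geq u^2$ for $u=\rho^2/4\geq 0$.
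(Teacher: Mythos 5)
Your proposal is correct and follows essentially the same route as the paper's proof: specialize the $M=1\to M=2$ induction step of Proposition~\ref{Prop 4.2}, use the expansion \eqref{eq 4.5} to identify the $\ve^2$ source, observe that the centrifugal bracket $\{|\xi|^2,G\}$ vanishes, read off $\cH_0^i=-\frac{\Ga_1\Ga_2}{4\pi}\{Q^c_2,G\}=\frac{\Ga_i\Ga_\is}{4\pi}Q^s_2 G$ and $\cH_1^i=0$, then invert $\Lam$ via Proposition~\ref{Prop 3.2}(4) in the $n=2$ sector to get the $\cos(2\vartheta)/\sin(2\vartheta)$ structure, and finally obtain the sign of $\overline\Omega^E_2$ by a comparison principle using the positivity of the Schr\"odinger potential. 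The only (harmless) difference is stylistic: you derive $\dot\theta_1=\dot\al_1=0$ explicitly from the moment constraints $M[\cH_0^i]=M[\cH_1^i]=0$, whereas the paper simply posits this choice and confirms consistency, relying on the uniqueness built into the induction; and you spell out the maximum-principle step and the elementary inequality $e^u-1\ge u^2$, which the paper compresses into the phrase ``comparison principle.''
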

\begin{proof}
We inherit the notations in the proof of Proposition \ref{Prop 4.2}, and  take  $M=2$ in \eqref{eq 4.4} to get
\begin{equation*}
    \Omega_{i,a}^{(2)} = \Ga_i G + \ve^2 \Omega_{i,2}, \quad \dot{\al}_{a}^{(2)} = \ve \dot{\al}_1, \quad \dot{\theta}_{a}^{(2)}=-\frac{\Ga}{2\pi } + \ve \dot{\theta}_1, \quad {\al}_{a}^{(2)}= 1+ \frac{2\dot{\al}_1}{3\nu} \ve^3.
\end{equation*}
Then in view of \eqref{S4eq1}, we have
\begin{equation*}
\begin{split}
    {\rm R}_i[\Ga_i G,\dot{\al}_{a}^{(2)},\dot{\theta}_{a}^{(2)}]
    &= \frac{1}{\nu}\Bigr\{\Ga_i \Upsilon + \Ga_{\is} \Upsilon\big(\xi+\ka_i \ve^{-1}\al_{a}^{(2)}\eo\big) \\
    &\qquad \quad   + \frac{\ve^2 }{2} \dot{\theta}_{a}^{(2)} |\xi|^2 + \frac{ \ve \ka_i \Ga_{\is} }{\Ga} \big(\dot{\theta}_{a}^{(2)}\al_{a}^{(2)} \xi_1+ \dot{\al}_{a}^{(2)} \xi_2\big), \ \Ga_i G \Bigr\}\\
    &= \frac{\Ga_1 \Ga_2}{\nu} \Bigl\{   \Upsilon\big(\xi+\ka_i \ve^{-1}\al_{a}^{(2)}\eo\big)  + \frac{ \ve\ka_i  }{\Ga} \big(\dot{\theta}_{a}^{(2)}\al_a^{(2)} \xi_1+ \dot{\al}_{a}^{(2)} \xi_2\big),\   G \Bigr\}.
\end{split}
\end{equation*}
By inserting \eqref{eq 4.5} into above  equality and setting $\dot{\theta}_1=\dot{\al}_1=0$, we obtain
\begin{equation*}
    {\rm R}_i[\Ga_i G,\dot{\al}_{a}^{(2)},\dot{\theta}_{a}^{(2)}] = -\nu^{-1}\ve^2\frac{\Ga_1\Ga_2}{4\pi} \lbk Q^c_2, \, G  \rbk + \cO_{\cZ}(\nu^{-1}\ve^3),
\end{equation*}
which implies $
    \cH^i_0= -\frac{\Ga_1\Ga_2}{4\pi} \lbk Q^c_2, \, G  \rbk$ and $ \cH^i_1=0$. Thus by our strategy of construction of the approximate
    solutions, we get
\begin{equation*}
    \begin{split}
  &  \Omega_{i,2}= \Omega^E_{i,2} + \nu \Omega^{NS}_{i,2} \quad \text{with} \ \Lam \Omega^E_{i,2} =\frac{\Ga_\is}{4\pi} \{ Q^c_2, G\}
 \andf  \Lam\Omega^{NS}_{i,2}= \frac{1}{\Ga_i} (\cL-1) \Omega^E_{i,2}.
    \end{split}
\end{equation*}

Noticing
$ \frac{4}{\rho^2} - \frac{\rho^2/4}{ e^{\rho^2/4}-1} >0$,
we deduce from  \eqref{eq 3.1}, \eqref{eq 3.2} and comparison principle that the equations have a  unique $\overline{\Omega}^{E}_{2}(\rho)>0$ and $\overline{\Omega}^{NS}_{2}(\rho)$ so that
\begin{equation*}
   \Lam\left[\overline{\Omega}^{E}_{2}(\rho) \cos(2\vartheta)\right] =\frac{1}{4\pi} \{ Q^c_2, G\} = -\frac{\rho^2}{16\pi^2}e^{-\rho^2/4} \sin(2\vartheta),
\end{equation*}
and
\begin{equation*}
    \Lam\left[\overline{\Omega}^{NS}_{2}(\rho) \sin(2\vartheta)\right]=(\cL-1) \left[\overline{\Omega}^{E}_{2}(\rho) \cos(2\vartheta)\right].
\end{equation*}
Then we have $
    \Omega^{E}_{i,2}= \Ga_\is \overline{\Omega}^{E}_{2}(\rho) \cos(2\vartheta)$ and $
    \Omega^{NS}_{i,2}= \frac{\Ga_\is}{\Ga_i} \overline{\Omega}^{NS}_{2}(\rho) \sin(2\vartheta)$, which completes the proof of Lemma \ref{Lem 4.3}.
\end{proof}

\begin{Remark}\label{Rmk 4.4}
  Indeed  by more detailed  calculations, we have
  \begin{equation}\begin{split}
    \dot{\theta}^E_a &= -\frac{\Ga}{2\pi} - \ve^4  \frac{\Ga(\Ga_1^2+\Ga_2^2)}{2\pi \Ga_1\Ga_2} \int_{\R^2} (\xi_1^2-\xi_2^2) \overline{\Omega}^E_2(\rho) \cos(2\vartheta) d\xi +\cO(\ve^5) \\
    &= -\frac{\Ga}{2\pi } - \ve^4  \frac{\Ga(\Ga_1^2+\Ga_2^2)}{2  \Ga_1\Ga_2} \int_0^\oo  \overline{\Omega}^E_2\rho^3 d\rho +\cO(\ve^5),
    \end{split}
  \end{equation}
which coincides with the result in \cite{DG24}, which we shall not pursue here and thus
 omit the details.
\end{Remark}

\subsection{Functional relationship of  the Eulerian approximation}
By our construction in Proposition \ref{Prop 4.2}, we have for any $M\geq 1$,
\begin{equation}\label{eq 4.19}
    \Omega^{(M),E}_{i,a}= \sum_{k=0}^M \ve^k\Omega^E_{i,k},\quad \Phi^{(M),E}_{i,a}= \sum_{k=0}^M \ve^k\Phi^E_{i,k}, \quad \dot{\theta}^{(M),E}_a= \sum_{k=0}^{M-1}\ve^k \dot{\theta}^E_k, \quad  \dot{\al}_a^{(M)}= \sum_{k=0}^{M-1}\nu \ve^k \dot{\al}_k^{NS}.
\end{equation}
which gives
\begin{equation}\label{eq 4.20}
    \begin{split}
\al_a^{(M)}=1+ \sum_{k=2}^{M+1} \frac{2 }{k}\ve^k \dot{\al}^{NS}_{k-2}.
    \end{split}
\end{equation}
We introduce
\begin{equation}\label{eq 4.21}
\begin{split}
    \Psi_{i,a}^{(M),E} &\eqdefa \Phi_{i,a}^{(M),E}(\xi) +\Phi_{\is,a}^{(M),E}\big(\xi+  \ka_i\ve^{-1}\al_a^{(M)}\eo\big) + \frac{ \ve^2}{2}\dot{\theta}^{(M),E}_a|\xi|^2    \\
    &\quad + \ve\frac{\ka_i\Ga_\is}{\Ga} \dot{\theta}_a^{(M),E} \al_a^{(M)}\xi_1- \frac{\Ga_{\is}}{2\pi}\log\bigl|\ve/\al_a^{(M)}\bigr| \\
    &\eqdefa \sum_{k=0}^M \ve^k \Psi^E_{i,k} + \cO_{\cS_\star}(\ve^{M+1}),
\end{split}
\end{equation}
we shall  study in this subsection the functional relationship between $\Omega_{i,a}^E$ and $\Psi_{i,a}^E$.

\begin{Proposition}\label{Prop 4.5}
 {\sl   Let $M\geq 2$. There exist $F_i^{(M)}\in \cK$ of the form
$ F_i^{(M)} = \sum_{k=0}^M \ve^k F_{i,k},$
so that
\begin{equation}\label{eq 4.22}
    \Pi_M\big[\Psi^{(M),E}_{i,a} + F_i^{(M)}(\Omega_{i,a}^{(M),E})\big] =0.
\end{equation}}
\end{Proposition}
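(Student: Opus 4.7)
The plan is to prove Proposition \ref{Prop 4.5} by induction on $k = 0, 1, \dots, M$, constructing $F_{i,k}$ at each step so that
\[
\Pi_k\Bigl[\Psi_{i,a}^{(M),E} + \sum_{j=0}^{k} \varepsilon^j F_{i,j}(\Omega_{i,a}^{(M),E})\Bigr] = 0.
\]
The structural input driving the induction is the Eulerian constraint
\[
\bigl\{\Psi_{i,a}^{(M),E},\ \Omega_{i,a}^{(M),E}\bigr\} = \mathcal{O}_{\cS_\star}(\varepsilon^{M+1}),
\]
which follows from extracting the $\nu^{-1}$-order contribution to the residual bound $\mathrm{R}_i = \mathcal{O}_\cZ(\nu^{-1}\varepsilon^{M+1} + \nu\varepsilon^2)$ from Proposition \ref{Prop 4.2}, after noting that the terms omitted in (\ref{eq 4.21}) relative to (\ref{S4eq1}) are either constants (the log-term), produce $\mathcal{O}(\nu)$ contributions via $\dot\al = \nu \dot\al^{NS}$, or have identically vanishing Poisson bracket with $\Omega_{i,a}^{(M),E}$.

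For the base case $k=0$: since $\Omega_{i,0}^E = \Ga_i G$ is radial, and by Lemma \ref{Lem 3.4} the shifted stream function $\Phi_{\is,0}(\xi + \ka_i\varepsilon^{-1}\eo)$ contributes (after subtracting the isolated log singularity) only a constant at order $\varepsilon^0$, the function $\Psi_{i,0}^E = \Ga_i\Upsilon + c_{i,0}$ is radial and lies in $\cS_\star$. Using that $\rho \mapsto G(\rho) = \tfrac{1}{4\pi} e^{-\rho^2/4}$ is a smooth strictly decreasing bijection onto $(0,1/(4\pi)]$, define $F_{i,0}$ on this interval via $F_{i,0}(\Ga_i G(\rho)) := -\Psi_{i,0}^E(\rho)$, and extend smoothly to $\R^+$; the polynomial growth of $\Upsilon$ in $|\xi|$ translates to logarithmic growth of $F_{i,0}$ as its argument tends to $0$, which is compatible with $F_{i,0}(G)\in\cS_\star$ and hence $F_{i,0}\in\cK$.

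For the inductive step, suppose $F_{i,0},\dots,F_{i,k-1}$ are in hand, and set
\[
\widetilde{\Psi}_i^{(k)} \eqdef \Psi_{i,a}^{(M),E} + \sum_{j=0}^{k-1}\varepsilon^j F_{i,j}(\Omega_{i,a}^{(M),E}),
\]
so that by the inductive hypothesis $\widetilde{\Psi}_i^{(k)} = \varepsilon^k\widetilde{\Psi}_{i,k} + \mathcal{O}_{\cS_\star}(\varepsilon^{k+1})$ for some $\widetilde{\Psi}_{i,k}\in\cS_\star$ independent of $\varepsilon$. The crucial identity is that the Poisson bracket $\{F(X),X\}$ vanishes identically by the chain rule, so
\[
\bigl\{\widetilde{\Psi}_i^{(k)},\, \Omega_{i,a}^{(M),E}\bigr\} = \bigl\{\Psi_{i,a}^{(M),E},\, \Omega_{i,a}^{(M),E}\bigr\} = \mathcal{O}_{\cS_\star}(\varepsilon^{M+1}).
\]
Extracting the $\varepsilon^k$-coefficient (valid because $k\leq M$) and using $\Omega_{i,0}^E = \Ga_i G$ yields $\Ga_i\{\widetilde{\Psi}_{i,k}, G\} = 0$. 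Since $\partial_\rho G \neq 0$ on $\{\rho>0\}$ and $\{f,G\} = -\tfrac{1}{\rho}\partial_\vartheta f \cdot \partial_\rho G$, this forces $\partial_\vartheta \widetilde{\Psi}_{i,k} \equiv 0$, so $\widetilde{\Psi}_{i,k}$ is radial on $\R^2\setminus\{0\}$ and, by smoothness, on all of $\R^2$. Defining $F_{i,k}$ via $F_{i,k}(\Ga_i G(\rho)) := -\widetilde{\Psi}_{i,k}(\rho)$, extended smoothly to $\R^+$, closes the induction and gives the required $F_i^{(M)} = \sum_{k=0}^M \varepsilon^k F_{i,k}$.

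The main technical obstacle is bookkeeping the regularity under successive application of the formal $\varepsilon$-derivative and the nonlinear composition $F(\Omega)$, in particular verifying that each Taylor coefficient $\widetilde{\Psi}_{i,k}$ truly lies in $\cS_\star$ rather than merely in some weaker space, and hence that $F_{i,k}\in\cK$. This is where the assertions (a4)–(a6) of Proposition \ref{Prop 4.2} — finite Fourier expansion in $\vartheta$, membership in $\cZ$, and uniform $\cO_\cZ(1)$ bounds — are used: composing a smooth $F_{i,j}$ with $\Omega_{i,a}^{(M),E} = \Ga_i G + \mathcal{O}_\cZ(\varepsilon^2)$ preserves these quantitative properties, and shifting via Lemma \ref{Lem 3.4} produces only polynomial-in-$\xi$ corrections. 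A secondary subtlety is that the identity $\Ga_i\{\widetilde{\Psi}_{i,k}, G\} = 0$ must hold exactly (not merely modulo $\mathcal{O}(\varepsilon)$) after extracting the $\varepsilon^k$-coefficient; this is automatic because both sides are analytic in $\varepsilon$ and the order-$k$ projection of an $\mathcal{O}(\varepsilon^{M+1})$ quantity vanishes identically for $k\leq M$.
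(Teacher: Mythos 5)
Your proof is correct and rests on the same structural pillars as the paper's: the Euler residual constraint $\{\Psi^{(M),E}_{i,a},\Omega^{(M),E}_{i,a}\}=\cO_{\cS_\star}(\ve^{M+1})$ inherited from Proposition \ref{Prop 4.2}, the chain-rule identity $\{F(\Omega),\Omega\}=0$, the resulting radiality of the next Taylor coefficient via $\{\widetilde\Psi_{i,k},G\}=0$, and the implicit definition $F_{i,k}(\Ga_i G):=-\widetilde\Psi_{i,k}$. The only real difference is organizational: the paper runs the induction on the approximation order $M$ in lockstep with the construction of $\Omega_a^{(M)}$ (defining $\cH_{M+1}$ and appealing to the equation $\Ga_i\Lam\Omega^{E,1}_{i,M+1}+\cH^i_0=0$), whereas you fix $M$ up front, take the residual bound as given, and induct on the Taylor coefficient $k\leq M$; this decoupling is slightly cleaner but yields the same $F_{i,k}$.
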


\begin{Remark}\label{Rmk 4.6}
    We remark that, since $\Omega^{(M),E}_{i,a} = \Ga_i G+ \cO_{\cS_\star}(\ve^2)$, we only have (see \eqref{eq 4.24})
    $$\left|\Omega^{(M),E}_{i,a}-\Ga_i G\right| \ll  G \quad \text{ for $|\xi|\lesssim \ve^{-\sig_1}$}.$$
 This is the reason why we can only define $F_i(\Omega^{(M),E}_{i,a})$ over $|\xi|\lesssim \ve^{-\sig_1}$ instead of whole $\R^2$. However, $\Pi_M (F_i^{(M)}(\Omega^E_{i,a}))$ is well-defined, since it only involves terms like $\f{d^kF^{(M)}_i}{d\xi^k}(\Ga_i G)\bigl(\Omega^{(M),E}_{i,a}-\Ga_i G\bigr)^k$.
\end{Remark}

\begin{proof} Once again we shall prove
   by induction, and to simplify the notations, we drop in this proof the superscript $(M),E$ and subscripts $i,a$ in \eqref{eq 4.19}-\eqref{eq 4.21}.
Since
$$\Omega_0= \Ga G(\xi) \andf \Psi_0 = \Phi_0 = \Ga \Upsilon, $$
we define (without loss of generality, we assume $\Ga>0$)
\begin{equation*}
    F_0(s) = \frac{\Ga}{4\pi}\left( \ga_E- \Ein\log\left((4\pi s)^{-1} \Ga\right)\right) \quad  \text{for} \ 0 < s \leq \frac{\Ga}{4\pi},
\end{equation*}
from which we have $\Psi_0 + F_0(\Omega_0) =0$.

Now we assume that for $M$-th approximation
\begin{equation*}
    \Psi = \sum_{k=0}^M \ve^k \Psi_k ,\qquad \Omega = \sum_{k=0}^M \ve^k \Omega_k\eqdefa \Omega_0 + \Omega_r,
\end{equation*}
we have found $F=\sum_{k=0}^M \ve^k F_k$ so that
$\Pi_M[\Psi + F(\Omega)]=0$. And for refined $(M+1)$-th approximation, we write
\begin{equation}\label{S4eq4}
    \widetilde{\Psi} = \Psi + \ve^{M+1}\Psi_{M+1} ,\qquad \widetilde{\Omega} = \Omega+ \ve^{M+1}\Omega_{M+1},
\end{equation}
we shall seek $\widetilde{F} = F + \ve^{M+1}F_{M+1}$
 so that $\Pi_{M+1}[\widetilde{\Psi} + \widetilde{F}(\widetilde{\Omega})]=0$. Thanks to $\Pi_M[\Psi + F(\Omega)]=0$, we have
\begin{equation*}
    \Pi_{M+1}[\Psi + F(\Omega)] = \ve^{M+1} \cH_{M+1} \quad \text{for some }\ \cH_{M+1} \in \cS_\star,
\end{equation*}
from which and the fact that for any $N\in\N$,
\begin{equation*}
    \begin{split}
&\quad \Pi_N\{F(\Omega) ,\  \Omega\} = \Pi_N\{F(\Omega_0+\Pi_N \Omega_r),\  \Omega_0+\Pi_N \Omega_r\} \\
&= \Pi_N \Big\{  \sum_{\ell \leq N}  \ve^\ell F_\ell(\Omega_0),\  \Omega_0+\Pi_N \Omega_r\Big\} + \Pi_N \Big\{\sum_{k+\ell \leq N, 1\leq k}  \frac{\ve^\ell}{k!} F^{(k)}_\ell(\Omega_0)(\Pi_N \Omega_r)^k,\  \Omega_0+\Pi_N \Omega_r\Big\} \\
&= \Pi_N \Big\{\sum_{\ell \leq N}  \ve^\ell F_\ell(\Omega_0),\ \Pi_N \Omega_r\Big\} +
\Pi_N \Big\{\sum_{k+\ell \leq N, 1\leq k}  \frac{\ve^\ell}{k!} (\Pi_N \Omega_r)^k,\  F^{(k-1)}_\ell(\Omega_0)\Big\} \\
&\quad +\Pi_N \Big\{\sum_{k+\ell \leq N, 1\leq k}  \frac{\ve^\ell}{(k+1)!} F^{(k)}_\ell(\Omega_0),\  (\Pi_N \Omega_r)^{k+1}\Big\}\\
&=\Pi_N \Big\{  \frac{1}{(N+1)!} F^{(N)}_0(\Omega_0),\  (\Pi_N \Omega_r)^{N+1}\Big\}=0,
    \end{split}
\end{equation*}
we deduce that
\begin{equation}\label{eq 4.23}
\begin{split}
    \Pi_{M+1}\left\{ \Psi, \Omega \right\}&= \Pi_{M+1}\left\{ \Psi + F(\Omega), \Omega \right\} \\
    &= \Pi_{M+1}\left\{ \Pi_{M+1}[\Psi + F(\Omega)], \Omega \right\} = \ve^{M+1} \{ \cH_{M+1}, \ \Omega_0 \}.
\end{split}
\end{equation}
As a result, it follows from \eqref{S4eq4} and the strategy of constructing $(\Omega_{M+1},\dot{\theta}_{M},\dot{\al}_{M})$, that
\begin{equation*}
    0=\Pi_{M+1} \{\widetilde{\Psi},\ \widetilde{\Omega}\} = \Pi_{M+1} \{\Psi,\ \Omega\} + \ve^{M+1}\{\Psi_{M+1},\ \Omega_0\} + \ve^{M+1} \{\Psi_{0},\ \Omega_{M+1}\},
\end{equation*}
 which together with \eqref{eq 4.23} implies
 \begin{equation*}
 \begin{split}
     0&=\{ \cH_{M+1}, \ \Omega_0 \} +  \{\Psi_{M+1},\ \Omega_0\} +  \{\Psi_{0},\ \Omega_{M+1}\} \\
     &= \{ \cH_{M+1}, \ \Omega_0 \} +  \{\Psi_{M+1},\ \Omega_0\} - \{F_{0}(\Omega_0),\ \Omega_{M+1}\} \\
     &= \{ \cH_{M+1}, \ \Omega_0 \} +  \{\Psi_{M+1},\ \Omega_0\} + \{F'_{0}(\Omega_0)\Omega_{M+1},\ \Omega_0 \}\\
     &=\{ \cH_{M+1}+\Psi_{M+1}+F'_{0}(\Omega_0)\Omega_{M+1}, \ \Omega_0 \}.
\end{split}
 \end{equation*}
Hence by defining
\begin{equation*}
  F_{M+1}(\Omega_0)\eqdefa -\left(   \cH_{M+1} + \Psi_{M+1} + F'_0(\Omega_0) \Omega_{M+1} \right),
\end{equation*}
and then  using the definitions of $\widetilde{\Psi}, \widetilde{\Omega}$, $\widetilde{F}$ and Leibniz's chain rule, we find
\begin{equation*}
\begin{split}
    \Pi_{M+1}[\widetilde{\Psi}+\widetilde{F}(\widetilde{\Omega})] &=  \Pi_{M+1}[\Psi+F(\Omega)] + \Pi_{M+1}[\widetilde{\Psi}-\Psi] + \Pi_{M+1}[\widetilde{F}(\widetilde{\Omega})-F(\Omega)]\\
    & = \ve^{M+1}\left(\cH_{M+1} + \Psi_{M+1} + F'_0(\Omega_0) \Omega_{M+1} + F_{M+1}(\Omega_0) \right) =0.
\end{split}
\end{equation*}
 This completes the proof of Proposition \ref{Prop 4.5}.
\end{proof}

Notice that \eqref{eq 4.22} is just a pointwise expansion, it doesn't imply any estimates of the remainder terms in any topology, which will be handled in next proposition. We denote $\chi_{\sig_1}(\xi)\eqdefa \chi(\ve^{\sig_1}|\xi|)$, where $\chi$ is a smooth cut-off function satisfying $\chi(r)=1$ for $r\leq 1/2$ and $\chi(r)=0$ for $r\geq 1$.

\begin{Proposition}\label{Prop 4.7}
    {\sl Let $M\geq 2$, $i=1,2$ and let $F_i^{(M)}$ be determined by  Proposition \ref{Prop 4.5},  there exist $\sig_1>0$, $N\in \N$ depending only on $M$ and constant $C$ depending only on $M,\Ga_2$, so that for $\ve$ small enough and $ |\xi| \leq 4 \ve^{-\sig_1}$, there hold
 \begin{subequations} \label{eqProp 4.7}
\begin{align}
     &|\Omega_{i,a}^{(M)}(\xi) - \Ga_i G(\xi)|  \leq C\ve (1+|\xi|)^N G(\xi), \label{eq 4.24} \\
     &|F_i'(\Omega^{(M),E}_{i,a}) -  W_0(\xi)| \leq C\ve (1+|\xi|)^N W_0(\xi),\label{eq 4.25}\\
     & \Theta_i(\ve,\xi) \chi_{\sig_1}(\xi/4) =\cO_{\cS_\star}(\ve^{M+1})   \label{eq 4.26},
\end{align}
\end{subequations}
where $ \Theta_i(\ve,\xi)\eqdefa \Psi^{(M),E}_{i,a} + F_i^{(M)}(\Omega^{(M),E}_{i,a})$ and  $W_0(\xi)\eqdefa 4|\xi|^{-2}\bigl(e^{|\xi|^2/4}-1\bigr)$.}
\end{Proposition}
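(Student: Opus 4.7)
My plan is to establish the three bounds in order. Estimates (4.24) and (4.25) reduce respectively to Proposition 4.2 and to a short explicit computation, while (4.26) is the substantive part, where the pointwise formal cancellation provided by Proposition 4.5 must be upgraded to a quantitative $\cS_\star$-bound valid on $\mathrm{supp}(\chi_{\sig_1/4})$. Throughout, $\sig_1>0$ will be chosen small depending only on $M$, and the common thread is that on the cutoff region $|\xi|\leq 4\ve^{-\sig_1}$, any polynomial weight $(1+|\xi|)^N$ costs at most $\ve^{-\sig_1 N}$, which can be absorbed by gaining extra $\ve$-powers.

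For (4.24), Proposition 4.2 gives $\Omega_{i,a}^{(M)}-\Ga_i G=\sum_{k=2}^M\ve^k\Omega_{i,k}$ (recall $\Omega_{i,1}=0$), where each $\Omega_{i,k}\in\cZ$ and therefore $|\Omega_{i,k}(\xi)|\leq C_k(1+|\xi|)^{N_k}G(\xi)$. Summing and using $(1+|\xi|)^{N_k}\leq C\ve^{-\sig_1 N_k}$ on the cutoff region, one obtains $|\Omega_{i,a}^{(M)}-\Ga_i G|\leq C\ve(1+|\xi|)^N G$ provided $\sig_1 N_k\leq k-1$ for each $k\leq M$, which is achieved by choosing $\sig_1$ small enough. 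For (4.25), the key computation is $F_{i,0}'(\Ga_i G)=W_0$. Indeed, from $F_{i,0}(s)=\tfrac{\Ga_i}{4\pi}\bigl(\ga_E-\Ein(\log(\Ga_i/(4\pi s)))\bigr)$ and $\Ein'(x)=(1-e^{-x})/x$, the chain rule yields $F_{i,0}'(s)=\tfrac{\Ga_i}{4\pi s}\Ein'(\log(\Ga_i/(4\pi s)))$, and at $s=\Ga_i G(\xi)$ this simplifies to $\tfrac{4(e^{|\xi|^2/4}-1)}{|\xi|^2}=W_0(\xi)$. The deviation is then handled by splitting
\[\bigl(F_i^{(M)}\bigr)'(\Omega_{i,a}^{(M),E})-W_0=\bigl[F_{i,0}'(\Omega_{i,a}^{(M),E})-F_{i,0}'(\Ga_i G)\bigr]+\sum_{k=1}^M\ve^k F_{i,k}'(\Omega_{i,a}^{(M),E}),\]
applying the mean value theorem to the first bracket together with (4.24), and using $F_{i,k}\in\cK$ together with (4.24) to show that each summand is bounded by $\ve$ times a polynomial weight times $W_0$.

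For (4.26), which is the heart of the proof, I would expand both $\Psi_{i,a}^{(M),E}$ and $F_i^{(M)}(\Omega_{i,a}^{(M),E})$ in $\ve$ and compare. Lemma 3.4 expands $\Phi_{\is,a}^{(M),E}(\xi+\ka_i\ve^{-1}\al_a^{(M)}\eo)$ to order $M$ with remainder $\cO_{\cS_\star}(\ve^{M+1})$, making $\Psi_{i,a}^{(M),E}$ an $\ve$-polynomial plus $\cO_{\cS_\star}(\ve^{M+1})$. For the composition, write $\Omega_{i,a}^{(M),E}=\Ga_i G+R$ with $R=\sum_{k=2}^M\ve^k\Omega_{i,k}^E$ and Taylor expand each $F_{i,\ell}$ around $\Ga_i G$ to some order $J$, keeping the integral remainder. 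By (4.24), on $\mathrm{supp}(\chi_{\sig_1/4})$ the argument $\Ga_i G+\tau R$ remains comparable to $\Ga_i G$ for $\tau\in[0,1]$ and $\ve$ small, so each $F_{i,\ell}^{(j)}(\Ga_i G+\tau R)$ and its spatial derivatives enjoy $\cS_\star$-bounds inherited from $F_{i,\ell}\in\cK$. Consequently, the Taylor remainder is $\cO_{\cS_\star}(\ve^{J+1}(1+|\xi|)^{N'(J+1)})$ on the cutoff region, which becomes $\cO_{\cS_\star}(\ve^{M+1})$ once $J$ is chosen large enough that $J+1-\sig_1 N'(J+1)\geq M+1$. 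Assembling the polynomial parts of $\Psi_{i,a}^{(M),E}$ and of the truncated Taylor expansion, Proposition 4.5 guarantees that the coefficients of $\ve^k$ for $k\leq M$ sum to zero identically, while the remaining coefficients (of finite order) are $\cS_\star$-functions of controlled polynomial weight, hence contribute $\cO_{\cS_\star}(\ve^{M+1})$ on $\mathrm{supp}(\chi_{\sig_1/4})$.

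The principal obstacle is the three-parameter interplay in (4.26) between the Taylor order $J$, the polynomial growth exponent $N'$ (which inflates under compositions and repeated spatial differentiation through the chain rule), and the cutoff parameter $\sig_1$. The strategy is to fix $\sig_1$ sufficiently small first, then choose $J$ large enough that the accumulated polynomial loss $\ve^{-\sig_1 N'(J+1)}$ is more than offset by the $\ve^{J+1}$-gain from Taylor truncation, and finally verify that the $\cS_\star$-bounds on compositions and products are uniform in $\tau$ and in derivative order up to the required number. These verifications amount to a standard Faà di Bruno bookkeeping once the parameters are fixed, and close the argument.
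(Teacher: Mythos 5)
Your proposal is substantively correct and captures the structure of the argument, but your treatment of \eqref{eq 4.26} takes a more explicit route than the paper's, and one of your bookkeeping concerns is unnecessary.

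For \eqref{eq 4.24} and \eqref{eq 4.25} you follow essentially the paper's argument: sum the $\cZ$-bounds on $\ve^k\Omega_{i,k}$, compute $F_{i,0}'(\Gamma_i G)=W_0$ (your explicit computation via $\Ein'(x)=(1-e^{-x})/x$ and $u=|\xi|^2/4$ is exactly right), and control the deviation using $F_{i,k}\in\cK$ and the comparability of $\Omega_{i,a}^{(M),E}$ with $\Gamma_i G$ on the cutoff region. One small point: for \eqref{eq 4.24} you do not need any $\sig_1$-absorption $(1+|\xi|)^{N_k}\lesssim\ve^{-\sig_1 N_k}$; since $\ve^k\leq\ve$ for $k\geq 1$, taking $N=\max_k N_k$ gives the bound immediately without any interplay with $\sig_1$.

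For \eqref{eq 4.26} the paper is considerably slicker: since $\Pi_M[\Theta_i]=0$, it writes directly the integral-form Taylor remainder \emph{in $\ve$},
$$\Theta_i(\ve,\xi)=\frac{\ve^{M+1}}{M!}\int_0^1(1-\tau)^M\,\partial_\ve^{M+1}\Theta_i(\tau\ve,\xi)\,d\tau,$$
and then observes that $\partial_\ve^{M+1}\Theta_i(\tau\ve,\cdot)$ is $\cO_{\cS_\star}(1)$ uniformly in $\tau\in[0,1]$ on the cutoff region, by $\Omega_k\in\cZ$, $\Psi_k\in\cS_\star$, $F_k\in\cK$ and \eqref{eq 4.24}. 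Your approach instead expands $\Psi_{i,a}^{(M),E}$ via Lemma \ref{Lem 3.4}, Taylor-expands $F_{i,\ell}$ around $\Gamma_i G$ to finite order $J$, and then invokes Proposition \ref{Prop 4.5} to cancel the low-order $\ve$-coefficients. This is correct and in essence unpacks what $\partial_\ve^{M+1}\Theta_i$ looks like term-by-term, so the two routes are genuinely equivalent; yours buys transparency at the cost of length, while the paper buys brevity at the cost of deferring the uniform control of $\partial_\ve^{M+1}\Theta_i$ to the reader. One genuine inefficiency in your argument: the ``three-parameter interplay'' between $J$, $N'$, $\sig_1$ that you worry about is not needed. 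The target class $\cO_{\cS_\star}(\ve^{M+1})$ permits \emph{any} polynomial weight $(1+|\xi|)^N$ with $C,N$ independent of $\ve$, so once $J\geq M$ (indeed $J\geq\lceil(M-1)/2\rceil$ suffices, since $R=\Omega_{i,a}^{(M),E}-\Gamma_iG=\cO_\cZ(\ve^2)$), the Taylor remainder is already $\cO_{\cS_\star}(\ve^{M+1})$; no offsetting of polynomial factors by extra $\ve$-powers on the cutoff region is required.
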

\begin{proof} Once again,
we  drop the superscripts $(M),E$ and subscripts $i,a$ in this proof. Since $\Omega_k \in \cZ$, we have
\begin{equation*}
    \ve^k |\Omega_k(\xi)| \leq C_k \ve^k (1+|\xi|)^{N_k} G(\xi) \quad \text{for some $C_k>0$ and $N_k\in \N$ }.
\end{equation*}
As a result, it comes out
\begin{equation}\label{eq 4.27}
    |\Omega(\xi) - \Ga G(\xi)| \leq \sum_{k=1}^M \ve^k |\Omega_k(\xi)| \leq C \ve (1+|\xi|)^{N} G(\xi),
\end{equation}
which leads to \eqref{eq 4.24} and in particular implies that $\Theta(\ve,\xi)$ and $F(\Omega)$ are well-defined and smooth over $|\xi| \leq 4 \ve^{-\sig_1}$ when $\sig_1 \ll 1$.

Notice that for  $F\in\cK$,  $G^\ell F^{(\ell)}\in \cK$ for any $\ell \in \N$, we then  deduce from  $F_k\in \cK$ and  \eqref{eq 4.27} that
\begin{equation*}
\begin{split}
  |F'(\Omega) - W_0(\xi)|&= |F'(\Omega) - F'_0(\Ga G)| \\
  &\leq \sum_{k=1}^M |\ve^k F_k'(\Omega)| + |F'_0(\Omega)- F'_0(\Ga G)| \lesssim C\ve (1+|\xi|)^{N'} W_0(\xi),
\end{split}
\end{equation*}
which gives \eqref{eq 4.25}.

 Now for \eqref{eq 4.26}, by using $\Pi_M\Theta=0$ and Taylor expansion, we find
\begin{equation*}
    \Theta(\ve,\xi) =\frac{\ve^{M+1}}{M!} \int_0^1 (1-\tau)^M \p_\ve^{M+1} \Theta(\tau\ve,\xi) d\tau \quad \text{for} \ |\xi| \leq  2 \ve^{-\sig_1},
\end{equation*}
which along with  \eqref{eq 4.27}, $\Omega_k \in \cZ$, $\Psi_k \in \cS_\star$ and $F_k\in \cK$, gives rise to \eqref{eq 4.26} and thus  we  complete the proof of Proposition \ref{Prop 4.7}.
\end{proof}

\renewcommand{\theequation}{\thesection.\arabic{equation}}
\setcounter{equation}{0}
\section{Analysis of the linearized equation}\label{sect4}

\subsection{derivation of the perturbed equation}
Given $(\Omega_a^{(M)}, \dot{\al}_a^{(M)}, \dot{\theta}^{(M)}_a)$ constructed in the previous section with  ${\rm R}_i[\Omega_a^{(M)}, \dot{\al}_a^{(M)}, \dot{\theta}_a^{(M)}]=\cO_{\cZ}(\nu^{-1}\ve^{M+1}+\nu\ve^2)$, we set
\begin{equation}\label{eq 5.1}
\begin{split}
\Omega_i = \Omega_a^{(M)} + \omega_i, \quad \Phi_i =\Phi_{i,a}^{(M)} + \phi_i , \quad \dot{\theta}= \dot{\theta}_a^{(M)}
 +  \thpp,  \quad \dot{\al} = \dot{\al}_a^{(M)} + \dot{\al}_p,
\end{split}
\end{equation}
where the modulation parameters $\dot{\theta}_p$ and $\dot{\al}_p$ remain to be determined.
Then by virtue of Proposition \ref{Prop 3.1} and the properties of $\Omega_{i,a}$, one has
\begin{equation}\label{eq 5.1a}
    m[\om_i]=0 \andf M[\om_1]+M[\om_2]=0   \quad \text{for}\ i=1,2.
\end{equation}

 For simplicity, we neglect the superscript
$(M)$ below.
Motivated by \cite{GS24-1,DG24}, a primary consideration is to choose $\dot{\theta}_p, \dot{\al}_p$ so that $M[\om_i]\equiv 0$ for $t>0$, thereby recovering the coercivity of the energy functional $E_\ve$ defined in \eqref{eq 5.10}. However, as already emphasized in the introduction, the orthogonal condition
\begin{equation}\label{eq 5.2}
   \int_{\R^2}\xi_1\om_i \, d\xi=\int_{\R^2}\xi_2 \om_i \, d\xi  =0
\end{equation}
exhibits incompatibility with the linearized equation around $\Omega_a$ and  allows an ``$\cO(\ve)$ perturbation"(see subsection \ref{subsec. coercivity}). More precisely, \eqref{eq 5.2} can be replaced by
\begin{equation*}
    \int_{\R^2}(\xi_1+\ve P_1) \om_i \, d\xi=\int_{\R^2}(\xi_2+\ve P_2) \om_i \, d\xi  =0 , \quad \text{for some $P_1,P_2\in \cS_\star$},
\end{equation*}
which still keeps the coercivity of $E_\ve$. Seeking suitable $P_1,P_2$ compatibility with the linearized equation  is the key imgredient in this paper, and this will be carried out in subsection \ref{subsec. coercivity}.

For a concise presentation, we  bypass this step for the time being and set $\al_p=0$, then we get, by inserting \eqref{eq 5.1} into \eqref{eq 2.7}, that
\begin{equation}\label{eq 5.3}
\begin{split}
    &(t\p_t - \cL) \om +  \frac{1}{\nu} \Lam^E \om + \frac{1}{\nu} \frac{\ve }{\Ga} \al_a \thpp \{\hat{X}, \Omega^E_{a}\}_V  + \Lam^{NS}\om + \frac{\ve }{\Ga} \al_a \thpp \{\hat{X}, \Omega^{NS}_a\}_V \\
    &= -{\rm R}_a - \frac{1}{\nu} \{\cB_a \om , \om\}_V - \frac{1}{\nu}\frac{\ve }{\Ga} \al_a \thpp \{\hat{X}, \om\}_V.
\end{split}
\end{equation}
where we define
\begin{equation}\label{eq 5.4}
    \begin{split}
& {\rm R}_a\eqdefa({\rm R}_1[\Omega_a,\dot{\al}_a,\dot{\theta}_a],\ {\rm R}_2[\Omega_a,\dot{\al}_a,\dot{\theta}_a])^T  = \cO_{\cZ}(\nu^{-1}\ve^{M+1}+\nu\ve^2),  \\
& \Lam^{E} \om \eqdefa \{\Psi^{E}_a, \om \}_V + \{\cB_a \om, \Omega^{E}_a\}_V ,\quad \Lam^{NS} \om \eqdefa \{\Psi^{NS}_a, \om \}_V + \{\cB_a \om, \Omega^{NS}_a\}_V,
\end{split}\end{equation}
with
\begin{equation}\label{eq 5.5}
\begin{split}
& \Omega_{a}\eqdefa \Omega^E_a + \nu \Omega^{NS}_a, \quad  \hat{X}\eqdefa  \xi_1 Y_2+\frac{\ve\Ga }{2 \al_a}|\xi|^2 Y_1\\
&\cB_a\Omega(\xi)\eqdefa \begin{pmatrix}\lap^{-1}_\xi\Omega_1(\xi) + \lap^{-1}_\xi\Omega_2(\xi+\ve^{-1}\ala\eo)\\ \lap^{-1}_\xi\Omega_2(\xi) + \lap^{-1}_\xi\Omega_1(\xi-\ve^{-1}\ala\eo) \end{pmatrix},\\
 & \Psi^{E}_a \eqdefa (\Psi^E_{1,a}, \Psi^E_{2,a})^T  = \cB_a \Omega^{E}_a + \frac{\ve^2}{2} \dot{\theta}^{E}_a |\xi|^2 Y_1 +\frac{\ve }{\Ga}\dot{\theta}^{E}_a \al_a \xi_1 Y_2-\frac{1}{2\pi}\log\big|\ve/\ala\big| (\Ga_2, \Ga_1)^T \\
 &\qquad = \cB_a \Omega^{E}_a + \frac{\ve }{\Ga}\dot{\theta}^{E}_a \al_a \hat{X}-\frac{1}{2\pi}\log\big|\ve/\ala\big| (\Ga_2, \Ga_1)^T ,\\
&\Psi^{NS}_a \eqdefa\cB_a \Omega^{NS}_a + \frac{\ve^2}{2} \dot{\theta}^{NS}_a |\xi|^2 Y_1 + \frac{\ve }{\Ga}(\dot{\theta}^{NS}_a \al_a \xi_1 + \dot{\al}^{NS}_a\xi_2) Y_2   .
    \end{split}
\end{equation}

Momentarily disregarding the precise topology and derivative structures,
and assuming $|\dot{\theta}_p|\thicksim |\omega|$, we observe from
$\Omega_a^{NS}=\cO_{\cZ}(\ve^2)$ and $\Psi_a^{NS}=\cO_{\cS_\star}(\ve^2)$ that
\begin{equation*}
    (t\p_t-\cL)\om + \frac{1}{\nu} \Lam^E\om  + \frac{1}{\nu} \frac{\ve }{\Ga} \al_a \thpp \{\hat{X}, \Omega^E_{a}\}_V + \text{l.s.t.}= -{\rm R}_a + \text{n.l.t.}.
\end{equation*}
Consequently, studying the linearized operator $\Lam^E$ is  the most crucial task.

\subsection{Weight function and energy functional}
Heuristically, in view of Propositions \ref{Prop 4.5} and \ref{Prop 4.7}, we have
\begin{equation*}
    \Lam^E \om= \lbk\Psi^{E}_a, \om \rbk_V + \lbk\cB_a \om, \Omega^{E}_a\rbk_V  \approx \lbk \om \dot\otimes \begin{pmatrix}
        F_1'(\Omega^E_{1,a}) , F_2'(\Omega^E_{2,a})
    \end{pmatrix}^T + \cB_a \om, \ \Omega^{E}_a\rbk_V ,
\end{equation*}
which gives rise to
\begin{equation*}
    \lla \Lam^E \om, \ \om \dot\otimes \begin{pmatrix}
        F_1'(\Omega^E_{1,a}) , F_2'(\Omega^E_{2,a})
    \end{pmatrix}^T + \cB_a \om  \rra_V \approx 0.
\end{equation*}

However, as mentioned in Remark \ref{Rmk 4.6}, $F_i(\Omega^E_{i,a})$ can only be defined in the region $|\xi|\lesssim \ve^{-\sig_1}$ for  some $\sig_1\ll1$. So in this subsection, we will follow the idea in \cite{DG24} and construct weighted functions via truncation techniques to rigorously realize the aforementioned theoretical framework and recover the skew-adjointness of $\Lam^E$.

 Let $\sigma_1$, $\sigma_2$,
and $\gamma$ be three real numbers satisfying
\begin{equation*}
  0 < \sigma_1 \ll 1 , \qquad \sigma_2 \gg 1, \qquad
  \gamma \eqdefa \sigma_1/\sigma_2\ll1.
\end{equation*}
In particular, $\sig_1$ is chosen small enough to ensure the validity of Proposition \ref{Prop 4.7}.

We now decompose $\R^2$ into three disjoint regions:
\begin{align}
  &\label{inner}\tag{Inner}\mathrm{I}_{i,\ve} \eqdefa \bigl\{\xi\in \R^2 :\, |\xi| < 2\ve^{-\sigma_1},
  \, F_i'(\Omega_{i,a}^E) < \exp(\ve^{-2\sigma_1}/4)\bigr\},\hspace{0.8cm}\\
  &\label{intermediate}\tag{Intermediate}\mathrm{II}_{i,\ve} \eqdefa \bigl \{\xi\in \R^2: \,
  \xi \notin \ive, \, |\xi| \leq \ve^{-\sigma_2}\bigr\},\\
  &\label{outer}\tag{Outer}\mathrm{III}_{i,\ve} \eqdefa \bigl\{\xi\in \R^2: \, |\xi| > \ve^{-\sigma_2}\bigr\},
\end{align}
which depend on time through the parameter $\ve(t)$. Our weighted functions are defined via
\begin{equation}\label{eq 5.4a}
W_\ve\eqdefa \begin{pmatrix}
    W_{1,\ve} \\ W_{2,\ve}
\end{pmatrix} \quad  \text{with} \   W_{i,\ve}(\xi) \eqdefa \begin{cases}
  F_i'(\Omega_{i,a}^E) \quad &\text{in } \,\mathrm{I}_{i,\ve}\,,\\[1mm]
  \exp(\ve^{-2\sigma_1}/4) \quad &\text{in } \,\mathrm{II}_{i,\ve}\,,\\[1mm]
  \exp(|\xi|^{2\gamma}/4) \quad &\text{in } \,\mathrm{III}_{i,\ve}\,.
\end{cases}
\end{equation}
For any fixed $\ve > 0$, the weight functions $W_{i,\ve} >0$ is a positive, locally Lipschitz and piecewise
smooth function, but  $\nabla W_{i,\ve}$ has a discontinuity at the boundaries
of the regions $\mathrm{I}_{i,\ve}, \mathrm{II}_{i,\ve}, \mathrm{III}_{i,\ve}$, see Fig.~\ref{fig1}.

\begin{figure}[ht]
  \begin{center}
 \begin{picture}(200,160)
  \put(0,5){\includegraphics[width=1.2\textwidth]{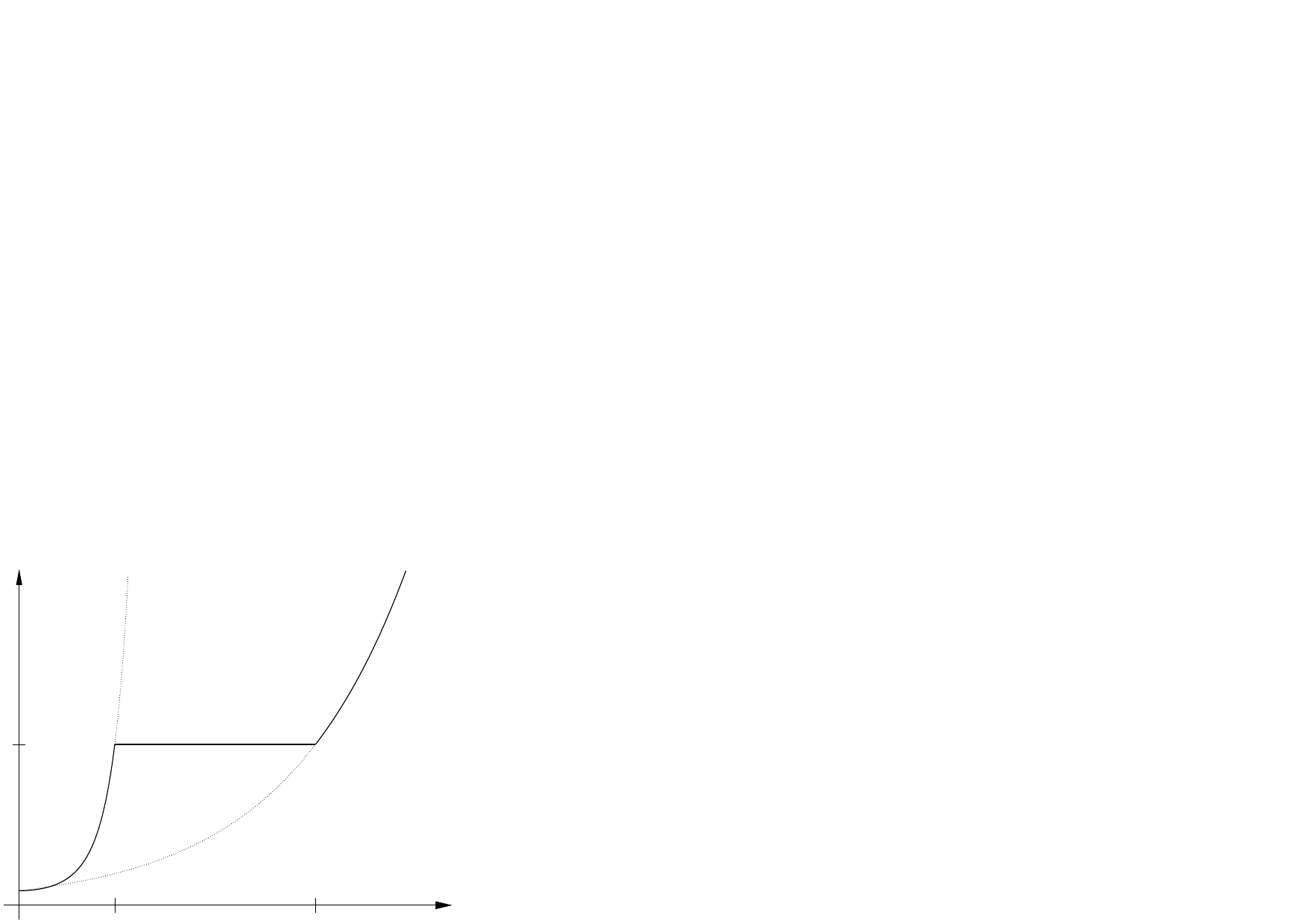}}
  \put(46,17){$\ve^{-\sigma_1}$}
  \put(129,17){$\ve^{-\sigma_2}$}
  \put(170,18){$|\xi|$}
  \put(1,16){$1$}
  \put(-60,77){$\exp(\ve^{-2\sigma_1}/4)$}
  \put(80,85){$W_{i,\ve}(\xi)$}
  \put(56,130){$W_0(\xi)$}
  \put(168,130){$\exp(|\xi|^{2\gamma}/4)$}
  \put(23,0){$\mathrm{I}_{i,\ve}$}
  \put(85,0){$\mathrm{II}_{i,\ve}$}
  \put(150,0){$\mathrm{III}_{i,\ve}$}
  \end{picture}
  \captionsetup{width=0.9  \linewidth}
 \caption{{\small
  In the inner region $\mathrm{I}_{i,\ve}$, the weight is close for
  $\ve > 0$ small to the radially symmetric function $W_0(\xi) = 4|\xi|^{-2}
  \bigl(e^{|\xi|^2/4}-1\bigr)$. It then takes constant values in the intermediate
  region $\mathrm{II}_{i,\ve}$, and grows like $\exp(|\xi|^{2\gamma}/4)$ in
  the outer region $\mathrm{III}_{i,\ve}$. The dashed lines illustrate the bounds
  \eqref{eq 5.8a}, where the constants $C_1, C_2$ are independent of $\ve$. This picture is borrowed from \cite{DG24}. }}\label{fig1}.
  \end{center}
 \end{figure}

\begin{Proposition}\label{Prop 5.1}
  {\sl  Let $\ve,\sig_1\ll1$. Then there hold
\begin{itemize}
    \item[(1)] $\ive$ is diffeomorphic to an open disk  and
\begin{equation}
    \{|\xi|\leq \ve^{-\sig_1}\} \subset \ive \subset \{|\xi|^2 \leq \ve^{-2\sig_1} + \ka |\log\ve|\} \quad \text{for some $\ka>0$}.
\end{equation}
\item[(2)] There exists $C_1,C_2>0$ so that
\begin{equation}\label{eq 5.8a}
    C_1 \exp(|\xi|^{2\ga}/4) \leq \wiv \leq C_2 W_0(\xi).
\end{equation}
\item[(3)] There exists $C_3>0$ depending only on $\sig_1$ and $M$,  so that
\begin{equation}
    |\wiv(\xi)-W_0(\xi)| + |\grad_\xi\wiv(\xi)-\grad_\xi W_0(\xi)|\leq C_3 \ve^{1/2} W_0(\xi), \quad \text{for $\xi \in \ive$}.
\end{equation}
\end{itemize}}
\end{Proposition}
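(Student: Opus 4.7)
The plan is to deduce all three assertions from the pointwise estimate \eqref{eq 4.25} in Proposition \ref{Prop 4.7} together with the explicit asymptotics of $W_0(\xi)=4|\xi|^{-2}(e^{|\xi|^2/4}-1)$. For $|\xi|\leq 4\ve^{-\sigma_1}$, the factor $(1+|\xi|)^N$ in \eqref{eq 4.25} is bounded by $\ve^{-N\sigma_1}$, so by choosing $\sigma_1$ small enough depending on $M$ (namely $\sigma_1<1/(2N)$), \eqref{eq 4.25} sharpens to
\begin{equation*}
  (1-C\ve^{1/2})W_0(\xi)\;\leq\;F_i'(\Omega^E_{i,a})(\xi)\;\leq\;(1+C\ve^{1/2})W_0(\xi), \qquad |\xi|\leq 4\ve^{-\sigma_1},
\end{equation*}
which will be the workhorse throughout.

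For (1), the inclusion $\{|\xi|\leq\ve^{-\sigma_1}\}\subset\ive$ follows since at $|\xi|=\ve^{-\sigma_1}$ the asymptotic $W_0(\xi)\sim 4\ve^{2\sigma_1}e^{\ve^{-2\sigma_1}/4}$ places $F_i'(\Omega^E_{i,a})$ strictly below the threshold $e^{\ve^{-2\sigma_1}/4}$ for small $\ve$. On the boundary level set $\{F_i'(\Omega^E_{i,a})=e^{\ve^{-2\sigma_1}/4}\}$, taking logarithms of the sharpened bound and using $\log W_0(\xi)=|\xi|^2/4-\log(|\xi|^2/4)+O(1)$ yields the implicit relation $|\xi|^2=\ve^{-2\sigma_1}+4\log(|\xi|^2/4)+O(1)$, from which $|\xi|^2\leq\ve^{-2\sigma_1}+\kappa|\log\ve|$ readily follows. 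The diffeomorphism claim is then a consequence of part (3): since the radial derivative of $F_i'(\Omega^E_{i,a})$ is close to that of $W_0$, which is strictly positive in the annulus under consideration, the implicit function theorem shows that the level set is a smooth simple closed curve enclosing a topological disk.

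For (2), I proceed region by region. In $\ive$, both inequalities follow from the sharpened bound above. In $\iive$, the lower bound reduces to $|\xi|^{2\gamma}\leq\ve^{-2\sigma_1}$, i.e.\ $|\xi|\leq\ve^{-\sigma_1/\gamma}=\ve^{-\sigma_2}$, which matches the defining condition of $\iive$ by the choice $\gamma=\sigma_1/\sigma_2$; the upper bound uses monotonicity of $W_0$ together with the defining property of $\iive$ (either $|\xi|\geq 2\ve^{-\sigma_1}$, whence $W_0(\xi)\geq W_0(2\ve^{-\sigma_1})\gg e^{\ve^{-2\sigma_1}/4}$, or $F_i'(\Omega^E_{i,a})\geq e^{\ve^{-2\sigma_1}/4}$, which by the sharpened bound propagates to $W_0$). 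In $\iiive$, the lower bound is tautological, while the upper bound follows since $2\gamma<2$ makes $e^{|\xi|^{2\gamma}/4}/W_0(\xi)$ decay as $|\xi|\to\infty$.

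For (3), the function estimate $|\wiv-W_0|\leq C_3\ve^{1/2}W_0$ in $\ive$ is exactly the sharpened bound. The gradient estimate requires differentiating $\wiv=F_i'(\Omega^E_{i,a})$ via the chain rule and splitting
\begin{equation*}
  \nabla\wiv-\nabla W_0 = F_0''(\Ga_i G)\,\nabla(\Omega^E_{i,a}-\Ga_i G) + \bigl[F_i''(\Omega^E_{i,a})-F_0''(\Ga_i G)\bigr]\nabla\Omega^E_{i,a}.
\end{equation*}
Using $\Omega^E_{i,a}-\Ga_i G=\cO_\cZ(\ve^2)$, the closure property $G^\ell F^{(\ell)}\in\cK$ for $F\in\cK$, and Taylor expansion of $F_i''$ about $\Ga_i G$, each term is bounded by $C\ve^2(1+|\xi|)^{N'}W_0(\xi)$; taking $\sigma_1$ small enough to absorb the polynomial loss into $\ve^{1/2}$ yields the desired estimate. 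The main obstacle is precisely this last step: tracking how the polynomial prefactors $(1+|\xi|)^{N'}$ accumulate through successive derivatives of $F_i$ pins down the admissible range of $\sigma_1$, but presents no essential difficulty since $\sigma_1$ is a free small parameter.
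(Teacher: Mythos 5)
The paper does not actually prove Proposition \ref{Prop 5.1}; after the statement it simply remarks that ``the proof was given in \cite{GS24-1,DG24}.'' So there is no internal argument to compare against, and your self-contained reconstruction from the pointwise bound \eqref{eq 4.25} of Proposition \ref{Prop 4.7} is the natural one and is presumably close to what those references do. Two bookkeeping remarks are worth flagging. First, for part (3) you assert that each term in the gradient decomposition is of size $C\ve^{2}(1+|\xi|)^{N'}W_0$. This holds only because $\Omega^{E}_{i,1}=0$ and $\Psi^{E}_{i,1}=0$, whence $F_{i,1}=0$ in the construction of Proposition \ref{Prop 4.5}, so the $k=1$ contribution to $\sum_{k\geq1}\ve^{k}F_{i,k}''$ is absent; absent that vanishing the second term in your split would only be $\cO(\ve)$, and indeed the paper's own \eqref{eq 4.25} records only $\ve$ rather than $\ve^{2}$. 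Either power absorbs into $\ve^{1/2}$ once $\sigma_1$ is small, so the conclusion is unaffected, but you should state that you are using $F_{i,1}=0$ rather than leaving it implicit. Second, for the diffeomorphism claim in part (1), the implicit function theorem by itself only gives a compact smooth one-manifold, i.e.\ a finite union of circles contained in the annulus $\ve^{-\sigma_1}\le|\xi|\le(\ve^{-2\sigma_1}+\kappa|\log\ve|)^{1/2}$; it is the strict positivity of the radial derivative of $F_i'(\Omega^{E}_{i,a})$ there (inherited from $\partial_r W_0\sim\tfrac{|\xi|}{2}W_0\gg\ve^{1/2}W_0$) that makes $r\mapsto F_i'(\Omega^{E}_{i,a})$ strictly increasing at each angle, hence exhibits $\partial\ive$ as a graph $r=r(\vartheta)$ and rules out multiple components. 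You invoke this positivity but attribute the ``simple closed curve'' conclusion to the IFT; the monotonicity step should be made explicit.
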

These are some elementary properties of $\wiv$, and the proof was given in \cite{GS24-1,DG24}. For scalar function $f$, we define the weighted spaces
\begin{equation}\label{eq 5.7}
  \cX_{i,\ve} \eqdefa \Bigl\{f\in L_\xi^2(\R^2):\, \|f\|_{\cX_{i,\ve}}^2 \eqdefa \int_{\R^2}|f(\xi)|^2
   W_{i,\ve}(\xi) \,d\xi < \infty\Bigr\},
\end{equation}
and
\begin{equation}\label{eq 5.8}
   \|f\|_\Div\eqdefa  \|\varrho_\ve f\|_\civ + \|f\|_\civ + \|\grad_\xi f\|_\civ,
\end{equation}
with \begin{equation}\label{eq 5.9}
    \varrho_\ve(\xi)\eqdefa \begin{cases}
        |\xi|, &  \text{if}\ |\xi|\leq \ve^{-\sig_1}, \\
        \ve^{-\sig_1}, & \text{if}\  \ve^{-\sig_1}\leq  |\xi|\leq \ve^{-\sig_2},\\
        |\xi|^\ga,  & \text{if}\  \ve^{-\sig_2} \leq |\xi|.
    \end{cases}
\end{equation}

And for 2D vector $\om=(\om_1,\om_2)^T$, we define
 \begin{equation}\label{eq 5.10}
  \|\om\|_{\cX_\ve}\eqdefa\sum_{i=1}^2 \|\om_i\|_{\cX_{i,\ve}},\quad  \|\om\|_{\cD_\ve}\eqdefa\sum_{i=1}^2 \|\om_i\|_{\cD_{i,\ve}}, \quad    E_\ve[\om]\eqdefa \f12 \lla\om, \om \dot\otimes W_\ve + \cB_a \om \rra_V,
 \end{equation}
and
\begin{equation}\label{eq 5.11}
    D_\ve[\om]\eqdefa -\f12 \lla  \om, \ \om \dot\otimes (t\p_t W_{\ve})\rra_V - \lla\cL \om, \ \om\dot\otimes W_\ve+ \cB_a \om \rra_V.
\end{equation}

\subsection{Coercivity of $E_{\ve}$ and $D_{\ve}$.}\label{subsec. coercivity}
In this subsection, we present some fundamental
estimates concerning the estimate of Biot-Savart law and the coercivity of $E_{\ve}$ and $D_{\ve}$.
We first recall some results from \cite{DG24}. Throughout this subsection, $f$ is a scalar function.

\begin{Proposition}[Lemmas 4.5, 4.6 in \cite{DG24}]\label{Prop 5.2}
    {\sl   Let $\ve\ll 1$, $q\in (2,\oo)$, $f\in \civ$ and let $\phi=\lap_{\xi}^{-1}f$. Then there exists a universal  constant $C>0$ so that
\begin{align*}
    &\|(1+|\cdot|)^{-1} \phi\|_{L^q_\xi} +  \|\grad_\xi \phi\|_{L^q_\xi} \leq C\|f\|_\civ,  \\
    & \|(1+|\cdot|) \grad_\xi \phi\|_{L^\oo_\xi} \leq C \|f\|_\civ^{1/2} \left(\|\grad_\xi f\|_\civ^{1/2}+\|f\|_\civ^{1/2}\right).
\end{align*}
Moreover if we assume in addition that $m[f]=0$ and $M[f]=0$, then there holds
\begin{align*}
    &\ve \|\mathbbm{1}_{\ive}(1+|\cdot|)^{-3} \cT_\ve\phi\|_{L^q_\xi} + \|\mathbbm{1}_{\ive}(1+|\cdot|)^{-3} \grad_\xi\cT_\ve\phi\|_{L^q_\xi} \lesssim  \ve^3\|f\|_\civ ,
\end{align*}
where $\cT_\ve\phi(\xi)\eqdefa \phi(-\xi_1-\ve^{-1},\xi_2).$}
\end{Proposition}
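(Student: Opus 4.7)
All three inequalities are weighted variants of the planar Biot--Savart estimates, and follow from standard singular-integral theory once one exploits (a) that $W_{i,\ve}\ge c>0$ uniformly so $\|f\|_{L^2}\lesssim\|f\|_{\civ}$, and (b) that $W_{i,\ve}^{-1}$ decays faster than any polynomial at infinity (Proposition \ref{Prop 5.1}), which via H\"older yields $\|(1+|\cdot|)^N f\|_{L^p}\lesssim\|f\|_{\civ}$ for every $p\in[1,2)$ and every $N\ge 0$. Thus the main task is to reduce each bound on $\phi=\lap_\xi^{-1}f$ to an unweighted $L^p$-norm of $f$.

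For the first inequality, the kernel $\nabla\lap_\xi^{-1}$ is convolution with the vector field $\xi/|\xi|^2\in L^{2,\infty}(\R^2)$, so the generalized Young / Hardy--Littlewood--Sobolev inequality gives $\|\nabla\phi\|_{L^q}\lesssim \|f\|_{L^p}$ whenever $\tfrac1p=\tfrac1q+\tfrac12$, which combined with (a)--(b) proves the claim for any $q\in(2,\infty)$. The bound on $\|(1+|\cdot|)^{-1}\phi\|_{L^q}$ is obtained by splitting $\phi(\xi)=\tfrac{1}{2\pi}\int\log|\xi-\eta|f(\eta)\,d\eta$ into $|\eta|\le|\xi|/2$, where $|\log|\xi-\eta||\lesssim\log(2+|\xi|)$ and so the weight $(1+|\xi|)^{-1}$ absorbs the logarithmic growth, and $|\eta|>|\xi|/2$, where the rapid decay of $f$ provided by $W_{i,\ve}$ dominates any polynomial growth.

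For the pointwise bound on $(1+|\xi|)\nabla\phi(\xi)$, split the Biot--Savart integral at a radius $r>0$ around $\xi$: the near piece is $\le C r\|f\|_{L^\infty}$ and the far piece is $\le C\|f\|_{L^q}r^{1-2/q}$, so optimizing $r$ yields a bound of the form $\|\nabla\phi\|_{L^\infty}\lesssim \|f\|_{L^q}^{1-\theta}\|f\|_{L^\infty}^{\theta}$. A two-dimensional weighted Gagliardo--Nirenberg embedding, obtained by interpolating $\|\cdot\|_{\civ}$ and $\|\cdot\|_{\Div}$ (using $H^{1+\delta}\hookrightarrow L^\infty$ in $\R^2$), then gives $\|f\|_{L^\infty}\lesssim\|f\|_{\civ}^{1/2}(\|\nabla f\|_{\civ}+\|f\|_{\civ})^{1/2}$, producing the required estimate. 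The extra polynomial weight $(1+|\xi|)$ is absorbed by the usual near/far splitting in $\eta$ relative to $|\xi|/2$.

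The third inequality is the key novelty and exploits the vanishing moments of $f$. For $\xi\in\ive$, the reflected/shifted point $\xi^\ast\eqdefa(-\xi_1-\ve^{-1},\xi_2)$ satisfies $|\xi^\ast|\gtrsim\ve^{-1}\gg 1$, so $\log|\xi^\ast-\eta|$ (and its $\xi$-derivative) can be Taylor-expanded about $\eta=0$ exactly as in Lemma \ref{Lem 3.4}. The zeroth-order term contributes $m[f]=0$ and the first-order term contributes $M[f]=0$, killing the first two orders; the remainder is quadratic in $\eta$, and converting the Taylor variables back to the original scale produces the prefactor $\ve^3$. The weight $(1+|\xi|)^{-3}$ exactly compensates the three powers of $|\xi|$ coming from differentiating the Taylor expansion of $\log|\xi^\ast-\eta|$. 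Reducing to a weighted $L^1$-bound on $f$ (itself $\lesssim\|f\|_{\civ}$ by (a)--(b)) completes the argument. The main technical obstacle is to make these cancellations uniform as $\xi$ ranges over the growing region $\ive$ of diameter $\sim\ve^{-\sigma_1}$; this is handled by the rapid decay of $W_{i,\ve}^{-1}$ away from the origin, which ensures that the contributions of $f$ from large $|\eta|$ are negligible on scale $\ve^3$.
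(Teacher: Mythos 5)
The paper states Proposition~\ref{Prop 5.2} without proof, citing Lemmas~4.5 and~4.6 of \cite{DG24}; there is therefore no in-paper argument to compare yours against, and your proposal must be judged on its own. Your sketch for the first and third bounds is sound. For the first, weak-Young/HLS with the Biot--Savart kernel $\xi/|\xi|^2\in L^{2,\infty}$ together with $\|(1+|\cdot|)^N f\|_{L^p}\lesssim\|f\|_{\civ}$ for $p\in[1,2)$ (which follows from $W_{i,\ve}^{-1/2}\in L^r$ for every finite $r$, uniformly in $\ve$, by Proposition~\ref{Prop 5.1}) does the job, and your treatment of the weighted $\phi$ term is fine. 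For the third, the Lemma~\ref{Lem 3.4}-style expansion of $\phi$ at the reflected point with $m[f]=M[f]=0$ killing the first two orders is exactly the right mechanism, and you correctly flag that the remaining work is to run this for $f\in\civ$ (not $\cZ$) uniformly over the growing region $\ive$.

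The genuine gap is in your argument for the pointwise bound on $(1+|\xi|)\nabla\phi$. You propose to estimate the near part of the Biot--Savart split by $r\|f\|_{L^\infty}$, optimize in $r$, and then invoke a ``weighted Gagliardo--Nirenberg'' estimate $\|f\|_{L^\infty}\lesssim\|f\|_{\civ}^{1/2}(\|\nabla f\|_{\civ}+\|f\|_{\civ})^{1/2}$. That intermediate bound is false: in two dimensions an $H^1$-type norm does not control $L^\infty$ (the failure of $H^1(\R^2)\hookrightarrow L^\infty$ is local and the Gaussian-type weight cannot repair it), and the embedding $H^{1+\delta}\hookrightarrow L^\infty$ you invoke would require fractional derivatives not present in $\|f\|_{\civ}+\|\nabla_\xi f\|_{\civ}$. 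Your far-piece bound ``$\lesssim\|f\|_{L^q}r^{1-2/q}$'' for $q\in(2,\infty)$ is also incoherent, since $\int_{|z|>r}|z|^{-q'}\,dz$ diverges when $q'<2$; the far piece must be paired with $\|f\|_{L^p}$, $p<2$. The route that actually produces the stated scaling $\|f\|_{\civ}^{1/2}(\|\nabla_\xi f\|_{\civ}^{1/2}+\|f\|_{\civ}^{1/2})$ is to split at a \emph{fixed} radius $r=1$, bound the near piece by $\|f\|_{L^4}$ via H\"older (using $|\xi|^{-1}\in L^{4/3}(B_1)$), bound the far piece by $\|f\|_{L^1}\lesssim\|f\|_{\civ}$, and then apply the two-dimensional Ladyzhenskaya inequality
\begin{equation*}
\|f\|_{L^4}^2\lesssim\|f\|_{L^2}\|\nabla_\xi f\|_{L^2}\lesssim\|f\|_{\civ}\|\nabla_\xi f\|_{\civ},
\end{equation*}
valid since $W_{i,\ve}\gtrsim 1$. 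This yields $\|\nabla_\xi\phi\|_{L^\infty}\lesssim\|f\|_{\civ}^{1/2}\|\nabla_\xi f\|_{\civ}^{1/2}+\|f\|_{\civ}$, and the extra weight $(1+|\xi|)$ is then absorbed by the near/far-in-$\eta$ splitting relative to $|\xi|/2$, as in your sketch.
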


\begin{Proposition}[Propositions 4.8 and 4.12 in \cite{DG24}]\label{Prop 5.3}
    {\sl  Let $ \ve,\sig_1 \ll1, f \in \cX_{i,\ve}$ and $\phi=\lap_\xi^{-1}f$. There exist  constants $\ka_1, \ka_D>0$ depending only on $\sig_1$ and $M$, so that if $m[f]=0$ and $M[f]=0$, then there hold
    \begin{equation*}
  \ka_1 \|f\|_{\cX_{i,\ve}}^2 \leq  \|f\|_{\cX_{i,\ve}}^2 + \lla  \phi-\cT_\ve \phi, f \rra  ,
    \end{equation*}
\begin{equation*}
    \ka_D \|f\|_{\cD_{i,\ve}}^2 \leq - 1/2 \lla (t\p_t W_{i,\ve}) f, f\rra - \lla \cL f, W_{i,\ve} f + \phi-\cT_\ve \phi \rra .
\end{equation*}}
\end{Proposition}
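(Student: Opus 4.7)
My plan is to reduce both inequalities to the classical Arnold-type coercivity around the Oseen vortex $\Ga_i G$, exploiting the fact that on the inner region $\ive$ the weight $W_{i,\ve}$ equals $F_i'(\Omega^E_{i,a})$, which by Proposition \ref{Prop 4.7} is an $O(\ve^{1/2})$ perturbation of the universal Hessian $W_0$. The three-region decomposition from Proposition \ref{Prop 5.1} organizes the whole argument.

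For the first (energy) inequality, on $\iive \cup \iiive$ the super-polynomial growth of $W_{i,\ve}$ lets the diagonal term $\|f\|^2_{\civ}$ absorb all nonlocal contributions. On $\ive$, Proposition \ref{Prop 5.2} gives $|\lla \cT_\ve\phi,f\rra| \lesssim \ve^2 \|f\|^2_{\civ}$, with the $\ve^3$ gain in the Biot--Savart bound coming precisely from $m[f]=0$ and $M[f]=0$. Up to this error and an $O(\ve^{1/2})$ relative error from $|W_{i,\ve}-W_0|$, the problem reduces to the classical coercivity of
\begin{equation*}
Q_0(f) \eqdefa \int_{\R^2} |f|^2 W_0 \,d\xi + \lla \lap_\xi^{-1} f, f\rra
\end{equation*}
on the subspace $\{m[f]=0,\,M[f]=0\}$, which is the standard Arnold--Gallay--Wayne stability estimate. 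It follows by angular Fourier decomposition $\cY=\bigoplus_n \cY_n$: on each $\cY_n$ the symmetric operator $W_0+\lap_\xi^{-1}$ is strictly positive, with only zero modes $G,\p_1G,\p_2G$ on $\cY_0 \oplus \cY_1$, which are eliminated by the two moment conditions.

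For the dissipation inequality, integration by parts plus the elliptic regularity of $\phi=\lap_\xi^{-1}f$ from Proposition \ref{Prop 5.2} yield
\begin{equation*}
D_\ve[f] = \int |\grad_\xi f|^2 W_{i,\ve}\,d\xi + \int f^2 \Bigl(-\tfrac{1}{2}\cL^\star W_{i,\ve} - \tfrac{1}{2} t\p_t W_{i,\ve}\Bigr) d\xi + \cI_{\rm jump} + \cI_{\rm nonloc},
\end{equation*}
where $\cI_{\rm jump}$ collects boundary contributions across $\p\ive \cup \p\iive$. The radial weight $\varrho_\ve$ in $\|\cdot\|_{\Div}$ is calibrated so that $-\tfrac{1}{2}\cL^\star W_{i,\ve} \gtrsim \varrho_\ve^2 W_{i,\ve}$ pointwise in each region: automatic in $\iive$, from the Gaussian decay of $\cL^\star\exp(|\xi|^{2\ga}/4)$ in $\iiive$, and from the Hessian structure of $W_0$ in $\ive$.

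The main obstacle lies in handling $\cI_{\rm jump}$: since $W_{i,\ve}$ is only Lipschitz at the sliding boundaries $|\xi|\approx \ve^{-\sig_1}$ and $|\xi|\approx \ve^{-\sig_2}$, the surface integrals of $\grad_\xi W_{i,\ve}$ there must be absorbed by $-\tfrac{1}{2}t\p_t W_{i,\ve}$, which is favorably signed precisely because $W_{i,\ve}$ decreases on the moving annuli where the jumps live. The scaling $0<\sig_1\ll 1 \ll \sig_2$ with $\ga=\sig_1/\sig_2$ is calibrated for exactly this cancellation: $\sig_1$ keeps the inner jump below the bulk dissipation, while $\sig_2$ pushes the outer jump out to a region where $f$ is already suppressed by the Gaussian-like behavior of $W_0$. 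Once Proposition \ref{Prop 5.1}(3) is used to make the jump estimates quantitative, both $\ka_1$ and $\ka_D$ come out depending only on $\sig_1$ and $M$, as claimed.
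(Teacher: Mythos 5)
The first thing to notice is that the paper does not prove this proposition: the bracketed attribution in the statement (``Propositions 4.8 and 4.12 in [DG24]'') is the paper's complete treatment of it. The authors import it verbatim from Dolce--Gallay and build on it, so there is no internal proof to compare your attempt against.

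Taken on its own terms, your sketch follows the right overall roadmap, which is indeed that of [DG24] and of [GS24-1]: organize the argument around the three regions of Proposition \ref{Prop 5.1}; on $\ive$ replace $W_{i,\ve}$ by $W_0$ up to an $\cO(\ve^{1/2})$ multiplicative error and control the interaction $\lla\cT_\ve\phi,f\rra$ by the $\ve^3$-gain Biot--Savart bound that uses $m[f]=M[f]=0$; on $\iive\cup\iiive$ use the enormous size of $W_{i,\ve}$ to absorb everything; and for the dissipation inequality integrate by parts and keep track of the boundary terms produced by the merely Lipschitz weight at the two sliding interfaces. All of that is correct in spirit.

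Where the sketch is glossing over real content: (i) The passage from ``the symmetric operator $W_0+\lap_\xi^{-1}$ is strictly positive on each $\cY_n$ once the kernel is removed'' to a \emph{uniform} lower bound $\ka_1$ is the entire difficulty of [DG24, Prop.~4.8]. The quadratic form $\int W_0 f^2 + \lla\lap_\xi^{-1}f,f\rra$ is not a compact perturbation of the identity on $L^2(W_0\,d\xi)$, so positivity on the complement of $\{G,\p_1G,\p_2G\}$ does not by itself produce a spectral gap; one needs a genuine quantitative argument (a Hardy-type inequality for the angular modes, handled separately for $n=0,1$ and $n\ge 2$). Also, strictly speaking, $G$ is not a zero mode of $f\mapsto W_0 f+\lap_\xi^{-1}f$ even though $\cY_0\subset\ker\Lam$; what the condition $m[f]=0$ removes is a soft direction of the form, not a literal null vector, and the two statements are different. (ii) For the dissipation estimate, the assertion that $-\tfrac12\,t\p_t W_{i,\ve}$ is ``favorably signed precisely because $W_{i,\ve}$ decreases on the moving annuli'' needs to be proved, not announced: the sign of $t\p_t W_{i,\ve}$ depends both on the motion of the sliding interfaces and on how $F_i'(\Omega^E_{i,a})$ depends on $\ve$ inside $\ive$, and [DG24] computes this explicitly. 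As a proof, your text is a plausible executive summary of the cited references rather than a self-contained argument; as a guide to where the hard steps sit, it is sound.
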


Below we shall remove the assumption $M[f]=0$ in Propositions \ref{Prop 5.2} and \ref{Prop 5.3}.

\begin{Proposition}\label{Prop 5.4}
{\sl Let $\ve\ll 1$, $q\in (2,\oo)$, $f\in \civ$ and let $\phi=\lap_{\xi}^{-1}f$. If we assume $m[f]=0$,
then there hold
\begin{equation}\begin{split}
&\ve \left\|\mathbbm{1}_{\ive}(1+|\cdot|)^{-3} \phi(\xi+\ve^{-1}\eo)\right\|_{L^q_\xi} + \left\|\mathbbm{1}_{\ive}(1+|\cdot|)^{-3} \grad_\xi\phi(\xi+\ve^{-1}\eo)\right\|_{L^q_\xi} \\
&\qquad \lesssim  \ve^3\|f\|_\civ + \ve^2 |M[f]|.
\end{split}\end{equation}}
\end{Proposition}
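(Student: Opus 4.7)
The plan is to reduce the proposition to Proposition~\ref{Prop 5.2} by subtracting off the contribution of the first moment $M[f]$ from the source. Using integration by parts, one has $\int_{\R^2}\xi_j\p_k G\,d\xi=-\delta_{jk}$ and $m[\p_j G]=0$, so that setting
\begin{equation*}
\tilde f \eqdefa f+M_1[f]\,\p_1 G+M_2[f]\,\p_2 G
\end{equation*}
produces a function with $m[\tilde f]=M[\tilde f]=0$ and, thanks to $\p_j G\in\civ$ with universal norm, $\|\tilde f\|_{\civ}\lesssim\|f\|_{\civ}+|M[f]|$. Since $\lap_\xi^{-1}(\p_j G)=\p_j\Upsilon$, the stream function accordingly decomposes as
\begin{equation*}
\phi(\xi+\ve^{-1}\eo)=\tilde\phi(\xi+\ve^{-1}\eo)-\sum_{j=1}^{2}M_j[f]\,\p_j\Upsilon(\xi+\ve^{-1}\eo),\qquad \tilde\phi\eqdefa\lap_\xi^{-1}\tilde f.
\end{equation*}

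I would then apply Proposition~\ref{Prop 5.2} to $\tilde f$. Because both $m[\tilde f]$ and $M[\tilde f]$ vanish, the Taylor expansion of $\log|y-\eta|$ in $\eta$ around the origin for $|y|\sim\ve^{-1}$ that underlies the proof of Proposition~\ref{Prop 5.2} in \cite{DG24} applies verbatim to the translation $y=\xi+\ve^{-1}\eo$ in place of $\cT_\ve\phi$, and yields
\begin{equation*}
\ve\|\mathbbm{1}_{\ive}(1+|\cdot|)^{-3}\tilde\phi(\xi+\ve^{-1}\eo)\|_{L^q_\xi}+\|\mathbbm{1}_{\ive}(1+|\cdot|)^{-3}\grad_\xi\tilde\phi(\xi+\ve^{-1}\eo)\|_{L^q_\xi}\lesssim\ve^3\|\tilde f\|_{\civ}.
\end{equation*}
For the correction terms, Proposition~\ref{Prop 5.1}(1) gives $\ive\subset\{|\xi|^2\lesssim\ve^{-2\sig_1}+|\log\ve|\}$, hence $|\xi+\ve^{-1}\eo|\geq\ve^{-1}/2$ for $\xi\in\ive$ and $\ve\ll 1$. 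Using the closed form $\grad_\xi\Upsilon(y)=\frac{1-e^{-|y|^2/4}}{2\pi|y|^2}\,y$ one obtains the pointwise bounds $|\p_j\Upsilon(\xi+\ve^{-1}\eo)|\lesssim\ve$ and $|\grad_\xi^2\Upsilon(\xi+\ve^{-1}\eo)|\lesssim\ve^2$ on $\ive$, and since $\|\mathbbm{1}_{\ive}(1+|\cdot|)^{-3}\|_{L^q_\xi}\lesssim 1$ uniformly in $\ve$, each $M[f]$-correction contributes $\lesssim\ve^2|M[f]|$ to its respective norm. Combining the two estimates yields $\lesssim\ve^3\|f\|_{\civ}+\ve^3|M[f]|+\ve^2|M[f]|\lesssim\ve^3\|f\|_{\civ}+\ve^2|M[f]|$.

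The conceptual core of the argument is the algebraic decomposition $\tilde f=f+M_j[f]\,\p_j G$: $\p_j G$ carries precisely the first moment needed to cancel $M_j[f]$ while preserving vanishing mass, so $\tilde f$ lands in the class to which Proposition~\ref{Prop 5.2} directly applies. The main technical point to check is that the proof of Proposition~\ref{Prop 5.2} in \cite{DG24}, stated there for the reflected shift $\cT_\ve$, transfers without change to the straight translation $\phi(\cdot+\ve^{-1}\eo)$. This is expected because the underlying mechanism is purely a moment-cancellation multipole expansion of the Newtonian kernel at distance $\sim\ve^{-1}$, insensitive to the orientation of the far-field shift. The loss of a power of $\ve$ between the $\ve^3\|f\|_{\civ}$ and $\ve^2|M[f]|$ terms reflects exactly the fact that only the zeroth moment, and not the first, has been cancelled in $\tilde f$.
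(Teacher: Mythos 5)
Your decomposition $\tilde f = f + M_1[f]\,\p_1 G + M_2[f]\,\p_2 G$ is exactly the one the paper uses (there called $f^R$), your moment cancellations $m[\tilde f]=M[\tilde f]=0$ are verified the same way, and your treatment of the correction terms $-M_j[f]\,\p_j\Upsilon(\cdot+\ve^{-1}\eo)$ via the explicit formula for $\nabla_\xi\Upsilon$ and the lower bound $|\xi+\ve^{-1}\eo|\gtrsim\ve^{-1}$ on $\ive$ is also the same. The one place you diverge is the reduction of $\tilde\phi(\cdot+\ve^{-1}\eo)$ to Proposition~\ref{Prop 5.2}: you assert, and explicitly flag as unverified, that the moment-cancellation expansion behind the $\cT_\ve\phi$ estimate in \cite{DG24} ``applies verbatim'' to the plain translation. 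The paper closes precisely this gap with a reflection substitution rather than a re-derivation: it sets $\zeta(\xi_1,\xi_2)\eqdefa f^R(-\xi_1,\xi_2)$, observes that $M[\zeta]=0$ still, and that since $\lap_\xi^{-1}$ commutes with the reflection,
\begin{equation*}
\phi^R(\xi+\ve^{-1}\eo) = (\lap_\xi^{-1}\zeta)(-\xi_1-\ve^{-1},\xi_2) = \cT_\ve\bigl(\lap_\xi^{-1}\zeta\bigr)(\xi),
\end{equation*}
so Proposition~\ref{Prop 5.2} applies to $\zeta$ as a black box and yields $\lesssim \ve^3\|\zeta\|_{\civ}\lesssim\ve^3\|f\|_{\civ}$. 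This buys you exactly the ``orientation-insensitivity'' you appealed to heuristically, at the small cost of noting that $\|\zeta\|_{\civ}\lesssim\|f^R\|_{\civ}$, which follows because $W_{i,\ve}$ is within an $\cO(\ve^{1/2})$-relative error of the radial weight $W_0$ on $\ive$ (Proposition~\ref{Prop 5.1}(3)) and radial in $\mathrm{II}_{i,\ve}\cup\mathrm{III}_{i,\ve}$. So: your argument is correct modulo the step you yourself marked as pending, and the reflection trick is the standard device to avoid re-opening the proof in \cite{DG24}.
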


\begin{proof}
  We decompose
\begin{equation}\label{eq 5.18}
    f = -M_1[f] \p_1 G - M_2[f] \p_2 G + f^R.
\end{equation}
Therefore by denoting $\phi^R\eqdefa \lap_\xi^{-1} f^R$, we have
\begin{equation*} \phi= -M_1[f] \p_1 \Upsilon -M_2[f]\p_2 \Upsilon + \phi^{R} \quad \text{with} \  \grad_\xi^\perp \Upsilon = \frac{\xi^\perp}{2\pi |\xi|^2}\left(1-e^{-\frac{|\xi|^2}{4}}\right). \end{equation*}

Let $ \zeta(\xi_1,\xi_2) \eqdefa f^R(-\xi_1,\xi_2)$, we get
\begin{equation*}
 M[\zeta]=0 \andf \phi^R(\xi+\ve^{-1}\eo) = (\lap_\xi^{-1} \zeta)(-\xi_1-\ve^{-1}, \xi_2)=\cT_\ve(\lap_\xi^{-1}\zeta)(\xi).
\end{equation*}
Noticing that
\begin{equation*}
\begin{split}
    &\ve \left\|\mathbbm{1}_{\ive}(1+|\cdot|)^{-3} \p_j \Upsilon(\xi+\ve^{-1}\eo)\right\|_{L^q_\xi}  + \left\|\mathbbm{1}_{\ive}(1+|\cdot|)^{-3} \grad_\xi \p_j \Upsilon(\xi+\ve^{-1}\eo) \right\|_{L^q_\xi} \lesssim \ve^2,
\end{split}
\end{equation*}
we get by Proposition \ref{Prop 5.2} that
\begin{equation*}
  \begin{split}
     & \ve \left\|\mathbbm{1}_{\ive}(1+|\cdot|)^{-3} \phi(\xi+\ve^{-1}\eo)\right\|_{L^q_\xi} + \left\|\mathbbm{1}_{\ive}(1+|\cdot|)^{-3} \grad_\xi\phi(\xi+\ve^{-1}\eo)\right\|_{L^q_\xi} \\
     & \lesssim \ve^2|M[f]| +\ve \left\|\mathbbm{1}_{\ive}(1+|\cdot|)^{-3} \phi^R(\xi+\ve^{-1}\eo)\right\|_{L^q_\xi}  + \left\|\mathbbm{1}_{\ive}(1+|\cdot|)^{-3} \grad_\xi\phi^R(\xi+\ve^{-1}\eo)\right\|_{L^q_\xi} \\
     &= \ve^2|M[f]| +\ve \big\|\mathbbm{1}_{\ive}(1+|\cdot|)^{-3} \cT_\ve (\lap_\xi^{-1} \zeta) \big\|_{L^q_\xi}+ \big\|\mathbbm{1}_{\ive}(1+|\cdot|)^{-3} \grad_\xi\cT_\ve (\lap_\xi^{-1} \zeta)\big\|_{L^q_\xi} \\
      &\lesssim \ve^2|M[f]| + \ve^3 \|\zeta\|_{\cX_{i,\ve}} \lesssim \ve^3 \|f\|_{\cX_{i,\ve}} +\ve^2|M[f]| ,
  \end{split}
\end{equation*}
which completes the proof  of Proposition \ref{Prop 5.4}.
\end{proof}

\begin{Proposition}\label{Prop 5.5}
    {\sl Let $\ve,\sig_1 \ll1, f \in \cX_{i,\ve}$ and $\phi=\lap_\xi^{-1}f$. There exists a  constant $\ka>0$ depending only on $\sig_1$ and $M$, so that if $m[f]=0$, then there hold
     \begin{subequations} \label{eqProp 5.5}
\begin{align}
  \label{eq 5.19}
 & \ka \|f\|_{\cX_{i,\ve}}^2 \leq  |M[f]|^2 + \bigl( \|f\|_{\cX_{i,\ve}}^2 + \lla  \phi, f \rra  \bigr),\\
   \label{eq 5.20}
   & \ka \|f\|_{\cD_{i,\ve}}^2 \leq   |M[f]|^2 + \bigl(- 1/2 \lla (t\p_t W_{i,\ve}) f, f\rra - \lla \cL f, W_{i,\ve} f + \phi \rra \bigr).
\end{align}
\end{subequations}}
\end{Proposition}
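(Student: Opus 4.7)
The plan is to reduce to Propositions \ref{Prop 5.2} and \ref{Prop 5.3} via an orthogonal-projection-style decomposition, exactly as in the proof of Proposition \ref{Prop 5.4}. Using the identities $\int_{\R^2}\xi_k\p_j G\,d\xi=-\delta_{jk}$ and $m[\p_j G]=0$, one writes
\begin{equation*}
f=-M_1[f]\,\p_1 G-M_2[f]\,\p_2 G+f^R,\qquad \phi=-M_1[f]\,\p_1\Upsilon-M_2[f]\,\p_2\Upsilon+\phi^R,
\end{equation*}
with $\phi^R=\lap_\xi^{-1}f^R$, $m[f^R]=0$ and $M[f^R]=0$, so that Propositions \ref{Prop 5.2} and \ref{Prop 5.3} apply to $f^R$. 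Since $\p_j G$ and $\p_j\Upsilon$ are smooth and decay fast enough, their $\cX_{i,\ve}$ and $\cD_{i,\ve}$ norms are bounded uniformly in $\ve$ by direct computation using $W_{i,\ve}\lesssim W_0$ in the inner region and the moderate growth outside.

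Applying Proposition \ref{Prop 5.3} to $f^R$ yields $\ka_1\|f^R\|^2_{\civ}\le\|f^R\|^2_{\civ}+\lla\phi^R-\cT_\ve\phi^R,f^R\rra$. The $\cT_\ve\phi^R$-term is a small perturbation: by Proposition \ref{Prop 5.2} applied to $f^R$,
\begin{equation*}
|\lla\cT_\ve\phi^R,f^R\rra|\lesssim\ve^2\|f^R\|_{\civ}^2,
\end{equation*}
after pairing with the weight $(1+|\cdot|)^{-3}$ on $\ive$ and handling the exponentially small contribution from $\R^2\setminus\ive$ via the outer-region weight. Absorbing this error for $\ve\ll1$ produces $\ka_1'\|f^R\|^2_{\civ}\le\|f^R\|^2_{\civ}+\lla\phi^R,f^R\rra$. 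Substituting the decomposition into $\|f\|^2_{\civ}+\lla\phi,f\rra$ and expanding yields this coercive $f^R$-contribution, together with pure $|M[f]|^2$ terms and mixed cross terms of the type $M_j[f]\,\lla\p_j G,f^R\rra_{\civ}$ and $M_j[f]\bigl(\lla\p_j\Upsilon,f^R\rra+\lla\phi^R,\p_j G\rra\bigr)$. Cauchy--Schwarz and Young's inequality bound each cross term by $\epsilon\|f^R\|^2_{\civ}+C_\epsilon|M[f]|^2$; choosing $\epsilon$ small and invoking $\|f^R\|^2_{\civ}\ge\tfrac12\|f\|^2_{\civ}-C|M[f]|^2$ gives \eqref{eq 5.19}.

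The dissipation estimate \eqref{eq 5.20} follows by the same template applied to the second bound of Proposition \ref{Prop 5.3}, together with the gradient part of Proposition \ref{Prop 5.2} to control the $\cT_\ve\phi^R$ correction at the derivative level. A useful algebraic simplification is that $\p_j G\in\ker(\tfrac12+\cL)$ by Proposition \ref{Prop 3.2}(1), so $\cL f=\cL f^R+\tfrac12\sum_j M_j[f]\,\p_j G$, cleanly separating the singular part. The main obstacle will be the careful bookkeeping of the various cross terms in the dissipation estimate, in particular those involving $t\p_t W_{i,\ve}$, whose support concentrates near the boundaries between $\ive$, $\iive$, $\iiive$ where $\grad W_{i,\ve}$ is discontinuous; handling these requires the pointwise control of $W_{i,\ve}-W_0$ and of the moving boundaries furnished by Proposition \ref{Prop 5.1}.
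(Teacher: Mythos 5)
Your proposal is correct and follows essentially the same route as the paper: both hinge on the decomposition $f = -M_1[f]\,\p_1 G - M_2[f]\,\p_2 G + f^R$ from \eqref{eq 5.18}, invoking Propositions \ref{Prop 5.2} and \ref{Prop 5.3} on the zero-momentum remainder $f^R$, absorbing the $\cO(\ve^2)\|f^R\|_{\cX_{i,\ve}}^2$-sized $\cT_\ve\phi^R$ correction, and closing the cross terms with Cauchy–Schwarz and Young. The only organizational difference is that the paper first establishes the $M[f]=0$ case directly (without decomposition) and then cites it as a black box for $f^R$, whereas you decompose from the outset; this is a cosmetic distinction, and your observation that $\cL f = \cL f^R + \tfrac12\sum_j M_j[f]\,\p_j G$ is a correct and tidy way to organize the dissipation estimate that the paper leaves implicit.
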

\begin{proof}In this proof, we omit the subscript $i$.
We first prove \eqref{eq 5.19} and \eqref{eq 5.20} under the assumption that $M[f]=0$.
 Let $h\eqdefa1+\sqrt{\xi_2^2 + (\xi_1+\ve^{-1})^2}\lesssim 1+ |\xi| + \ve^{-1}.$ We get,
by applying Proposition \ref{Prop 5.2}, that
\begin{equation}\label{eq 5.21}
    \begin{split}
 |\lla \cT_\ve \phi, f \rra| &\leq \left\|\mathbbm{1}_{\mathrm{I}_{\ve}}(1+|\cdot|)^{-3}\cT_\ve \phi\right\|_{L^4_\xi} \left\|(1+|\cdot|)^{3}f\right\|_{L^{4/3}_\xi}  + \left\|h^{-1} \cT_\ve\phi\right\|_{L^4_\xi} \left\| \mathbbm{1}_{\mathrm{I}^c_{\ve}} h f  \right\|_{L^{4/3}_\xi} \\
 &\lesssim \left\|\mathbbm{1}_{\mathrm{I}_{\ve}}(1+|\cdot|)^{-3}\cT_\ve \phi\right\|_{L^4_\xi} \left\|(1+|\cdot|)^{3}f\right\|_{L^{4/3}_\xi}  \\
 &\quad + \ve^{-1} \left\|(1+|\cdot|)^{-1}\phi\right\|_{L^4_\xi} \left\| \mathbbm{1}_{\mathrm{I}^c_{\ve}}(1+|\cdot|)f  \right\|_{L^{4/3}_\xi}\\
 &\lesssim \left(\ve^2+ \ve^{-1}\exp(-c\ve^{-2\sig_1})\right) \|f\|_{\cX_\ve}^2\lesssim  \ve^2 \|f\|_{\cX_\ve}^2.
    \end{split}
\end{equation}
Similarly recalling that $\cL^\star=\Delta_\xi-\f12\xi\cdot\na_\xi,$ one has
\begin{equation}\label{eq 5.22}
\begin{split}
  |\lla \cL^\star \cT_\ve \phi, f \rra| &\lesssim |\lla \cT_\ve f, f \rra| + |\lla \xi\cdot\grad_\xi \cT_\ve \phi, f \mathbbm{1}_{\mathrm{I}_\ve} \rra| + |\lla \xi\cdot\grad_\xi \cT_\ve \phi, f \mathbbm{1}_{\mathrm{I}^c_\ve} \rra|  \\
  &\lesssim \left(\ve^3+ \exp(-c\ve^{-2\sig_1})\right) \|f\|_{\cX_\ve}^2 \lesssim \ve^3 \|f\|_{\cX_\ve}^2.
\end{split}
\end{equation}
Then by virtue of \eqref{eq 5.21},\eqref{eq 5.22} and  Proposition \ref{Prop 5.3}, we obtain \eqref{eq 5.19} and \eqref{eq 5.20} for $M[f]=0$.
Now for $M[f]\neq 0$, we decompose $f $ as in \eqref{eq 5.18} to get
\begin{equation*}
\begin{split}
     |M[f]|^2 + \left\|f^R\right\|_{\cX_{\ve}}^2 &\lesssim |M[f]|^2 +\left\|f^R\right\|_{\cX_{\ve}}^2 + \lla  \phi^R, f^R \rra \\
     & \lesssim |M[f]|^2 + \left( \|f\|_{\cX_{\ve}}^2 + \lla  \phi, f \rra  \right) + |M[f]| \left\|f^R\right\|_{\cX_{\ve}},
\end{split}
\end{equation*}
which together with Young inequality results in
\begin{equation}
    \|f\|_{\cX_\ve}^2 \lesssim |M[f]|^2 +  \left\|f^R\right\|_{\cX_{\ve}}^2 \lesssim |M[f]|^2 + \left( \|f\|_{\cX_{\ve}}^2 + \lla  \phi, f \rra  \right).
\end{equation}
\eqref{eq 5.20} can be proved along the same line,  we complete the proof of Proposition \ref{Prop 5.5}.
\end{proof}

\subsection{Toy model}\label{subsec. toy model}
In this subsection, we investigate a toy model to illuminate both the structure of $\Lambda^E$ and the subtle orthogonal condition \eqref{eq 4.2}.
Specifically, we examine the scalar equation obtained by linearizing \eqref{eq 2.3} around the profile $G$,
\begin{equation}
    (t\p_t -\cL)f + \nu^{-1} \Lam f ={\rm R},
\end{equation}
where $\Lam f = \lbk \Upsilon,f \rbk  + \{ \lap_\xi^{-1}f, G  \} = \{ f W_0 + \lap_\xi^{-1} f, G \}$.
It is straightforward to verify the relations
\begin{equation}\label{eq 5.24}
    \Lam[\p_j G]=0, \quad  \cL\p_j G= -1/2 \p_jG   \andf \Lam^\star[\xi_j]=0,\quad  \cL^\star \xi_j = -1/2\xi_j.
\end{equation}
In particular,  $\xi_1, \xi_2$ serve as  eigenfunctions of $\Lam^\star$, and  are related to $\p_j G$ by
\begin{equation}\label{relation G}
    \{\xi_1, G\} = \p_2 G \andf \{\xi_2, G\}= -\p_1 G.
\end{equation}
By decomposing $f = \mu_1 \p_1 G + \mu_2 \p_2 G + f^R$ with $M[f^R]=0$, we find
\begin{equation}\label{eq 5.26}
    (t\p_t+1/2)\mu_1 \times \p_1 G+  (t\p_t+1/2)\mu_2 \times \p_2 G + (t\p_t -\cL +\nu^{-1}\Lam)f^R ={\rm R}.
\end{equation}
Moreover, exploiting the structural identities
\begin{equation*}
\begin{split}
    &\big\langle \p_i G , f^R W_0 + \lap^{-1}_\xi f^R \big\rangle  = \big\langle \{\ka_\is \xi_\is, G\}, f^R W_0 + \lap_\xi^{-1}f^R \big\rangle \\
    & = - \big\langle \big\{ f^R W_0 + \lap_\xi^{-1}f^R, G\big\} , \ka_\is \xi_\is \big\rangle = - \big\langle \Lam f^R , \ka_\is \xi_\is \big\rangle = - \big\langle  f^R ,  \ka_\is \Lam^\star \xi_\is \big\rangle =0.
\end{split}
\end{equation*}
and testing  \eqref{eq 5.26} respectively with $-\xi_1$, $-\xi_2$ and $f^R W_0 + \lap_\xi^{-1}f^R$,   we are led to
\begin{equation}\label{eq 5.27}
    (t\p_t +1/2 )\mu_1  = -M_1[{\rm R}],\quad  (t\p_t +1/2 )\mu_2  = -M_2[{\rm R}] ,
\end{equation}
and
\begin{equation}\label{eq 5.29}
    t\p_t E_0[f^R] + D_0[f^R] = \big\langle {\rm R}, f^R W_0 + \lap_\xi^{-1}f^R\big\rangle,
\end{equation}
where $E_0[f^R] \eqdefa \f12 \big \langle f^R, f^R W_0 + \lap_\xi^{-1}f^R \big \rangle$ and  $D_0[f^R]\eqdefa \big \langle -\cL  f^R , f^R W_0 + \lap_\xi^{-1}f^R\big \rangle$.

On the other hand,  the orthogonality constraint $M[f^R]=0$ allows us, by invoking the estimates of \cite{GS24-2, GS24-1, DG24}, to assert
\begin{equation*}
    E_0[f^R] \approx \left\|f^R\right\|_{L^2(\R^2;W_0 d\xi)}^2 \andf D_0[f^R] \gtrsim \left\|f^R\right\|_{H^1(\R^2;W_0 d\xi)}^2,
\end{equation*}
from which, \eqref{eq 5.27}, \eqref{eq 5.29} and above inequalities, we deduce that
$$\|f(t)\|_{L^2(\R^2;W_0 d\xi)} \lesssim  |\mu_1(t)|+|\mu_2(t)|+ \left\|f^R(t)\right\|_{L^2(\R^2;W_0 d\xi)}   \lesssim    \int_0^t \frac{1}{s}\|{\rm R}(s)\|_{L^2(\R^2;W_0 d\xi)}\, ds.  $$
Here  we omit the details. The proof of Theorem \ref{Thm 1.1} basically follows along the same lines. We remark that the construction of ``pseudo-momenta" will be the most crucial part.

\renewcommand{\theequation}{\thesection.\arabic{equation}}
\setcounter{equation}{0}

\section{Construction of ``pseudo-momenta''}\label{sect5}
In this section, we shall construct ``pseudo-momenta" compatible with the operator $\Lam^E$ and orthogonal to $\om^R$, so as to obtain the coercivity of $E_\ve[\om^R]$ and $D_\ve[\om^R]$. To begin with, in view of \eqref{eq 5.4}
 and \eqref{eq 5.5}, we observe that
\begin{equation*}
    \lla \cB_a\om , \varrho \rra_V = \lla \om , \cB_a \varrho\rra_V.
\end{equation*}
Then we define
\begin{equation}\label{eq 6.1}
      \lla \om ,\  \Lam^{E,\star} \varrho \rra_V \eqdefa \lla \Lam^E \om , \ \varrho \rra_V  \quad \text{with $\Lam^{E,\star} \varrho\eqdefa -\left\{\Psi^E_a, \varrho \right\}_V - \cB_a \left\{ \varrho, \Omega^E_a \right\}_V$}.
\end{equation}

Based on the insights observed from the toy model in subsection \ref{subsec. toy model},  ``pseudo-momenta"  should be some eigenfunctions of $\Lam^{E,\star}$, or at least some elements in invariant spaces of $\Lam^{E,\star}$, whose leading order takes the form $S\xi$ for some non-zero constant $2\times 2$ matrix $S$.

\subsection{Expansion of $\Lam^{E,\star}$.}

For any $\varrho=  \cO_{\cS_\star}(1)$, the identity $m[\{\varrho,\Omega^E_a\}_V]=0$ and  Lemma \ref{Lem 3.4} lead to
\begin{equation}
    \Lam^{E,\star}\varrho = \sum_{\ell=0}^M \ve^\ell \Lam^{E,\star}_\ell\varrho + \cO_{\cS_\star}(\ve^{M+1}) \quad \text{with} \  \Lam^{E,\star}_M \varrho \eqdefa \Xi_M\left[ \Lam^{E,\star} \varrho\right].
\end{equation}
By virtue of \eqref{eq 6.1}, one has
\begin{equation*}
    \big(\Lam^{E,\star}_\ell \varrho\big)_i = -\left\{\Psi^E_{i,\ell},  \varrho_i\right\} - \lap_\xi^{-1} \left\{\varrho_i, \Omega^E_{i,\ell}\right\} -  \Xi_\ell \big[\bigl(\lap_\xi^{-1} \left\{\varrho_\is, \Omega^E_{\is,a}\right\}\bigr)\left(\xi+\ka_i \ve^{-1}\al_a\eo\right)\big].
\end{equation*}
It follows from  \eqref{eq 4.5}, \eqref{eq 4.6} and Lemma \ref{Lem 4.3} that
\begin{equation*}
\begin{split}
    &\Omega^E_{i,a} = \Ga_i G+ \ve^2\Omega^E_{i,2} +\cO_\cZ(\ve^3), \quad \dot{\theta}^E_a = -\frac{\Ga}{2\pi } + \cO(\ve^2) , \quad \al_a =1+ \cO( \ve^4), \\
    &\Psi^E_{i,a} =\Phi^E_{i,a}(\xi) +  \Phi^E_{\is,a}\big(\xi+\ve^{-1}\ka_i \al_a\eo\big)+ \frac{ \ve^2}{2}\dot{\theta}^E_a|\xi|^2  + \frac{\ve \ka_i\Ga_\is}{\Ga} \dot{\theta}^E_a \al_a \xi_1 - \frac{\Ga_\is}{2\pi}\log\big|\ve/\ala\big|\\
    &\qquad = \Ga_i \Upsilon + \ve^2\Big( \Phi^E_{i,2} - \frac{\Ga_\is }{4\pi}Q^c_2 - \frac{\Ga}{4\pi}|\xi|^2 \Big) +  \cO_{\cS_\star}(\ve^3)
    \with \Phi^E_{i,a}\eqdefa \Delta_\xi^{-1}\Omega^E_{i,a},
\end{split}
\end{equation*}
from which and Lemma \ref{Lem 3.4}, we deduce that
\begin{equation}\label{eq 6.4}
    \Lam^{E,\star}_0  =   \begin{pmatrix}
        \Ga_1 \Lam^\star ,& 0 \\
        0 , &\Ga_2 \Lam^\star
    \end{pmatrix},
\end{equation}
and
 \begin{equation}\label{eq 6.3}
 \begin{split}
\big(\Lam^{E,\star}_1 \varrho\big)_i  &= -  \Xi_1\big[ \big(\lap^{-1}_\xi \left\{\varrho_\is, \Omega^E_{\is,a}\big\}\right)\left(\xi+\ka_i \ve^{-1}\al_a\eo\right)\big] \\
& = -  \Xi_1\big[ \big(\lap^{-1}_\xi \left\{\varrho_\is, \Ga_\is G\right\}\big)\left(\xi+\ka_i \ve^{-1}\al_a\eo\right)\big]\\
&= -\frac{1}{2\pi} \frac{1}{\ka_i} \int_{\R^2} (\xi_1-\eta_1) \{\varrho_\is, \Ga_\is G\}(\eta) d\eta =C.
\end{split}
 \end{equation}
Moreover, for $\varrho = ( \mu_1 \xi_2, \mu_2 \xi_2)^T$, we have
\begin{equation}\label{eq 6.2}
\begin{split}
   \bigl( \Lam^{E,\star}_2 \varrho\bigr)_i
  &= -\left\{\Psi^E_{i,2}, \mu_i \xi_2 \right\} - \lap^{-1}_\xi\left\{ \mu_i \xi_2, \Omega^E_{i,2} \right\} - \Xi_2 \big[\big(\lap^{-1}_\xi \left\{\mu_\is \xi_2, \Omega^E_{\is,a}\right\}\big)\left(\xi+\ka_i \ve^{-1}\al_a\eo\right)\big]  \\
  & =-\frac{1}{4\pi}\left\{-\Ga_\is Q^c_2 -\Ga |\xi|^2, \mu_i\xi_2\right\}+ \frac{1}{4\pi} \int_{\R^2} Q^c_2(\xi-\eta) \left\{\mu_\is \eta_2, \Ga_\is G\right\} \, d\eta \\
  &= \frac{1}{4\pi}\left\{\Ga_\is Q^c_2
  +\Ga |\xi|^2 , \mu_i\xi_2\right\}+  \frac{\mu_\is \Ga_\is}{4\pi }\left\{\xi_2, Q^c_2 \right\}\\
& = \frac{\mu_i\Ga_\is}{4\pi}\p_1 (\xi_1^2-\xi_2^2) + \frac{\mu_i\Ga}{4\pi} \p_1(|\xi|^2) - \frac{\mu_\is \Ga_\is}{4\pi}\p_1(\xi_1^2-\xi_2^2)\\
  &=\frac{\xi_1}{2\pi} \left( \Ga_\is (\mu_i- \mu_\is)+ \mu_i \Ga\right).
\end{split}
\end{equation}


From \eqref{eq 6.4} it follows that there must exist four ``pseudo-momenta’’ to be determined, whose leading order profiles are of the type $S\xi$ with $S$ a non-zero constant $2\times 2$ matrix.

\subsection{Trivial ``pseudo-momenta''}
We first recall from the proof of Proposition \ref{Prop 4.2} that
\begin{equation}\label{eq 6.5}
    {\rm R}^E_a \eqdefa  \lbk\Psi^E_{a}, \Omega^E_{a}\rbk_V = \cO_\cZ(\ve^{M+1}),
\end{equation}
and
\begin{equation}\label{eq 6.6}
\begin{split}
    \Psi^{E}_a & = \cB_a \Omega^{E}_a + \frac{\ve^2}{2} \dot{\theta}^{E}_a |\xi|^2 Y_1 +\frac{\ve }{\Ga}\dot{\theta}^{E}_a \al_a \xi_1 Y_2-\frac{1}{2\pi}\log\big|\ve/\ala\big| (\Ga_2, \Ga_1)^T \\
 & = \cB_a \Omega^{E}_a + \frac{\ve }{\Ga}\dot{\theta}^{E}_a \al_a \hat{X}-\frac{1}{2\pi}\log\big|\ve/\ala\big| (\Ga_2, \Ga_1)^T.
 \end{split}
\end{equation}

\begin{Proposition}\label{Prop 6.1}
 {\sl Let us recall that $ Y_1\eqdefa\begin{pmatrix}
      1,1
  \end{pmatrix}^T$, $ Y_2 \eqdefa \begin{pmatrix}
      \Ga_2 , -\Ga_1
  \end{pmatrix}^T$,  and denote
\begin{equation}\label{eq 6.7a}
\begin{split}
  \varrho^{te}_a\eqdefa \xi_1 Y_1, &\quad \varrho^{to}_a\eqdefa \xi_2 Y_1, \\
    f^{te}_a \eqdefa  \left\{\varrho^{te}_a, \Omega^E_a  \right\}_V=  \p_2 \Omega^E_a, &\quad f^{to}_a \eqdefa \left\{\varrho^{to}_a, \Omega^E_a  \right\}_V = -\p_1 \Omega^E_a.
\end{split}
\end{equation}
We have
\begin{equation}\label{eq 6.7}
    \begin{split}
&\Lam^E f^{te}_a = -\ve^2  \dot{\theta}^E_a f^{to}_a + \p_2 {\rm R}^E_a,\quad \Lam^E f^{to}_a = \ve^2  \dot{\theta}^E_a f^{te}_a - \p_1 {\rm R}^E_a ,  \\
&\Lam^{E,\star} \varrho^{te}_a =\ve^2  \dot{\theta}^E_a \varrho^{to}_a, \quad \Lam^{E,\star} \varrho^{to}_a  = -\ve^2  \dot{\theta}^E_a \varrho^{te}_a - \frac{\ve}{\Ga}\dot{\theta}^E_a \al_a Y_2
 .
    \end{split}
\end{equation}}
\end{Proposition}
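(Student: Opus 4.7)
\medskip

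\noindent\textbf{Proof proposal.} The plan is to compute all four expressions by direct manipulation, exploiting two structural features: first, the correction term
\[
\Psi^E_a - \cB_a\Omega^E_a \;=\; \frac{\ve}{\Ga}\dtea\ala \hat X \;-\; \frac{1}{2\pi}\log|\ve/\ala|(\Ga_2,\Ga_1)^T
\]
has a very explicit (polynomial in $\xi$) dependence, so its partial derivatives are elementary; and second, since the translation appearing in $\cB_a$ is along $\eo$ and has amplitude $\pm\ve^{-1}\ala$ independent of $\xi$, the operator $\cB_a$ commutes with both $\p_1$ and $\p_2$.

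I would start with the two identities for $\Lam^{E,\star}$. Using that for any scalar $h$ one has $\{h,\xi_1\}=-\p_2 h$ and $\{h,\xi_2\}=\p_1 h$, and the component-wise definition of $\{\cdot,\cdot\}_V$, I get
\[
\{\Psi^E_a,\varrho^{te}_a\}_V=-\p_2\Psi^E_a,\qquad \{\xi_1 Y_1,\Omega^E_a\}_V=\p_2\Omega^E_a,
\]
so from \eqref{eq 6.1} and the commutation $\cB_a\p_2=\p_2\cB_a$,
\[
\Lam^{E,\star}\varrho^{te}_a \;=\; \p_2\Psi^E_a-\p_2\cB_a\Omega^E_a
\;=\;\p_2\big(\Psi^E_a-\cB_a\Omega^E_a\big).
\]
Inserting \eqref{eq 6.6} and computing $\p_2\hat X=\tfrac{\ve\Ga}{\ala}\xi_2 Y_1$ gives exactly $\ve^2\dtea\varrho^{to}_a$. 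The analogous calculation for $\varrho^{to}_a$ uses $\p_1\hat X=Y_2+\tfrac{\ve\Ga}{\ala}\xi_1 Y_1$ and produces the two terms announced in \eqref{eq 6.7}.

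For the two statements about $\Lam^E$, the idea is to use the Leibniz rule for $\p_j$ on each Poisson bracket together with the definition ${\rm R}^E_a=\{\Psi^E_a,\Omega^E_a\}_V$ in \eqref{eq 6.5}. Since $\cB_a$ commutes with $\p_j$, we have $\mathcal B_a f^{te}_a=\p_2\mathcal B_a\Omega^E_a$ and $\mathcal B_a f^{to}_a=-\p_1\mathcal B_a\Omega^E_a$, hence
\[
\Lam^E f^{te}_a \;=\; \{\Psi^E_a,\p_2\Omega^E_a\}_V+\{\p_2\cB_a\Omega^E_a,\Omega^E_a\}_V,
\]
while
\[
\p_2 {\rm R}^E_a \;=\; \{\p_2\Psi^E_a,\Omega^E_a\}_V+\{\Psi^E_a,\p_2\Omega^E_a\}_V.
\]
Subtracting, the cross term cancels and what remains is $\{\p_2(\cB_a\Omega^E_a-\Psi^E_a),\Omega^E_a\}_V$. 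Plugging in the explicit derivative $\p_2(\Psi^E_a-\cB_a\Omega^E_a)=\ve^2\dtea\,\xi_2Y_1$ and using $\{\xi_2 Y_1,\Omega^E_a\}_V=f^{to}_a$ yields $\Lam^E f^{te}_a-\p_2{\rm R}^E_a=-\ve^2\dtea f^{to}_a$. The identity for $\Lam^E f^{to}_a$ is obtained in the same way, where the extra constant vector $\tfrac{\ve}{\Ga}\dtea\ala Y_2$ appearing in $\p_1(\Psi^E_a-\cB_a\Omega^E_a)$ contributes $0$ to $\{\cdot,\Omega^E_a\}_V$ (it is a constant), and the remaining term $\ve^2\dtea\xi_1 Y_1$ produces $\ve^2\dtea f^{te}_a$.

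There is essentially no obstacle here: everything is a bookkeeping exercise built on the identity $\Psi^E_a-\cB_a\Omega^E_a=\frac{\ve}{\Ga}\dtea\ala\hat X+\text{const}$ and on the commutation of $\cB_a$ with Cartesian derivatives. The only point worth double-checking is that the constant logarithmic term in $\Psi^E_a$ and the constant vector $Y_2$ are both annihilated by $\{\cdot,\Omega^E_a\}_V$; this is immediate since $\{c,g\}=0$ for any constant $c$.
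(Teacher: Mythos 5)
Your proposal is correct and matches the paper's own proof in essence: both sets of identities are obtained by differentiating ${\rm R}^E_a=\{\Psi^E_a,\Omega^E_a\}_V$ (respectively, writing out $\Lam^{E,\star}$ on $\xi_j Y_1$ from the definition) and using that $\cB_a$ commutes with $\p_1,\p_2$ so that all the $\cB_a$-contributions cancel, leaving the elementary derivatives of the polynomial part $\Psi^E_a-\cB_a\Omega^E_a=\frac{\ve}{\Ga}\dtea\ala\hat X+\text{const}$. The only cosmetic difference is that you package the residual via $\hat X$, whereas the paper writes the $|\xi|^2$ and $\xi_1$ terms explicitly; the computations are the same.
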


\begin{proof} In view of \eqref{eq 5.4}, we get,
by taking $\p_j$ on \eqref{eq 6.5}, that
\begin{equation*}
\begin{split}
  & \p_j {\rm R}^E_a =  \lbk\p_j \Psi^E_{a}, \Omega^E_{a}\rbk_V+ \lbk\Psi^E_{a},\p_j \Omega^E_{a}\rbk_V \\
  & =  \lbk\cB_a \p_j\Omega^E_a , \Omega^E_{a}\rbk_V+ \lbk\Psi^E_{a},\p_j \Omega^E_{a}\rbk_V + \lbk\p_j \left(\ve^2/2 \dot{\theta}^{E}_a |\xi|^2 Y_1 +\ve /\Ga\dot{\theta}^{E}_a \al_a \xi_1 Y_2\right), \Omega^E_a \rbk_V\\
  &=  \Lam^E [\p_j \Omega^E_a]  +  \ve^2  \dot{\theta}^E_a\lbk \xi_j Y_1,  \Omega^E_a  \rbk_V = \Lam^E [\p_j \Omega^E_a] +\ka_j \ve^2  \dot{\theta}^E_a \p_{j^\star} \Omega^E_a,
\end{split}
\end{equation*}
which  proves the first line in \eqref{eq 6.7}.

While we deduce from \eqref{eq 6.1} and \eqref{eq 6.6}   that
\begin{equation*}
    \begin{split}
    \Lam^{E,\star} \varrho^{te}_a&= -\lbk\Psi^E_a ,\xi_1 Y_1 \rbk_V -\cB_a\lbk\xi_1Y_1, \Omega^E_a \rbk_V = \p_2 \Psi^E_a - \cB_a\p_2 \Omega^E_a \\
    &= \p_2 \left(\frac{\ve^2}{2} \dot{\theta}^{E}_a |\xi|^2 Y_1 +\frac{\ve }{\Ga}\dot{\theta}^{E}_a \al_a \xi_1 Y_2\right) =\ve^2  \dot{\theta}^E_a \varrho^{to}_a , \\
     \Lam^{E,\star} \varrho^{to}_a &= -\lbk\Psi^E_a ,\xi_2 Y_1 \rbk_V -\cB_a\lbk\xi_2Y_1, \Omega^E_a \rbk_V= -\p_1 \Psi^E_a + \cB_a\p_1 \Omega^E_a \\
     &= -\p_1 \left(\frac{\ve^2}{2} \dot{\theta}^{E}_a |\xi|^2 Y_1 +\frac{\ve }{\Ga}\dot{\theta}^{E}_a \al_a \xi_1 Y_2\right) = -\ve^2  \dot{\theta}^E_a \varrho^{te}_a - \frac{\ve}{\Ga}\dot{\theta}^E_a \al_a Y_2  ,
    \end{split}
\end{equation*}
and then complete the proof of Proposition \ref{Prop 6.1}.
\end{proof}

However $\varrho^{te}_a, \varrho^{to}_a$ are two trivial ``pseudo-momenta" for $\Lam^{E}$, since in light of \eqref{eq 5.1a}, they remain orthogonal to the nonlinear evolution of $\Omega$ and hence play only a subsidiary role.

\subsection{Non-trivial ``pseudo-momenta".}
To uncover some nontrivial ``pseudo-momenta",  We recall $\xi= \rho(\cos\vartheta, \sin\vartheta)$ and the ``frame stream function"
\begin{equation}\hat{X}\eqdefa  \xi_1 Y_2+\frac{\ve\Ga }{2 \al_a}|\xi|^2 Y_1  .
\end{equation}
For any 2D vector field  $Z=(Z_1,Z_2)^T$, we denote
$ Z\Omega \eqdefa (Z_1 \Omega_1, Z_2 \Omega_2)^T$.
Then using Poisson bracket, we  define ``frame derivative" $X$ via
\begin{equation}
    X\Omega \eqdefa \{\hat{X}, \Omega\}_V =
   \Bigl(   Y_2 \p_2 + \frac{\ve\Ga }{\al_a}  Y_1 \p_\vartheta   \Bigr) \Omega , \quad \text{i.e.,}\ X = \begin{pmatrix}\Ga_2 \p_2 + \frac{\ve \Ga}{\ala}\p_\vartheta\\ -\Ga_1 \p_2 + \frac{\ve \Ga}{\ala}\p_\vartheta \end{pmatrix}.
\end{equation}
Using $X\hat{X}=\{X,X\} =0$, we get
\begin{equation}\label{eq 6.10a}
\begin{split}
    X{\rm R}^E_a &= X \{\Psi^E_a, \Omega^E_a\}_V = \{\Psi^E_a, X\Omega^E_a\}_V + \{X\Psi^E_a, \Omega^E_a\}_V\\
    &= \{\Psi^E_a, X\Omega^E_a\}_V + \{X\left(\cB_a \Omega^E_a \right), \Omega^E_a\}_V.
\end{split}
\end{equation}
Notice that
\begin{equation*}
    \begin{split}
&\left(X\left(\cB_a \Omega^E_a \right)\right)_i
= X_i \Phi_{i,a}^E +X_i  \left(\Phi_{\is,a}^E\left(\xi+  \ka_i \ve^{-1}\al_a\eo\right)\right) \\
& = X_i \Phi_{i,a}^E + \frac{\ve \Ga}{\al_a} (-\xi_2 \p_1 + \xi_1 \p_2 ) \left( \Phi_{\is,a}^E\left(\xi+  \ka_i \ve^{-1}\al_a\eo\right) \right) + \ka_i \Ga_\is  \p_2 \Phi_{\is,a}^E\left(\xi+  \ka_i \ve^{-1}\al_a\eo\right)  \\
& = X_i \Phi_{i,a}^E + \frac{\ve \Ga}{\al_a} \left((-\xi_2 \p_1 + \xi_1 \p_2 )  \Phi_{\is,a}^E\right)\left(\xi+  \ka_i \ve^{-1}\al_a\eo\right)  \\
&\quad -\frac{\ve \Ga}{\al_a}  \frac{\ka_i \al_a}{\ve}\p_2 \Phi_{\is,a}^E\left(\xi+  \ka_i \ve^{-1}\al_a\eo\right)  + \ka_i \Ga_\is  \p_2 \Phi_{\is,a}^E\left(\xi+  \ka_i \ve^{-1}\al_a\eo\right)  \\
& = X_i \Phi_{i,a}^E + \frac{\ve \Ga}{\al_a} \left(\p_\vartheta  \Phi_{\is,a}^E\right)\left(\xi+  \ka_i \ve^{-1}\al_a\eo\right) - \ka_i \Ga_i \p_2\Phi_{\is,a}^E\left(\xi+  \ka_i \ve^{-1}\al_a\eo\right) \\
&= X_i \Phi_{i,a}^E + \left(X_\is \Phi_{\is,a}\right)\left(\xi+  \ka_i \ve^{-1}\al_a\eo\right),
    \end{split}
\end{equation*}
which together with $[\p_\vartheta, \lap_\xi^{-1}]=0$ implies
\begin{equation}\label{eq 6.12a}
    X\left(\cB_a \Omega^E_a \right) = \cB_a \left(X\Omega^E_a \right).
\end{equation}
We thus deduce from  \eqref{eq 6.10a} and \eqref{eq 6.12a} that
\begin{equation}\label{eq 6.12}
    \Lam^E (X\Omega^E_a)= X{\rm R}^E_a = \cO_\cZ(\ve^{M+1}),
\end{equation}
which means that up to an error of size $\cO(\ve^{M+1})$, $ X\Omega^E_a$ is an eigenfunction of $\Lam^E$ corresponding to eigenvalue zero.
The main result in this section states as follows.

\begin{Proposition}\label{Prop 6.2}
{\sl For $M\geq 3$,   there exist $\varrho^e_a, \varrho^o_a\in \cS_{\star}^2 $ and $ \lam^e_a\in \R$
so that
\begin{subequations}
\begin{align}
     &\lbk\varrho^e_a, \Omega^E_a\rbk_V = X \Omega^E_a  +{\rm R}_a^{e,z}, \quad \Lam^{E,\star}\varrho^{e}_a = {\rm R}_a^{e,s}, \quad \text{with $\varrho^e_a$ being $\xi_2$-even},  \label{eq 6.11b}  \\
       &\lamet \varrho^{o}_a = \lam^e \varrho^e_a  +{\rm R}_a^{o,s}, \quad \text{with $\varrho^o_a$ being $\xi_2$-odd},\with  \label{eq 6.11a}\\
     & {\rm R}_a^{o,s} = \cO_{\cS_\star}(\ve^{M+1}) +\Vec{C}(t), \quad {\rm R}_a^{e,s}=\cO_{\cS_\star}(\ve^{M+1}) \andf
     {\rm R}_a^{e,z} = \cO_{\cZ}(\ve^{M+1}),\label{eq 6.11c}
\end{align}
\end{subequations}
 for some vector $\Vec{C}(t)$.
 Moreover, there hold
\begin{equation}\label{eq 6.13}
    \varrho^o_a = \xi_2Y_2 + \cO_{\cS_\star}(\ve), \quad \varrho^e_a = \xi_1Y_2 + \cO_{\cS_\star}(\ve), \quad \lam^e_a  = \ve^2\Ga/\pi + \cO(\ve^3).
\end{equation}
}
\end{Proposition}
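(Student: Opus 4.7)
The construction of $\varrho^e_a, \varrho^o_a$ and $\lam^e_a$ proceeds by induction on the powers of $\ve$, in the same spirit as Proposition \ref{Prop 4.2}. We seek finite power series
\[
\varrho^e_a = \sum_{k=0}^M \ve^k \varrho^e_k, \quad \varrho^o_a = \sum_{k=0}^M \ve^k \varrho^o_k, \quad \lam^e_a = \sum_{k=2}^M \ve^k \lam^e_k,
\]
starting from the ansatz $\varrho^e_0 \eqdefa \xi_1 Y_2$ and $\varrho^o_0 \eqdefa \xi_2 Y_2$. This ansatz is forced by three requirements: that the leading profiles have the form $S\xi$ with $S$ a non-zero constant matrix, that they lie in $\ker \Lam^{E,\star}_0$ (which by \eqref{eq 6.4} reduces to $\Lam^\star \xi_j = 0$, cf.\ \eqref{eq 5.24}), and that the parities of \eqref{eq 6.11b}, \eqref{eq 6.11a} hold at leading order. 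A direct computation using $\lbk \xi_1, G\rbk = \p_2 G$ and $\p_\vartheta G = 0$ yields
\[
\lbk\varrho^e_0, \Omega^E_a\rbk_V\Big|_{\ve = 0} = \Ga_1 \Ga_2\, \p_2 G\,(1,-1)^T = (X \Omega^E_a)\Big|_{\ve = 0},
\]
so the three relations of \eqref{eq 6.11b}, \eqref{eq 6.11a} all hold at order $\ve^0$.

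Assume now that \eqref{eq 6.11b} and \eqref{eq 6.11a} have been arranged modulo $\cO(\ve^k)$, and let $h^e_k, h^o_k, h^z_k$ denote the $\ve^k$-coefficients of the residuals of $\Lam^{E,\star}\varrho^e_a$, of $\Lam^{E,\star}\varrho^o_a - \lam^e_a \varrho^e_a$, and of $\lbk\varrho^e_a, \Omega^E_a\rbk_V - X\Omega^E_a$, respectively. These sources are determined entirely by the profiles from Proposition \ref{Prop 4.2} together with $\varrho^e_j, \varrho^o_j, \lam^e_j$ for $j < k$. The next increments $\varrho^e_k, \varrho^o_k, \lam^e_k$ are then obtained by solving
\[
\Lam^{E,\star}_0 \varrho^e_k = -h^e_k, \qquad \Lam^{E,\star}_0 \varrho^o_k = \lam^e_k \varrho^e_0 - h^o_k,
\]
together with the compatibility constraint $\lbk\varrho^e_k, \Omega^E_{a,0}\rbk_V = -h^z_k + (\text{already-known correction})$; since $\Omega^E_{a,0}$ is radial, this last constraint reduces to a first-order ODE in $\p_\vartheta$ that is readily solvable on $\cS_\star$, leaving the $\vartheta$-constant part of $\varrho^e_k$ free to absorb the constant vector $\Vec C(t)$ in \eqref{eq 6.11c}. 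Decomposing $h^e_k, h^o_k$ into Fourier modes in $\vartheta$ (which are finite in number thanks to Proposition \ref{Prop 4.2}(a5)) and inverting $\Lam^\star$ mode by mode via Proposition \ref{Prop 3.2}(5) produces $\varrho^e_k, \varrho^o_k \in \cS_\star^2$. Parity preservation follows from the last clause of Proposition \ref{Prop 3.2}(5) combined with the $\xi_2$-evenness of every $\Omega^E_{i,k}$ from Proposition \ref{Prop 4.2}(a4): the sources $h^e_k, h^z_k$ come out $\xi_2$-odd while $h^o_k$ is $\xi_2$-even, so that $\varrho^e_k$ is $\xi_2$-even and $\varrho^o_k$ is $\xi_2$-odd.

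The main obstacle is the limited surjectivity of $\Lam^\star$ on the Fourier mode $n=1$: by Proposition \ref{Prop 3.2}(5) the range there is $G^{-1}\cY'_1$, so the mode-one component of the source must pair to zero against $\xi_1$ and $\xi_2$. For the $\varrho^e_k$-equation this cokernel obstruction vanishes automatically, as a direct consequence of the Lie-algebraic identity \eqref{eq 6.12}: pairing the residual of $\Lam^{E,\star}\varrho^e_a$ against the pseudo-eigenfunction $X\Omega^E_a$ through the duality \eqref{eq 6.1} reduces the mode-one obstruction to the controlled error $X{\rm R}^E_a = \cO_\cZ(\ve^{M+1})$. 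For the $\varrho^o_k$-equation, on the contrary, the mode-one obstruction is precisely what the scalar $\lam^e_k$ is designed to cancel: projecting $-h^o_k$ onto $\varrho^e_0 = \xi_1 Y_2$ uniquely determines $\lam^e_k$. At the first non-trivial level $k=2$, substituting $\mu_1 = \Ga_2, \mu_2 = -\Ga_1$ into the explicit formula \eqref{eq 6.2} gives
\[
\Lam^{E,\star}_2 \varrho^o_0 = \frac{\Ga}{\pi}\xi_1 Y_2 = \frac{\Ga}{\pi}\varrho^e_0,
\]
so $\lam^e_2 = \Ga/\pi$, matching \eqref{eq 6.13}. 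The standing hypothesis $\Ga \neq 0$ is used precisely here to guarantee $\lam^e_2 \neq 0$, which in turn underlies the linear independence of the four pseudo-momenta $\varrho^{te}_a, \varrho^{to}_a, \varrho^e_a, \varrho^o_a$ needed in Section \ref{sect5}. Iterating this construction $M$ times closes the induction and yields the error bounds \eqref{eq 6.11c}.
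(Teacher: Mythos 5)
Your proposal has two genuine gaps, both traceable to trying to solve an equation for $\varrho^e_a$ where the paper instead imposes a structural ansatz.

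\textbf{Overdetermination of $\varrho^e_k$.} You ask $\varrho^e_k$ to satisfy both $\Lam^{E,\star}_0\varrho^e_k=-h^e_k$ and the compatibility $\lbk\varrho^e_k,\Omega^E_{a,0}\rbk_V=-h^z_k+\cdots$. But since $G$ and $\Upsilon$ are radial, $\{f,G\}=-\tfrac{G'}{\rho}\p_\vartheta f$ and $\{\Upsilon,f\}=\tfrac{\Upsilon'}{\rho}\p_\vartheta f$, so $\Lam^\star f=-\tfrac{\Upsilon'}{\rho}\p_\vartheta f+\lap_\xi^{-1}\!\left(\tfrac{G'}{\rho}\p_\vartheta f\right)$ is already determined by $\p_\vartheta f$. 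The compatibility constraint fixes $\p_\vartheta\varrho^e_k$, after which $\Lam^\star\varrho^e_k$ is forced and there is no remaining freedom to match $-h^e_k$. Your argument does not establish the hidden consistency between $h^e_k$ and $h^z_k$ that would be required. The paper sidesteps this entirely: instead of solving any equation for $\varrho^e_a$, it posits the ansatz $\varrho^e_a=\hat X+\sum_\ell\ve^\ell\bigl(J_{i,\ell}(\Omega^E_{i,a})\chi_{\sig_1}\bigr)_i$ with $J_{i,\ell}\in\cK$ \emph{arbitrary}, and then both relations in \eqref{eq 6.11b} hold automatically, because $\{J(\Omega^E_{i,a}),\Omega^E_{i,a}\}=0$ and $\{F_i(\Omega^E_{i,a}),J(\Omega^E_{i,a})\}=0$ combine with the functional identity of Proposition \ref{Prop 4.5}/\ref{Prop 4.7} and the Lie identity \eqref{eq 6.12}. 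The $J_{i,\ell}$ remain free parameters for later use.

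\textbf{Insufficient freedom in the $\varrho^o$-equation.} You only allocate one scalar per level, $\lam^e_k$, to annihilate the mode-one obstruction in $\Lam^{E,\star}_0\varrho^o_k=\lam^e_k\varrho^e_0-h^o_k$. But the obstruction is a two-component condition (pairing each component against $\p_1 G$), so one scalar is not enough. The paper introduces an additional parameter $a_\ell$ by allowing $\varrho^o_a=\xi_2Y_2+\sum_\ell\ve^\ell(\varrho^o_\ell+a_\ell\xi_2\et)$, and the two mode-one obstruction conditions become a $2\times2$ linear system for $(a_{N-1},\lam^e_{N+1})$ with determinant $\Ga_2\Ga/(2\pi)$ — this, not the nonvanishing of $\lam^e_2$, is where $\Ga\neq0$ and $\Ga_2\neq0$ are actually used. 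Separately, the mode-zero obstruction $\cP_0 S_i$ is killed by choosing $J_{i,N-1}(G)=\tfrac{\pi}{\Ga}\cP_0 S_i$, which is the true role of the $J_{i,\ell}$ degrees of freedom, not absorbing the vector $\Vec{C}(t)$ as you suggest (a constant vector contributes nothing to $\lbk\cdot,\Omega^E_a\rbk_V$ and is genuinely harmless, which is why the paper simply tolerates it in ${\rm R}_a^{o,s}$). Your leading-order computations and the value $\lam^e_2=\Ga/\pi$ are correct, but the inductive step as you have written it cannot close.
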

\begin{proof}
We first deduce from Proposition \ref{Prop 4.7}
that for any $J_{i,\ell}\in \cK$, $i=1,2$, $\ell =1,\cdots, M$,
\begin{equation}\label{eq 6.14}
    \varrho^e_a =\xi_1 Y_2 +  \frac{\ve\Ga}{2\ala}|\xi|^2 Y_1 +\sum_{\ell=1}^M \ve^\ell \begin{pmatrix}
    J_{1,\ell}(\Omega^E_{1,a})\chi_{\sig_1}(\xi) \\
    J_{2,\ell}(\Omega^E_{2,a}) \chi_{\sig_1}(\xi)
\end{pmatrix}
\end{equation}
with $\chi_{\sig_1}(\xi)\eqdefa \chi(\ve^{\sig_1}|\xi|)$ with $\chi(r)=1$ for $r\leq 1/2$ and $\chi(r)=0$ for $r\geq 1$,
is well-defined and always solves \eqref{eq 6.11b}. Indeed, by taking $\varrho^e_a$ of the form \eqref{eq 6.14} and
 recalling from Proposition \ref{Prop 4.7} that $ \Theta_i(\ve,\xi)\eqdefa \Psi^{E}_{i,a} + F_i(\Omega^{E}_{i,a})$, we find
\begin{equation}\label{eq 6.15}
    \begin{split}
\lbk \varrho^e_a, \Omega^E_a \rbk_V &= X\Omega^E_a + \sum_{\ell=1}^M \ve^\ell \begin{pmatrix}
    \lbk J_{1,\ell}(\Omega^E_{1,a})\chi_{\sig_1}(\xi),  \Omega^E_{1,a}\rbk  \\
   \lbk J_{2,\ell}(\Omega^E_{2,a})\chi_{\sig_1}(\xi),  \Omega^E_{2,a}\rbk
\end{pmatrix}  \\
& = X\Omega^E_a + \sum_{\ell=1}^M \ve^\ell \begin{pmatrix}
   J_{1,\ell}(\Omega^E_{1,a}) \lbk \chi_{\sig_1}(\xi),  \Omega^E_{1,a}\rbk  \\
   J_{2,\ell}(\Omega^E_{2,a}) \lbk \chi_{\sig_1}(\xi),  \Omega^E_{2,a}\rbk
\end{pmatrix} ,
    \end{split}
\end{equation}
and
\begin{equation}\label{eq 6.15a}
    \begin{split}
\lbk\Psi^E_a,  \varrho^e_a\rbk_V &= -X(\cB_a\Omega^E_a) + \sum_{\ell=1}^M \ve^\ell \begin{pmatrix}
    \lbk \Theta_1, J_{1,\ell}(\Omega^E_{1,a}) \chi_{\sig_1}(\xi) \rbk  \\
    \lbk \Theta_2, J_{2,\ell}(\Omega^E_{1,a}) \chi_{\sig_1}(\xi) \rbk
\end{pmatrix}\\
&\quad - \sum_{\ell=1}^M\ve^\ell \begin{pmatrix}
    \lbk F_1(\Omega^E_{1,a}), J_{1,\ell}(\Omega^E_{1,a}) \chi_{\sig_1}(\xi) \rbk  \\
    \lbk F_2(\Omega^E_{1,a}), J_{2,\ell}(\Omega^E_{2,a}) \chi_{\sig_1}(\xi) \rbk
\end{pmatrix}\\
&= -X(\cB_a\Omega^E_a) + \sum_{\ell=1}^M \ve^\ell \begin{pmatrix}
    \lbk \Theta_1, J_{1,\ell}(\Omega^E_{1,a}) \chi_{\sig_1}(\xi) \rbk  \\
    \lbk \Theta_2, J_{2,\ell}(\Omega^E_{1,a}) \chi_{\sig_1}(\xi) \rbk
\end{pmatrix} \\
&\quad - \sum_{\ell=1}^M\ve^\ell \begin{pmatrix}
    F'_1(\Omega^E_{1,a}) J_{1,\ell}(\Omega^E_{1,a})  \lbk \Omega^E_{1,a},  \chi_{\sig_1}(\xi) \rbk  \\
     F'_2(\Omega^E_{2,a}) J_{2,\ell}(\Omega^E_{2,a})  \lbk \Omega^E_{2,a},  \chi_{\sig_1}(\xi) \rbk
\end{pmatrix}.
    \end{split}
\end{equation}
Notice that for any $\cH \in \cS_\star$ supported on $|\xi|\gtrsim \ve^{-\sig_1},$ $\cH$ is of $\cO_{\cS_\star}(\ve^M)$ for any $M$, we obtain
\begin{equation*}
    \lbk \chi_{\sig_1}(\xi),  \Omega^E_{i,a}\rbk  = \cO_\cZ(\ve^M) \quad \text{for any $M$,}
\end{equation*}
which together with \eqref{eq 6.15}, \eqref{eq 6.15a}, \eqref{eq 6.12},  $\Theta_i \chi_{\sig_1}(\xi/4) = \cO_{\cS_\star}(\ve^{M+1})$ and the fact:  $\cB_a$ maps $\cO_{\cZ}(\ve^{M+1})$ into $\cO_{\cS_\star}(\ve^{M+1})$, ensures  \eqref{eq 6.11b}.

Below we will denote $\cR_M^o \eqdefa \Xi_M [\lamet \varrho^o_a - \lam^e_a \varrho^e_a]$ and set
\begin{equation*}
    \varrho^o_a = \xi_2 Y_2  + \sum_{\ell=1}^M \ve^\ell (\varrho^o_\ell + a_\ell \xi_2\et) \andf
  \lam^e_a = \sum_{\ell=0}^M \ve^\ell \lam^e_\ell,
\end{equation*}
with $\varrho^o_\ell$ being $\xi_2$-odd and  $\lla \left(\varrho^o_\ell\right)_i, \p_2 G\rra=0$.  Our final aim is seeking appropriate
$$\text{ $\cJ_{1,\ell}, \cJ_{2,\ell}\in \cK,  \varrho^o_\ell \in \cS_\star^2, a_\ell, \lam^e_\ell\in \R$ for $1\leq \ell \leq M $ so that  $\cR^o_\ell = \Vec{C}$ for all $0\leq \ell \leq M$}. $$
Here we set $a_0=\cJ_{1,0}=\cJ_{2,0}=\varrho^o_0=\lam^e_0=0$ for $\ell=0$.

\no {\bf Step 1. Case $M=0$.}
Using \eqref{eq 6.4}, we have
\begin{equation*}
    \cR^o_0 = \lamet_0\left[\xi_2 Y_2\right] - \lam^e_0 \xi_1 Y_2=- \lam^e_0 \xi_1 Y_2.
\end{equation*}
Thus by setting $\lam^e_0=0$, we have  $\cR^o_0=0.$

\no {\bf Step 2. Case $M=1$.}
Using \eqref{eq 6.4}-\eqref{eq 6.3}, we have
\begin{equation*}
    \begin{split}
\cR^o_1 &= \lamet_0 \left[ a_1 \xi_2 \et
    + \varrho^o_1 \right]  + \lamet_1\left[\xi_2 Y_2\right] - \lam^e_1 \xi_1 Y_2\\
    &= \lamet_0\left[ \varrho^o_1 \right] - \lam^e_1 \xi_1 Y_2 + \Vec{C}.
    \end{split}
\end{equation*}
Then by setting $\varrho^o_1 = \lam^e_1 = 0$,  we have $\cR^o_1 =\Vec{C}$.

\no {\bf Step 3. Case $M=2$.}
Using \eqref{eq 6.4}-\eqref{eq 6.2}, we have
\begin{equation*}
    \begin{split}
\cR^o_2 &= \lamet_0\left[ a_2\xi_2 \et + \varrho^o_2 \right] + \Lam^{E,\star}_1[a_1\xi_2 \et ]  + \lamet_2\left[\xi_2 Y_2\right] - \lam^e_2 \xi_1 Y_2\\
& = \lamet_0\left[ \varrho^o_2 \right]  + \lamet_2\left[\xi_2 Y_2\right] - \lam^e_2 \xi_1 Y_2 +\Vec{C}\\
    &= \lamet_0\left[ \varrho^o_2 \right] + \left(\Ga/ \pi - \lam^e_2\right) \xi_1 Y_2 +\Vec{C}.
    \end{split}
\end{equation*}
Thus by setting $\lam^e_2 = \Ga/\pi$ and $\varrho^o_2 = 0$, we have $\cR^o_2 =\Vec{C}$.


\no {\bf Step 4. Induction for $2\leq N\leq M-1$.}
We assume that we have constructed $\cI_N\eqdefa ( \varrho^o_j, \lam^e_j, a_k, J_{1,k}, J_{2,k})$ for $j\leq N$ and $k\leq N-2$, so that $\cR^o_{j}=\Vec{C}$ for $j \leq N$, and  we will seek $(\varrho^o_{N+1}, \lam^e_{N+1}, a_{N-1}, J_{1,N-1},J_{2,N-1})$ so that $\cR^o_{N+1}=\Vec{C}$.

By direct calculations, we have
\begin{equation*}
    \begin{split}
\cR^{o}_{N+1} &=  \Lam^{E,\star}_{0} \left[a_{N+1}\xi_2 \et + \varrho^o_{N+1}\right]  +\Lam^{E,\star}_{2} \left[a_{N-1}\xi_2 \et + \varrho^o_{N-1}\right] + \cdots +\Lam^{E,\star}_{N+1} \left[\xi_2 Y_2\right] + \Vec{C} \\
& \quad - \lam^e_{N+1} \xi_1 Y_2 -\lam^e_{N}\left( \Xi_0\left[\frac{\Ga }{2\ala}|\xi|^2 Y_1\right] + \Xi_0\left[\begin{pmatrix}
    J_{1,1}(\Omega^E_{1,a})\chi_{\sig_1}(\xi) \\
    J_{2,1}(\Omega^E_{2,a}) \chi_{\sig_1}(\xi)
\end{pmatrix}\right]\right)  - \cdots \\
&\quad -\lam^e_2 \bigg( \Xi_{N-2}\left[\frac{\Ga }{2\ala}|\xi|^2 Y_1\right]   + \Xi_{N-2}\left[\begin{pmatrix}
    J_{1,1}(\Omega^E_{1,a})\chi_{\sig_1}(\xi) \\
    J_{2,1}(\Omega^E_{2,a}) \chi_{\sig_1}(\xi)
\end{pmatrix}\right] \\
&\qquad \qquad +\cdots + \Xi_0\left[\begin{pmatrix}
    J_{1,N-1}(\Omega^E_{1,a})\chi_{\sig_1}(\xi) \\
    J_{2,N-1}(\Omega^E_{2,a}) \chi_{\sig_1}(\xi)
\end{pmatrix}\right]\bigg).
    \end{split}
\end{equation*}
Using again the fact that $\cH \in \cS_\star$ supported on $|\xi|\gtrsim \ve^{-\sig_1}$ is of $\cO_{\cS_\star}(\ve^M)$ for any $M$, we have for any $k,\ell \geq 0$,
\begin{equation*}
\begin{split}
   \begin{pmatrix}
    J_{1,\ell}(\Omega^E_{1,a})\chi_{\sig_1}(\xi) \\
    J_{2,\ell}(\Omega^E_{2,a}) \chi_{\sig_1}(\xi)
\end{pmatrix} &= \begin{pmatrix}
   \Pi_{k} \left[ J_{1,\ell}(\Omega^E_{1,a})\right]\chi_{\sig_1}(\xi) \\
   \Pi_k\left[ J_{2,\ell}(\Omega^E_{2,a})\right] \chi_{\sig_1}(\xi)
\end{pmatrix} + \cO_{\cS_\star}(\ve^{k+1}) \\
& =  \begin{pmatrix}
   \Pi_{k} \left[ J_{1,\ell}(\Omega^E_{1,a})\right] \\
   \Pi_k\left[ J_{2,\ell}(\Omega^E_{2,a})\right]
\end{pmatrix} + \cO_{\cS_\star}(\ve^{k+1}),
\end{split}
\end{equation*}
which implies that
\begin{equation*}
    \begin{split}
\cR^{o}_{N+1}
&=  \Lam^{E,\star}_{0} \left[a_{N+1}\xi_2 \et + \varrho^o_{N+1}\right]  +\Lam^{E,\star}_{2} \left[a_{N-1}\xi_2 \et + \varrho^o_{N-1}\right] + \cdots +\Lam^{E,\star}_{N+1} \left[\xi_2 Y_2\right] + \Vec{C} \\
& \quad - \lam^e_{N+1} \xi_1 Y_2 -\lam^e_{N}\left( \Xi_0\left[\frac{\Ga }{2\ala}|\xi|^2 Y_1\right] + \Xi_0\left[\begin{pmatrix}
    J_{1,1}(\Omega^E_{1,a}) \\
    J_{2,1}(\Omega^E_{2,a})
\end{pmatrix}\right]\right)  - \cdots \\
&\quad -\lam^e_2 \bigg( \Xi_{N-2}\left[\frac{\Ga }{2\ala}|\xi|^2 Y_1\right]   + \Xi_{N-2}\left[\begin{pmatrix}
    J_{1,1}(\Omega^E_{1,a}) \\
    J_{2,1}(\Omega^E_{2,a})
\end{pmatrix}\right]  +\cdots + \Xi_0\left[\begin{pmatrix}
    J_{1,N-1}(\Omega^E_{1,a}) \\
    J_{2,N-1}(\Omega^E_{2,a})
\end{pmatrix}\right]\bigg)\\
&= \lamet_0[\varrho^o_{N+1}] + \lamet_2\left[
         a_{N-1} \xi_2\et \right]  - \lam^e_{N+1}\xi_1 Y_2\\
    &\quad - \frac{\Ga}{\pi} \begin{pmatrix}
        J_{1,N-1}(G)  \\ J_{2,N-1}(G)
    \end{pmatrix} +  \begin{pmatrix}
        S_1(\cI_N, \Omega^E_a, \al^E_a, \dot{\theta}^E_a)\\ S_2 (\cI_N, \Omega^E_a, \al^E_a, \dot{\theta}^E_a)
    \end{pmatrix} +\Vec{C}
    \end{split}
\end{equation*}
for some $S_1,S_2\in \cS_\star$ depending only on $\cI_N, \Omega^E_a, \al_a, \dot{\theta}^E_a$. And it follows from the $\xi_2$-odevity of $\Omega^E_a,\Psi^E_a, \xi_2 .\varrho^o_\ell, |\xi|^2$ and the definition of $\Lam^{E,\star}, \Lam^{E,\star}_\ell$ that $S_1,S_2$ are $\xi_2$-even.

Therefore by virtue   of \eqref{eq 6.2}, we find
\begin{equation*}
    \Lam^{E,\star}_2[ \xi_2 \et]= \frac{\xi_1}{2\pi} (-\Ga_2 , 2\Ga_1+\Ga_2)^T.
\end{equation*}
By choosing $a_{N-1}, \lam^e_{N+1}, J_{1,N-1}, J_{2,N-1}$ so that
\begin{equation*}
\left\{
\begin{aligned}
    &\frac{-\Ga_2}{2\pi}a_{N-1} - \lam^e_{N+1} \Ga_2 = \int_{\R^2} S_1  \p_1G\, d\xi, \\
    &\frac{2\Ga_1 + \Ga_2}{2\pi}a_{N-1} +  \lam^e_{N+1} \Ga_1 = \int_{\R^2} S_2  \p_1G\, d\xi,\\
    & \frac{\Ga}{\pi} J_{i,N-1}(G) = \cP_0 S_i  ,
\end{aligned}
\right.
\end{equation*}
and then letting $\varrho^o_{N+1}$ solve
\begin{equation*}
    \begin{split}
       &  \lamet_0[\varrho^o_{N+1}] =- \frac{a_{N-1}\xi_1}{2\pi}\begin{pmatrix}
        -\Ga_2 \\ 2\Ga_1 + \Ga_2
    \end{pmatrix}  + \lam^e_{N+1} \begin{pmatrix}
        \Ga_2 \xi_1 \\ -\Ga_1 \xi_1
    \end{pmatrix} - (1-\cP_0)  \begin{pmatrix}
        S_1 \\ S_2
    \end{pmatrix},
    \end{split}
\end{equation*}
we deduce from Proposition \ref{Prop 3.2} that there exists a unique $\varrho_{N+1}\in\cS_\star^2$ so that $\varrho^o_{N+1}$ is $\xi_2$-odd and $\lla \left(\varrho^o_{N+1}\right)_i, \p_2 G \rra=0 $. With such choice of $(\varrho^o_{N+1}, \lam^e_{N+1}, a_{N-1}, J_{1,N-1},J_{2,N-1})$, we have $\cR^o_{N+1}=\Vec{C}$,  which  completes the proof of Proposition \ref{Prop 6.2}.
\end{proof}

\begin{Proposition}\label{Prop 6.3a}
{\sl Let $M\geq 3$. For $\varrho^e_a, \varrho^o_a, \lam^e_a, {\rm R}_a^{e,z}, {\rm R}_a^{e,s}, {\rm R}_a^{o,s}$ constructed in Proposition \ref{Prop 6.2}, we denote
\begin{equation}\label{eq 6.18a}
\begin{split}
f^o_a\eqdefa \lbk\varrho^o_a , \Omega^E_a\rbk_V, \quad f^e_a \eqdefa \lbk\varrho^e_a , \Omega^E_a\rbk_V = X \Omega^E_{a}+ {\rm R}_a^{e,z}.
\end{split}
\end{equation}
Then we have
        \begin{subequations}
\begin{align}
& \Lam^E f^e_a = X {\rm R}^E_a + \Lam^E\left[ {\rm R}_a^{e,z}\right]=\cO_{\cZ}(\ve^{M+1}),\label{eq 6.18ba}\\
&\Lam^E f^o_a =  -\lam^e_a f^e_a - \lbk{\rm R}_a^{o,s}, \Omega^E_a\rbk_V +\{\varrho^o_a,{\rm R}^E_a \}_V =-\lam^e_a f^e_a +\cO_{\cZ}(\ve^{M+1}),\label{eq 6.18bb}\\
& \Lam^{E,\star} \varrho^e_a =  {\rm R}_a^{e,s},\quad \Lam^{E,\star} \varrho^o_a = \lam^e_a \varrho^e_a + {\rm R}_a^{o,s},\label{eq 6.18bc}
\end{align}
    \end{subequations}}
\end{Proposition}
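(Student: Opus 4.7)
Identity \eqref{eq 6.18bc} is simply a restatement of the defining relations \eqref{eq 6.11b} and \eqref{eq 6.11a} established in Proposition~\ref{Prop 6.2}, so no further work is required for this item.

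For \eqref{eq 6.18ba}, I would start from the decomposition $f^e_a = X\Omega^E_a + {\rm R}_a^{e,z}$, which is built into the definition \eqref{eq 6.18a} of $f^e_a$ together with \eqref{eq 6.11b}. Applying $\Lam^E$ linearly and invoking the key identity \eqref{eq 6.12}, which asserts that $\Lam^E(X\Omega^E_a) = X{\rm R}^E_a$ and was derived earlier by exploiting $X\hat{X} = 0$ together with the commutation $X\cB_a = \cB_a X$ in \eqref{eq 6.12a}, I obtain
\begin{equation*}
\Lam^E f^e_a = \Lam^E(X\Omega^E_a) + \Lam^E[{\rm R}_a^{e,z}] = X{\rm R}^E_a + \Lam^E[{\rm R}_a^{e,z}].
\end{equation*}
The size estimate then follows from ${\rm R}^E_a = \cO_{\cZ}(\ve^{M+1})$ (see \eqref{eq 6.5}) and the fact that both $X$, a first-order differential operator with polynomially bounded coefficients, and $\Lam^E$ preserve the class $\cO_{\cZ}(\ve^{M+1})$.

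The formula \eqref{eq 6.18bb} rests on a general Jacobi-type commutator identity: for any smooth vector field $\varrho$ with sufficient decay,
\begin{equation*}
\Lam^E\{\varrho, \Omega^E_a\}_V = -\{\Lam^{E,\star}\varrho,\,\Omega^E_a\}_V + \{\varrho,\,{\rm R}^E_a\}_V.
\end{equation*}
I would establish this by applying the scalar Jacobi identity componentwise to the triple $(\Psi^E_{i,a},\varrho_i,\Omega^E_{i,a})$, substituting $\{\Psi^E_a,\Omega^E_a\}_V = {\rm R}^E_a$, and recognizing from \eqref{eq 6.1} that $\{\Psi^E_a,\varrho\}_V + \cB_a\{\varrho,\Omega^E_a\}_V = -\Lam^{E,\star}\varrho$. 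Specializing to $\varrho = \varrho^o_a$ and inserting $\Lam^{E,\star}\varrho^o_a = \lam^e_a \varrho^e_a + {\rm R}_a^{o,s}$ from \eqref{eq 6.11a} yields exactly the first equality of \eqref{eq 6.18bb}, while the definition $f^e_a = \{\varrho^e_a,\Omega^E_a\}_V$ absorbs the $\lam^e_a$-term.

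The only delicate step I anticipate is controlling the two remainder brackets in \eqref{eq 6.18bb} in the right functional class. Since \eqref{eq 6.11c} gives ${\rm R}_a^{o,s} = \cO_{\cS_\star}(\ve^{M+1}) + \Vec{C}(t)$ and a constant vector has vanishing Poisson bracket, $\{{\rm R}_a^{o,s},\Omega^E_a\}_V$ reduces to the bracket of an $\cS_\star$-function against the Gaussian-decaying $\Omega^E_a\in\cZ$, which lands in $\cO_{\cZ}(\ve^{M+1})$ in view of the relation $\cZ = G\cdot\cS_\star$. Similarly, $\varrho^o_a\in\cS_\star^2$ paired with ${\rm R}^E_a = \cO_{\cZ}(\ve^{M+1})$ gives $\{\varrho^o_a,{\rm R}^E_a\}_V = \cO_{\cZ}(\ve^{M+1})$, so the combined remainder in \eqref{eq 6.18bb} is $\cO_{\cZ}(\ve^{M+1})$ as claimed. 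The bookkeeping between the multiplier class $\cS_\star$ and the Gaussian-decay class $\cZ$ under Poisson brackets, although routine, is where I expect most of the technical care to go.
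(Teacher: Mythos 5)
Your proposal is correct and follows essentially the same route as the paper's proof: you dispense with \eqref{eq 6.18bc} as a restatement of Proposition~\ref{Prop 6.2}, obtain \eqref{eq 6.18ba} by linearity and the identity $\Lam^E(X\Omega^E_a) = X{\rm R}^E_a$ from \eqref{eq 6.12}, and derive \eqref{eq 6.18bb} from the Jacobi identity combined with the definition \eqref{eq 6.1} of $\Lam^{E,\star}$. The only cosmetic difference is that you package the Jacobi computation as the general identity $\Lam^E\{\varrho,\Omega^E_a\}_V = -\{\Lam^{E,\star}\varrho,\Omega^E_a\}_V + \{\varrho,{\rm R}^E_a\}_V$ before specializing to $\varrho = \varrho^o_a$, whereas the paper carries out the same manipulation inline for this specific $\varrho$; your extra care in explaining why the constant vector $\Vec{C}(t)$ drops out of the bracket and why the remainders land in $\cO_\cZ(\ve^{M+1})$ is a welcome addition that the paper leaves implicit.
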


\begin{Remark}
    The definition of $f^o_a,f^e_a$ is motivated by relation \eqref{relation G} and \eqref{eq 6.7a}, which reveals the fact that, the  eigenfunctions of $\Lam^{E,\star}$ and $\Lam^E$  are  connected by the operator $\{\cdot,\Omega^E_a\}_V$.
\end{Remark}

\begin{proof}
     It suffices to prove  \eqref{eq 6.18ba} and \eqref{eq 6.18bb}.  We first get, by
     using \eqref{eq 6.12} and \eqref{eq 6.11c}, that
     \begin{equation*}
    \Lam^Ef^e_a = \Lam^E \left[X\Omega^E_a \right] + \Lam^E\left[{\rm R}_a^{e,z}\right] = X{\rm R}^E_a +  \Lam^E\left[{\rm R}_a^{e,z}\right]  =\cO_{\cZ}(\ve^{M+1}),
     \end{equation*}
     which leads to \eqref{eq 6.18ba}.

     While by using  Jacobi identity, we obtain
    \begin{equation*}
        \begin{split}
   \Lam^E[f^o_a]&= \Lam^E[\lbk\varrho^o_a, \Omega^E_a \rbk_V] = \left\{\Psi^E_a,\  \lbk\varrho^o_a, \Omega^E_a \rbk_V  \right\}_V + \left\{ \cB_a\lbk\varrho^o_a, \Omega^E_a \rbk_V, \ \Omega^E_{a} \right\}_V \\
    & = \left\{\Psi^E_a,\  \lbk\varrho^o_a, \Omega^E_a \rbk_V  \right\}_V + \left\{ -\lamet[\varrho^o_a]-  \lbk\Psi^E_a, \varrho^o_a \rbk_V, \ \Omega^E_{a} \right\}_V \\
    &= - \lam^e_a \lbk\varrho^e_a , \Omega^E_a\rbk_V - \lbk{\rm R}_a^{o,s}, \Omega^E_a\rbk_V + \big( \left\{\Psi^E_a,\  \lbk\varrho^o_a, \Omega^E_a \rbk_V  \right\}_V +  \left\{  \Omega^E_{a} ,  \lbk\Psi^E_a,  \varrho^o_a \rbk_V \right\}_V \big) \\
    &= - \lam^e_a f^e_a - \lbk{\rm R}_a^{o,s}, \Omega^E_a\rbk_V - \lbk\varrho^o_a, \{\Omega^E_a, \Psi^E_a\}_V \rbk_V \\
    &= - \lam^e_a f^e_a - \lbk{\rm R}_a^{o,s}, \Omega^E_a\rbk_V +\lbk\varrho^o_a,{\rm R}^E_a \rbk_V,
        \end{split}
    \end{equation*}
which results in  \eqref{eq 6.18bb} and we thus completes the proof of Proposition \ref{Prop 6.3a}.
\end{proof}

\subsection{Inner product structure.}

By summarizing Propositions \ref{Prop 6.1}- \ref{Prop 6.3a}, we achieve
\begin{equation}\label{eq 6.18}
\left\{
\begin{aligned}
& \varrho^e_a = \xi_1Y_2 +\cO_{\cS_\star}(\ve),\quad \varrho^o_a = \xi_2Y_2 +\cO_{\cS_\star}(\ve),\quad \varrho^{te}_a = \xi_1Y_1,\quad \varrho^{to}_a = \xi_2Y_1,\quad\\
& f^{\star}_a  = \lbk\varrho_a^\star,\Omega^E_a\rbk_V, \quad \text{for }\ \star=e,o,te,to, \quad \lam^e_a=\ve^2 \Ga/\pi + \cO(\ve^3),\\
     & \Lam^E f^e_a = X {\rm R}^E_a + \Lam^E\left[ {\rm R}_a^{e,z}\right],\quad \Lam^E f^o_a =  -\lam^e_a f^e_a - \lbk{\rm R}_a^{o,s}, \Omega^E_a\rbk_V + \lbk\varrho^o_a,{\rm R}^E_a \rbk_V,\\
& \Lam^E f^{te}_a = -\ve^2 \dot{\theta}^E_a f^{to}_a + \p_2 {\rm R}_a^{E}, \quad \Lam^E f^{to}_a = \ve^2 \dot{\theta}^E_a f^{te}_a - \p_1 {\rm R}_a^{E},  \\
& \Lam^{E,\star} \varrho^e_a =  {\rm R}_a^{e,s},\quad \Lam^{E,\star} \varrho^o_a = \lam^e_a \varrho^e_a + {\rm R}_a^{o,s} , \\
 &\Lam^{E,\star} \varrho^{te}_a=\ve^2  \dot{\theta}^E_a \varrho^{to}_a ,\quad  \Lam^{E,\star} \varrho^{to}_a = -\ve^2  \dot{\theta}^E_a \varrho^{te}_a - \frac{\ve}{\Ga}\dot{\theta}^E_a \al_a Y_2 ,
\end{aligned}
\right.
\end{equation}
where  ${\rm R}_a^{e,z},{\rm R}_a^{E} = \cO_{\cZ}(\ve^{M+1})$,
 ${\rm R}_a^{e,s} = \cO_{\cS_\star}(\ve^{M+1})$ and ${\rm R}_a^{o,s} =\cO_{\cS_\star}(\ve^{M+1}) +\Vec{C}(t)$ for some  vector $\Vec{C}(t)$.
The superscripts  ``e" or ``o" over $``\varrho"$  denote whether the function is $\xi_2$-even or $\xi_2$-odd, respectively,  whereas for $``f"$ and $``{\rm R}"$, it is the opposite. And the superscripts $``s",``z"$ over $``{\rm R}"$ indicate $``\cS_\star"$ and $``\cZ"$, respectively, to which ${\rm R}$ belongs.  The following proposition concerns the inner product structure of $f^{\star}_a$ for $\star=e,o,te,to$.

\begin{Proposition}\label{Prop 6.3}
{\sl There hold
\begin{equation}\label{eq 6.19}
\cI\eqdefa  \begin{pmatrix}
 \lla f^{te}_a, \ \varrho^{te}_a \rra_V, & \lla f^{to}_a, \ \varrho^{te}_a \rra_V, &   \lla f^{e}_a, \ \varrho^{te}_a \rra_V, & \lla f^{o}_a, \ \varrho^{te}_a \rra_V \\
\lla f^{te}_a, \ \varrho^{to}_a \rra_V, & \lla f^{to}_a, \ \varrho^{to}_a \rra_V,  &  \lla f^{e}_a, \ \varrho^{to}_a \rra_V, & \lla f^{o}_a, \ \varrho^{to}_a \rra_V \\
\lla f^{te}_a, \ \varrho^{e}_a \rra_V, & \lla f^{to}_a, \ \varrho^{e}_a \rra_V,   & \lla f^{e}_a, \ \varrho^{e}_a \rra_V, & \lla f^{o}_a, \ \varrho^{e}_a \rra_V \\
\lla f^{te}_a, \ \varrho^{o}_a \rra_V, &\lla f^{to}_a, \ \varrho^{o}_a \rra_V,  &  \lla f^{e}_a, \ \varrho^{o}_a \rra_V, & \lla f^{o}_a, \ \varrho^{o}_a \rra_V \\
  \end{pmatrix}= \begin{pmatrix}
      0 , &\Ga, &0,& \Bar{a}\\
      -\Ga,& 0,& \Bar{c}, &0\\
      0,&-\Bar{c}, &0, &\Bar{b} \\
      -\Bar{a},&0,&-\Bar{b},& 0
  \end{pmatrix},
\end{equation}
where
\begin{equation}
    \begin{split}
    \Bar{a}&\eqdefa\lla f^{o}_a, \ \varrho^{te}_a \rra_V = - \lla f^{te}_a, \ \varrho^{o}_a \rra_V= \cO(\ve^{M-1}),\\
    \Bar{b}&\eqdefa \lla f^{o}_a, \ \varrho^{e}_a \rra_V =-\lla f^{e}_a, \ \varrho^{o}_a \rra_V = \Ga_1 \Ga_2 \Ga+ \cO(\ve),\\
    \Bar{c}&\eqdefa\lla f^{e}_a, \ \varrho^{to}_a \rra_V = - \lla f^{to}_a, \ \varrho^{e}_a \rra_V =\lla {\rm R}_a^{e,z}, \varrho^{to}_a \rra_V = \cO(\ve^{M+1}).
    \end{split}
\end{equation}}
\end{Proposition}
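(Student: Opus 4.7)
The plan is to establish the claim from four ingredients: the cyclic property of the Poisson bracket, the $\xi_2$-parity that was built into the approximate solution in Proposition \ref{Prop 4.2} and the pseudo-momenta in Propositions \ref{Prop 6.1}--\ref{Prop 6.2}, explicit leading-order integrals, and one duality identity between $\Lam^E$ and $\Lam^{E,\star}$. First I would record that for any $\varrho^\star_a,\varrho^\dagger_a$ of polynomial growth and $\Omega^E_a\in\cZ$, integration by parts together with the cyclic identity $\int\{f,g\}h\,d\xi=\int f\{g,h\}\,d\xi$ yields
\begin{equation*}
\lla f^\star_a,\varrho^\dagger_a\rra_V=\lla \{\varrho^\star_a,\Omega^E_a\}_V,\varrho^\dagger_a\rra_V=\lla\varrho^\star_a,\{\Omega^E_a,\varrho^\dagger_a\}_V\rra_V=-\lla f^\dagger_a,\varrho^\star_a\rra_V,
\end{equation*}
so $\cI$ is antisymmetric and it suffices to inspect the entries above the diagonal.

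Next I would kill all remaining zero entries by parity. The functions $\varrho^{te}_a,\varrho^e_a$ are $\xi_2$-even while $\varrho^{to}_a,\varrho^o_a$ are $\xi_2$-odd by their construction; since $\Omega^E_a$ is $\xi_2$-even (assertion (a4)), the bracket $\{\cdot,\Omega^E_a\}_V$ flips $\xi_2$-parity, so $f^{te}_a,f^e_a$ are $\xi_2$-odd and $f^{to}_a,f^o_a$ are $\xi_2$-even. Every pairing of opposite parities integrates to zero, accounting for the diagonal and the entries $(1,3),(2,4)$ (and their mirror images). The single non-zero anti-diagonal-adjacent entry $\lla f^{to}_a,\varrho^{te}_a\rra_V=\Ga$ follows at once from $f^{to}_a=-\p_1\Omega^E_a$ and integration by parts against $\xi_1Y_1$, combined with $m[\Omega^E_{i,a}]=\Ga_i$.

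For $\bar b$, substituting $\varrho^e_a=\xi_1 Y_2+\cO_{\cS_\star}(\ve)$ and $\varrho^o_a=\xi_2Y_2+\cO_{\cS_\star}(\ve)$ into the definition $f^o_a=\{\varrho^o_a,\Omega^E_a\}_V$ and integrating by parts gives
\begin{equation*}
\lla f^o_a,\varrho^e_a\rra_V=-\Ga_2\cdot\Ga_2\int\Omega^E_{1,a}\,d\xi+\Ga_1\cdot(-\Ga_1)\int\Omega^E_{2,a}\,d\xi+\cO(\ve)=\Ga_1\Ga_2\,\Ga+\cO(\ve).
\end{equation*}
For $\bar c$ I would use the identity $f^e_a=X\Omega^E_a+{\rm R}^{e,z}_a$ from \eqref{eq 6.18a} and check that $\lla X\Omega^E_a,\varrho^{to}_a\rra_V=0$ identically: the $Y_2\p_2$ part contributes $-\Ga_2\Ga_1+\Ga_1\Ga_2=0$, while the $\frac{\ve\Ga}{\al_a}Y_1\p_\vartheta$ part produces a multiple of $M_1[\Omega^E_a]$ after integration by parts in $\vartheta$, which vanishes by (a3). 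The remainder $\lla{\rm R}^{e,z}_a,\varrho^{to}_a\rra_V=\cO(\ve^{M+1})$ gives the stated bound.

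The final and most delicate entry is $\bar a=\lla f^o_a,\varrho^{te}_a\rra_V$, which is where the $\cO(\ve^{M-1})$ cancellation must be extracted from the pseudo-momentum construction itself rather than from elementary computation. I would exploit the identity $\lla\Lam^E f^{to}_a,\varrho^o_a\rra_V=\lla f^{to}_a,\Lam^{E,\star}\varrho^o_a\rra_V$. By \eqref{eq 6.18}, the left-hand side equals $\ve^2\dot{\theta}^E_a\lla f^{te}_a,\varrho^o_a\rra_V-\lla\p_1{\rm R}^E_a,\varrho^o_a\rra_V=-\ve^2\dot{\theta}^E_a\,\bar a+\cO(\ve^{M+1})$, while the right-hand side equals $\lam^e_a\lla f^{to}_a,\varrho^e_a\rra_V+\lla f^{to}_a,{\rm R}^{o,s}_a\rra_V=-\lam^e_a\bar c+\cO(\ve^{M+1})=\cO(\ve^{M+1})$, because the constant vector $\Vec C(t)$ inside ${\rm R}^{o,s}_a$ pairs against the total derivative $f^{to}_a=-\p_1\Omega^E_a$ to give zero. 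Since $\dot{\theta}^E_a\to-\Ga/(2\pi)\neq 0$, dividing by $\ve^2\dot{\theta}^E_a$ yields $\bar a=\cO(\ve^{M-1})$. The main technical care is in this last step: verifying that none of the ``hidden'' terms in $\Lam^{E,\star}\varrho^o_a$, in particular the constant $\Vec C(t)$ part, contribute at an order lower than $\ve^{M+1}$, which is where the antisymmetry of $\p_1$ and the conservation law (a3) are used together.
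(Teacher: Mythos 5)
Your proposal is correct and follows essentially the same strategy as the paper: antisymmetry via the cyclic Poisson-bracket identity, vanishing of the remaining entries by $\xi_2$-parity, explicit leading-order computations for $\Gamma$, $\bar b$, $\bar c$, and a duality argument against the eigenvalue structure of \eqref{eq 6.18} to extract the $\cO(\ve^{M-1})$ estimate for $\bar a$. The only notable variation is that you estimate $\bar a$ through the pair $\lla\Lam^E f^{to}_a,\varrho^o_a\rra_V=\lla f^{to}_a,\Lam^{E,\star}\varrho^o_a\rra_V$, whereas the paper pairs $\Lam^E f^o_a$ against $\varrho^{to}_a$ after solving for $\varrho^{te}_a$ from $\Lam^{E,\star}\varrho^{to}_a$; these are mutually dual uses of the same identities and give the same bound. (A cosmetic slip: in your intermediate display for $\bar b$ the two products should carry the opposite sign, $+\Ga_2^2\Ga_1+\Ga_1^2\Ga_2$, though your final answer $\Ga_1\Ga_2\Ga$ is correct.)
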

\begin{proof}
First of all, the zero elements in the $\cI$ simply follow the $\xi_2$-odevity of functions.
And by using
\begin{equation*}
    \lla \{a,\Omega^E_a\}_V, \ b\rra_V = - \lla \{b,\Omega^E_a\}_V, \ a\rra_V
\end{equation*}
we deduce that $\cI$ is a skew-symmetric matrix, thus it suffices to consider the upper triangular part of $\cI$. By virtue of  the properties of $\Omega^E_a$, we achieve
\begin{equation*}
\begin{split}
 \lla f^{to}_a, \varrho^{te}_o \rra_V &= \lla  - \p_1 \Omega^E_a, \xi_1 Y_1 \rra_V= \lla   \Omega^E_a, Y_1 \rra_V=\Ga,
\end{split}\end{equation*}
and
\begin{equation*}\begin{split}
\Bar{b} &= \lla f^{o}_a, \ \varrho^{e}_a \rra_V = \lla \lbk \xi_2 Y_2 + \cO_{\cS_\star}(\ve), \Omega^E_a\rbk_V , \ \xi_1 Y_2 + \cO_{\cS_\star}(\ve) \rra_V \\
   &= - \lla \lbk \xi_2 Y_2 ,\xi_1 Y_2 \rbk_V + \cO_{\cS_\star}(\ve), \ \Omega^E_a  \rra_V
   = \Ga_1 \Ga_2 \Ga +\cO(\ve), \\
\Bar{c} &=  \lla f^e_a, \varrho^{to}_a \rra_V = \lla X \Omega^E_a + {\rm R}_a^{e,z}, \varrho^{to}_a \rra_V = - \lla  \Omega^E_a, X\varrho^{to}_a \rra_V +\lla {\rm R}_a^{e,z}, \varrho^{to}_a \rra_V\\
    &= - \lla  \Omega^E_a, \ve \Ga/\ala\xi_1 Y_1 +Y_2 \rra_V +\lla {\rm R}_a^{e,z}, \varrho^{to}_a \rra_V
    =\lla {\rm R}_a^{e,z}, \varrho^{to}_a \rra_V = \cO(\ve^{M+1}),\\
\Bar{a}&= \lla f^o_a, \varrho^{te}_a \rra_V = (\ve^2  \dot{\theta}^E_a)^{-1}\big\langle f^o_a,\  -\lamet\varrho^{to}_a - \ve/\Ga\dot{\theta}^E_a \al_a Y_2  \big\rangle_V =  -(\ve^2  \dot{\theta}^E_a)^{-1}\lla \Lam^E f^o_a,\ \varrho^{to}_a \rra_V  \\
&=-(\ve^2  \dot{\theta}^E_a)^{-1} \lla -\lam^e_a f^e_a - \{{\rm R}_a^{o,s},\Omega^E_a\}_V+ \{\varrho^o_a,{\rm R}^E_a\}_V , \varrho^{to}_a \rra_V =\cO(\ve^{M-1}),\\
\end{split}
\end{equation*}
which complete the proof of Proposition \ref{Prop 6.3}.

\end{proof}

\renewcommand{\theequation}{\thesection.\arabic{equation}}
\setcounter{equation}{0}

\section{Estimation of the error}\label{sect6}

\subsection{Decomposition of $\om$.}
With the ``pseudo-momenta'' constructed in Section \ref{sect5}, we obtain the following structural decomposition.

\begin{Lemma}\label{Lem 7.1}
{\sl Let $(\om,\dot{\theta}_p)$ solve \eqref{eq 5.3}. Then we have
\begin{equation}\label{eq 7.1}
    \om = \mu^o\big(f^o_a-\frac{\Bar{a}}{\Ga} f^{to}_a\big) + \mu^e \big(f^e_a +\frac{\Bar{c}}{\Ga} f^{te}_a\big)+ \om^R,
\end{equation}
with $ \mu^o=\left(\Bar{b}+\Bar{a}\Bar{c}/\Ga\right)^{-1}  \lla \om, \varrho^{e} \rra_V , \mu^e = - \left(\Bar{b}+\Bar{a}\Bar{c}/\Ga\right)^{-1}  \lla \om, \varrho^{o} \rra_V $ and
\begin{equation}\label{eq 7.2}
   m[\om^R_i]=0, \quad \text{for} \ i=1,2 \andf \lla\om^R, \varrho^\star_a\rra_V=0 \quad \text{for}\  \star=e,o,te,to.
\end{equation}}
\end{Lemma}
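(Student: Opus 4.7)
The plan is to determine $(\mu^e,\mu^o,\om^R)$ by imposing the four orthogonality conditions $\lla \om^R,\varrho^\star_a\rra_V=0$ for $\star\in\{te,to,e,o\}$ and reading off the formulas from the pairing matrix $\cI$ of Proposition \ref{Prop 6.3}. The conservation law \eqref{eq 5.1a}, i.e., $M[\om_1]+M[\om_2]=0$, already gives $\lla\om,\xi_j Y_1\rra_V=0$ for $j=1,2$, hence $\lla\om,\varrho^{te}_a\rra_V=\lla\om,\varrho^{to}_a\rra_V=0$. So two of the four conditions come for free and the task reduces to choosing $(\mu^o,\mu^e)$ to kill $\lla\om,\varrho^e_a\rra_V$ and $\lla\om,\varrho^o_a\rra_V$, provided the correction terms in \eqref{eq 7.1} do not reintroduce components in the $\varrho^{te}_a,\varrho^{to}_a$ directions.

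First I would check this compatibility. Using the entries of $\cI$ from Proposition \ref{Prop 6.3}, a direct computation gives
\begin{align*}
\lla f^o_a-\tfrac{\Bar a}{\Ga}f^{to}_a,\varrho^{te}_a\rra_V &=\Bar a-\tfrac{\Bar a}{\Ga}\cdot\Ga=0, &
\lla f^e_a+\tfrac{\Bar c}{\Ga}f^{te}_a,\varrho^{to}_a\rra_V &=\Bar c+\tfrac{\Bar c}{\Ga}\cdot(-\Ga)=0,
\end{align*}
and the two remaining cross-pairings $\lla f^o_a-\tfrac{\Bar a}{\Ga}f^{to}_a,\varrho^{to}_a\rra_V$ and $\lla f^e_a+\tfrac{\Bar c}{\Ga}f^{te}_a,\varrho^{te}_a\rra_V$ vanish by $\xi_2$-parity. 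Hence the $\varrho^{te}_a,\varrho^{to}_a$ orthogonality of $\om^R$ holds automatically, irrespective of the values of $\mu^e,\mu^o$. This is the whole reason for the correction terms $-\Bar a/\Ga\cdot f^{to}_a$ and $+\Bar c/\Ga\cdot f^{te}_a$: they project away the components that would otherwise violate the conservation laws.

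Second, the two remaining conditions reduce to a diagonal $2\times 2$ system in $(\mu^o,\mu^e)$. Proposition \ref{Prop 6.3} yields
\begin{align*}
\lla f^o_a-\tfrac{\Bar a}{\Ga}f^{to}_a,\varrho^e_a\rra_V &=\Bar b+\tfrac{\Bar a\Bar c}{\Ga}, &
\lla f^e_a+\tfrac{\Bar c}{\Ga}f^{te}_a,\varrho^o_a\rra_V &=-\bigl(\Bar b+\tfrac{\Bar a\Bar c}{\Ga}\bigr),
\end{align*}
while the off-diagonal pairings $\lla f^o_a-\tfrac{\Bar a}{\Ga}f^{to}_a,\varrho^o_a\rra_V$ and $\lla f^e_a+\tfrac{\Bar c}{\Ga}f^{te}_a,\varrho^e_a\rra_V$ vanish by $\xi_2$-parity. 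Since $\Bar b=\Ga_1\Ga_2\Ga+\cO(\ve)$ and $\Bar a=\cO(\ve^{M-1})$, $\Bar c=\cO(\ve^{M+1})$ by \eqref{eq 6.19}, the scalar $\Bar b+\Bar a\Bar c/\Ga$ is of order $\Ga_1\Ga_2\Ga\neq 0$ for $\ve$ small (using $\Ga_1=1$, $|\Ga_2|\leq 1$ and $\Ga\neq 0$), so the system is invertible and gives exactly the stated formulas for $\mu^o$ and $\mu^e$.

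Finally, the mass condition $m[\om^R_i]=0$ follows from $m[\om_i]=0$ (again \eqref{eq 5.1a}) and the identity $m[f^\star_{i,a}]=0$: each $f^\star_{i,a}=\{\varrho^\star_{i,a},\Omega^E_{i,a}\}$ can be written as $\grad\cdot(\Omega^E_{i,a}\grad^\perp\varrho^\star_{i,a})$, and since $\Omega^E_{i,a}\in\cZ$ decays exponentially while $\varrho^\star_{i,a}\in\cS_\star$ grows at most polynomially, integration over $\R^2$ yields zero. I do not anticipate any serious obstacle: all the analytic work was already front-loaded into Propositions \ref{Prop 6.2} and \ref{Prop 6.3}, where the $\varrho^\star_a$ and $f^\star_a$ were engineered so that $\cI$ has the block-anti-diagonal form \eqref{eq 6.19}, and what remains is pure linear algebra on the four-dimensional subspace $\spn\{\varrho^{te}_a,\varrho^{to}_a,\varrho^e_a,\varrho^o_a\}$.
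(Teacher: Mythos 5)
Your argument is correct and is essentially the same linear-algebra argument as in the paper. The paper starts from the general ansatz $\om = \mu^{te}f^{te}_a + \mu^{to}f^{to}_a + \mu^e f^e_a + \mu^o f^o_a + \om^R$, writes the $4\times 4$ linear system $\cI\,\vec\mu = (\lla\om,\varrho^{te}_a\rra_V,\lla\om,\varrho^{to}_a\rra_V,\lla\om,\varrho^e_a\rra_V,\lla\om,\varrho^o_a\rra_V)^T$, invokes \eqref{eq 5.1a} to set the first two right-hand components to zero, and then solves to find $\mu^{te}=\frac{\Bar c}{\Ga}\mu^e$, $\mu^{to}=-\frac{\Bar a}{\Ga}\mu^o$ together with the stated formulas for $\mu^o,\mu^e$; you instead write down the two combined directions $f^o_a-\frac{\Bar a}{\Ga}f^{to}_a$ and $f^e_a+\frac{\Bar c}{\Ga}f^{te}_a$ directly and check their compatibility with all four $\varrho^\star_a$'s using the entries of $\cI$, which is the same computation read in reverse order. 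Your extra remark that $m[f^\star_{i,a}]=0$ because $\{\varrho^\star_{i,a},\Omega^E_{i,a}\}=\nabla\cdot(\Omega^E_{i,a}\nabla^\perp\varrho^\star_{i,a})$ with rapidly decaying $\Omega^E_{i,a}$ is a correct justification of a point the paper only asserts.
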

\begin{Remark}
  Since $\Gamma\neq 0$, the vectors $Y_1,Y_2$ are linearly independent. Together with \eqref{eq 7.2}, this yields
\begin{equation}\label{eq 7.3a}
   \left|M[\om^R_1]\right|+\left|M[\om^R_2]\right|\lesssim \ve \left\|\om^R\right\|_{\cX_\ve} .
\end{equation}
\end{Remark}

\begin{proof}
Thanks to \eqref{eq 5.1a} and  $m[Z_i]=0$ for $Z =f^{te}_a,f^{to}_a,f^{o}_a, f^{e}_a$, $i=1,2$,
we can decompose $\om$ into
\begin{equation*}
    \begin{split}
\om =\mu^{te}  f^{te}_a + \mu^{to} f^{to}_a+  \mu^e f^e_a + \mu^o f^o_a + \om^R,
    \end{split}
\end{equation*}
with $\om^R$ satisfying \eqref{eq 7.2}, which is equivalent to
\begin{equation}\label{eq 7.3}
    \begin{split}
\cI \begin{pmatrix}
    \mu^{te} \\
    \mu^{to}\\
    \mu^e\\
    \mu^o
\end{pmatrix} = \begin{pmatrix}
    \lla \om, \varrho^{te} \rra_V \\
    \lla \om, \varrho^{to} \rra_V \\
    \lla \om, \varrho^{e} \rra_V\\
    \lla \om, \varrho^{o} \rra_V
\end{pmatrix} .
    \end{split}
\end{equation}
Since $(\om,\dot{\theta}_p)$ solves \eqref{eq 5.3}, we deduce from \eqref{eq 5.1a} that
$\lla \om, \varrho^{te} \rra_V = \lla \om, \varrho^{to} \rra_V=0$,
which together with \eqref{eq 7.3} and \eqref{eq 6.19}, gives rise to
\begin{equation*}
    \mu^{te}=\frac{\Bar{c}}{\Ga}\mu^{e}, \quad \mu^{to} = -\frac{\Bar{a}}{\Ga} \mu^o  ,  \quad \mu^o= \left(\Bar{b}+\frac{\Bar{a}\Bar{c}}{\Ga}\right)^{-1} \lla \om, \varrho^{e} \rra_V, \quad \mu^e = -\left(\Bar{b}+\frac{\Bar{a}\Bar{c}}{\Ga}\right)^{-1}   \lla \om, \varrho^{o} \rra_V.
\end{equation*}
This completes the proof of Lemma \ref{Lem 7.1}.
\end{proof}

\subsection{Choice of $\dot{\theta}_p$ and total energy.}
We next specify the modulation parameter $\dot{\theta}_p$, intrinsically linked to the linear structure and the pseudo–momenta of Section \ref{sect6}.

\begin{Lemma}\label{Lem 7.3}
  {\sl  Let $\om$ of the form \eqref{eq 7.1}  be the solution of \eqref{eq 5.3} with $ \dot{\theta}_p = \frac{\Ga \lam^e_a}{\ve  \ala} \mu^o$, we have
    \begin{equation}\label{eq 7.4}
    \begin{split}
&(t\p_t+1/2 )\mu^o \times \big(f^o_a - \frac{\Bar{a}}{\Ga}f^{to}_a\big)+   (t\p_t+1/2 )\mu^e \times \big(f^e_a +\frac{\Bar{c}}{\Ga} f^{te}_a\big)\\
&\qquad + (t\p_t-\cL +\nu^{-1}\Lam^E + \Lam^{NS}) \om^R  + \cT = -{\rm R}_a - \cN,
    \end{split}
\end{equation}
where we denote
\begin{equation}\label{eq 7.5}
    \begin{split}
\cT&\eqdefa \mu^o (t\p_t-\cL-1/2 ) \big(f^o_a - \frac{\Bar{a}}{\Ga}f^{to}_a\big) + \mu^e (t\p_t-\cL-1/2) \big(f^e_a +\frac{\Bar{c}}{\Ga} f^{te}_a\big) \\
&\quad-\frac{\mu^o}{\nu}  \left( \lbk{\rm R}_a^{o,s},\Omega^E_a \rbk_V-\lbk\varrho^o_a ,{\rm R}^E_a\rbk_V\right) -\frac{\Bar{a}\mu^o}{\nu\Ga}(\ve^2  \dot{\theta}^E_a f^{te}_a - \p_1 {\rm R}^E_a) \\
&\quad + \frac{\mu^e}{\nu} \left(X{\rm R}^E_a + \Lam^E\left[ {\rm R}_a^{e,z} \right]\right) + \frac{\Bar{c}\mu^e}{\nu \Ga}\bigl(-\ve^2\dot{\theta}^E_a f^{to}_a+\p_2{\rm R}^E_a\bigr)- \frac{\lam^e_a \mu^o}{\nu} {\rm R}_a^{e,z} \\
&\quad+\Lam^{NS}\big[\mu^o\big(f^o_a - \frac{\Bar{a}}{\Ga}f^{to}_a\big) + \mu^e \big(f^e_a +\frac{\Bar{c}}{\Ga} f^{te}_a\big)\big]  +  \lam^e_a \mu^o X\Omega^{NS}_a,
    \end{split}
\end{equation}
and
\begin{equation}\label{eq 7.6}
    \begin{split}
        \cN\eqdefa \nu^{-1}\{\cB_a \om, \om \}_V + \nu^{-1}\lam^e_a\mu^o \{\hat{X}, \om\}_V.
    \end{split}
\end{equation}}
\end{Lemma}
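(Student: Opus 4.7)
The plan is a purely algebraic substitution-and-bookkeeping argument: insert the decomposition \eqref{eq 7.1} into \eqref{eq 5.3}, apply each of the operators $(t\p_t-\cL)$, $\nu^{-1}\Lam^E$ and $\Lam^{NS}$ term by term, then use the identities \eqref{eq 6.18ba}-\eqref{eq 6.18bc} together with Proposition \ref{Prop 6.1} to rewrite the action of $\Lam^E$ on the four profiles $f^o_a,f^e_a,f^{to}_a,f^{te}_a$. Writing $A^o\eqdefa f^o_a-\tfrac{\Bar a}{\Ga}f^{to}_a$ and $A^e\eqdefa f^e_a+\tfrac{\Bar c}{\Ga}f^{te}_a$, the Leibniz rule and the fact that $\cL$ acts only in $\xi$ give
\begin{equation*}
(t\p_t-\cL)(\mu^\star A^\star)=(t\p_t+1/2)\mu^\star\cdot A^\star+\mu^\star(t\p_t-\cL-1/2)A^\star,\qquad \star=o,e,
\end{equation*}
so the first piece is the "good" term displayed on the left-hand side of \eqref{eq 7.4}, while the second piece lands in $\cT$, matching the first line of \eqref{eq 7.5}.

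Step two is the action of $\nu^{-1}\Lam^E$. Using \eqref{eq 6.18bb} and \eqref{eq 6.18ba} together with the trivial identities from Proposition \ref{Prop 6.1},
\begin{align*}
\Lam^E A^o&=-\lam^e_a f^e_a-\lbk {\rm R}_a^{o,s},\Omega^E_a\rbk_V+\lbk \varrho^o_a,{\rm R}^E_a\rbk_V-\tfrac{\Bar a}{\Ga}\bigl(\ve^2\dtea f^{te}_a-\p_1{\rm R}^E_a\bigr),\\
\Lam^E A^e&=X{\rm R}^E_a+\Lam^E[{\rm R}_a^{e,z}]+\tfrac{\Bar c}{\Ga}\bigl(-\ve^2\dtea f^{to}_a+\p_2{\rm R}^E_a\bigr),
\end{align*}
which produces the dangerous resonant term $-\nu^{-1}\lam^e_a\mu^o f^e_a$. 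This is annihilated by the $\thpp$-forcing thanks to \eqref{eq 6.18a}: since $\{\hat X,\Omega^E_a\}_V=X\Omega^E_a=f^e_a-{\rm R}_a^{e,z}$, the choice $\thpp=\tfrac{\Ga\lam^e_a}{\ve\ala}\mu^o$ turns
\begin{equation*}
\tfrac{1}{\nu}\tfrac{\ve\ala\thpp}{\Ga}\{\hat X,\Omega^E_a\}_V=\tfrac{\lam^e_a\mu^o}{\nu}\bigl(f^e_a-{\rm R}_a^{e,z}\bigr),
\end{equation*}
which exactly cancels $-\nu^{-1}\lam^e_a\mu^o f^e_a$ and leaves only the harmless residual $-\tfrac{\lam^e_a\mu^o}{\nu}{\rm R}_a^{e,z}$ appearing in $\cT$. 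This cancellation—forced by the precise construction of $\varrho^e_a,\varrho^o_a,\lam^e_a$ in Section \ref{sect5}—is the reason why the modulation $\dot\al_p$ was not needed.

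What remains is gathering leftovers. The operator $\Lam^{NS}$ applied to $\mu^o A^o+\mu^e A^e$ and the companion $\thpp$-term against $\Omega^{NS}_a$, which with the above choice becomes $\lam^e_a\mu^o X\Omega^{NS}_a$, both enter $\cT$. All residual linear pieces from $\Lam^E A^o,\Lam^E A^e$ (those involving ${\rm R}^E_a$, ${\rm R}_a^{o,s}$, ${\rm R}_a^{e,z}$, $\Lam^E[{\rm R}_a^{e,z}]$, $X{\rm R}^E_a$, and the $\ve^2\dtea$-rotations) assemble into the remaining terms of \eqref{eq 7.5}. On the right-hand side, the quadratic $-\nu^{-1}\{\cB_a\om,\om\}_V$ together with $-\nu^{-1}\tfrac{\ve\ala\thpp}{\Ga}\{\hat X,\om\}_V=-\nu^{-1}\lam^e_a\mu^o\{\hat X,\om\}_V$ is collected into $\cN$, yielding \eqref{eq 7.6}. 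No real analytic obstacle arises; the only delicate point is the careful matching of signs and coefficients in step two, so I would organize that bookkeeping as a single display equation per profile to ensure the $\nu^{-1}$-resonance truly disappears under the prescribed choice of $\thpp$.
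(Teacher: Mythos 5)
Your proposal is correct and follows essentially the same route as the paper's proof: Leibniz's rule on $(t\p_t-\cL)$ applied to $\mu^\star A^\star$, then the identities from Propositions \ref{Prop 6.1} and \ref{Prop 6.3a} to evaluate $\Lam^E$ on $f^o_a,f^e_a,f^{to}_a,f^{te}_a$, then the observation that with $\dot{\theta}_p=\frac{\Ga\lam^e_a}{\ve\ala}\mu^o$ the forcing $\nu^{-1}\frac{\ve\ala\dot{\theta}_p}{\Ga}\{\hat X,\Omega^E_a\}_V=\frac{\lam^e_a\mu^o}{\nu}(f^e_a-{\rm R}_a^{e,z})$ exactly cancels the resonant $-\frac{\lam^e_a\mu^o}{\nu}f^e_a$. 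The sign and coefficient bookkeeping in your display equations for $\Lam^E A^o$ and $\Lam^E A^e$ is correct and the residual $-\frac{\lam^e_a\mu^o}{\nu}{\rm R}_a^{e,z}$ lands in $\cT$ exactly as in \eqref{eq 7.5}.
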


\begin{proof} We first, get,
by   direct calculations, that
 \begin{equation}\label{eq 7.7}
     \begin{split}
(t\p_t-\cL)\om
&= t\p_t\mu^o \times \big(f^o_a - \frac{\Bar{a}}{\Ga}f^{to}_a\big)+ t\p_t \mu^e \times \big(f^e_a +\frac{\Bar{c}}{\Ga} f^{te}_a\big) + (t\p_t-\cL) \om^R  \\
& \quad + \mu^o (t\p_t-\cL) \big(f^o_a - \frac{\Bar{a}}{\Ga}f^{to}_a\big) + \mu^e (t\p_t-\cL) \big(f^e_a +\frac{\Bar{c}}{\Ga} f^{te}_a\big) \\
& = (t\p_t+1/2 )\mu^o \times \big(f^o_a - \frac{\Bar{a}}{\Ga}f^{to}_a\big)+   (t\p_t+1/2 )\mu^e \times \big(f^e_a +\frac{\Bar{c}}{\Ga} f^{te}_a\big) + (t\p_t-\cL) \om^R   \\
& \quad +\mu^o (t\p_t-\cL-1/2 ) \big(f^o_a - \frac{\Bar{a}}{\Ga}f^{to}_a\big) + \mu^e (t\p_t-\cL-1/2) \big(f^e_a +\frac{\Bar{c}}{\Ga} f^{te}_a\big) .
     \end{split}
 \end{equation}

And by employing \eqref{eq 6.18} and choosing $ \dot{\theta}_p = \frac{\Ga \lam^e_a}{\ve  \ala} \mu^o$, one has
\begin{equation}
   \begin{split}
&\frac{1}{\nu} \Lam^E\om + \frac{1}{\nu}\frac{\ve }{\Ga}\ala \dot{\theta}_p X\Omega^{E}_a \\
&= \frac{1}{\nu} \Lam^E \big[\mu^o\big(f^o_a-\frac{\Bar{a}}{\Ga} f^{to}_a\big) + \mu^e \big(f^e_a +\frac{\Bar{c}}{\Ga} f^{te}_a\big)+ \om^R\big] + \frac{\lam^e_a \mu^o}{\nu} X\Omega^E_a\\
&= \frac{\mu^o}{\nu}\Lam^E f^o_a - \frac{\Bar{a}\mu^o}{\nu \Ga} \Lam^Ef^{to}_a +\frac{\mu^e}{\nu}\Lam^Ef^e_a+ \frac{\Bar{c}\mu^e}{\nu \Ga} \Lam^E f^{te}_a + \frac{1}{\nu}\Lam^E\om^R  + \frac{\lam^e_a \mu^o}{\nu} X\Omega^E_a\\
& = \frac{1}{\nu} \mu^o \left(-\lam^e_a f^e_a -\lbk{\rm R}_a^{o,s},\Omega^E_a \rbk_V+\lbk\varrho^o_a ,{\rm R}^E_a\rbk_V\right) - \frac{\Bar{a}\mu^o}{\nu\Ga}\bigl(\ve^2  \dot{\theta}^E_a f^{te}_a - \p_1 {\rm R}^E_a\bigr) \\
&\quad + \frac{\mu^e}{\nu} \left(X{\rm R}^E_a + \Lam^E\left[ {\rm R}_a^{e,z} \right]\right) + \frac{\Bar{c}\mu^e}{\nu \Ga}\bigl(-\ve^2\dot{\theta}^E_a f^{to}_a+\p_2{\rm R}^E_a\bigr)\\
&\quad + \frac{1}{\nu}\Lam^E \om^R +\frac{\lam^e_a \mu^o}{\nu} \left(f^e_a-{\rm R}_a^{e,z}\right) \\
&= \frac{1}{\nu} \Lam^E \om^R  -\frac{\mu^o}{\nu}  \left( \lbk{\rm R}_a^{o,s},\Omega^E_a \rbk_V-\lbk\varrho^o_a ,{\rm R}^E_a\rbk_V\right) -\frac{\Bar{a}\mu^o}{\nu\Ga}(\ve^2  \dot{\theta}^E_a f^{te}_a - \p_1 {\rm R}^E_a) \\
&\quad + \frac{\mu^e}{\nu} \left(X{\rm R}^E_a + \Lam^E\left[ {\rm R}_a^{e,z} \right]\right) + \frac{\Bar{c}\mu^e}{\nu \Ga}\bigl(-\ve^2\dot{\theta}^E_a f^{to}_a+\p_2{\rm R}^E_a\bigr)- \frac{\lam^e_a \mu^o}{\nu} {\rm R}_a^{e,z} .
\end{split}
\end{equation}
Finally notice that
\begin{equation}
\begin{split}
    &\Lam^{NS} \om + \frac{\ve }{\Ga}\ala \dot{\theta}_p X\Omega^{NS}_a \\
    &= \Lam^{NS}\big[\mu^o\big(f^o_a - \frac{\Bar{a}}{\Ga}f^{to}_a \big) + \mu^e \big(f^e_a +\frac{\Bar{c}}{\Ga} f^{te}_a\big)+\om^R\big]  +  \lam^e_a \mu^o X\Omega^{NS}_a,
\end{split}
\end{equation}
and
\begin{equation}\label{eq 7.10}
    \begin{split}
& \frac{1}{\nu}\{\cB_a \om, \om \}_V + \frac{1}{\nu}\frac{\ve }{\Ga} \al_a \thpp \{\hat{X}, \om\}_V =  \frac{1}{\nu}\{\cB_a \om, \om \}_V + \frac{\lam^e_a\mu^o}{\nu} \{\hat{X}, \om\}_V,
    \end{split}
\end{equation}
by summarizing \eqref{eq 7.7}-\eqref{eq 7.10}, we achieve \eqref{eq 7.4} and thus complete the proof of Lemma \ref{Lem 7.3}.
\end{proof}

Now with the choice $ \dot{\theta}_p = \frac{\Ga \lam^e_a}{\ve  \ala} \mu^o$, the unknowns remain  $(\om^R, \mu^o, \mu^e)$, which is analogous to the toy model in Subsection \ref{subsec. toy model}.  We  recall energy functional and dissipation functional:
 \begin{equation*}
 \begin{split}
     &E_\ve[\om]= \f12 \lla\om, \om \dot\otimes W_\ve + \cB_a \om \rra_V, \\
     & D_\ve[\om]= -\f12 \lla  \om, \ \om \dot\otimes (t\p_t W_{\ve})\rra_V - \lla\cL \om, \ \om \dot\otimes W_\ve + \cB_a \om \rra_V,
    \end{split}
 \end{equation*}
 and then define
 \begin{equation}
    \cE \eqdefa (\mu^o)^2+ (\mu^e)^2+ E_\ve[\om^R] \andf \cD\eqdefa (\mu^o)^2+ (\mu^e)^2 + D_\ve[\om^R].
\end{equation}
\begin{Proposition}\label{Prop 7.4}
  {\sl  There exist $c_1, c_2,c_3>0$, so that
\begin{subequations}\begin{align}
  &   c_1 \|\om^R\|_{\cX_{\ve}}^2\leq  E_\ve[\om^R] \leq c_2 \|\om^R\|_{\cX_{\ve}}^2, \label{eq 7.12a} \\
   &   c_1 \|\om^R\|_{\cD_{\ve}}^2  \leq D_\ve[\om^R] \leq c_2 \|\om^R\|_{\cD_{\ve}}^2, \label{eq 7.12b}\\
  &  c_3 \|\om\|_{\cX_{\ve}}^2 \leq  \cE ,
   \qquad  c_3 \|\om\|_{\cD_{\ve}}^2  \leq \cD . \label{eq 7.12c}
\end{align}\end{subequations}}
\end{Proposition}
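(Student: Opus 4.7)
The plan is to derive the three estimates in order \eqref{eq 7.12a}, \eqref{eq 7.12b}, \eqref{eq 7.12c}, reducing everything to the scalar coercivity results of Proposition \ref{Prop 5.5}, the Biot--Savart bounds of Proposition \ref{Prop 5.4}, and the a priori bound \eqref{eq 7.3a} on the momenta of $\om^R$. The upper bounds $E_\ve[\om^R]\le c_2\|\om^R\|_{\cX_\ve}^2$ and $D_\ve[\om^R]\le c_2\|\om^R\|_{\cD_\ve}^2$ are immediate from the definitions of the weighted norms once the cross terms $\lla \om^R_i, \lap_\xi^{-1}\om^R_\is(\cdot+\ka_i\ve^{-1}\al_a\eo)\rra$ are estimated by Proposition \ref{Prop 5.4}; these are $\cO(\ve^3\|\om^R\|_{\cX_\ve}^2+\ve^2|M[\om^R]|\,\|\om^R\|_{\cX_\ve})$, hence negligible.

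For the nontrivial lower bound in \eqref{eq 7.12a}, I would first expand
\[
2E_\ve[\om^R]=\sum_{i=1}^{2}\Bigl(\|\om^R_i\|_{\cX_{i,\ve}}^2+\lla \phi^R_i,\om^R_i\rra\Bigr)+\sum_{i=1}^{2}\lla \om^R_i,\lap_\xi^{-1}\om^R_\is(\cdot+\ka_i\ve^{-1}\al_a\eo)\rra,
\]
where $\phi^R_i\eqdefa\lap_\xi^{-1}\om^R_i$. Since $m[\om^R_i]=0$ by \eqref{eq 7.2}, Proposition \ref{Prop 5.5} yields
\[
\ka\|\om^R_i\|_{\cX_{i,\ve}}^2\leq |M[\om^R_i]|^2+\bigl(\|\om^R_i\|_{\cX_{i,\ve}}^2+\lla \phi^R_i,\om^R_i\rra\bigr).
\]
Summing over $i$, adding the cross terms above and invoking Proposition \ref{Prop 5.4} to bound them by $C(\ve^3\|\om^R\|_{\cX_\ve}^2+\ve^2|M[\om^R]|\,\|\om^R\|_{\cX_\ve})$, we arrive at
\[
\ka\|\om^R\|_{\cX_\ve}^2\leq 2E_\ve[\om^R]+C\bigl(|M[\om^R_1]|^2+|M[\om^R_2]|^2\bigr)+C\bigl(\ve^3+\ve^3\bigr)\|\om^R\|_{\cX_\ve}^2.
\]
Plugging in \eqref{eq 7.3a}, i.e.\ $|M[\om^R_i]|\lesssim \ve\|\om^R\|_{\cX_\ve}$, the last two terms are absorbed into the left-hand side for $\ve$ small, which yields \eqref{eq 7.12a}. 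The estimate \eqref{eq 7.12b} is obtained by the same scheme, now using the second inequality of Proposition \ref{Prop 5.5} together with the simple identity $t\p_t W_{i,\ve}=\cO(W_{i,\ve})$ uniformly on each of the three regions $\ive,\iive,\iiive$.

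Granted \eqref{eq 7.12a}--\eqref{eq 7.12b}, the bound \eqref{eq 7.12c} reduces to a triangle inequality on the decomposition \eqref{eq 7.1}: since $\varrho^\star_a\in\cS_\star^2$ and $\Omega^E_a\in\cZ^2$, one has $f^\star_a=\{\varrho^\star_a,\Omega^E_a\}_V\in\cZ^2$ and hence $\|f^\star_a\|_{\cX_\ve}+\|f^\star_a\|_{\cD_\ve}\le C$ uniformly in $\ve$. Combined with $|\bar a|+|\bar c|=\cO(\ve^{M-1}+\ve^{M+1})$ from \eqref{eq 6.19}, the decomposition \eqref{eq 7.1} gives $\|\om\|_{\cX_\ve}^2\lesssim (\mu^o)^2+(\mu^e)^2+\|\om^R\|_{\cX_\ve}^2$ and analogously for $\cD_\ve$, which is \eqref{eq 7.12c}. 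The one subtle point—and the place where I expect most of the care—is verifying that the $\ve^2|M[\om^R]|\,\|\om^R\|_{\cX_\ve}$ contribution from Proposition \ref{Prop 5.4} can indeed be combined with \eqref{eq 7.3a} without losing powers of $\ve$; this requires keeping track of the exact Lebesgue exponents used when invoking Hölder in the cross-term estimate, as in the proof of \eqref{eq 5.21}--\eqref{eq 5.22}.
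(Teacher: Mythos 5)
Your proposal is correct and follows essentially the same route as the paper: expand $E_\ve[\om^R]$ and $D_\ve[\om^R]$ into diagonal pieces handled by Proposition \ref{Prop 5.5}, cross terms handled by Proposition \ref{Prop 5.4}, absorb the $|M[\om^R_i]|^2$ contributions using \eqref{eq 7.3a}, and finally deduce \eqref{eq 7.12c} from the fact that the profiles $f^\star_a$ lie in $\cZ \subset \cX_{i,\ve}$ with uniformly bounded norms together with the decomposition \eqref{eq 7.1}.

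Two small remarks on the details. First, the cross-term bound you quote from Proposition \ref{Prop 5.4} is off by one power of $\ve$: reading \eqref{eqProp 4.7} (and the proposition), the $L^q$ norm of $\mathbbm{1}_{\ive}(1+|\cdot|)^{-3}\phi^R_\is(\cdot+\ka_i\ve^{-1}\ala\eo)$ is $\lesssim \ve^2\|\om^R\|_{\cX_\ve}+\ve|M[\om^R]|$, not $\ve^3\|\om^R\|_{\cX_\ve}+\ve^2|M[\om^R]|$; after pairing with $\|(1+|\cdot|)^3\om^R_i\|_{L^{4/3}}$ and inserting \eqref{eq 7.3a} one arrives at a $\cO(\ve^2\|\om^R\|_{\cX_\ve}^2)$ contribution, exactly as in the paper. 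Since $\ve^2$ is still small enough to absorb, the conclusion stands; the discrepancy is only in the bookkeeping. Second, you do not actually need the ad hoc claim ``$t\p_t W_{i,\ve}=\cO(W_{i,\ve})$ on each region'' for the lower bound of $D_\ve$: estimate \eqref{eq 5.20} of Proposition \ref{Prop 5.5} already packages the $t\p_t W_{i,\ve}$ term into the coercive quantity, so that control is inherited, not derived separately. That heuristic is only (implicitly) relevant for the upper bound of $D_\ve$, which both you and the paper treat as immediate.
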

\begin{proof}
The upper bound part inequality in \eqref{eq 7.12a}-\eqref{eq 7.12b} is immediate, thus it suffices to consider the lower bounds. We first get by  Proposition \ref{Prop 5.5} and \eqref{eq 7.3a}
   \begin{equation}\label{eq 7.14}
 \sum_{i=1}^2\|\om^R_i\|_{\cX_{i,\ve}}^2  \lesssim \sum_{i=1}^2 \left( \|\om^R_i\|_{\cX_{i,\ve}}^2 + \lla  \phi^R_i, \om^R_i \rra  \right),
    \end{equation}
and
\begin{equation}
  \sum_{i=1}^2\|\om^R_i\|_{\cD_{i,\ve}}^2  \lesssim   \sum_{i=1}^2  \Big(- \f12 \lla (t\p_t W_{i,\ve}) \om^R_i, \om^R_i \rra - \lla \cL\om^R_i, W_{i,\ve} \om^R_i + \phi^R_i \rra \Big).
\end{equation}
Along similar arguments in \eqref{eq 5.21} and \eqref{eq 5.22}, one obtains, by using Proposition \ref{Prop 5.4} and \eqref{eq 7.3a}, that
\begin{equation}
\begin{split}
    &\sum_{i=1,2} \big|\big\langle \om^R_i,   \phi^R_\is\big(\cdot+ \ka_i\ve^{-1} \ala\eo\big) \big\rangle  \big|   \\
    &\leq\sum_{i=1}^2 \big\|\mathbbm{1}_{\ive} (1+|\cdot|)^{-3}\phi^R_\is\big(\cdot+ \ka_i\ve^{-1} \ala\eo\big)  \big\|_{L^4_\xi} \left\|(1+|\cdot|)^3 \om^R_i \right\|_{L^\f43_\xi}  \\
    &  \quad + \ve^{-1}\sum_{i=1,2}  \left\| (1+|\cdot|)^{-1} \phi^R_\is  \right\|_{L^4_\xi} \big\|\mathbbm{1}_{\mathrm{I}_{i,\ve}^c}(1+|\cdot|) \om^R_i\big\|_{L^\f43_\xi}\\
    &\lesssim  \sum_{i=1}^2\left( \ve^2\|\om^R_i\|_{\cX_{i,\ve}} +\ve\left|M[\om^R_i]\right|\right)\sum_{i=1,2}\|\om^R_i\|_{\cX_{i,\ve}} + \ve^{-1}\exp(-c\ve^{-2\sig_1})\sum_{i=1}^2\|\om^R_i\|_{\cX_{i,\ve}}^2 \\
    &\lesssim \ve^2\sum_{i=1}^2\|\om^R_i\|_{\cX_{i,\ve}}^2,
\end{split}
\end{equation}
and similarly
\begin{equation}\label{eq 7.15}
    \begin{split}
       \sum_{i=1}^2 \big|\big\langle \cL \om^R_i,    \phi^R_\is\big(\cdot+ \ka_i\ve^{-1} \ala\eo\big)\big\rangle   \big| \lesssim \ve^3\sum_{i=1}^2\|\om^R_i\|_{\cX_{i,\ve}}^2.
    \end{split}
\end{equation}
By summarizing \eqref{eq 7.14}-\eqref{eq 7.15}, we achieve \eqref{eq 7.12a} and  \eqref{eq 7.12b}. Finally thanks to $f^o_a,f^e_a, f^{to}_a,f^{te}_a\in\cZ \subset \cX_{i,\ve}$, it's easy to deduce \eqref{eq 7.12c} from \eqref{eq 7.12a} and \eqref{eq 7.12b}. This completes the proof of Proposition \ref{Prop 7.4}.
\end{proof}

\subsection{Estimation of $\cT$ and $\cN$.}

\begin{Proposition}\label{Prop 7.5}
  Let $\ve ,\sig_1 \ll1$.  There hold
  \begin{subequations} \label{eq 7.16a}
\begin{align}
   &\|\cT\|_{\cX_{\ve}}   \lesssim \left( \ve + \nu^{-1}\ve^{M+1}\right)\cE^\f12, \label{eq 7.16}\\
   & \left|\lla \cN,  \varrho^o_a \rra_V \right| +  \left|\lla \cN,  \varrho^e_a \rra_V \right| \lesssim \nu^{-1}  \cD, \label{eq 7.17}\\
   &\left| \lla \cN, \om^R \dot\otimes W_\ve +\cB_a \om^R \rra_V \right| \lesssim \nu^{-1} \cE^{\f12} \cD.\label{eq 7.18}
   \end{align}
\end{subequations}
\end{Proposition}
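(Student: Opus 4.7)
The approach is to expand $\cT$ term by term following \eqref{eq 7.5} and bound each piece in $\cX_\ve$, and to handle the nonlinear contributions in $\cN$ by integration by parts on the Poisson brackets combined with the Biot--Savart estimates of Propositions \ref{Prop 5.2}--\ref{Prop 5.4} and the coercivity of Proposition \ref{Prop 7.4}. A key tool throughout is the simple comparison $\cE\lesssim\cD$, which follows directly from Proposition \ref{Prop 7.4} since $\|\cdot\|_{\cD_\ve}\gtrsim\|\cdot\|_{\cX_\ve}$ by the definition \eqref{eq 5.8}; together with $|\mu^o|+|\mu^e|\lesssim\cE^{1/2}$ and $\|\om\|_{\cX_\ve}\lesssim\cE^{1/2}$ from \eqref{eq 7.12c}, this will absorb the higher powers of $\cD$ that arise in the nonlinear estimates.

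For the estimate of $\cT$, the first two lines of \eqref{eq 7.5} are handled through the observation that the leading $\cO(1)$ part of each $f^\star_a$ ($\star=o,e,to,te$) is a constant linear combination of $\p_1 G$ and $\p_2 G$---for instance $f^{to}_a=-\Ga\p_1 G+\cO_\cZ(\ve^2)$, and the leading order of $f^o_a$ is $\Ga_1\Ga_2(-1,1)^T\p_1 G$ by \eqref{eq 6.18a}, Proposition \ref{Prop 6.2}, and the vanishings $\Omega^E_{i,1}=\varrho^o_1=0$---on which $\cL$ acts as multiplication by $-1/2$; hence $(t\p_t-\cL-1/2)f^\star_a=\cO_\cZ(\ve^2)$, and multiplication by $\mu^\star$ gives a contribution $\lesssim\ve^2\cE^{1/2}$. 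The third and fourth lines involve the residuals $\{{\rm R}_a^{o,s},\Omega^E_a\}_V$, $\{\varrho^o_a,{\rm R}^E_a\}_V$, $X{\rm R}^E_a$, $\Lam^E[{\rm R}_a^{e,z}]$ divided by $\nu$; by \eqref{eq 6.11c} each such quantity has $\cX_\ve$ norm $\cO(\ve^{M+1})$ (the constant $\vec{C}(t)$ in ${\rm R}_a^{o,s}$ being annihilated by $\{\cdot,\Omega^E_a\}_V$), producing an $\nu^{-1}\ve^{M+1}\cE^{1/2}$ contribution. The last two lines exploit $\Omega^{NS}_a,\Psi^{NS}_a=\cO(\ve^2)$ (a consequence of $\Omega^{NS}_{i,0}=\Omega^{NS}_{i,1}=0$ from Proposition \ref{Prop 4.2}) and $\lam^e_a=\cO(\ve^2)$ to yield another $\ve^2\cE^{1/2}$ piece. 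Collecting all pieces gives \eqref{eq 7.16}.

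For the estimates of $\cN$, integration by parts on the Poisson bracket gives
\begin{equation*}
\lla\{\cB_a\om,\om\}_V,\varrho^\star_a\rra_V=-\lla\om,\nabla^\perp\cB_a\om\cdot\nabla\varrho^\star_a\rra_V,\qquad \star\in\{o,e\}.
\end{equation*}
Since $\varrho^\star_a\in\cS_\star^2$ grows at most polynomially and $W_{i,\ve}\geq c>0$ by Proposition \ref{Prop 5.1}, the weight $\nabla\varrho^\star_a/(1+|\cdot|)$ is absorbed into $\sqrt{W_\ve}$, and the Biot--Savart bound $\|(1+|\cdot|)\nabla\cB_a\om\|_{L^\infty}\lesssim\|\om\|_{\cX_\ve}^{1/2}(\|\om\|_{\cD_\ve}^{1/2}+\|\om\|_{\cX_\ve}^{1/2})$ from Proposition \ref{Prop 5.2} (extended to $\cB_a$ via Proposition \ref{Prop 5.4} and \eqref{eq 7.3a}) yields $|\lla\{\cB_a\om,\om\}_V,\varrho^\star_a\rra_V|\lesssim\|\om\|_{\cX_\ve}^2\lesssim\cE\lesssim\cD$; the piece $\nu^{-1}\lam^e_a\mu^o\{\hat{X},\om\}_V$ is treated identically, the prefactor $|\lam^e_a\mu^o|\lesssim\ve^2\cE^{1/2}$ making it strictly smaller, and thus \eqref{eq 7.17} follows. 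For the trilinear estimate \eqref{eq 7.18}, the same Biot--Savart bound gives
\begin{equation*}
|\lla\{\cB_a\om,\om\}_V,\om^R\dot\otimes W_\ve\rra_V|\lesssim\|(1+|\cdot|)\nabla\cB_a\om\|_{L^\infty}\|\om\|_{\cD_\ve}\|\om^R\|_{\cX_\ve}\lesssim\cE^{3/4}\cD^{3/4},
\end{equation*}
which under $\cE\lesssim\cD$ is absorbed into $\cE^{1/2}\cD$; the contribution involving the $\cB_a\om^R$ factor is treated analogously after a further integration by parts that shifts one derivative onto $\cB_a\om^R$, combined with the $L^q$ estimates of Proposition \ref{Prop 5.2}. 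The main technical obstacle will be tracking the polynomial growth of $\varrho^\star_a$ and $\hat{X}$ against the jumps of $\nabla W_{i,\ve}$ at the interfaces $\p\ive$ and $\p\iive$; this is handled exactly as in the proof of Proposition \ref{Prop 5.3}, by splitting each region-integral separately and controlling the boundary contributions via the gradient bounds on $W_{i,\ve}$ in Proposition \ref{Prop 5.1}.
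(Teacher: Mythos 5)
Your overall architecture matches the paper's: expand $\cT$ term by term using the asymptotics from \eqref{eq 7.21a}, handle $\cN$ via integration by parts plus the Biot--Savart bounds of Propositions \ref{Prop 5.2}--\ref{Prop 5.4} and the coercivity of Proposition \ref{Prop 7.4}. Two of your intermediate claims are imprecise but harmless: the error in $(t\p_t-\cL-1/2)f^e_a$ is $\cO_\cZ(\ve)$, not $\cO_\cZ(\ve^2)$, since $\varrho^e_a$ contains the $\cO(\ve)$ term $\frac{\ve\Ga}{2\al_a}|\xi|^2 Y_1$ explicitly (cf.\ \eqref{eq 6.14}); and in \eqref{eq 7.17} the bound ``$\lesssim\|\om\|_{\cX_\ve}^2$'' is not correct as written because the Biot--Savart estimate produces a factor $\|\om\|_{\cD_\ve}^{1/2}$ which is not dominated by $\|\om\|_{\cX_\ve}^{1/2}$; still, as you note, the conclusion is recoverable via $\cE\lesssim\cD$. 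Also worth noting: the paper obtains $\cE^{1/2}\cD$ for \eqref{eq 7.18} directly from $\|\nabla\cB_a\om\|_{L^\infty}\|\nabla\om\|_{\cX_\ve}\|\om^R\|_{\cX_\ve}\lesssim\cD^{1/2}\cD^{1/2}\cE^{1/2}$, rather than passing through a weaker exponent and converting with $\cE\lesssim\cD$.

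The genuine gap is in the treatment of the $\nu^{-1}\lam^e_a\mu^o\{\hat{X},\om^R\}_V$ contribution to \eqref{eq 7.18}. You correctly flag the obstacle (the $|\xi|^2$ piece of $\hat{X}$ has unbounded gradient and collides with the weight $W_\ve$ after integration by parts), but saying it is ``handled exactly as in Proposition \ref{Prop 5.3}'' is not adequate. Proposition \ref{Prop 5.3} is inherited from \cite{DG24} and deals with $t\p_t W_{i,\ve}$, not with a Poisson bracket against a quadratic stream function; the bound you need here, on $\{|\xi|^2,W_{i,\ve}\}=2\xi^\perp\cdot\nabla W_{i,\ve}$, would not close by region-splitting alone. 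What saves the day, and what the paper explicitly flags in the Remark following Proposition \ref{Prop 7.5} as a new ingredient relative to \cite{DG24}, is that $W_{i,\ve}$ is \emph{radial} on $\iive\cup\iiive$, and $W_0$ is radial everywhere, so
\begin{equation*}
\{|\xi|^2,W_{i,\ve}\}=2\xi^\perp\cdot\nabla(W_{i,\ve}-W_0)\,\mathbbm{1}_{\ive},
\end{equation*}
which by Proposition \ref{Prop 5.1}(3) and $|\xi|\lesssim\ve^{-\sig_1}$ on $\ive$ is $\cO(\ve^{1/2-\sig_1}W_0\mathbbm{1}_{\ive})$. Combined with the prefactor $\nu^{-1}|\lam^e_a\mu^o|\ve\lesssim\nu^{-1}\ve^3|\mu^o|$ this yields the required $\nu^{-1}\ve^{2}\cE^{1/2}\cD$ smallness. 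Without recognizing this cancellation, the outer-region contribution of $\xi^\perp\cdot\nabla W_{i,\ve}$ appears to grow like $|\xi|^{2\ga}W_{i,\ve}$ and your estimate would not close. You need to state and use this radiality observation rather than defer to Proposition \ref{Prop 5.3}.
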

\begin{proof}
    Notice that
\begin{equation}\label{eq 7.21a}
\begin{split}
    &t\p_t\Bar{a}, \Bar{a}=\cO(\ve^{M-1}), \quad  t\p_t\Bar{c}, \Bar{c}=\cO(\ve^{M+1}) ,\quad  \Omega^{NS}_a= \cO_\cZ(\ve^2), \quad \Psi^{NS}_a = \cO_{\cS_\star}(\ve^2), \\
    &{\rm R}_a^{e,z},{\rm R}_a^{E} = \cO_\cZ(\ve^{M+1}),\quad {\rm R}_a^{o,s}= \cO_{\cS_\star}(\ve^{M+1}) +\Vec{C}(t), \quad {\rm R}_a^{e,s} = \cO_{\cS_\star}(\ve^{M+1}), \\
    &f^o_a = - \p_1 G Y_2  +\cO_\cZ(\ve), \quad f^e_a =
         \p_2 G Y_2 +\cO_\cZ(\ve), \quad \cL[\p_j G]= -1/2 \p_j G, \\
    &\varrho^o_a = \xi_2 Y_2+ \cO_{\cS_\star}(\ve), \quad \varrho^e_a = \xi_1 Y_2 +\cO_{\cS_\star}(\ve) ,\quad  \cL^\star [\xi_j] = - \xi_j/2,
\end{split}
\end{equation}
we deduce from \eqref{eq 7.5} that
\begin{equation*}
\begin{split}
  \|\cT\|_{\cX_{\ve}}  \lesssim \left( \ve + \nu^{-1}\ve^{M+1}\right)(|\mu^o|+|\mu^e|),
  \end{split}
\end{equation*}
which gives \eqref{eq 7.16}.

While by using Proposition \ref{Prop 5.2} and \eqref{eq 7.12c}, we find
\begin{equation}\label{eq 7.19}
    \|\grad_\xi \cB_a \om\|_{L^\oo_\xi} \lesssim \|\om\|_{\cX_\ve}^\f12 (\|\om\|_{\cX_\ve}^\f12 + \|\grad_\xi\om\|_{\cX_\ve}^\f12) \lesssim \cD^\f12,
\end{equation}
which implies
\begin{equation*}
    \begin{split}
\left|\lla \cN,  \varrho^o_a \rra_V \right| +  \left|\lla \cN,  \varrho^e_a \rra_V \right| \lesssim \nu^{-1}\|\grad_\xi \cB_a\om\|_{L^\oo_\xi} \|\grad_\xi\om\|_{\cX_\ve} + \nu^{-1}\ve^2 |\mu^o| \|\grad_\xi\om\|_{\cX_\ve} \lesssim \nu^{-1}\cD.
    \end{split}
\end{equation*}
This gives \eqref{eq 7.17}.

 Now for \eqref{eq 7.18}, we decompose $\cN$ into $\cN= \cN^{\mathrm{I}}+\cN^{\mathrm{II}} $  with
\begin{align*}
    &\cN^{\mathrm{I}}\eqdefa   \nu^{-1}\{\cB_a \om, \om \}_V + \nu^{-1}\lam^e_a \mu^o \big\{\hat{X}, \mu^o\big(f^o_a -\frac{\Bar{a}}{\Ga} f^{to}_a \big) + \mu^e \big(f^e_a +\frac{\Bar{c}}{\Ga} f^{te}_a \big)\big\}_V,   \\
    &\cN^{\mathrm{II}}\eqdefa \nu^{-1}\lam^e_a \mu^o \{\hat{X}, \om^R\}_V.
\end{align*}
Then by using  \eqref{eq 7.19}, \eqref{eq 7.12c}, integration by parts and H\"older inequality, we obtain
\begin{equation}\label{eq 7.21}
\begin{split}
    &\nu \left| \lla \cN^{\mathrm{I}}, \om^R \dot\otimes W_\ve +\cB_a \om^R \rra_V \right|
    \\
    &\leq \left\|\grad_\xi \cB_a \om\right\|_{L^\oo_\xi} \left\|\grad_\xi \om\right\|_{\cX_\ve} \left\|\om^R\right\|_{\cX_\ve} +\left\|\grad_\xi \cB_a \om\right\|_{L^\oo_\xi} \left\|\grad_\xi \cB_a \om^R\right\|_{L^\oo_\xi} \|\om\|_{\cX_\ve}\\
    &\quad + \ve^2\left(|\mu^o|^2+ |\mu^e|^2\right) \bigl(\left\|\om^R\right\|_{\cX_\ve} + \left\|\grad_\xi \cB_a \om^R \right\|_{L^\oo_\xi}\bigr)\\
    &\lesssim \cE^\f12 \cD.
    \end{split}
\end{equation}
And thanks to Proposition \ref{Prop 5.1} and $\sig_1 \ll1$,  we have
\begin{equation*}
    \begin{split}
& \nu \left| \lla \cN^{\mathrm{II}}, \om^R \dot\otimes W_\ve +\cB_a \om^R \rra_V \right|\\
&=   |\lam^e_a \mu^o|\big| \big\langle \big\{ \xi_1 Y_2 + (2\ala)^{-1}\ve\Ga|\xi|^2 Y_1 ,\om^R\big\}_V,\ \om^R\dot\otimes W_\ve  + \cB_a\om^R \big\rangle_V  \big| \\
&\leq   |\lam^e_a \mu^o|\big| \big\langle \big\{\xi_1 Y_2 + (2\ala)^{-1}\ve\Ga|\xi|^2 Y_1 ,\cB_a\om^R\big\}_V,\  \om^R \big\rangle_V  \big| \\
&\quad + |\lam^e_a \mu^o|\big|  \lla \lbk\xi_1 Y_2,\om^R\rbk_V,\  \om^R \dot\otimes W_\ve \rra_V  \big| +(4\ala)^{-1}\ve|\Ga \lam^e_a \mu^o| \big|  \lla \lbk|\xi|^2 Y_1 ,W_\ve\rbk_V,\  \om^R\dot\otimes \om^R \rra_V  \big|\\
&\lesssim  \ve^2 |\mu^o| \left\|\om^R\right\|_{\cX_\ve} \big( \left\|\grad_\xi \cB_a \om^R\right\|_{L^\oo_\xi} + \left\|\grad_\xi \om^R\right\|_{\cX_\ve} \big) +  \ve^3|\mu^o|\sum_{i=1}^2 \left|\lla \xi^\perp\cdot\grad (W_{i,\ve} - W_0) \mathbbm{1}_{\ive} , |\om^R_i|^2 \rra\right|   \\
&\lesssim  \ve^2 |\mu^o|\cD +  \ve^{3+0.5-\sig_1}|\mu^o| \cE \lesssim  \ve^2\cE^\f12\cD.
    \end{split}
\end{equation*}
By summarizing above inequality and \eqref{eq 7.21}, we achieve \eqref{eq 7.18}. This  completes the proof of Proposition \ref{Prop 7.5}
\end{proof}
\begin{Remark}
    The estimation of $\cN$ is  slightly different from previous literature \cite{DG24}. Here we have an extra  term $|\xi|^2$ in frame stream function, whose derivative is not bounded. We overcome this technical difficulty by noticing that $W_{i,\ve}$ is radial outside $\ive$ and thus $\{|\xi|^2, W_{i,\ve}\}=2\xi^\perp\cdot 
    \grad(W_{i,\ve}-W_0) \mathbbm{1}_{\ive}$. Similar idea is used in the proof of Proposition \ref{Prop 7.6}.
\end{Remark}

\subsection{Estimate of $\om^R$.}
\begin{Proposition}\label{Prop 7.6}
 {\sl   Let $\ve,\sig_1\ll1$. There hold
   \begin{subequations} \label{eq 7.23a}
\begin{align}
\label{eq 7.23}
&\left| \lla (t\p_t-\cL +\nu^{-1}\Lam^E + \Lam^{NS}) \om^R, \varrho^o_a \rra_V\right|\lesssim (\ve + \nu^{-1}\ve^{M+1}) \cE^\f12,\\
   \label{eq 7.24}
&\left| \lla (t\p_t-\cL +\nu^{-1}\Lam^E + \Lam^{NS}) \om^R, \varrho^e_a \rra_V\right|\lesssim (\ve + \nu^{-1}\ve^{M+1}) \cE^\f12,\\
   \label{eq 7.25}
& \lla (t\p_t-\cL ) \om^R , \om^R \dot\otimes W_\ve +\cB_a \om^R \rra_V \geq t\p_t E_\ve[\om^R] +  D_\ve[\om^R]-C\ve^2 \left\|\om^R\right\|_{\cX_\ve}^2,\\
   \label{eq 7.26}
  & \nu^{-1}\left| \lla \Lam^E \om^R , \om^R \dot\otimes W_\ve +\cB_a \om^R \rra_V \right| \lesssim \nu^{-1} (\ve^{M}+\ve^{\sig_2}) \cD,\\
   \label{eq 7.27}
&\left| \lla \Lam^{NS} \om^R , \om^R \dot\otimes W_\ve +\cB_a \om^R \rra_V \right| \lesssim  \ve^{2} \cD.
  \end{align}
\end{subequations}}
\end{Proposition}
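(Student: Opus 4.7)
\no \textbf{Proposal for \eqref{eq 7.23}--\eqref{eq 7.24}.} The guiding principle is that $\om^R$ is orthogonal by construction (see \eqref{eq 7.2}) to each of the pseudo-momenta $\varrho^o_a,\varrho^e_a,\varrho^{te}_a,\varrho^{to}_a$, and that $\varrho^o_a,\varrho^e_a$ are almost eigenfunctions of $\Lam^{E,\star}$. Differentiating the identity $\lla \om^R,\varrho^o_a\rra_V\equiv 0$ in $t$ and using $t\p_t\ve = \ve/2$ together with $\varrho^o_a = \xi_2Y_2 + \cO_{\cS_\star}(\ve)$ gives $\lla t\p_t\om^R,\varrho^o_a\rra_V = -\lla \om^R, t\p_t\varrho^o_a\rra_V = \cO(\ve\cE^{1/2})$. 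For the $\cL$-term, transfer to $\cL^\star$, use $\cL^\star[\xi_2Y_2] = -\f12\xi_2Y_2$, and reabsorb the leading piece via $\lla\om^R,\varrho^o_a\rra_V = 0$, obtaining again $\cO(\ve\cE^{1/2})$. For $\nu^{-1}\Lam^E$, transfer to $\Lam^{E,\star}\varrho^o_a = \lam^e_a\varrho^e_a + {\rm R}^{o,s}_a$ from \eqref{eq 6.18bc}; the $\lam^e_a\varrho^e_a$ piece is killed by $\lla\om^R,\varrho^e_a\rra_V = 0$, and the constant-vector part of ${\rm R}^{o,s}_a$ is killed by $m[\om^R_i]=0$, so only the $\cO_{\cS_\star}(\ve^{M+1})$ smooth remainder survives, producing $\cO(\nu^{-1}\ve^{M+1}\cE^{1/2})$. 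Finally $\Lam^{NS}$ contributes $\cO(\ve^2\cE^{1/2})$ from $\Psi^{NS}_a,\Omega^{NS}_a = \cO(\ve^2)$. Estimate \eqref{eq 7.24} is entirely parallel, using $\Lam^{E,\star}\varrho^e_a = {\rm R}^{e,s}_a$ in place of the identity for $\varrho^o_a$.

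\no \textbf{Proposal for \eqref{eq 7.25}.} Using the symmetry $\lla \om,\cB_a\om'\rra_V = \lla \om',\cB_a\om\rra_V$, one computes
\begin{equation*}
\lla t\p_t\om^R,\om^R\dot\otimes W_\ve + \cB_a\om^R\rra_V = t\p_t E_\ve[\om^R] - \f12\lla\om^R,\om^R\dot\otimes(t\p_t W_\ve)\rra_V - \f12\lla\om^R,(t\p_t\cB_a)\om^R\rra_V.
\end{equation*}
Combined with $-\lla\cL\om^R,\om^R\dot\otimes W_\ve + \cB_a\om^R\rra_V$, the first two pieces reassemble into $t\p_t E_\ve[\om^R] + D_\ve[\om^R]$ by the very definition \eqref{eq 5.11} of $D_\ve$. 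Only the residual $\f12\lla\om^R,(t\p_t\cB_a)\om^R\rra_V$ has to be controlled. Since $\cB_a$ depends on $t$ solely through the translation vector $\ve^{-1}\ala\eo$ and $t\p_t(\ve^{-1}\ala) = \cO(\ve^{-1})$, this residual reduces to pairings of $\om^R_i$ against $\p_1\lap_\xi^{-1}\om^R_\is(\cdot\pm\ve^{-1}\ala\eo)$. Proposition \ref{Prop 5.4} together with the bound $|M[\om^R_i]| \lesssim \ve\|\om^R\|_{\cX_\ve}$ from \eqref{eq 7.3a} controls these translated Biot-Savart pairings by $\cO(\ve^3\|\om^R\|_{\cX_\ve})$, and the extra $\ve^{-1}$ factor yields the announced $C\ve^2\|\om^R\|_{\cX_\ve}^2$ correction.

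\no \textbf{Proposal for \eqref{eq 7.26}--\eqref{eq 7.27} and the main obstacle.} The Arnold-type relation from Proposition \ref{Prop 4.7} gives $\Psi^E_{i,a} + F_i(\Omega^E_{i,a}) = \Theta_i$ with $\Theta_i\chi_{\sig_1}(\xi/4) = \cO_{\cS_\star}(\ve^{M+1})$ and $W_{i,\ve} = F'_i(\Omega^E_{i,a})$ on $\ive$. Since $\{F_i(\Omega^E_{i,a}),\om_i\} = F'_i(\Omega^E_{i,a})\{\Omega^E_{i,a},\om_i\}$ componentwise, we obtain on the inner region the pointwise identity
\begin{equation*}
\Lam^E\om^R = \lbk\Theta,\om^R\rbk_V + \lbk\om^R\dot\otimes W_\ve + \cB_a\om^R,\;\Omega^E_a\rbk_V.
\end{equation*}
Pairing against $\om^R\dot\otimes W_\ve + \cB_a\om^R$, the second summand vanishes exactly by the Poisson antisymmetry $\lla\lbk h,g\rbk_V,h\rra_V = 0$ applied with $h = \om^R\dot\otimes W_\ve + \cB_a\om^R$, $g = \Omega^E_a$, while the first summand yields $\cO(\nu^{-1}\ve^{M+1}\cD)$ through the bound on $\na\Theta$. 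The \emph{main obstacle} is the tail contribution from $\iive\cup\iiive \subset\{|\xi| \geq \ve^{-\sig_1}\}$, where the identification $W_{i,\ve} = F'_i(\Omega^E_{i,a})$ breaks down; this forces a separate estimate that exploits the Gaussian decay of $\Omega^E_a$ and the bounds \eqref{eq 5.8a}, trading the factor $\exp(-c\ve^{-2\sig_1})$ against the $\exp(|\xi|^{2\ga}/4)$ weight growth to extract the tail $\ve^{\sig_2}$, following the inner/outer splitting strategy of \cite{DG24}. Estimate \eqref{eq 7.27} is comparatively direct: $\Psi^{NS}_a,\Omega^{NS}_a = \cO_\cZ(\ve^2)$ with their gradients controlled in $L^\oo_\xi$ via Proposition \ref{Prop 5.2}, so the resulting pairings are of the form $\ve^2\|\na\om^R\|_{\cX_\ve}\|\om^R\|_{\cX_\ve} \lesssim \ve^2\cD$.
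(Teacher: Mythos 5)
The strategy for \eqref{eq 7.23}--\eqref{eq 7.25} matches the paper's proof closely: transfer the operators to their adjoints acting on the pseudo-momenta (using \eqref{eq 7.2} to kill the time-derivative of the orthogonality relations and to annihilate the $\lam^e_a\varrho^e_a$ and constant-vector pieces), and for \eqref{eq 7.25} reduce to the commutator $[t\p_t,\cB_a]$, which is then controlled by Proposition~\ref{Prop 5.4} together with \eqref{eq 7.3a}. For \eqref{eq 7.26} your organization is slightly different --- the paper first integrates by parts \emph{globally} into $A_1+A_2$ and only then inserts the Arnold relation region-by-region, whereas you push the identity $W_\ve=F'(\Omega^E_a)$ into $\Lam^E\om^R$ first and then invoke the global Poisson cancellation --- but you correctly flag that the cancellation only works where the identification holds and that the tail must be estimated separately, so the spirit is the same.

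There is, however, a genuine gap in your treatment of \eqref{eq 7.27}. You assert that $\Psi^{NS}_a=\cO_\cZ(\ve^2)$ with $\nabla_\xi\Psi^{NS}_a\in L^\oo_\xi$ controlled by Proposition~\ref{Prop 5.2}. Neither claim is correct: $\Psi^{NS}_a$ lies only in $\cS_\star$ (not $\cZ$) because of the explicit term $\tfrac{\ve^2}{2}\dot{\theta}^{NS}_a|\xi|^2Y_1$ in \eqref{eq 5.5}, and the gradient of this term is $\cO(\ve^2|\xi|)$, which is unbounded. Consequently the ``direct'' pairing $\ve^2\|\na\om^R\|_{\cX_\ve}\|\om^R\|_{\cX_\ve}$ does not follow. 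The paper's proof (and the Remark immediately following Proposition~\ref{Prop 7.5}) handles this by splitting $\Psi^{NS}_a=\Psi^{NS,\mathrm{I}}_a+\Psi^{NS,\mathrm{II}}_a$ with $\Psi^{NS,\mathrm{II}}_a$ the $|\xi|^2$-piece, then observing that $\{|\xi|^2,W_{i,\ve}\}=2\xi^\perp\cdot\nabla(W_{i,\ve}-W_0)\mathbbm{1}_{\ive}$ because $W_{i,\ve}$ is radial outside $\ive$; Proposition~\ref{Prop 5.1}(3) then yields the small factor $\ve^{1/2}$ that makes the inner-region contribution $\lesssim\ve^{2.5-\sig_1}\cE\lesssim\ve^2\cD$. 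Without this structural observation your bound for \eqref{eq 7.27} does not close, and the same idea is also needed in the outer-region part of \eqref{eq 7.26} (where the $|\xi|^2$ term in $\Psi^E_a$ likewise Poisson-commutes with radial $W_\ve$), which your sketch does not isolate.
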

\begin{proof} We first get,
by using \eqref{eq 7.2}, \eqref{eq 6.18}, \eqref{eq 7.21a}, that
\begin{equation*}
\begin{split}
   &\left| \lla (t\p_t-\cL + \nu^{-1}\Lam^E + \Lam^{NS}) \om^R, \varrho^o_a  \rra_V\right|  \\
   &\leq \left| \lla  \om^R, t\p_t \varrho^o_a  \rra_V\right| + \left| \lla  \om^R, \cL^\star \varrho^o_a  \rra_V\right| +\nu^{-1}\left| \lla  \om^R, \Lam^{E,\star} \varrho^o_a  \rra_V\right| + \left| \lla  \om^R, \Lam^{NS,\star} \varrho^o_a  \rra_V\right|\\
   &\lesssim \left(\ve+ \nu^{-1}\ve^{M+1}\right) \left\|\om^R\right\|_{\cX_\ve} \lesssim \left(\ve+ \nu^{-1}\ve^{M+1}\right) \cE^\f12,
\end{split}
\end{equation*}
which gives rises to \eqref{eq 7.23}, and  \eqref{eq 7.24} follows along the same line.

For \eqref{eq 7.25},  we observe by direct calculations that
\begin{equation}\label{eq 7.28}
    \begin{split}
&\lla (t\p_t-\cL ) \om^R , \om^R \dot\otimes W_\ve +\cB_a \om^R \rra_V \\
&= t\p_t E_\ve[\om^R] +  D_\ve[\om^R] + 1/2\lla t\p_t \om^R, \cB_a \om^R \rra_V - 1/2 \lla \om^R, t\p_t(\cB_a \om^R) \rra_V\\
&= t\p_t E_\ve[\om^R] + D_\ve[\om^R]   - 1/2 \lla \om^R, [t\p_t,\cB_a] \om^R \rra_V.
    \end{split}
\end{equation}
While one has, by using Propositions \ref{Prop 5.2}, \ref{Prop 5.4} and \eqref{eq 7.3a}, that
\begin{equation*}
    \begin{split}
&\left|\lla \om^R, [t\p_t,\cB_a] \om^R \rra_V\right| \leq \left|t\p_t\left(\ala/\ve\right)\right|\sum_{i=1}^2\big| \big\langle \p_1 \phi^R_\is\left(\xi+\ka_i\ve^{-1} \ala\eo\right) , \om^R_i  \big\rangle\big| \\
&\lesssim \ve^{-1}\sum_{i=1,2} \Big( \big| \big\langle \p_1 \phi^R_\is\left(\xi+\ka_i\ve^{-1} \ala\eo\right) , \om^R_i \mathbbm{1}_{\ive}  \big\rangle\big| +\big| \big\langle \p_1 \phi^R_\is\left(\xi+\ka_i\ve^{-1} \ala\eo\right) , \om^R_i \mathbbm{1}_{\mathrm{I}^c_{i,\ve}}  \big\rangle\big|   \Big)\\
&\lesssim \ve^{-1}\sum_{i=1,2} \Big( \big\|\mathbbm{1}_{\ive}(1+|\cdot|)^{-3} \grad_\xi \phi^R_\is\left(\xi+\ka_i\ve^{-1} \ala\eo\right)\big\|_{L^4_\xi} \left\|(1+|\cdot|)^{3} \om^R_i\right\|_{L^{4/3}_\xi} \\
&\quad \qquad \qquad \quad  + \left\| \grad_\xi \phi^R_\is\right\|_{L^4_\xi} \big\|\mathbbm{1}_{\mathrm{I}^c_{i,\ve}} \om^R_i \big\|_{L^{4/3}_\xi}    \Big) \\
& \lesssim \ve^{-1}\big(\ve^3 \left\|\om^R\right\|_{\cX_\ve} + \ve^2\left|M[\om^R_1]\right|+ \ve^2\left|M[\om^R_2]\right|\big)  \left\|\om^R\right\|_{\cX_\ve} + \ve^{-1} \exp\left(-c\ve^{-2\sig_1}\right)\left\|\om^R\right\|_{\cX_\ve}^2 \\
&\lesssim \ve^2 \left\|\om^R\right\|_{\cX_\ve}^2.
\end{split}
\end{equation*}
Therefore by inserting the above inequality into \eqref{eq 7.28}, we achieve \eqref{eq 7.25}.

For \eqref{eq 7.26}, we split
\begin{equation}\begin{split}\label{eq 7.30}
    & \lla \Lam^E \om^R , \om^R \dot\otimes W_\ve +\cB_a \om^R \rra_V = \lla \lbk \Psi^E_a, \om^R \rbk_V + \lbk\cB_a\om^R, \Omega^E_a\rbk_V , \om^R \dot\otimes W_\ve +\cB_a \om^R \rra_V\\
     & =- \f12 \lla \lbk \Psi^E_a, W_\ve\rbk_V, \om^R\dot\otimes\om^R \rra_V + \lla \lbk\cB_a \om^R , \Omega^E_a \rbk_V\dot\otimes W_\ve + \lbk \cB_a \om^R, \Psi^E_a \rbk_V    , \om^R\rra_V\\
     &\eqdefa A_1+A_2.
\end{split}
\end{equation}
We can bound $A_1$ by using Propositions \ref{Prop 4.7}, \ref{Prop 5.2} and \ref{Prop 5.4} that for some $N\in\N$,
\begin{equation}\label{eq 7.31}
\begin{split}
    &|A_1| \lesssim\sum_{i=1}^2 \left|\lla \{\Psi^E_{i,a}, W_{i,\ve}\} , \mathbbm{1}_{\ive} |\om^R_i|^2\rra\right| +\sum_{i=1}^2 \left|\lla \{\Psi^E_{i,a}, W_{i,\ve}\} , \mathbbm{1}_{\iiive} |\om^R_i|^2\rra\right| \\
    &\lesssim \sum_{i=1}^2 \left|\lla \{\Theta_{i}, W_{i,\ve}\} , \mathbbm{1}_{\ive} |\om^R_i|^2\rra\right|  \\
    &\quad +\sum_{i=1}^2 \big| \big\langle \big\{ \left(\cB_a\Omega^E_a\right)_i + \ve  \ka_i \Ga_i\dot{\theta}^E_a \ala \xi_1/\Ga, \ \exp(|\xi|^{2\ga}/4)\big\} , \mathbbm{1}_{|\xi|\geq \ve^{-\sig_2}} |\om^R_i|^2\big\rangle\big|\\
    &\lesssim \ve^{M+1-N\sig_1} \left\|\om^R\right\|_{\cX_\ve}^2  + \ve^{\sig_2}\big(\ve +\sum_{i=1}^2\left\|\grad_\xi \cB_a\Omega^E_a  \right\|_{L^\oo_\xi}   \big) \left\|\om^R\right\|_{\cD_\ve}^2\\
    &\lesssim \left(\ve^{M+1-N\sig_1}+\ve^{\sig_2}\right) \cD .
\end{split}
\end{equation}
While we can similarly bound $A_2$ by
\begin{equation}\label{eq 7.32}
    \begin{split}
|A_2|&\lesssim \sum_{i=1}^2 \left| \lla \lbk \left(\cB_a \om^R\right)_i, \Omega^E_{i,a}  \rbk F'_i(\Omega^E_{i,a}) + \lbk\left(\cB_a \om^R\right)_i,\Psi^E_{i,a}\rbk , \mathbbm{1}_{\ive} \om^R_i  \rra  \right| \\
& \quad +\sum_{i=1}^2 \big| \big\langle \lbk \left(\cB_a \om^R\right)_i, \Omega^E_{i,a}  \rbk W_{i,\ve} + \lbk\left(\cB_a \om^R\right)_i,\Psi^E_{i,a}\rbk , \mathbbm{1}_{\mathrm{I}_{i,\ve}^c} \om^R_i  \big\rangle  \big|\\
&\lesssim \sum_{i=1}^2 \left| \lla \lbk \left(\cB_a \om^R\right)_i, \Theta_i  \rbk , \mathbbm{1}_{\ive} \om^R_i  \rra  \right| \\
& \quad +\sum_{i=1}^2 \big| \big\langle \lbk \left(\cB_a \om^R\right)_i, \Omega^E_{i,a}  \rbk W_{i,\ve} + \lbk\left(\cB_a \om^R\right)_i,\Psi^E_{i,a}\rbk , \mathbbm{1}_{\mathrm{I}_{i,\ve}^c} \om^R_i  \big\rangle  \big|\\
&\lesssim \left(\ve^{M+1}+ \exp\left(-c\ve^{-2\sig_1}\right) \right) \left\|\grad_\xi\cB_a\om^R\right\|_{L^\oo_\xi}   \left\|\om^R\right\|_{\cX_\ve} \\
&\lesssim \left(\ve^{M+1}+ \exp\left(-c\ve^{-2\sig_1}\right) \right) \cD.
    \end{split}
\end{equation}
By inserting \eqref{eq 7.31}, \eqref{eq 7.32} into \eqref{eq 7.30}, and then letting $\ve, \sig_1\ll 1$, we achieve \eqref{eq 7.26}.

Now we turn to the proof of \eqref{eq 7.27}. We decompose $\Psi^{NS}_a$ into $\Psi^{NS}_a = \Psi^{NS,\mathrm{I}}_a+ \Psi^{NS,\mathrm{II}}_a$ with
\begin{equation*}
\begin{split}
   &\Psi^{NS,\mathrm{I}}_a\eqdefa  \cB_a \Omega^{NS}_a + \frac{\ve }{\Ga}(\dot{\theta}^{NS}_a \al_a \xi_1 + \dot{\al}^{NS}_a\xi_2) Y_2, \quad \Psi^{NS,\mathrm{II}}_a\eqdefa  \frac{\ve^2}{2} \dot{\theta}^{NS}_a |\xi|^2 Y_1,
   \end{split}
\end{equation*}
and then split
\begin{equation}\label{eq 7.33}
    \begin{split}
 &\lla \Lam^{NS} \om^R , \om^R \dot\otimes W_\ve +\cB_a \om^R \rra_V \\
 &=  \lla \lbk\Psi^{NS,\mathrm{I}}_a, \om^R \rbk_V , \om^R \dot\otimes W_\ve +\cB_a \om^R \rra_V + \lla \lbk\Psi^{NS,\mathrm{II}}_a, \om^R \rbk_V , \om^R \dot\otimes W_\ve +\cB_a \om^R \rra_V  \\
 &\quad +  \lla \lbk\cB_a\om^R, \Omega^{NS}_a \rbk_V, \om^R \dot\otimes W_\ve +\cB_a \om^R \rra_V \eqdefa A_3+A_4+A_5.
    \end{split}
\end{equation}
Using integration by parts, $\dot{\theta}^{NS}_0=\dot{\al}^{NS}_0=0$ and $\Omega^{NS}_a =\cO_\cZ(\ve^2)$, we get
\begin{equation}\label{eq 7.33a}
    \begin{split}
|A_3|&\leq   \big|\big\langle \big\{\Psi^{NS,\mathrm{I}}_a, \om^R \big\}_V ,  \om^R \dot\otimes W_\ve \big\rangle_V  \big| + \big|\big\langle \big\{\Psi^{NS,\mathrm{I}}_a, \cB_a \om^R \big\}_V , \om^R   \big\rangle_V\big|\\
&\lesssim \left\|\grad_\xi \Psi^{NS,\mathrm{I}}_a\right\|_{L^\oo_\xi}  \big(\left\|\grad_\xi\om^R\right\|_{\cX_\ve} + \left\|\grad_\xi \cB_a\om^R\right\|_{L^\oo_\xi}  \big) \left\|\om^R\right\|_{\cX_\ve} \lesssim \ve^2 \cD,
    \end{split}
\end{equation}
and similarly
\begin{equation}
    \begin{split}
|A_5|\lesssim \left\|\grad_\xi \cB_a\om^R\right\|_{L^\oo_\xi}\left\|\grad_\xi\Omega^{NS}_a\right\|_{\cX_\ve} \left\|\om^R\right\|_{\cX_\ve}\lesssim \ve^2 \cD.
    \end{split}
\end{equation}
Finally for $A_4$, one obtains, by using Proposition \ref{Prop 5.1} and taking $\sig_1$ to be sufficiently small, that
\begin{equation}\label{eq 7.35}
    \begin{split}
|A_4|&\leq 1/2 \times \big|\lla \lbk\Psi^{NS,\mathrm{II}}_a, W_\ve \rbk_V , \om^R \dot\otimes \om^R \rra_V \big| + \big|\lla \lbk\Psi^{NS,\mathrm{II}}_a, \cB_a\om^R \rbk_V , \om^R \rra_V \big|\\
&\lesssim\sum_{i=1}^2 \big|\big\langle \big\{\Psi^{NS,\mathrm{II}}_{i,a}, W_{i,\ve} \big\} , \mathbbm{1}_{\ive} |\om^R_i|^2 \big\rangle_V \big|  + \ve^2   \left\|\grad_\xi \cB_a\om^R\right\|_{L^\oo_\xi} \left\|\om^R\right\|_{\cX_\ve} \\
&\lesssim \ve^2\sum_{i=1}^2 \left|\lla  \xi^\perp \cdot\grad_\xi (W_{i,\ve}-W_0),   \mathbbm{1}_{\ive}|\om^R_i|^2 \rra_V \right| + \ve^2 \cD\\
&\lesssim \ve^{2.5-\sig_1} \cE^2+ \ve^2 \cD \lesssim \ve^2 \cD.
    \end{split}
\end{equation}
By inserting \eqref{eq 7.33a}-\eqref{eq 7.35} into \eqref{eq 7.33}, we achieve \eqref{eq 7.27}. This completes the proof of Proposition \ref{Prop 7.6}.

\end{proof}

\subsection{Estimation of $\mu^o$ and $\mu^e$.}

\begin{Proposition}\label{Prop 7.7}
 {\sl Let $\ve,\sig_1\ll1$ and
  \begin{equation}
      \cF\eqdefa (t\p_t+1/2 )\mu^o \times \big(f^o_a - \frac{\Bar{a}}{\Ga}f^{to}_a\big)+   (t\p_t+1/2 )\mu^e \times \big(f^e_a +\frac{\Bar{c}}{\Ga} f^{te}_a\big).
  \end{equation}
  There hold
\begin{subequations}  \label{eq 7.36a}
\begin{gather}
\lla \cF , \   -\varrho^o_a \rra_V  = \left(\Bar{b} + \frac{\Bar{a}\Bar{c}}{\Ga}\right) (t\p_t + 1/2)\mu^e ,\quad  \lla \cF, \  \varrho^e_a \rra_V  = \left(\Bar{b} + \frac{\Bar{a}\Bar{c}}{\Ga}\right)(t\p_t + 1/2)\mu^o, \label{eq 7.36}\\
\left| \lla \cF, \   \om^R \dot\otimes W_\ve +\cB_a \om^R \rra_V\right|  \lesssim \ve^{M+1} \left(|t\p_t \mu^o| + |\mu^o| +|t\p_t \mu^e| + |\mu^e|\right) \cE^\f12.\label{eq 7.38}
\end{gather}
\end{subequations}}
\end{Proposition}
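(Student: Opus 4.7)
The identities \eqref{eq 7.36} follow by direct bilinear expansion of $\cF$ against $-\varrho^o_a$ and $\varrho^e_a$, reading off the entries of the inner-product matrix $\cI$ from Proposition \ref{Prop 6.3}. Pairing $\cF$ with $-\varrho^o_a$ annihilates the $\mu^o$ contribution because $\lla f^o_a, \varrho^o_a\rra_V = \lla f^{to}_a, \varrho^o_a\rra_V = 0$, while the $\mu^e$ contribution assembles to $(\Bar{b} + \Bar{a}\Bar{c}/\Ga)(t\p_t + 1/2)\mu^e$ from $\lla f^e_a, -\varrho^o_a\rra_V = \Bar{b}$ and $\lla f^{te}_a, -\varrho^o_a\rra_V = \Bar{a}$ combined with the factor $\Bar{c}/\Ga$ in front of $f^{te}_a$; the second identity is symmetric.

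For \eqref{eq 7.38}, the starting point is the symmetry of $\cB_a$, which yields
$$
\lla \cF, \om^R \dot\otimes W_\ve + \cB_a \om^R\rra_V = \lla \cF \dot\otimes W_\ve + \cB_a \cF, \om^R\rra_V.
$$
The crux is the claim that for each $\star \in \{e, o, te, to\}$, the combination $f^\star_a \dot\otimes W_\ve + \cB_a f^\star_a$ is, up to an $\cO_{\cS_\star}(\ve^{M+1})$ remainder on $\ive$ and an exponentially small error on $\iive\cup\iiive$, a linear combination of $\varrho^e_a, \varrho^o_a, \varrho^{te}_a, \varrho^{to}_a, Y_2$. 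Indeed, rearranging the definition \eqref{eq 6.1} as
$$
\cB_a f^\star_a = -\Lam^{E,\star}\varrho^\star_a - \{\Psi^E_a, \varrho^\star_a\}_V,
$$
and invoking the functional relationship $\Psi^E_{i,a} + F_i(\Omega^E_{i,a}) = \Theta_i$ from Proposition \ref{Prop 4.7} together with the identity $F'_i(\Omega^E_{i,a}) = W_{i,\ve}$ on $\ive$, one computes $\{\Psi^E_{i,a}, \varrho^\star_{i,a}\} = W_{i,\ve}\{\varrho^\star_{i,a}, \Omega^E_{i,a}\} + \{\Theta_i, \varrho^\star_{i,a}\}$ on $\ive$, so
$$
W_\ve f^\star_a + \cB_a f^\star_a = -\Lam^{E,\star}\varrho^\star_a - \{\Theta, \varrho^\star_a\}_V \quad \text{on}\ \ive.
$$
The last term is $\cO_{\cS_\star}(\ve^{M+1})$ since $\Theta_i\chi_{\sig_1}(\xi/4) = \cO_{\cS_\star}(\ve^{M+1})$ and $\varrho^\star_a \in \cS_\star^2$, and $\Lam^{E,\star}\varrho^\star_a$ is given explicitly by \eqref{eq 6.18} as $\lam^e_a \varrho^e_a + {\rm R}^{o,s}_a$ for $\star = o$, as $\ve^2\dot\theta^E_a \varrho^{to}_a$ for $\star = te$, and analogously in the remaining two cases. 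On $\iive\cup\iiive$, the Gaussian decay of $f^\star_a \in \cZ$ and the polynomial growth of $\cB_a f^\star_a \in \cS_\star$ are defeated by $W_\ve$ resp.\ $W_\ve^{-1}$ in an $L^2(W_\ve^{-1}d\xi)$ sense, giving a contribution exponentially small in $\ve^{-2\sig_1}$ (this reproduces the argument leading to \eqref{eq 5.21}--\eqref{eq 5.22}).

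Pairing the above identity with $\om^R$, all pseudo-momentum terms vanish by the orthogonality \eqref{eq 7.2}, and $\lla Y_2, \om^R\rra_V = \Ga_2 m[\om^R_1] - \Ga_1 m[\om^R_2] = 0$ by \eqref{eq 5.1a}; the constant component $\vec{C}(t)$ of ${\rm R}^{o,s}_a$ is annihilated for the same reason, while its $\cO_{\cS_\star}(\ve^{M+1})$ component contributes $\cO(\ve^{M+1})\|\om^R\|_{\cX_\ve}$ via Cauchy--Schwarz. Therefore $\lla f^\star_a \dot\otimes W_\ve + \cB_a f^\star_a, \om^R\rra_V = \cO(\ve^{M+1})\|\om^R\|_{\cX_\ve}$ for every $\star$. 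Multiplying by the coefficients appearing in $\cF$, namely $(t\p_t + 1/2)\mu^o, (t\p_t + 1/2)\mu^e$ and the $\Bar{a}/\Ga = \cO(\ve^{M-1})$, $\Bar{c}/\Ga = \cO(\ve^{M+1})$ prefactors from Proposition \ref{Prop 6.3}, and then using $\|\om^R\|_{\cX_\ve} \lesssim \cE^{1/2}$ from Proposition \ref{Prop 7.4}, yields \eqref{eq 7.38}.

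The main obstacle is the careful bookkeeping of the outer-region contributions: because $W_\ve$ reaches values as large as $\exp(\ve^{-2\sig_1}/4)$ on the intermediate annulus $\iive$, any crude bound would completely destroy the $\ve^{M+1}$ gain, and one must verify in each of the three regions $\ive, \iive, \iiive$ that the fast Gaussian decay of $f^\star_a$ (and the matching decay of $W_\ve^{-1}$ controlling $\cB_a f^\star_a$) beats the weight, so that the $\cO_{\cS_\star}(\ve^{M+1})$ inner-region bound genuinely dictates the overall rate.
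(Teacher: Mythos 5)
Your proof is correct and takes essentially the same approach as the paper: for \eqref{eq 7.36} you read off the inner‐product matrix $\cI$, and for \eqref{eq 7.38} you move the test function onto $\cF$ via the symmetry of $\cB_a$, then rewrite $W_\ve f^\star_a + \cB_a f^\star_a = -\Lam^{E,\star}\varrho^\star_a - \{\Theta,\varrho^\star_a\}_V$ on $\ive$ using $W_{i,\ve}=F'_i(\Omega^E_{i,a})$ and the functional relation, discard the $\Lam^{E,\star}\varrho^\star_a$ terms by the orthogonality \eqref{eq 7.2} and $m[\om^R_i]=0$, and control the outer regions by exponential smallness — precisely the paper's computation (which spells it out for $\star=o$ and says the rest is analogous). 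The one cosmetic difference is that you organize the argument uniformly over $\star\in\{e,o,te,to\}$ by listing what $\Lam^{E,\star}\varrho^\star_a$ is in each case, whereas the paper works out $\star=o$ in full detail; both are fine.
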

\begin{proof}
    \eqref{eq 7.36} simply follows \eqref{eq 6.19}. Now for \eqref{eq 7.38}, notice that
    \begin{equation*}
\begin{split}
    &\lla f^o_a, \om^R \dot\otimes W_\ve +\cB_a \om^R\rra_V =  \lla \lbk\varrho^o_a, \Omega^E_a\rbk_V, \om^R \dot\otimes W_\ve +\cB_a \om^R\rra_V \\
    &= \lla \lbk\varrho^o_a, \Omega^E_a\rbk_V \dot\otimes W_\ve +\cB_a\lbk\varrho^o_a, \Omega^E_a\rbk_V, \om^R \rra_V\\
    &=- \lla \Lam^{E,\star}\varrho^o_a, \om^R\rra_V + \lla \lbk\varrho^o_a, \Omega^E_a\rbk_V \dot\otimes W_\ve - \lbk \Psi^E_a,\varrho^o_a\rbk_V, \om^R\rra_V \\
    &= - \lla {\rm R}_a^{o,s}, \om^R\rra_V +\sum_{i=1}^2 \lla \lbk\varrho^o_{i,a}, \Omega^E_{i,a}\rbk  F'_i(\Omega^E_{i,a}) - \lbk \Psi^E_{i,a},\varrho^o_{i,a}\rbk, \om^R_i \mathbbm{1}_{\ive} \rra \\
    &\quad +\sum_{i=1}^2 \big\langle \lbk\varrho^o_{i,a}, \Omega^E_{i,a}\rbk  W_{i,\ve} - \lbk \Psi^E_{i,a},\varrho^o_{i,a}\rbk, \om^R_i \mathbbm{1}_{\mathrm{I}_{i,\ve}^c} \big\rangle\\
    &= - \lla {\rm R}_a^{o,s}, \om^R \rra_V +\sum_{i=1,2} \lla \lbk\varrho^o_{i,a}, \Theta_i\rbk , \om^R_i \mathbbm{1}_{\ive} \rra \\
    &\quad +\sum_{i=1}^2 \big\langle \lbk\varrho^o_{i,a}, \Omega^E_{i,a}\rbk  W_{i,\ve} - \lbk \Psi^E_{i,a},\varrho^o_{i,a}\rbk, \om^R_i \mathbbm{1}_{\mathrm{I}_{i,\ve}^c} \big\rangle,
\end{split}
    \end{equation*}
we can bound $ \lla f^o_a, \om^R \dot\otimes W_\ve +\cB_a \om^R\rra_V$ by using Proposition \ref{Prop 4.7} and \eqref{eq 7.21a}:
\begin{equation*}
    \begin{split}
     \left|\lla f^o_a, \om^R \dot\otimes W_\ve +\cB_a \om^R\rra_V\right| \lesssim \left\|\om^R\right\|_{\cX_\ve}\left(\ve^{M+1}   +\exp\left(-c\ve^{-2\sig_1}\right)\right) \lesssim   \ve^{M+1} \cE^\f12.
    \end{split}
\end{equation*}
And other term can be estimated along  similar lines, we thus complete the proof of \eqref{eq 7.38}.
\end{proof}

\subsection{proof of Theorem \ref{Thm 1.1}}
We are now in a position to present the proof of Theorem \ref{Thm 1.1}.
\begin{proof}
   By taking $\lla \cdot, \cdot\rra_V$ inner product of \eqref{eq 7.4} with $\varrho^e$ and utilizing  ${\rm R}_a=\cO_\cZ\left(\nu^{-1}\ve^{M+1}+\nu\ve^2\right)$,  we get, by using Propositions \ref{Prop 7.5}-\ref{Prop 7.7}, that
\begin{equation}\label{eq 7.39}
    \begin{split}
   \left|\Bar{b} + \frac{\Bar{a}\Bar{c}}{\Ga}\right| \left|(t\p_t + 1/2)\mu^o\right| \lesssim   \nu^{-1}\ve^{M+1} +\nu \ve^2 + \left( \ve + \nu^{-1}\ve^{M+1}\right) \cE^\f12  + \nu^{-1}  \cD,
    \end{split}
\end{equation}
which  implies that
\begin{equation}\label{eq 4.65}
\begin{split}
    t\p_t|\mu^o|^2 + |\mu^o|^2 \lesssim \left(\nu^{-1}\ve^{M+1} +\nu \ve^2\right)\cE^\f12 + \left( \ve + \nu^{-1}\ve^{M+1}\right) \cE + \nu^{-1}\cE^\f12 \cD.
\end{split}
\end{equation}
Similarly, we have
\begin{equation}\label{eq 7.41}
    \begin{split}
    \left|\Bar{b} + \frac{\Bar{a}\Bar{c}}{\Ga}\right| \left|(t\p_t + 1/2)\mu^e\right| \lesssim   \nu^{-1}\ve^{M+1} +\nu \ve^2 + \left( \ve + \nu^{-1}\ve^{M+1}\right) \cE^\f12  + \nu^{-1}  \cD,
    \end{split}
\end{equation}
and
\begin{equation}\label{eq 7.42}
\begin{split}
   t\p_t|\mu^e|^2 + |\mu^e|^2 \lesssim \left(\nu^{-1}\ve^{M+1} +\nu \ve^2\right)\cE^\f12 + \left( \ve + \nu^{-1}\ve^{M+1}\right) \cE + \nu^{-1}\cE^\f12 \cD.
\end{split}
\end{equation}

  While by taking $\lla \cdot, \cdot\rra_V$ inner product of \eqref{eq 7.4} with $\om^R \dot\otimes W_\ve + \cB_a\om^R$,  we get, by
  using Propositions \ref{Prop 7.5}-\ref{Prop 7.7}, that
\begin{equation*}
    \begin{split}
&t\p_t E_\ve[\om^R] + D_\ve[\om^R]- C\ve^2 \left\|\om^R\right\|_{\cX_\ve}^2 \\
&\lesssim  \left(\nu^{-1}\ve^{M+1} +\nu \ve^2\right)\cE^\f12 + \left( \ve + \nu^{-1}\ve^{M+1}\right) \cE + \nu^{-1} \cE^\f12 \cD\\
&\quad + \nu^{-1}(\ve^M+\ve^{\sig_2})\cD+\ve^2 \cD +\ve^{M+1}(|t\p_t \mu^o| +|t\p_t \mu^e|  + \cE^\f12) \cE^\f12 ,
    \end{split}
\end{equation*}
which together with \eqref{eq 7.39} and \eqref{eq 7.41} implies
\begin{equation}\label{eq 7.43}
    \begin{split}
&t\p_t E_\ve[\om^R] + D_\ve[\om^R] \\
&\lesssim \left(\nu^{-1}\ve^{M+1} +\nu \ve^2\right)\cE^\f12 + \left( \ve + \nu^{-1}\ve^{M+1}\right) \cE + \nu^{-1} \cE^\f12 \cD +\nu^{-1}\left(\ve^M+\ve^{\sig_2}\right)\cD+\ve^2 \cD \\
&\quad +\ve^{M+1}\left(\nu^{-1}\ve^{M+1} +\nu \ve^2 + \left( \ve + \nu^{-1}\ve^{M+1}\right) \cE^\f12  + \nu^{-1}  \cD + \cE^\f12\right) \cE^\f12 \\
&\lesssim \left(\nu^{-1}\ve^{M+1} +\nu \ve^2\right)\cE^\f12 + \left( \ve + \nu^{-1}\ve^M + \nu^{-1}\ve^{\sig_2}\right) \cD +\nu^{-1} \cE^\f12 \cD.
    \end{split}
\end{equation}

 By summarizing \eqref{eq 4.65}, \eqref{eq 7.42} and \eqref{eq 7.43}, we find
 \begin{equation}\label{eq 7.44}
     \begin{split}
&t\p_t \cE + \cD  \lesssim\left(\nu^{-1}\ve^{M+1} +\nu \ve^2\right)\cE^\f12 + \left( \ve + \nu^{-1}\ve^M + \nu^{-1}\ve^{\sig_2}\right) \cD  +\nu^{-1} \cE^\f12 \cD .
     \end{split}
 \end{equation}
 For any given $s\in(0,1)$, we take $\sig_2 = M > \max(\frac{5-s}{1-s},\frac{2}{1-s}) = \frac{5-s}{1-s}$, which ensures that
 \begin{equation*}
    \nu^{-1}\ve^M + \nu^{-1}\ve^{\sig_2} \ll 1 \andf \nu^{-1}\ve^{M+1} \ll \nu \ve^2 .
 \end{equation*}
 Then by virtue of \eqref{eq 7.44}, $\cE(0)=0$ and continuity argument, we deduce that for $t\leq \nu^{-s}$,
 \begin{equation*}
     \cE + \int_0^t \frac{1}{\tau} \cD(\tau) d\tau  \lesssim \left(\nu^{-1}\ve^{M+1} +\nu \ve^2\right)^2 \ll (\nu \ve^2)^2,
 \end{equation*}
 which together with Proposition \ref{Prop 7.4}, $\dot{\theta}_p=\frac{\Ga\lam^e_a}{\ve \ala}\mu^o$ gives rise to
 \begin{equation*}
  \sum_{i=1}^2 \left\| \Omega_i - \Omega_{i,a} \right\|_{\cX_{i,\ve}} \lesssim \nu \ve^2 \andf |\dot{\theta} - \dot{\theta}_a|\lesssim \nu \ve^3.
 \end{equation*}
In particular, this gives
\begin{equation}\label{eq 7.46}
  \sum_{i=1}^2 \|\Omega_i - \Ga_i G\|_{L^1_\xi} \lesssim \ve^2 \andf \left|\dot{\theta} -\dot{\theta}^E_a \right| \lesssim \nu \ve^2 \ \Rightarrow \ \left|\theta -\theta^E_a \right| \lesssim  \ve^4.
\end{equation}

Now recalling \eqref{eq 2.2} and  then setting
$ \xi= \frac{1}{\sqrt{\nu t}} \left(\cR(\theta) x - r_i \eo \right)$, \eqref{eq 7.46} results in
\begin{equation}\label{eq 7.47}
 \Bigl\|w_i(t) -\frac{\Ga_i}{\nu t} G\Bigl(\frac{ x - \cR^{-1}(\theta)[r_i \eo]}{\sqrt{\nu t}}   \Bigr) \Bigr\|_{L^1_x}=    \Bigl\|w_i(t) -\frac{\Ga_i}{\nu t} G\Bigl(\frac{\cR(\theta) x - r_i \eo}{\sqrt{\nu t}}   \Bigr) \Bigr\|_{L^1_x} \lesssim \ve^2.
\end{equation}

Finally, by using \eqref{eq 7.46} and $r_i = \frac{\ka_i \Ga_\is}{\Ga}\ala = \ell_i \ala = \ell_i + \cO(\ve^4)$, we obtain
\begin{equation*}
  \left|\cR^{-1}(\theta)[r_i \eo] - \cR^{-1}(\theta^E_a) [\ell_i \eo] \right|\lesssim \ve^4,
\end{equation*}
from which and \eqref{eq 7.47}, we deduce that for $i=1,2$,
\begin{equation*}
    \Bigl\|w_i(t) -\frac{\Ga_i}{\nu t} G\Bigl(\frac{ x - \cR^{-1}(\theta^E_a)[\ell_i \eo]}{\sqrt{\nu t}}   \Bigr) \Bigr\|_{L^1_x} \lesssim \ve^2  \lesssim \nu t.
\end{equation*}
Thus by summarizing the above inequality and then setting $ \beta_k \eqdefa - \dot{\theta}^E_k$, we complete the proof of Theorem \ref{Thm 1.1}.

\end{proof}

\subsection*{acknowledgements} P. Zhang is partially  supported by National Key R$\&$D Program of China under grant 2021YFa1000800 and by National Natural Science Foundation of China under Grant 12421001, 12494542 and 12288201. Y. Zhang wants to thank Ning Liu for stimulating discussions.

\end{document}